\title{\huge On Cohen-Jones Isomorphism  \\in String Topology}
\date{}
\author{Syunji Moriya\footnote{Corresponding address: Department of Mathematics and Information Sciences, Osaka Prefecture University, Sakai, 599-8531, Japan   \   
E-mail adress: \texttt{moriyasy@gmail.com} }}
\theoremstyle{definition}
\newtheorem{defi}{Definition}[subsection]
\newtheorem{prop}[defi]{Proposition}
\newtheorem{rem}[defi]{Remark}
\newtheorem{lem}[defi]{Lemma}
\newtheorem{thm}[defi]{Theorem}
\newtheorem{cor}[defi]{Corollary}
\newtheorem{exa}[defi]{Example}
\newenvironment{itemize2}{
 \begin{list}{$\bullet$\ \ }%
 {\setlength{\itemindent}{0pt}
  \setlength{\leftmargin}{1.5em}    
  \setlength{\rightmargin}{0em}   
  \setlength{\labelsep}{0.5em}      
  \setlength{\labelwidth}{3em}    
  \setlength{\itemsep}{0em}       
  \setlength{\parsep}{0em}        
  \setlength{\listparindent}{0em} 
  \setlength{\topsep}{0pt} 
}
}{
 \end{list}
}
\newcommand{\eps}{\epsilon}
\newcommand{\ul}{\underline}
\newcommand{\Z}{\mathbb{Z}}
\newcommand{\R}{\mathbb{R}}
\newcommand{\LMT}{LM^{-\tau}}
\newcommand{\LMTM}{LM^{-TM}}
\newcommand{\MT}{M^{-\tau}}
\newcommand{\MTM}{M^{-TM}}
\newcommand{\TV}{\widetilde{V}}
\newcommand{\EVN}{ev^*\M}
\newcommand{\M}{\mathcal{M}}
\newcommand{\D}{\mathcal{D}}
\newcommand{\tF}{\widetilde{\mathcal{F}}}
\newcommand{\tPhi}{\widetilde{\Phi}}
\newcommand{\tc}{\widetilde{c}}
\newcommand{\Sph}{\mathbb{S}}
\newcommand{\CL}{\mathbb{L}}
\newcommand{\athc}{\mathbb{T}}
\newcommand{\bthc}{\widehat{\mathbb{T}}}
\newcommand{\IM}{\mathbb{IM}}
\newcommand{\TIM}{\overline{\mathbb{IM}}}
\newcommand{\BIM}{\widetilde{\mathbb{IM}}}
\newcommand{\tpsi}{\widetilde{\psi}}
\newcommand{\bpsi}{\overline{\psi}}
\newcommand{\teps}{\widetilde{\epsilon}}
\newcommand{\id}{\operatorname{id}}
\newcommand{\trho}{\widetilde{\rho}}
\newcommand{\homega}{\widehat{\omega}}
\newcommand{\hz}{\widehat{z}}
\newcommand{\bnu}{\overline{\nu}}
\newcommand{\tUpsilon}{\widetilde{\Upsilon}}
\newcommand{\hUpsilon}{\widehat{\Upsilon}}
\newcommand{\B}{\mathcal{B}}
\newcommand{\tB}{\widetilde{\mathcal{B}}}
\newcommand{\T}{\mathscr{T}}
\newcommand{\CT}{\mathscr{T}^c}
\newcommand{\K}{\mathcal{K}}
\newcommand{\CK}{\mathcal{CK}}
\newcommand{\CB}{\mathcal{CB}}
\newcommand{\oper}{\mathcal{O}}
\newcommand{\aoper}{\mathcal{P}}
\newcommand{\toper}{\widetilde{\mathcal{O}}}
\newcommand{\boper}{\overline{\mathcal{O}}}
\newcommand{\len}{\mathcal{L}}
\newcommand{\Doper}{\mathcal{D}}
\newcommand{\tDoper}{\widetilde{\mathcal{D}}}
\newcommand{\OB}{\mathrm{Ob}}
\newcommand{\C}{\mathcal{C}}
\newcommand{\CC}{\mathcal{C}^\Delta}
\newcommand{\TOP}{\mathcal{TOP}}
\newcommand{\CG}{\mathcal{CG}} 
\newcommand{\CCG}{\mathcal{CG}^\Delta}
\newcommand{\SP}{\mathcal{SP}}
\newcommand{\Alg}{\mathcal{ALG}}
\newcommand{\sqcat}{\tilde\square}
\newcommand{\thc}{\operatorname{THC}}
\newcommand{\tot}{\operatorname{Tot}}
\newcommand{\ttot}{\widetilde{\operatorname{Tot}}}
\newcommand{\MK}{\widetilde{\K}}
\newcommand{\MCK}{\overline{\CK}}
\newcommand{\Inhom}{\operatorname{Map}}
\newcommand{\Map}{\operatorname{Map}}
\newcommand{\hotimes}{{\,\hat\otimes\,}}
\newcommand{\FO}{F_{\oper}}        
\newcommand{\GL}{G_{l}} 
\newcommand{\Ho}{\mathrm{Ho}}
\newcommand{\epsS}{($\eps$-1)}
\newcommand{\epsSigma}{($\eps$-2)}
\newcommand{\epsev}{($\eps$-4)}
\newcommand{\epscone}{($\eps$-5)}
\newcommand{\epscomp}{($\eps$-3)}
\newcommand{\psiS}{($\psi$-1)}
\newcommand{\psiSigma}{($\psi$-2)}
\newcommand{\psiev}{($\psi$-5)}
\newcommand{\psicomp}{($\psi$-3)}
\newcommand{\psibp}{($\psi$-4)}
\newcommand{\tpsiuni}{($\tpsi$-1)}
\newcommand{\tpsiS}{($\tpsi$-2)}
\newcommand{\tpsiSigma}{($\tpsi$-3)}
\newcommand{\tpsiev}{($\tpsi$-6)}
\newcommand{\tpsicomp}{($\tpsi$-5)}
\newcommand{\tpsiend}{($\tpsi$-4)}
\newcommand{\tpsiconti}{($\tpsi$-7)}
\newcommand{\zS}{($z$-1)}
\newcommand{\zSigma}{($z$-2)}
\newcommand{\zcomp}{($z$-3)}
\newcommand{\zcos}{($z$-4)}
\newcommand{\zev}{($z$-5)}
\newcommand{\zconti}{($z$-6)}
\newcommand{\omegaS}{($\omega$-1)}
\newcommand{\omegaSigma}{($\omega$-2)}
\newcommand{\omegacomp}{($\omega$-3)}
\newcommand{\omegaend}{($\omega$-5)}
\newcommand{\omegacos}{($\omega$-4)}
\newcommand{\omegaconti}{($\omega$-6)}
\newcommand{\homegaS}{($\homega$-1)}
\newcommand{\homegaSigma}{($\homega$-2)}
\newcommand{\homegaev}{($\homega$-6)}
\newcommand{\homegacomp}{($\homega$-3)}
\newcommand{\homegaend}{($\homega$-5)}
\newcommand{\homegacos}{($\homega$-4)}
\newcommand{\homegaconti}{($\homega$-7)}
\newcommand{\ordI}{(o-1)}
\newcommand{\ordII}{(o-2)}
\newcommand{\ordIII}{(o-3)}
\newcommand{\lev}{\operatorname{lev}}
\newcommand{\ari}{\operatorname{ar}}
\newcommand{\ci}{\mathbf{c}}
\newcommand{\ve}{\mathbf{v}}
\newcommand{\df}{\overline{d}}
\newcommand{\dl}{\underline{d}}
\newcommand{\colim}{\operatorname{colim}}
\newcommand{\col}{\mathbf{c}}
\newcommand{\ocol}{\bar{\mathbf{c}}}
\newcommand{\ocola}{\bar{\mathbf{c}}_a}
\newcommand{\ocolb}{\bar{\mathbf{c}}_b}
\newcommand{\ucol}{\underline{\mathbf{c}}}
\newcommand{\vs}{\vspace}
\newcommand{\branchII}{\put(0,-13){\line(0,1){24} }}
\newcommand{\branchIII}{\put(0,-13){\line(1,1){24}}}
\newcommand{\branchI}{\put(0,-13){\line(-1,1){24} }}
\newcommand{\branchIV}{\put(-12,-1){\line(1,1){12}}}
\newcommand{\branchV}{\put(12,-1){\line(-1,1){12}}}
\newcommand{\socirc}{\put(0,-13){\circle{3}}}
\newcommand{\somark}{\put(1,-24)}
\newcommand{\sacirc}{\put(-6,-7){\circle{3}}}
\newcommand{\samark}{\put(-17,-20)}
\newcommand{\saacirc}{\put(-12,-1){\circle{3}}}
\newcommand{\saamark}{\put(-23,-14)}
\newcommand{\saaacirc}{\put(-18,5){\circle{3}}}
\newcommand{\saaamark}{\put(-16,8)}
\newcommand{\saaaacirc}{\put(-24,11){\circle{3}}}
\newcommand{\saaaamark}{\put(-24,19)}
\newcommand{\saabbcirc}{\put(0,11){\circle{3}}}
\newcommand{\saabbmark}{\put(0,19)}
\newcommand{\sbbcirc}{\put(12,-1){\circle{3}}}
\newcommand{\sbbmark}{\put(16,-9)}
\newcommand{\sbbacirc}{\put(6,5){\circle{3}}}
\newcommand{\sbbamark}{\put(10,7)}
\newcommand{\sbbbbcirc}{\put(24,11){\circle{3}}}
\newcommand{\sbbbbmark}{\put(24,19)}
\begin{document}
\maketitle
\begin{abstract}
The loop product is an operation in string topology. Cohen and Jones \cite{cj,cohen} gave a homotopy theoretic realization of the loop product as a classical ring spectrum $\LMTM$ for a manifold $M$. Using this, they presented a proof of the statement that the loop product is isomorphic to the Gerstenhaber cup product on the Hochschild cohomology   $HH^*(C^*(M)\,;C^*(M))$ for simply connected $M$. However, some parts of their proof are technically difficult to justify. The main aim of the present paper is to give detailed modification to a geometric part of their proof. To do so, we set up an `up to higher homotopy' version of McClure-Smith's cosimplicial product. We prove a structured version of the Cohen-Jones isomorphism in the category of symmetric spectra.
\end{abstract}
\begin{center}
2010MSC\  primary: 55P50, \ secondary: 55P48\\
Keywords: symmetric spectra, Atiyah duality, associahedra, \\cosimplicial space,
colored operad
\end{center}
\tableofcontents

\section{Introduction}

\indent Throughout this paper, $M$ denotes a smooth closed manifold of a finite dimension $d$. Let $LM$ denote the free loop space of $M$, i.e., $LM$ is the space of all continuous maps from the circle $S^1$ to the manifold $M$ with the compact-open topology.\\
\indent String topology was initiated by Chas and Sullivan \cite{cs}. It is a study of certain algebraic structures on the homology of  $LM$, which can be seen as a generalization of Goldman's Lie algebra for a Riemann surface. These structures are studied in many ways e.g., relation to counting problem of closed geodesics (see Goresky-Hingston \cite{gh}), generalization to Gorenstein spaces (see F\'elix-Thomas \cite{ft}, Kuribayashi-Menichi-Naito\cite{kmn12, kmn13},  Naito \cite{naito}, and Kuribayashi-Naito-Wakatsuki-Yamaguchi \cite{KNWY}). Another interesting subject is the relationship between string topology operations   and  intrinsic operations on Hochschild cohomology of the cochain  due to Gerstenhaber \cite{gerstenhaber} and Jones \cite{jones}.  Cohen and Jones \cite{cj,cohen} gave  a proof of the claim that there exists an isomorphism between the loop product, a most basic string topology operation,
and the Gerstenhaber cup product on the Hochschild cohomology of the singular cochain algebra $C^*(M)$  over a  field $k$
\[
\bigl( H_{*-d}(LM; k),\ \text{the loop product}\bigr) \cong \bigl( HH^*(C^*(M);C^*(M)),\ \text{the cup product}\bigr),
\]
which we call the \textit{Cohen-Jones isomorphism}. In the case of  characteristic zero, 
F\'elix and Thomas \cite{ft2} showed an isomorphism of BV-algebras. (In contrast, Menichi \cite{menichi} showed non-existence of such an isomorphism  in characteristic 2.) \\
\indent Though the idea of the proof of the Cohen-Jones isomorphism in \cite{cj,cohen} is very interesting,  the author of the present paper encountered  technical difficulties with their argument. The difficulties can be divided into two parts. One is the comparison between the loop product and a natural product on the topological Hochschild cohomology. The other is the comparison between topological and chain level Hochschild cohomologies.  The aim of the present paper is to resolve the former part, which we consider the bigger difficulty.  We shall state the main theorem. In \cite{cj} 
the loop product was realized as a product on a Thom spectrum $LM^{-TM}$ in the stable homotopy category. By this product, we regard $LM^{-TM}$ as a classical associative ring spectrum, i.e., a monoid in the stable homotopy category, and call it the \textit{Cohen-Jones ring spectrum}.
 The main result of the present paper is the following theorem, which is regarded as a symmetric spectrum version of the Cohen-Jones isomorphism.
\begin{thm}\label{mainthm1}
There exists a non-unital $A_\infty$-symmetric ring spectrum $\LMT$ satisfying the following two conditions.
\begin{itemize}
\item[(1)] (Theorem \ref{TLMTLMTM})   $\LMT$ is isomorphic to the Cohen-Jones ring spectrum $LM^{-TM}$ as a classical non-unital associative ring spectrum. 
\item[(2)] Suppose $M$ is simply connected. Let $Q$ be a fibrant cofibrant replacement of the function  spectrum  $F(M)$ of $M$ in a suitable model category  (see Proposition \ref{propnonunitalmodel}). Then, $LM^{-\tau}$  and the topological Hochschild cohomology $\thc (Q\, ;Q)$ is weak equivalent as  non-unital $A_\infty$-symmetric ring spectra. 
\end{itemize}
\end{thm}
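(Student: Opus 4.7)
The plan is to build $\LMT$ as a totalization of a cosimplicial object in symmetric spectra whose $n$-th term is a suitable symmetric-spectrum model of the Thom spectrum $(M^{n+1})^{-\tau}$, with cofaces induced from the standard cosimplicial presentation of $LM$ (the inner cofaces coming from diagonals, the outer ones from projections, and the codegeneracies from further projections). The non-unital $A_\infty$-ring structure is then produced by acting by the up-to-higher-homotopy version of the McClure-Smith cosimplicial pairing flagged in the abstract: one cooks up a colored operad whose colors record levels of partial totalization, together with the associahedra supplying the coherences, and extracts from the action on the tower of partial $\tot$'s a genuine $A_\infty$-action on the full $\tot$, which one defines to be $\LMT$.

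The heart of the matter is this $A_\infty$-structure. The strict McClure-Smith multiplication works for multiplicative cosimplicial objects, but in the $\MT$ setting the cosimplicial diagram is only multiplicative up to coherent higher homotopies coming from the Thom-diagonal. The step I expect to be the principal obstacle is encoding these coherences operadically while retaining compatibility with the strict symmetric-spectrum structure (symmetric-group actions and assembly maps) at each level. Everything else in the paper is essentially arranged so that one piece of machinery, the colored-operad-enriched cosimplicial pairing, produces an $A_\infty$-structure simultaneously matching the Cohen-Jones loop product classically and the Hochschild cup product after duality.

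For (1), one forgets the $A_\infty$-structure down to the stable homotopy category and invokes the already-cited Theorem \ref{TLMTLMTM} to identify the underlying spectrum with $\LMTM$. Identifying the products then reduces to checking that, at the level of the stable homotopy category, the pairing induced by the cosimplicial McClure-Smith construction on $\tot$ coincides with the Thom-collapse pairing used by Cohen and Jones to build the loop product. This is a comparison of two explicit geometric constructions on the pullback of $-TM$ along $LM\times_M LM \to LM$, to be verified by an explicit homotopy of Thom-spectrum maps.

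For (2), when $M$ is simply connected, one models $\thc(Q;Q)$ by the standard cosimplicial bar construction built from mapping spectra $F(Q^{\wedge \bullet+1},Q)$. Atiyah duality identifies the fibrant-cofibrant replacement $Q \simeq F(M)$ with a model of $\MT$, and under this duality the Hochschild cosimplicial diagram corresponds, degreewise and together with its structure maps, to the cosimplicial symmetric spectrum used to define $\LMT$. The simple-connectedness hypothesis enters precisely to guarantee convergence of the $\tot$-spectral sequence, so that the duality of cosimplicial objects passes to a weak equivalence of totalizations. Finally, the $A_\infty$-structures agree because both are produced by applying the same up-to-homotopy McClure-Smith machinery to cosimplicial symmetric spectra that are themselves weakly equivalent through this duality, so the comparison of $A_\infty$-structures reduces to the naturality of the colored-operad action with respect to level-wise weak equivalences of cosimplicial diagrams.
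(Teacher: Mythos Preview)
Your proposal has the right overall shape but contains a genuine gap that the paper devotes most of its technical work to closing.

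The central claim in your argument for (2) is that under Atiyah duality ``the Hochschild cosimplicial diagram corresponds, degreewise and together with its structure maps, to the cosimplicial symmetric spectrum used to define $\LMT$'', so that the comparison of $A_\infty$-structures ``reduces to the naturality of the colored-operad action with respect to level-wise weak equivalences of cosimplicial diagrams''. This is precisely what fails. The paper's cosimplicial object is $\CL^p = M^{\times p}\hotimes \MT$, and the natural Atiyah-duality map $\trho^{\,p}:\CL^p\to \thc^p(F(M),\Gamma(M))$ is \emph{not} a morphism of cosimplicial objects: it fails to commute with the first and last cofaces $d^0$ and $d^{p+1}$, because those cofaces on $\CL^\bullet$ use the tubular-neighborhood projection $\pi_\phi$ while the cofaces on the Hochschild side use only the $F(M)$-module structure on $\Gamma(M)$ (see sub-subsection~\ref{SSSdefIM}). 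So there is no zig-zag of level-wise weak equivalences of cosimplicial objects connecting $\CL^\bullet$ and $\thc^\bullet$, and no amount of naturality gets you the comparison of $A_\infty$-structures for free.

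The paper's remedy is twofold. First, it builds an intermediate cosimplicial spectrum $\IM^\bullet$ (a path-object construction interpolating between the two coface conventions) together with genuine cosimplicial maps $\CL^\bullet \xleftarrow{p_0} \IM^\bullet \xrightarrow{\bar q_2} \thc^\bullet(A',B)$. Second, and this is where the ``up-to-higher-homotopy McClure--Smith'' machinery actually enters, it is impossible to put a strict $\MK$-action on $\IM^\bullet$ compatible with both $p_0$ and $\bar q_2$ (sub-subsection~\ref{SSSmotivation} explains why: the required perturbation homotopies $\tpsi$ cannot be made to satisfy the strict compatibility equations). The paper therefore introduces a laxer monad $\MCK$, built from an operad $\CK$ of trees with marks parametrizing coherence homotopies for the outer-coface relations, and shows $\IM^\bullet$ carries an $\MCK$-action compatible (via a monad map $U:\MCK\to\MK$) with the strict $\MK$-actions on both ends. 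Note that $\CL^\bullet$ itself carries a \emph{strict} $\MK$-action (Proposition~\ref{PwelldefinedPsi}); the lax structure is needed only in the middle.

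Two smaller corrections: the colored operad in the paper is not used to manufacture the $A_\infty$-structure on $\LMT$; it is used separately (subsection~\ref{SScomparisontot}) to compare the $A_\infty$-structures on $\tot$ and $\ttot$ of a single cosimplicial object, which is needed because $\MCK$-actions induce $A_\infty$-structures only on $\ttot$, while the identification $\LMT\cong\tot(\CL^\bullet)$ lives on $\tot$. And the $A_\infty$-structure on $\LMT$ itself is first constructed directly on the spectrum (Section~\ref{Sainfty}), via explicit perturbation maps $\psi$ built inductively over the associahedra, and only afterwards identified with the structure induced on $\tot(\CL^\bullet)$.
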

For terminologies, see subsection \ref{SSNT} and for topological Hochschild cohomology, see \ref{SStophochschild}. The function spectrum $F(M)$ is the spectrum of maps from the  manifold  with a disjoint basepoint $M_+$ to the sphere spectrum. In other words, it is  (a structured version of) the Spanier-Whitehead dual of $M_+$. $F(M)$ is regarded as  an analogue of the cochain algebra $C^*(M)$. The equivalence in  (2) or the isomorphism of Cohen-Jones is more or less a  folklore,  but the author could not find a  rigorous proof of it  anywhere except for the case of rational coefficient.  It is important  to have a detailed proof for further study of the relationship between string topology and Hochschild cohomology.  For works related to (1), Poirier and Rounds  \cite{pr} constructed a chain map which encodes string topology TQFT operations, and Irie \cite{irie} constructed a chain-level  BV-structure including the loop product over reals and give  applications to symplectic topology, based on  Fukaya's idea \cite{fukaya} \\
\indent We shall look at the outline of the proof of the Cohen-Jones isomorphism in \cite{cj,cohen} to explain the main task in the present paper. 
The authors of \cite{cj} use two cosimplicial objects 
\[
(\CL_M)_* ,  \quad  CH^\bullet_M.
\]
$(\CL_M)_*$  is the cosimplicial spectrum given by $(\CL_M)_n=(M^{\times n}_+)\wedge M^{-TM}$ for each cosimplicial degree $n$, where  $M^{-TM}=\Sigma^{-N}Th(\nu )$ is the Thom spectrum for the normal bundle $\nu$ of a fixed embedding $M\to \R^N$. There is a weak equivalence $LM^{-TM}\simeq \tot (\CL_M)_*$, where $\tot$ denotes the totalization, see subsection \ref{SSNT}. (In the present paper, we denote the cosimplicial degree by the superscript $\bullet$ but here, for $(\CL_M)_*$ we use the notation of \cite{cj}.) $CH^\bullet_M$ is the cosimplicial cochain complex given by $CH^n_M=Hom(C^*(M)^{\otimes n},C^*(M))$. The (normalized) total complex of $CH^\bullet_M$ is the Hochschild complex  $CH^*(C^*(M)\,;C^*(M))$.  Though $CH^\bullet_M$ does not appear in \cite{cj}, it is used implicitly (see \cite{cohen}).  They use products on these cosimplicial objects
\[
(\CL_M)_p\wedge (\CL_M)_q\to (\CL_M)_{p+q},\quad {CH}^p_M\otimes {CH}^q_M\to {CH}^{p+q}_M.
\]
The  first one is defined by using a natural product on $M^{-TM}$  and is claimed to induce the loop product on $\tot (\CL_M)_*\simeq LM^{-TM}$, and the second one is the product which induces the Gerstenhaber cup product on the Hochschild complex.  
They define a degreewise quasi-isomorphism $C_*((\CL_M)_*)\simeq CH^\bullet_M$ of cosimplicial chain complexes, which preserves products,  using the Atiyah duality $M^{-TM}\simeq F(M)$ and a quasi-isomorphism $C_*(F(M))\simeq C^*(M)$. (Here, $C_*$ is an unclear  chain functor   for symmetric spectra.) Using this quasi-isomorphism, they essentially state that there is a zig-zag of quasi-isomorphisms
\[
C_*(LM^{-TM})\simeq C_*(\tot (\CL_M)_* )\simeq \tot C_*((\CL_M)_* )\simeq \tot CH^\bullet_M = CH^*(C^*(M);C^*(M))  
\]
which sends the loop product to the cup product. (Here $\tot$ denotes different two notions,  spectrum-level totalization and total complex.)\\
\indent   In the construction of the  quasi-isomorphism $C_*((\CL_M)_*)\simeq CH^\bullet_M$, the topological Hochschild cohomology for $F(M)$ is  used. The topological Hochschild is not well-defined in the classical stable homotopy category since it is defined by using  the symmetry of the monoidal structure and the totalization of a cosimplicial object. It is well-defined in the category of symmetric spectra \cite{hss, mms} since that category has a structure of a monoidal model category.  In \cite{cohen}, Cohen gives  details of construction of \cite{cj} in the category of  symmetric spectra, especially a very effective realization of the Thom spectrum $M^{-TM}$ which is also useful for other application. Actually, the author of the present paper crucially applied it to a model for a knot space to construct a new cohomology spectral sequence in \cite{moriya1}.\\
\indent To explain our tasks, it is necessary to specify  what the  difficulty for the author is. Here, we give a quick view. More detailed explanation is given in subsection \ref{SSproblem}.  The difficulty  is that the first and last coface operators $d^0$, $d^{n+1}$ on $(\CL_M)_n$ are not well-defined. In \cite{cohen}, the coface operators are defined by using morphisms \[
\Delta_r:M^{-\tau}(e)\to M^{-\tau}(e)\wedge (\nu_{2\epsilon}(e)_+),\quad \Delta_l:M^{-\tau}(e)\to  (\nu_{2\epsilon}(e)_+)\wedge M^{-\tau}(e)
\]
(see the paragraph under Theorem 6 of \cite{cohen}), where $M^{-\tau}(e)$ is a model of $M^{-TM}$ which has a structure of unital commutative symmetric ring spectrum. These morphisms are not well-defined morphisms of symmetric spectra since they do not commute with the action of the sphere spectrum. The author  tried to modify this using another model $M^{-\tau}$ of $M^{-TM}$ introduced by Cohen in \cite{cohen}. While $M^{-\tau}$ is {\em non}-unital, the  morphisms analogous to $\Delta_r$ and $\Delta_l$ are well-defined morphisms of symmetric spectra for this  model. Unfortunately,  this modification  caused another problem. In order for a product on a cosimplicial object to induce a product on the totalization, it must satisfy some condition. The only sufficient condition which the author knows is the condition of  McClure-Smith \cite{ms} (for $A_\infty$-structure), which states some compatibility between coface and degeneracy operators and the product. For example, $CH_M^\bullet$ satisfies this condition.  The problem is that the non-unital version of Cohen's product on $(\CL_M)_*$ does not satisfy the condition of McClure-Smith.  \\
\indent In the present paper, for a transparent proof,  we give all involved construction in  the category of symmetric spectra. Our construction are based on the non-unital model of Cohen. We define a  product on a model of $LM^{-TM}$ which encodes the loop product in the category of symmetric spectra  as in (1) of Theorem \ref{mainthm1}, and a product on  a modified version of $(\CL_M)_*$ ($\CL^\bullet$ in the notion of the present paper) which satisfies a slight generalization of the McClure-Smith's condition. The generalization  is given as an  action of a monad $\widetilde{\K}$ on a cosimplicial object. This product is different from the above non-unital version of Cohen's product. We prove the product induced by the $\widetilde{\K}$-action on $\CL^\bullet$ is isomorphic to the product on $\LMT$. Then we construct a zig-zag of weak equivalences
\[
\CL^\bullet \leftarrow \IM^\bullet \rightarrow \thc^\bullet (A',B)
\]
where $A'$ and $B$ are suitable (fibrant or cofibrant) models of $F(M)$ and $\thc^\bullet$ is the cosimplicial object the totalization of which is the topological Hochschild cohomology. As the action of $\widetilde{\K}$ is a generalization of the  McClure and Smith's product, $\thc^\bullet$ has a natural action of $\widetilde{\K}$. The author tried  to connect $\CL^\bullet$ and $\thc^\bullet (A',B)$  by a zig-zag of weak equivalences which preserve the action of $\widetilde{\K}$ but it turned out to be difficult. To avoid this difficulty, we need a generalization of the  McClure-Smith's product, laxer than an action of $\widetilde{\K}$. We give such a generalization as  an action of another monad $\overline{\CK}$. The construction of the monad $\overline{\CK}$ and the zig-zag of weak equivalences preserving the action of $\overline{\CK}$ are  main tasks in the present paper. \\
\indent  The other main task is to establish an equivalence of two multiplicative objects defined by McClure-Smith \cite{ms}. Recall from \cite{ms}  the notion of homotopy totalization $\ttot$, which is the homotopy limit over the category of standard simplices $\Delta$,  see subsection \ref{SSNT} for the definition. $\ttot$ has a better homotopy invariance than $\tot$ and we need both of them. We construct an explicit isomorphism $\LMT\cong \tot (\CL^\bullet)$ while a $\overline{\CK}$-action induces an ($A_\infty$-) product on $\ttot$ (not on $\tot$).  
In \cite{ms}, a notion of  $\Xi_n$-algebra structure on a cosimplicial object was introduced and it was proved that a $\Xi_n$-structure induces  $E_n$-operad  actions both on $\tot$ and on  $\ttot$. Here, an $E_n$-operad is an operad weakly equivalent to the little $n$-cubes operad. The $\Xi_1$-structure is the  McClure-Smith product mentioned above.
We prove the following theorem.
\clearpage
\begin{thm}[Theorem \ref{Ttot}]\label{Tintro2}
Let $X^\bullet$ be a cosimplicial object over the category of topological spaces or symmetric spectra, and suppose $X^\bullet$ is equipped with a $\Xi_n$-structure and suppose  a canonical morphism $f^*:\tot(X^\bullet)\to \ttot(X^\bullet)$ (see subsection \ref{SSNT}) is a weak equivalence. Then, the $E_n$-actions on $\tot(X^\bullet)$ and $\ttot(X^\bullet)$ which are induced from the $\Xi_n$-structure are equivalent.
\end{thm}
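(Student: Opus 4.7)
The strategy is to realize both $E_n$-actions as instances of one universal construction applied to $\tot$ and $\ttot$, and to show that the canonical comparison $f^*$ is equivariant for these actions by identifying it as a natural transformation of that construction. Once this is done, the conclusion is a formal consequence of the weak-equivalence hypothesis in the monoidal model category of symmetric spectra (or spaces).

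First I would recall the representations $\tot(X^\bullet)=\mathrm{Map}^{\Delta}(\Delta^\bullet, X^\bullet)$ and $\ttot(X^\bullet)=\mathrm{Map}^{\Delta}(\widetilde{\Delta}^\bullet, X^\bullet)$, where $\widetilde{\Delta}^\bullet\to \Delta^\bullet$ is a Reedy cofibrant resolution of the standard cosimplicial simplex, so that $f^*$ is precisely precomposition with this resolution. In the McClure--Smith machinery, a $\Xi_n$-structure on $X^\bullet$ provides, for every operation $\theta\in \Xi_n(k)$, a cosimplicial map $X^\bullet\boxtimes_\theta \cdots \boxtimes_\theta X^\bullet\to X^\bullet$ built from an operation-indexed box product on cosimplicial objects. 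The $E_n$-action on $\tot(X^\bullet)$ (respectively $\ttot(X^\bullet)$) is then assembled by combining these $\Xi_n$-operations with a lax multiplicative structure on the functor $\mathrm{Map}^{\Delta}(\Delta^\bullet,-)$ (resp. $\mathrm{Map}^{\Delta}(\widetilde{\Delta}^\bullet,-)$), which is itself encoded by operadic diagonals on the representing cosimplicial object.

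The next step is to fix, once and for all, a resolution $\widetilde{\Delta}^\bullet$ equipped with operadic diagonals that lift those on $\Delta^\bullet$ along $\widetilde{\Delta}^\bullet\to\Delta^\bullet$; such lifts exist by the standard lifting property of Reedy cofibrations against trivial Reedy fibrations, applied inductively in the arity of the operad. With this compatible choice, both $E_n$-actions factor through the same diagram of cosimplicial objects, and $f^*$ becomes a strict morphism for the underlying $\Xi_n$-structure, hence for the induced $E_n$-action. Combined with the hypothesis that $f^*$ is a weak equivalence of underlying objects, standard arguments in the category of $E_n$-algebras over a monoidal model category then upgrade $f^*$ to an equivalence of $E_n$-algebras, which is the conclusion of the theorem.

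The main technical obstacle is the construction of operadic diagonals on $\widetilde{\Delta}^\bullet$ lifting those on $\Delta^\bullet$ coherently in all arities; this is a lifting problem in a Reedy model structure, and its solution requires careful verification that the relevant structure maps are Reedy cofibrations (or trivial fibrations). A secondary subtlety, specific to the symmetric spectrum setting, is that one must work throughout in a monoidal model structure (such as the positive stable structure) for which $\mathrm{Map}^{\Delta}$ preserves the appropriate equivalences and for which the external product $\boxtimes$ behaves well; without such a choice, the passage from ``weak equivalence of underlying spectra'' to ``equivalence of $E_n$-algebras'' in the final step is not automatic.
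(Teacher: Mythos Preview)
Your approach has a genuine gap at its key step. You propose to equip $\widetilde{\Delta}^\bullet$ with operadic diagonals lifting those on $\Delta^\bullet$ by invoking the lifting property of Reedy cofibrations against trivial Reedy fibrations, but the map you would need to lift against, namely $f^{\square k}\colon (\widetilde{\Delta}^\bullet)^{\square k}\to(\Delta^\bullet)^{\square k}$, is not known to be a trivial fibration in any of the relevant model structures, so the lifting problem is not of the form you claim. More seriously, even granting individual lifts in each arity, the operadic \emph{coherence}---the co-associativity relations among the diagonals and their compatibility with $f$---is an infinite family of additional constraints that a sequence of separate liftings does not secure; your phrase ``applied inductively in the arity of the operad'' does not address this, and you yourself flag it as the main technical obstacle without resolving it. The paper in fact remarks explicitly (just after stating the theorem) that there is no obvious morphism between the two $E_n$-operads $\mathcal{D}_n$ and $\widetilde{\mathcal{D}}_n$; such a morphism is exactly what your argument would have to produce.

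The paper's route is entirely different and sidesteps this obstacle. It observes that $\mathcal{D}_n$ and $\widetilde{\mathcal{D}}_n$ are the endomorphism operads of the two objects $a=\Delta^\bullet$ and $b=\widetilde{\Delta}^\bullet$ inside a single \emph{colored} operad $\mathcal{O}$, with $\mathcal{O}(c_1,\ldots,c_k;d)=\operatorname{Map}^\Delta\bigl(d,\Xi^n_k(c_1,\ldots,c_k)\bigr)$, and $f$ becomes a $1$-ary morphism $\alpha\in\mathcal{O}(a;b)$. A general construction, modeled on Moore loop spaces with variable-length paths, then builds from any such $\alpha$ a third operad $\widetilde{\mathcal{O}}$ with strictly associative composition together with operad maps $\widetilde{\mathcal{O}}\to\mathcal{O}_a$ and $\widetilde{\mathcal{O}}\to\mathcal{O}_b$; the arity-$k$ space of $\widetilde{\mathcal{O}}$ is identified with the homotopy fiber product $\mathcal{O}_a(k)\times^h_{\mathcal{O}(a^k;b)}\mathcal{O}_b(k)$, so $\widetilde{\mathcal{O}}$ is again an $E_n$-operad in this situation. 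The same construction simultaneously yields an $\widetilde{\mathcal{O}}$-algebra $\widetilde{Y}$ mapping compatibly to both $\tot(X^\bullet)$ and $\ttot(X^\bullet)$, and the hypothesis that $f^*$ is a weak equivalence makes both legs of this zig-zag equivalences of algebras. Thus the coherence problem you isolate is replaced by an explicit point-set construction, at the price of never producing a direct map between $\mathcal{D}_n$ and $\widetilde{\mathcal{D}}_n$.
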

We use the case of $n=1$ of this theorem to prove Theorem \ref{mainthm1} (2). As the both $E_n$-actions have many applications \cite{ms2, sinha2, ms, moriya, tsopmene}, this theorem will be useful in other context.
Theorem \ref{Tintro2}  is not so trivial as it looks since  the $E_n$-operad actions on $\tot$ and $\ttot$ are realized by  different operads  and there is no obvious morphism between them. The key observation is that  the two involved  operads  are naturally regarded as endomorphism operads on two different objects contained in a colored operad.\\
\indent There still exists a gap between Theorem \ref{mainthm1} and the Cohen-Jones isomorphism. Let $H_*(\thc (Q,Q))=\pi_*(\thc (Q,Q)\wedge Hk)$ with $Hk$ is the Eilenberg-MacLane spectrum for a base field $k$. If there exists an isomorphism of algebras
\[
H_*(\thc (Q, Q))\cong HH^*(C^*(M);C^*(M)) \cdots\cdots (*),
\]
where $Q$ is the object  in Theorem \ref{mainthm1}, we can complete the proof of Cohen-Jones isomorphism, combining the isomorphism $(*)$ with Theorem \ref{mainthm1}. The isomorphism $(*)$ is plausible since the function spectrum is an analogy of the singular cochain (the fibrant cofibrant replacement is necessary for the topological Hochschild to have the right homotopy type). Nevertheless, the construction of $(*)$ is non-trivial problem since it is related to comparison of symmetry of monoidal structures of symmetric spectra and chain complexes.  A similar  problem appeared in an earlier paper of Jones \cite{jones}, which is modified by Unghretti \cite{ungheretti}. Since the construction of the isomorphism $(*)$   is a quite general homotopical algebraic problem, we will resolve it in  another paper. \\ 

\indent An outline of this paper is as follows. In section \ref{Spreliminary}, we recall basic definitions  and known results we use later. Nothing is essentially new. We review the Cohen-Jones ring spectrum and (a slightly different version of) the realization of the Atiyah duality in symmetric spectra due to \cite{cohen}. We give a detailed explanation of errors in \cite{cohen}. We also recall a description of the Stasheff's associahedral operad by trees and the McClure-Smith's product for $A_\infty$-structures and  introduce a slight generalization of it,  which is applicable to the  $A_\infty$-structure on $\LMT$. We also recall the definition of topological Hochschild cohomology.\\
\indent In section \ref{Sainfty} we define the non-unital $A_\infty$-symmetric ring spectra $\LMT$ and prove the part (1) of Theorem \ref{mainthm1}.\\
\indent In section \ref{Sequivalence}, we prove the part (2) of Theorem \ref{mainthm1}. We construct a chain of equivalences of non-unital $A_\infty$-symmetric ring spectra:
\[
\begin{split}
\LMT\stackrel{\text{(A)}}{\cong}\tot(\CL^\bullet)&
\stackrel{\text{(B)}}{\simeq}
\ttot(\CL^\bullet)\stackrel{\text{(C)}}{\simeq}\ttot(\IM^\bullet)\\
&\stackrel{\text{(D)}}{\simeq}
\ttot(\thc^\bullet(A',B))\stackrel{\text{(E)}}{\simeq}
\tot(\thc^\bullet(A',B))=
\thc(A',B)\stackrel{\text{(F)}}{\simeq}
\thc(Q,Q)
\end{split}
\]
For the definitions of $A'$, $B$ and $Q$, see Proposition \ref{propinvarianceofthc}, and the equivalence (F) is proved in the same proposition.  $\thc$ denotes the topological Hochschild cohomology. In subsection \ref{SScosimplicialLMT}, we define a cosimplical symmetric spectrum $\CL^\bullet$ whose totalization is isomorphic to $\LMT$ and prove the $A_\infty$-structure on $\LMT$ comes from a (slight generalization of) McClure-Smith product on $\CL^\bullet$, which implies the isomorphism (A). Equivalences (C) and (D) are induced from morphisms between cosimplical symmetric spectra $\CL^\bullet\xleftarrow{p_0
} \IM^\bullet\xrightarrow{\bar q_2} \thc^\bullet(A',B)$. In subsection \ref{SSintermediate}, we define $\IM^\bullet$ and the two morphisms $p_0$, $\bar q_2$. In subsection \ref{SSgeneralizationMS}, we introduce the monad $\MCK$, and in subsection \ref{SSMCKalgebra}, we define an action of $\MCK$ on $\IM^\bullet$. We also need to establish the equivalences of $A_\infty$-structures on $\tot$ and $\ttot$ (B) and (E). We deal with the main part of this problem in subsection \ref{SScomparisontot}. In the final subsection \ref{SSproofTmain2}, we put  the results of previous subsections together, and establish the whole equivalences.\\

\textbf{Acknowledgements}: The author is most grateful to Masana Harada for reading the first draft of this paper and giving valuable comments. He is also deeply indebted to an anonymous referee for suggestions to improve readability  and pointing out errors. He is partially supported by JSPS KAKENHI Grant Number 26800037 and 17K14192.

\subsection{Notation and terminology}\label{SSNT}
\begin{itemize}
\item For a sequence of numbers $k_1,\dots, k_n$, we set $k_{\leq n}=k_1+\cdots +k_n$. If the numbering of a sequence begins with $0$, the sum also begins with the term numbered  $0$. 
 We someteimes use the word `map' instead of `morphism'
\item We will consider several subspaces of product spaces. For most of them, we use a particular tuple of  letters to denote elements of them. we sometimes prefix the letter on `part' to indicate a component.  For example, an element of $\M_k$ (see subsection \ref{SSatiyahduality}) is typically denoted by $(\phi, \eps, v)$ (possibly with super- or subscirpts). The $\phi$-part of the element $(\phi_1,\eps_1,v_1)\in \M_k$ is $\phi_1$, and its $\eps$-part is $\eps_1$. 
\item For two maps $\phi_1:A\to \R^{k_1}$ and $\phi_2:A\to \R^{k_2}$,  $\phi_1\times \phi_2 : A\to \R^{k_1+k_2}$ denotes the map given by $\phi_1\times \phi_2(x)=(\phi_1(x), \phi_2(x))$.
\item $\TOP$ denotes the category of all (unpointed) topological spaces and continuous maps. $\CG$ denotes the full subcategory of $\TOP$ consisting of all compactly generated spaces in the sense of \cite[Definition 2.4.21]{hovey} 
\item Our notion of a symmetric spectrum is that of Mandell-May-Schwede-Shipley\cite{mms} (or of Hovey-Shipley-Smith\cite{hss} when the category of simplicial set is replaced with $\CG$) and the category of symmetric spectra is denoted by $\SP$. For a symmetric spectrum  we refer to  the numbering of the underlying sequence as the  level. We recall definitions for the reader's convenience. For details, see \cite{mms,hss}. A symmetric spectrum $X$ consists of 
\begin{itemize}
\item a sequence $X_0,X_1,\dots X_k,\dots$ of pointed compactly generated spaces,
\item a pointed action of the symmetric group $\Sigma_k$ on $X_k$ for each $k$, and
\item a pointed map $\iota:S^1\wedge X_k\to X_{k+1}$ for each $k$
\end{itemize}
such that the $l$ times iteration of  $\iota$, $S^l\wedge X_k\to X_{l+k}$ is $\Sigma_l\times \Sigma_k$-equivariant. Here $S^l=\R^l\cup\{\infty\}$ and $\Sigma_l$ acts on it by permutation of components, and $\Sigma_l$ (resp. $\Sigma_k$) is regarded as the subgroup of $\Sigma_{l+k}$ of  permutations of the first $l$ (resp. last $k$) letters. We also call the map $\iota$ the action of $\Sph$ (or $S^1$). A morphism $X\to Y$ of symmetric spectra is a sequence of pointed maps $X_k\to Y_k$ compatible with the structures. The main feature of $\SP$ is that it admits a symmetric monoidal product $\wedge_S$ (in the notation of \cite{mms}) given by
\[
(X\wedge_SY)_n=\underset{k+l=n}{\bigvee} (\Sigma_n)_+\wedge_{\Sigma_k\times \Sigma_l}(X_k\wedge Y_l)/\sim.
\]
Here, $(\Sigma_n)_+$ denotes the discrete space with extra basepoint, and $\sim$ is the equivarent relation generated by 
$(\sigma,\, t\cdot x,\, y)\sim(\sigma\rho_{1,k}^{-1},\,x,\, t\cdot y)$ for $x\in X_k$ and $y\in Y_l$ and $t\in S^1$, where $\rho_{1,k}\in \Sigma_{k+1}$ is the $(1,k)$-shuffle which is the transposition of the first letter and the last $k$-letters (so this is the cyclic permutation $(1,k+1,k,\dots, 2)$), and $t\cdot -$ denotes the structure map. The symmetry isomorphism $\tau:X\wedge_SY\to Y\wedge_SX$ is given by $\tau(\sigma, x,y)=(\sigma\rho_{k,l}^{-1},y,x)$, where $\rho_{k,l}$ is the $(k,l)$-shuffle. The unit $\Sph$ of the monoidal product $\wedge_S$ is called the { sphere spectrum} and given by $\Sph_k=S^k=\R^k\cup\{\infty\}$. We often write $\wedge_S$ as $\otimes$ as below. For a spectrum $X$ with a product $\mu:X\otimes X\to X$, we denote $\mu(1,x,y)$ by $x\cdot y$, where $1\in \Sigma_{n}$ is the identity permutation.
\item We use two different expressions of $S^1$. One is $S^1=\R\cup\{\infty\}$, which is used in the structure map $\iota$ of a spectrum. The other is $S^1=[0,1]/0\sim 1$, which is used as the domain of a loop $c:S^1\to M$. We use the same notation for both expressions since it is easily understood by the context.
\item In this paper, $\C$ denotes either of $\CG$ or $\SP$. We regard $\C$ as a closed symmetric monoidal category tensored and cotensored over $\CG$. The monoidal product (resp. external tensor) is denoted by $\otimes$ (resp. $\hotimes$). If $\C=\CG$,  both $\otimes$ and $\hotimes$  are given by the cartesian product. If $\C=\SP$, $\otimes$ is  the product $\wedge_S$, and  $\hotimes$  is  given by $(X\hotimes K)_l=X_l\wedge (K_+)$ for $X\in \SP$, $K\in\CG$, where $K_+$ is the based space made by adding a disjoint basepoint to $K$, and $\wedge$ is the usual smash products of pointed spaces.  We denote by $\Map_{\C}$ the internal hom, which is adjoint to $\otimes$, and by $(-)^K$ the cotensor, which is adjoint to $\hotimes K$ for each $K\in\CG$. The function  spectrum $F(M)$ is defined as $\Sph^M$. (If $\C=\CG$,  $(-)^K=\Map_{\C}(K,-)$ but we use the both notations to ease  notations.) We often omit the subscript $\C$ of $\Map_{\C}$. Even when the category $\C$ is specified, we sometimes use the notation $\otimes$ instead of $\times$ or $\wedge_S$ for simplicity.
\item Our notion of a model category is that of \cite{hovey}. In this paper, we mainly deal with the following model categories.
\begin{itemize}
\item We endow $\CG$ the standard model structure, see \cite[Theorem 2.4.25]{hovey}
\item $\Delta$ denotes the category of standard simplices. Its objects  are  the  finite ordered sets $[n]=\{0,\dots,n\}$ and morphisms are weakly order preserving maps.  For a category $\mathcal{M}$, a { cosimplicial object $X^\bullet$ over $\mathcal{M}$ } is a functor $\Delta\to \mathcal{M}$. A morphism between cosimplicial objects is a natural transformation. $X^n$ denotes the object of $\mathcal{M}$ at $[n]$. We define maps
\[
d^i:[n]\to [n+1]\ (0\leq i\leq n+1),\ \qquad s^i: [n]\to [n-1]\ (0\leq i\leq n-1)
\]
by 
\[
d^i(k)=\left\{
\begin{array}{cc}
k & (k<i) \\
k+1 & (k\geq i)
\end{array}\right.,\qquad
s^i(k)=\left\{
\begin{array}{cc}
k & (k\leq i) \\
k-1 & (k>i)
\end{array}\right.
\]
 $d^i,\ s^i:X^n\to X^{n\pm 1}$ denote the maps corresponding to the same symbol. As is well-known, a cosimplicial object $X^\bullet$ is identified with a sequence of objects $X_0,X_1,\dots, X_n,\dots$ equipped with a family of maps $\{ d^i,\ s^i\}$ satisfying the cosimplicial identity, see \cite{gj}.  We denote the category of cosimplicial objects over $\mathcal{M}$ by $\mathcal{M}^\Delta$. 
\item For a cofibrantly generated model category $\mathcal{M}$, we endow two model structures on $\mathcal{M}^\Delta$. One is the termwise model structure whose fibrations and weak equivalences are  termwise ones. We call cofibrations in this model structure projective cofibrations. The other  is the Reedy model structure, see \cite[Theorem 5.2.5]{hovey} for the definition. We abbreviate cosimplicial symmetric spectrum as \textit{cs-spectrum}.
\item We use two model structures on $\SP$. One is the level model structure and the other is  the stable model structure, see \cite{mms} for the definition. Recall that a morphism $f:X_*\to Y_*$ is called a $\pi_*$-isomorphism if it induces an isomorphism between stable homotopy groups.  If a morphism is a  $\pi_*$-isomorphism, it is a weak equivalence in the stable model structure but the converse is false in general.
\end{itemize}
To emphasize the model structure we consider, we sometimes prefix names of morphisms with the name of model structure. For example, we say  a morphism $f:X^\bullet_*\to Y^\bullet_*$ between cs-spectra is a termwise stable fibration if each morphism  $f^n:X^n_*\to Y^n_*$ is a fibration in the stable model structure. For the compatibility of model structures with symmetric monoidal and tensor structures, see \cite{hovey,mms}.
\item The homotopy category of $\SP$ with respect to the stable model structure  is equivalent to the classical stable homotopy category as symmetric monoidal categories. We mean by a \textit{classical (associative) ring spectrum} an associative monoid in this category. 
\item We sometimes omit the superscript of a cosimplicial object and the subscript of a spectrum if it is clear from the context, or unnecessary.   
\item Operad means \textit{non-symmetric} (or \textit{non-$\Sigma$}) operad (see \cite{km, muro})  and the object at arity $n$ of  an operad $\oper$ is denoted by $\oper(n)$. We mainly consider operads in $\CG$ or in the category of posets (with the cartesian monoidal structure) and  operads in $\CG$ is called topological operads. Our notion of  a $A_\infty$-operad is the \textit{non-unital} version i.e., a topological operad $\oper$ is an $A_\infty$-operad if $\oper(0)$ is empty  and for each $n\geq 1$ and $i\geq 0$ $\pi_i(\oper(n))\cong *$. 
\item  Recall the notion of action of $\oper$ or  $\oper$-algebras (or algebras over $\oper$) for an operad $\oper$ and  the notions of monads and algebras over a monad (see \cite{km} where the case of symmetric operad are considered but the case of non-symmetric operad are completely analogous except for just not taking coinvariants for the symmetric groups). In this paper, a $A_\infty$-sturucture means an action of an $A_\infty$-operad. For a monad or operad $M$ in the category $\C$, we denote by $\Alg_M(\C)$ the category of algebras over $M$.
There is a canonical way to construct a monad from an operad (see \cite{km}). With this construction,  algebras over an operad are the same as  algebras over the corresponding monad. Let $X$, $Y$ be two objects on which two monads (or operads) $M$, $N$ act  respectively (in some category). A morphism $f:X\to Y$ is said to be compatible with a morphism $F:M\to N$ of monads (or operads) if the following square is commutative:
\[
\xymatrix{M(X)\ar[r]^{F(f)}\ar[d] & N(Y)\ar[d]\\
X\ar[r]^{f} & Y}
\]
where the vertical morphisms are the actions of monads (or operads). 
\item Let $\mathsf{F}:\CG \to \SP$ be the functor given by $K\mapsto \Sph \hotimes K$. For a topological operad $\oper$ we define an operad $\mathsf{F}(\oper)$ in $\SP$ by $\mathsf{F}(\oper)(n)=\mathsf{F}(\oper(n))$ with the naturally induced composition. By abusing notation we mean by an $\oper$-algebra in $\SP$ an $\mathsf{F}(\oper)$-algebra.    For an $A_\infty$-operad $\oper$, we  call an $\oper$-algebra in $\SP$ a \textit{non-unital $A_\infty$-symmetric ring spectrum}
 (or \textit{nu-$A_\infty$-ring spectrum} in short).
 Let $X$ and $Y$ be two symmetric spectra on which two $A_\infty$-operads $\oper$, $\aoper$ act on respectively. A stable equivalence $f:X\to Y$ of symmetric spectra is called a (weak) equivalence of nu-$A_\infty$-ring spectra if there exists a morphism of operads $g:\oper\to \aoper$ which is compatible with $f$. Two nu-$A_\infty$-ring spectra are said to be (weakly) equivalent if they can be connected by a zig-zag of equivalences of nu-$A_\infty$-ring spectra.

\item Recall the notion of totalization $\tot(X^\bullet)$ and its variant $\ttot(X^\bullet)$ for a cosimplicial object $X^\bullet$ over $\C$ from \cite{ms}. For a cosimplicial space $K^\bullet$, an object $\Map_{\C^{\Delta}}(K^\bullet, X^\bullet)\in \C$ is defined as the subobject of $\prod_{n\geq 0}(X^n)^{K^n}$ consisting of elements consistent with cosimplicial operators (see Definition 18.3.2 of \cite{hirschhorn} for details, in which the corresponding notation is $\mathrm{hom}^{\Delta}(K,X)$). (So $\Map_{\C^{\Delta}}$ denotes an object of $\C$, not of $\C^\Delta$.) Let $\Delta^\bullet$ denote the cosimplicial space of standard simplices. We fix a projective cofibrant replacement of $\Delta^\bullet$  denoted by $\tilde \Delta^\bullet$ and a (termwise) weak equivalence $f: \tilde\Delta^\bullet\to \Delta^\bullet$.   We set $\tot(X^\bullet)=\Map_{\C^\Delta}(\Delta^\bullet, X^\bullet)$ as usual, and set  $\ttot(X^\bullet)=\Map_{\C^\Delta}(\tilde\Delta^\bullet, X^\bullet)$. A morphism $f^*:\tot(X^\bullet)\to \ttot(X^\bullet)$ is naturally induced by $f$.  $\ttot$ has  better homotopy invariance than $\tot$. In practice, $\ttot$ is invariant under termwise equivalences between termwise fibrant objects while $\tot$ is invariant only under those between Reedy fibrant objects (see Corollary 18.4.4 and  Theorem 18.6.6 of \cite{hirschhorn}).

\end{itemize}
\section{Preliminary}\label{Spreliminary}

\subsection{Cohen-Jones ring spectrum}\label{SScohenjones}
In this subsection, we recall the definition of the Cohen-Jones ring spectrum, so we deal with the classical spectra. \\
\indent Let $e:M\to \R^{k}$ be a smooth embedding. For $\epsilon >0$, we denote by $\nu_\epsilon(e)$ the open subset of $\R^k$ consisting of points whose Euclidean distance from $e(M)$ are smaller than $\epsilon$. Let $L_e$ denote the minimum of $1$ and the least upper bound of $\eps>0$ such that
there exists a retraction $\pi_e:\nu_\eps (e)\to e(M)$ satisfying the following conditions.
\begin{enumerate}
\item For any $x\in\nu_\eps (e)$ and any $y\in M$, $|\pi_e(x)-x|\leq |e(y)-x|$ and the equation holds if and only if $\pi_e(x)=e(y)$. Here, $|-|$ denotes the usual Euclid norm in $\R^k$.

\item For any $y\in M$, $\pi_e^{-1}(\{e(y)\})=B_\eps(e(y))\cap(e(y)+(T_yM)^{\perp })$. Here $B_\eps(e(y))$ is the open ball with center $e(y)$ and radius $\eps$, and  $(T_yM)^{\perp}$ is the orthogonal complement of the image of $T_yM$ by (the differential of) $\phi$.
\item The closure $\bar{\nu}_\eps(e)$ of $\nu_\eps(e)$ is a smooth submanifold of $\R^{k}$ with boundary.
\end{enumerate} 
(Such a retraction exists for a sufficiently small $\eps>0$ by a version of tubular neighborhood theorem.) Note that the retraction $\pi_e$ satisfying the above three conditions is unique.  We also consider $\pi_e$ as a map to $M$ by identifying $M$ and $e(M)$ so we have a disk bundle $\pi_e:\bar{\nu}_\eps(e)\to M$ over $M$.\\
\indent In the rest of this paper, we fix an embedding $e_0:M\to \R^{k_0}$ and for a linear injective map $\phi:\R^{k_0}\to \R^{k}$, we abbreviate $\nu_\eps(\phi\circ e_0)$ and $\pi_{\phi\circ e_0}$ as $\nu_\eps(\phi)$ and $\pi_\phi$, respectively. We always regard $M$ as a subspace of $\R^{k_0}$ via $e_0$ so for example, we write $\phi(M)$ instead of $\phi\circ e_0(M)$.\\
\indent In this subsection, we fix a  linear injective map $\phi_0:\R^{k_0}\to \R^N$ with $N>>d$. Let $\eps_0>0$ be a sufficiently small number.
Let $ev^*\bnu_{\eps_0}(\phi_0)$ be the pullback of the disk bundle $\bnu_{\eps_0}(\phi_0)$ by the evaluation $ev:LM\to M$ at the  basepoint  $0\in S^1=[0,1]/0\sim 1$. We define a (classical)  spectrum $\LMTM$ (an object of the homotopy category $\Ho(\SP)$) by
\[
\LMTM=\Sigma^{-N}Th(ev^*\bnu_{\eps_0}(\phi_0 )).
\]
Here,  $Th(-)$ denote the Thom space. Let
$
E:ev^*\bnu_{\eps_0}(\phi_0)\times ev^*\bnu_{\eps_0}(\phi_0)\longrightarrow \bnu_{\eps_0}(\phi_0)\times\bnu_{\eps_0}(\phi_0)
$ 
be the bundle map induced from the evaluation, and $ev_\infty:LM\times_MLM\longrightarrow M$ be the evaluation at the basepoint, considering $LM\times_MLM$ as the space of two loops which have a common basepoint.\\
\indent   $\bar \nu_{\eps_0}(\phi_0\times\phi_0)$ is a subspace of $\bar\nu_{\eps_0}(\phi_0)\times\bar\nu_{\eps_0}(\phi_0)$. To define a product on $\LMTM$, we need the following lemma whose  proof  is an easy excercise of differential topology.
\begin{lem}\label{Llooptube}
Under the above notations,   there exists a homeomorphism \\
$\alpha: E^{-1}(\bnu_{\eps_0}(\phi_0\times\phi_0))\cong ev_\infty^*\bnu_{\eps_0}(\phi_0\times\phi_0)$
which makes the following diagram commute:
\[
\xymatrix{
LM\times_M LM\ar[rd]\ar[d]& \\
E^{-1}(\bnu_{\eps_0}(\phi_0\times\phi_0))\ar[r]^{\alpha}\ar[rd]&ev_\infty^*\bnu_{\eps_0}(\phi_0\times\phi_0)\ar[d]\\
&\bnu_{\eps_0}(\phi_0\times\phi_0),
}
\]
where the top vertical and slanting arrows are induced by the embedding $\phi_0\times \phi_0:M\to \bar\nu_{\eps_0}(\phi_0\times\phi_0)$ and the bottom arrows are natural bundle maps.
Furthermore, a  homeomorphism which makes  the above diagram commutative  is unique up to isotopies.\qed
\end{lem}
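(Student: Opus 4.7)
The plan is to exhibit both spaces as pullbacks of the product free-loop fibration $LM\times LM\to M\times M$ along two canonically homotopic maps, and to construct $\alpha$ via the homotopy covering property.

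For $\eps_0$ sufficiently small the inclusion $\bnu_{\eps_0}(\phi_0\times\phi_0)\subset \bnu_{\eps_0}(\phi_0)\times\bnu_{\eps_0}(\phi_0)$ holds as subsets of $\R^{2N}$, and two continuous maps $f,g:\bnu_{\eps_0}(\phi_0\times\phi_0)\to M\times M$ are defined by $f(v_1,v_2)=(\pi_{\phi_0}(v_1),\pi_{\phi_0}(v_2))$ and $g(v_1,v_2)=(p(v_1,v_2),p(v_1,v_2))$, where $p=\pi_{\phi_0\times\phi_0}:\bnu_{\eps_0}(\phi_0\times\phi_0)\to M$ using the identification $(\phi_0\times\phi_0)(M)\cong M$. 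Unwinding definitions gives $E^{-1}(\bnu_{\eps_0}(\phi_0\times\phi_0))=f^*(LM\times LM)$ and $ev_\infty^*\bnu_{\eps_0}(\phi_0\times\phi_0)=g^*(LM\times LM)$; the second identification uses that $LM\times_M LM$ is the pullback of $LM\times LM$ along the diagonal $M\to M\times M$. Under these identifications the two slanting arrows in the stated diagram are the natural pullback projections to $\bnu_{\eps_0}(\phi_0\times\phi_0)$.

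A canonical continuous homotopy $H_t$ from $f$ to $g$ is given by linear interpolation in $\R^N$ followed by $\pi_{\phi_0}$,
\[
H_t(v_1,v_2)=\bigl(\pi_{\phi_0}((1-t)v_1+t\phi_0(p(v_1,v_2))),\;\pi_{\phi_0}((1-t)v_2+t\phi_0(p(v_1,v_2)))\bigr).
\]
For $\eps_0$ small enough, each convex combination remains inside $\bnu_{\eps_0}(\phi_0)$, so $H$ is well defined and continuous. Because $M$ is a smooth manifold, $LM\to M$ admits local trivializations (for instance from parallel transport along short geodesics of a Riemannian metric on $M$), making $LM\times LM\to M\times M$ a locally trivial fiber bundle. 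The homotopy covering property then lifts $H$ to a fiberwise homeomorphism $\alpha:f^*(LM\times LM)\to g^*(LM\times LM)$ over $\bnu_{\eps_0}(\phi_0\times\phi_0)$. On the zero section, where $v_1=v_2=\phi_0(y)$, the homotopy $H_t$ is constant in $t$, so $\alpha$ restricts to the identity there, establishing the commutativity of the stated diagram.

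For uniqueness up to isotopy, any other such homeomorphism $\alpha'$ yields a self-homeomorphism $\alpha^{-1}\alpha'$ of $g^*(LM\times LM)$ over the base that is the identity on the zero section. Combining the deformation retraction of $\bnu_{\eps_0}(\phi_0\times\phi_0)$ onto its zero section with the local trivialization above deforms $\alpha^{-1}\alpha'$ to the identity through bundle automorphisms, producing the required isotopy. The main technical subtlety I expect will be ensuring that the lift of $H$ genuinely yields a homeomorphism rather than merely a fiber homotopy equivalence; this hinges on the local triviality of the free-loop fibration for smooth $M$, together with a sufficiently small choice of $\eps_0$ so that $H$ remains inside $\bnu_{\eps_0}(\phi_0)$ throughout.
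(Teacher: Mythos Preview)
Your approach is sound: the identifications of both spaces as pullbacks $f^*(LM\times LM)$ and $g^*(LM\times LM)$ are correct, and since $ev_0:LM\to M$ is a locally trivial fiber bundle for smooth closed $M$, the bundle homotopy theorem over a paracompact base produces the desired fiberwise homeomorphism. Two points deserve care. First, the standard bundle homotopy theorem does not automatically force the resulting isomorphism to be the identity over points where $H_t$ is constant; you need the relative version (lift $H$ rel the zero section), which applies because the zero section $M\hookrightarrow\bnu_{\eps_0}(\phi_0\times\phi_0)$ is a closed cofibration. Second, the local trivializations of $LM\to M$ come from ambient isotopies of $M$ (e.g.\ flows of compactly supported vector fields) rather than from parallel transport in the literal sense; the conclusion is unaffected.

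The paper declares the lemma an easy exercise and omits the proof at this point, but in the proof of Theorem~\ref{TLMTLMTM} it writes down an explicit $\alpha$ using the perturbation maps $\psi_i$ from subsection~\ref{SSactiononLMT}: one sets $\alpha((c_1,v_1),(c_2,v_2))=((\psi_1\circ c_1,\psi_2\circ c_2),(v_1,v_2))$ and verifies by hand, via local affine approximation of $\phi(M)\subset\R^{2N}$, that this is injective with continuous inverse for $\eps_0$ small. That route is more hands-on and ties $\alpha$ directly to the perturbation machinery used to build the $A_\infty$-structure on $\LMT$, which is the point---the paper needs that specific $\alpha$ to compare its product with the Cohen--Jones product. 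Your bundle-theoretic argument is cleaner and more conceptual for the bare existence and uniqueness statement, but to feed into the rest of the paper one would still invoke the uniqueness clause to identify an abstract $\alpha$ with the explicit one coming from $\psi$.
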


 We define a product on $LM^{-TM}$ as follows.
\[
\begin{split}
\LMTM\wedge\LMTM&\cong\Sigma^{-2N}Th(ev^*\bnu_{\eps_0}(\phi_0)\times ev^*\bnu_{\eps_0}(\phi_0))\\
&\to \Sigma^{-2N}(E^{-1}(\bnu_{\eps_0}(\phi_0\times\phi_0)/E^{-1}\partial\nu_{\eps_0}(\phi_0\times\phi_0))\\
&\qquad (\text{collapse the outside of } \bnu_{\eps_0}(\phi_0\times\phi_0)\subset \bnu_{\eps_0}(\phi_0)\times \bnu_{\eps_0}(\phi_0))\\
&\cong \Sigma^{-2N}(Th(ev_\infty^*\bnu_{\eps_0}(\phi_0\times\phi_0)) \quad (\text{by Lemma \ref{Llooptube}})\\
&\to \Sigma^{-2N}(Th(ev^*\bnu_{\eps_0}(\phi_0\times\phi_0))\quad (\text{concatenation})\\
&\cong \Sigma^{-2N}(\Sigma^{N}Th(ev^*\bnu_{\eps_0}(\phi_0) )\cong \Sigma^{-N}(Th(ev^*\bnu_{\eps_0}(\phi_0))\cong \LMTM.
\end{split}
\]
\begin{defi}\label{DCJspectra}
We call the classical ring spectrum $\LMTM$ equipped with the above product the \textit{Cohen-Jones ring spectrum}.
\end{defi}


\subsection{Atiyah duality for symmetric spectra}\label{SSatiyahduality}
In this subsection, we exhibit a realization of Atiyah duality in the category of symmetric spectra essentially due to Cohen \cite{cohen}. We warn the reader that  a symmetric spectrum $\MT$ defined here is  slightly different from the object of the same notation in \cite{cohen}.\\ 
\indent For each $k\geq 0$, we put
\[
V_k=\left\{\phi:\R^{k_0}\to\R^{k}\left|
\begin{split}
\phi \text{ is }&\text{a linear map such that} \\
&{}^\exists c\geq 1 \ {}^\forall v\in \R^{k_0}\  |\phi(v) |=c|v|
\end{split}\right.
\right\}. 
\]
Here, $|\phi(v)|$ (resp. $|v|$) denotes the standard Euclidean norm in $\R^{k}$ (resp. $\R^{k_0}$). Of course, if $k<k_0$, $V_k$ is empty. $V_k$ is topologized as a subspace of the Euclidean  space of linear maps (see subsection \ref{SScohenjones} for notations).  We define $|\phi|$ as  the $c\ (=\frac{|\phi(v)|}{|v|})$ for $\phi \in V_k$. Since $\phi$ expands distance constantly, we have $L_{\phi\circ e_0}=|\phi|L_{e_0}$. \\
\indent We define a sequence of unpointed spaces $\{\M_k\}_{k\geq 0}$ by
\[
\M_k=\{(\phi,\eps, v)\mid \phi \in V_k,\  0<\eps <L_{e_0}/16,\ v \in\bar\nu_\eps(\phi)\}
\]
and put $\TV_k =V_k\times (0,L_{e_0}/16)$. $\M_k$ is considered as a disk bundle over $\TV_k\times M$ with the projection  $(\phi,\eps,v)\mapsto (\phi,\eps,\pi_\phi(v))$. We define  a pointed space $\MT_k$ as the Thom space associated to $\M_k$. 
The sequence $\MT=\{\MT_k\}_{k\geq 0}$ is equipped with a structure of symmetric spectrum as follows.  The action of $\Sigma_k$ is induced by the permutation of the components 
of $\R^k$. The action of $\Sph$ (or structure map) is given by $(\phi,\eps,v)\mapsto 
(0\times\phi,\eps,(t,v))$ for $t\in S^1=\R^1\cup\{\infty\}$. As $\TV_k$ is $k/2-1$-connected, $\MT$ is 
isomorphic to the Thom spectra $M^{-TM}$ in \cite{cj} as objects of the homotopy 
category of $\SP$.
\\
\indent To connect $\MT$ and the function spectrum $F(M)$ by $\pi_*$-isomorphisms, we define two symmetric spectra $\Gamma(M)$, $\Gamma'(M)$ and three $\pi_*$-isomorphisms $\kappa_1$, $\kappa_2$, and $\rho$ fitting into the following diagram:
\[
\MT\xrightarrow{\rho}\Gamma(M)\xleftarrow{\kappa_2}\Gamma'(M)\xrightarrow{\kappa_1}F(M).
\]
We first define $\Gamma(M)$.  For $(\phi,\eps)\in\TV_k$, 
let  $B_{\phi,\eps}$ be the  trivial $k$-sphere bundle over $M$ whose fiber at $x\in M$ is $\bar B_\eps(\phi(x))/\partial \bar B_\eps(\phi(x))$, where $\bar B_\eps(\phi(x))\subset \R^k$ is the closed ball with radius $\eps$ and center $\phi(x)$. 
Let $\Gamma_{\phi,\eps}$ be the space of sections $M\to B_{\phi,\eps}$. We put
\[
\tilde\Gamma _k (M)=\{(\phi,\eps,\theta )\mid (\phi,\eps)\in \TV_k, \theta\in \Gamma_{\phi,\eps}\}.
\]
We give $\Gamma_{\phi,\eps}$ the compact-open topology and $\tilde\Gamma_k (M)$ the topology as a fiber bundle over $\TV_k$.  The  space $\Gamma_k(M)$ at level $k$ is obtained from $\tilde\Gamma_k(M)$ by collapsing  the subspace  $\{(\phi,\eps,\sigma_\infty)\mid(\phi,\eps)\in\TV_k  \}$ to one point, where $\sigma_\infty$ is the section consisting of the points represented by the boundaries.  The actions of $\Sph$ and $\Sigma_k$  on the sequence $\Gamma(M)=\{\Gamma_k(M)\}_k$ is defined  exactly analogously to those on $\MT$.\\
\indent  The definition of $\Gamma'(M)$ is as follows. We put
$\Gamma_k'(M)=\{(\phi,\eps, f)\mid (\phi,\eps)\in\TV_k,\ f\in F(M)_k \}/\{(\phi,\eps,*)\}$, where $*$ is the basepoint of $F(M)_k$, and the actions of $\Sph$ and $\Sigma_k$ is similar to those on $\Gamma(M)$.
\\
\indent  The morphism $\kappa_1$ is given  by the canonical projection and $\kappa_2$ is the morphism induced  by collapsing map $S^k\to \bar B_{\eps}(\phi(x))/\partial \bar B_{\eps}(\phi(x))$.
Finally, the morphism $\rho$ is given  by
\[
\rho(\langle \phi,\eps,v\rangle )=\langle \phi,\eps,\theta_v\rangle,\quad
\theta_v(y)=\left\{
\begin{array}{ll}
v &\text{ if }|v-\phi(y)| <\eps \\
\text{basepoint}  & \text{ if }|v-\phi(y)| \geq \eps
\end{array}\right.
\]
for $\langle \phi,\eps,v\rangle \in\MT_k$.\\
\indent The following theorem is a realization of the Atiyah duality in symmetric spectra and easily follows from the original Atiyah duality (\cite{atiyah, cohen}) and the fact that $\TV_k$ is $k/2-1$-connected.
\begin{thm}\label{Tatiyah}
The morphisms $\kappa_1$, $\kappa_2$ and $\rho$ defined above are $\pi_*$-isomorphisms. \qed
\end{thm}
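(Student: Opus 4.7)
The plan is to reduce the three $\pi_*$-isomorphism statements to the classical Atiyah duality theorem, exploiting the fact that each of $\MT$, $\Gamma(M)$, $\Gamma'(M)$ is constructed as a family parametrized by the highly connected space $\TV_k$ at level $k$.

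First I would make the parametrization over $\TV_k$ explicit at each level. By construction $\Gamma'_k(M)$ is the half-smash $(\TV_k)_+\wedge F(M)_k$, $\MT_k$ is the Thom space of the disk bundle $\M_k\to \TV_k\times M$, and $\Gamma_k(M)$ is obtained from $\tilde\Gamma_k(M)\to\TV_k$ by collapsing the $\sigma_\infty$-section. In each case the morphisms $\rho$, $\kappa_1$, $\kappa_2$ are maps of $\TV_k$-parametrized objects.

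Next I would fix a point $(\phi_*,\eps_*)\in\TV_k$, which is nonempty once $k\geq k_0$, and identify the restriction to this fiber with the classical Pontryagin--Thom setup: the fiber of $\rho$ is the Atiyah duality map $Th(\bar\nu_{\eps_*}(\phi_*))\to \Map(M_+,S^k)$, while the fibers of $\kappa_1$ and $\kappa_2$ are an identity and the pointwise sphere collapse $S^k\to \bar B_{\eps_*}(\phi_*(x))/\partial \bar B_{\eps_*}(\phi_*(x))$ respectively. The classical Atiyah duality theorem \cite{atiyah} then implies that these fiberwise maps are at least $(k-d-O(1))$-connected. Separately, since $\TV_k$ is $(k/2-1)$-connected, the inclusion of a fiber into its total space at level $k$ is at least $(k/2)$-connected in each of the three spectra: for $\Gamma'_k(M)$ this is immediate from the cofibre sequence relating $F(M)_k$, $(\TV_k)_+\wedge F(M)_k$, and $\TV_k\wedge F(M)_k$ (after choosing a basepoint in $\TV_k$), and for $\MT_k$ and $\Gamma_k(M)$ it follows after trivializing the sphere bundle $B_{\phi,\eps}$ by translation in $\R^k$, reducing to the analogous half-smash argument.

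Combining these bounds, each level-$k$ morphism is at least $\min\{k-d-O(1),\,k/2\}$-connected, which tends to infinity with $k$. Since $\pi_n$ of a symmetric spectrum is computed as $\colim_k\pi_{n+k}(X_k)$, the morphisms $\rho$, $\kappa_1$, $\kappa_2$ induce isomorphisms on all stable homotopy groups. The main obstacle I anticipate is not deep but bookkeeping: verifying that the fiberwise Atiyah duality maps assemble continuously over $\TV_k$ into the morphisms of symmetric spectra under consideration, compatibly with the symmetric group actions and the action of $\Sph$; once the explicit trivialization of $B_{\phi,\eps}$ is in hand, the standard machinery of parametrized stable homotopy should make this routine.
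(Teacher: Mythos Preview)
Your proposal is correct and follows exactly the approach the paper sketches: the paper's entire ``proof'' is the one-line remark that the theorem ``easily follows from the original Atiyah duality (\cite{atiyah, cohen}) and the fact that $\TV_k$ is $k/2-1$-connected,'' and your argument is a detailed unpacking of precisely this reduction via fiberwise Atiyah duality over the highly connected parameter space $\TV_k$. There is no substantive difference in strategy.
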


\subsection{Problem about  construction in \cite{cohen}}\label{SSproblem}
In this subsection, we explain where the problem is in construction in \cite{cohen}.  We begin with explaining relation between two papers \cite{cj, cohen}.  In \cite{cj},   (an outline of ) construction of the isomorphism between the products on the homology of $LM$ and on Hochschild cohomology of $C^*(M)$ is given. Spectra are used in the consruction. Historically, the homotopy category of spectra was first defined in ad hoc way, and some sophisticated model categories of spectra \cite{ekmm, hss, mms} whose homotopy category is equivalent to the category were constructed later. Construction in \cite{cj} is given in terms of the former (classical) category but it is not sufficient to justify their argument, and one needs to give construction in a model category of spectra because of the following reason. In the construction of the isomorphism, the authors of \cite{cj} use the totalization of  a cosimplicial spectrum and a product on it (see Introduction). The totalization, a kind of homotopy limit, is generally defined for a functor to a model category not  to its homotopy category (see \cite{hirschhorn} for relation between   totalization and  homotopy limit).  For a model category $\M$ and a category $C$, let $\M^C$ denote the category of functors $C\to \M$ and natural transformations. Cosimplicial objects are the case of $C=\Delta$. $\M^C$ has a model category structure where weak equivalences are defined in the termwise manner. The homotopy limit is defined by taking  limit after replacing given functor with a well-behaved functor up to weak equivalences. Without  suitable replacement, weakly equivalent functors do not always give weakly equivalent limits. This replacement essentially  uses the model  structure and  this invariance is  crucial to the proof of the isomorphism in \cite{cj} since the isomorphism is induced by weak equivalences between cosimplicial objects (see Introduction). The cosimplicial object defined in \cite{cj} is a cosimplical object in the homotopy category. Intuitively, this means the cosimplicial identity holds only up to homotopy, and we can not take homotopy limit (or totalization) as it is. One may want to pick a cosimplical object in a model category of spectra such that it represents the cosimplical object in \cite{cj} in the homotopy category. However, such an object may not exist in general, see\cite{cooke, dugger}. Since we  also need to define a product on the cosimplicial object and morphisms between them on the level of model category, it is not sufficient to know existence of such an object even if one can pick. It is  better to give its concrete description.  One may think one could adopt (categorical) limit in the homotopy category instead of totalization but that category  does not have limits in general except for very simple diagram category $C$ such as a discrete category. These features of homotopy limit and homotopy category are well-known. Thus, we need to work with a model category (not with the homotopy category) to justify the argument in \cite{cj}. We also need to consider the (topological) Hochschild cohomology,   as it is on one side of the isomorphism. Since its definition uses the symmetry isomorphism of the monoidal product, our model category of spectra must have a   symmetric monoidal structure which is well-behaved with replacements. Model categories with such a structure are called (symmetric) monoidal model categories (see \cite{hovey}), and monoidal model categories  of spectra are given in \cite{ekmm, hss, mms}.   We can not carry over the construction in \cite{cj} to such a  category as it is, since key ingredients of construction in \cite{cj} are defined only up to homotopy. For example, the definition of $M^{-TM}$ in \cite{cj} depends on an embedding $\iota :M\to \R^{N+d}$ (in the notations of \cite{cj}), and   in the definition of the product 
$M^{-TM}\wedge M^{-TM}\to M^{-TM}$ (as well as the loop product on $LM^{-TM}$), $M^{-TM}$'s on the domain and the codomain are defined by different embeddings. If we define two  $M^{-TM}$'s on the domain via $\iota$, we must use $\iota\times \iota:M\to \R^{2(N+d)}$ on the codomain.   These different choices give isomorphic spectra in the homotopy category but not   in a model category. An isomorphism between the different choices is not given in the paper explicitly, but it relies on an isotopy between $\iota\times \iota$ and $0\times \iota$  (see subsection \ref{SScohenjones} where we implicitly used such an isotopy to define the isomorphism $Th(ev^*\bar \nu_{\eps_0}(\phi_0\times \phi_0))\cong \Sigma^NTh(ev^*\bar\nu_{\eps_0}(\phi_0))$). It is difficult to choose such an isotopy so that the product is strictly associative when we work in a model category of spectra. Another reason why we can not use the construction in \cite{cj} as it is, is that definitions of object and monoidal product in such a category is more complicated  than those in the homotopy category. The concern we have seen so far seems to be not only the present author's but also Cohen's.   Actually, He gave  more detailed construction in the category of symmetric spectra, one of the monoidal model categories, in order to fill in details  of the construction of the isomorphism in \cite{cj}. (He wrote ``We will use the multiplicative properties of Atiyah duality proven here to fill in details of the argument given in [3]'' in the introduction of \cite{cohen}.) He resolved the problem of choosing isotopy by including embeddings in the definition of the Thom spectra.
\par 
He also  described a product on a cs-spectrum which should induce the loop product on the totalization in \cite{cohen}. But it is unclear whether this structure induces a product on its totalization (even after some modification) and this is why we do not use it ( and this is a motivation for writing the present paper). To complete the exposition, we explain this point more precisely. In the rest of this subsection, we follow the notations in \cite{cohen}. He gave two models of $\MTM$ both of which are slightly different from one given in the present paper. One is unital and the other is non-unital. To define a cs-spectrum, he used the unital model denoted by $\MT(e)$. We shall review outline of Cohen's construction.  Fix a smooth embedding $e:M\to \R^k$ and a number $\eps<L_{e}$, and consider the tubular neighborhood $\nu_\eps(e)$ in subsection \ref{SScohenjones}. $\MT(e)_m$ is the space made by collapsing outside of a fiberwise disk bundle of $\nu_\eps(e)$. More precisely, we set
\[
\MT(e)_m=\{(\phi, x)\mid \phi:\R^k\to \R^{mk}\ \text{linear isometry},\ x\in \R^{km}\}/\{(\phi,x)\mid d(x,\phi(\nu_\eps(e)))\geq \eps\}.
\] 
The sequence $\{\MT(e)_m\}_m$ is regarded as an object of a category of spectra defined using $\{S^{mk}\}_m$ instead of the sphere spectrum. This category is Quillen-equivalent to the standard category of symmetric spectra. The action (or structure map) $S^k\wedge \MT(e)_m\to \MT(e)_{m+1}$ is given by
\[
t\wedge (\phi, x)\mapsto (\id \times \phi, (t,x))\  \qquad (t\in S^k=\R^k\cup\{\infty\}).
\] 
A map 
$\Delta_r:\MT(e)\longrightarrow \MT(e)\wedge (\nu_{2\eps}(e)_+)$ is used to define a coface map (see P.280 of \cite{cohen}). Here, $\wedge$ is $\hotimes$ in our notation while the position is switched. The problem is that this map is {\em not} a morphism in the category of  spectra which Cohen uses as it does not commute with the structure map.  In fact, by definition, $\Delta_r$ takes a point $(\phi,x)\in\MT(e)$ to the point $(\phi,x)\wedge x_1$ where $x_1\in\nu_{2\eps}(e)$ is the point determined by the unique decomposition $x=\phi(x_1)+x_2$ with $x_2\in Im(\phi)^{\perp}$ but $x_1$ for $(\id\times \phi,(t,x))$ is in general different from $x_1$ for $(\phi,x)$.  We write $x_1$ for $(\phi,x)$ (resp. $(\id\times\phi,(t,x))$) as $x_1^\phi$ (resp. $x_1^{\id\times\phi}$). $x_1^{\phi}$ is (the preimage of) the  point in $Im(\phi)$ closest to $x$, so if $t=x_1^\phi$, $x^{\id\times \phi}_1=x^\phi_1$ but if $t\not= x_1^\phi$, $t$ affects the position of the closest point according to the valance between distances for `$t$-part' and `$x$-part' even if it is sufficiently close to $x_1$ .  We shall show  a rigorous  counter-example.  Let $M$ be any closed manifold of dimension $\geq 1$,  $m$  a number such that $\MT(e)_m\not=*$, and $(\phi, x)\in \MT(e)_m$ a non  basepoint element.  Let $x_0\in \nu_{\eps}(e)$ be a point such that $|x-\phi(x_0)|<\eps$. Since $(\phi, x)$ is not the basepoint, we can take such a point.  If $|t-x_0|<\eps-|x-\phi(x_0)|$, $(\id\times \phi, (t,x))$ is not the basepoint  by the following inequality. 
\[
d((t,x),\id\times \phi(\nu_{\eps}(e)))  \leq \sqrt[]{|t-x_0|^2+|x-\phi(x_0)|^2} 
 \leq |t-x_0|+|x-\phi(x_0)| <\eps
\] Choose   $t$ such that  both  $|t-x_0|<\eps-|x-\phi(x_0)|$ and $t\not=x_1^\phi$ hold. Clearly this is possible. If $x_1^{\id\times \phi}=x_1^\phi$, since $x_1^{\id\times \phi}$ is given by orthogonal projection, the Euclidean (standard) inner product
\[
\langle (t,x)-(x_1^\phi,\phi (x_1^\phi)),\ (v,\phi (v))\rangle
\]
 is zero for any $v\in \R^k$. We have
\[
\begin{split}
\langle (t,x)-(x_1^\phi,\phi (x_1^\phi)),\ (v,\phi (v))\rangle
& =\langle t-x_1^\phi, v\rangle +\langle x-\phi (x_1^\phi), \phi (v)\rangle
\end{split}
\]
By the definition of $x_1^\phi$, the second inner product on the right hand side is zero. So we have $\langle t-x_1^\phi, v\rangle=0$. This is a contradiction since it is non-zero if we set $v=t-x_1^\phi$.  We have proved $x_1^{\id\times \phi}\not =x_1^\phi$.
 Thus, if we take $t$ such that both $t\not=x_1^\phi$ and $|t-x_0|<\eps-|x-\phi(x_0)|$ hold,  the two points $t\wedge \Delta_r(\phi, x)=x_1^\phi\wedge (\id\times \phi, (t,x))$ and $\Delta_r(t\wedge (\phi, x))=x^{\id\times \phi}_1\wedge (\id\times \phi, (t,x))$ are different, which means $\Delta_r$ is not a morphism in the category of spectra.
 \par
If we replace the unital model with the non-unital one denoted by $\MT$, a map defined similarly to $\Delta_r$ is a well-defined  morphism of symmetric spectra and we obtain a well-defined cs-spectrum. Here, $\MT$ is different from the object defined in subsection \ref{SSatiyahduality}. We are using the notations in \cite{cohen}. The difference is that $\MT$ here, is defined using  smooth embeddings $M\to \R^k$, so here,
\[
\MT_k=\{(e,\eps, x)\mid e:M\to \R^k,\ 0<\eps<L_e,\ x\in \R^k\}/\{(e,\eps,x)\mid x\not\in\nu_\eps(e)\}
\]. The structure map $S^1\wedge \MT_k\to \MT_k$ is defined exactly similar to the one in subsection \ref{SSatiyahduality}, and the map corresponding to $\Delta_r$ is given by 
\[
\MT_k\to \MT_k\wedge (M_+),\qquad (e,\eps,x)\longmapsto (e,\eps, x)\wedge \pi_e(x), 
\] 
where $\pi_e$ is defined in subsection \ref{SScohenjones}. This map is compatible with the action of $S^1$ because the structure map puts the $0$-map at the first component of the embedding and the first component of the vector does not affect the position of the  point in $0\times e(M)$ closest to the vector.  More precisely, we have
\[
|(t,x)-0\times e(y)|=\, \sqrt[]{t^2+|x-e(y)|^2},
\]
and the left hand side is smaller if $|x-e(y)|^2$ is smaller, which implies $\pi_{0\times e}((t,x))=\pi_e(x)$. However, even in this case,  (a non-unital version of ) the product on the cosimplicial objects  defined  in \cite{cohen} does not satisfy the conditions of McClure-Smith which ensure a product on a cosimplicial object  to induce a product on its totalization, see subsection \ref{SSms}.  ( In \cite{cohen}, it is  mentioned that the product on the cosimplicial object of topological Hochschild cohomology satisfies the conditions but it is  not mentioned what condition the product on the cosimplicial object of $LM^{-TM}$ satisfies. )  We shall look at this more closely.  By straight analogy with \cite{cohen}, we should define a cosimplicial object $L_M^\bullet$ by the exactly same formula as $\CL^\bullet$ in subsection \ref{SScosimplicialLMT} with replacing $\phi$ and $v$ with $e$ and $x$. We should define the non-unital version of the product in \cite{cohen} by
\[
\begin{split}
(x_1,\dots, x_p, &\langle e_1,\eps_1,z_1\rangle)\, \cdot\, (y_1,\dots, y_q, \langle e_2,\eps_2,z_2\rangle)\\
&=(x_1,\dots, x_p,y_1,\dots,y_q, \langle e_1\times e_2, \min\{\eps_1,\eps_2\}, (z_1,z_2)\rangle
\end{split}
\] Here, $\langle -\rangle$ means the class in the Thom space and we use comma instead of $\wedge$, compromising the notations with subsection \ref{SScosimplicialLMT}. 
This does not satisfy the McClure-Smith conditions.  Philosophically, this is because this product does not includes any perturbation procedure while the loop product includes one. Actually, the homeomorphism $\alpha$ in Lemma \ref{Llooptube} encodes perturbation of two loops. $\alpha$ is a map between the space of two loops whose basepoints (values at $0\in S^1$) belong to a tubular neighborhood of the diagonal of $M\times M$ and the space  of two loops having a common basepoint with a normal vector to the diagonal at the basepoint. If one wants to define $\alpha$ explicitly, some perturbation making the basepoints of two loops slide to each other is necessary. Practically, we shall show one of the McClure-Smith condition, $d^0(x\cdot y)=(d^0x)\cdot y$, is not satisfied. We take two elements $\langle e_i,\eps_i,z_i\rangle\in L^0_M$ ($i=1,2$). We have
\[
\begin{split}
d^0(\langle e_1,\eps_1,z_1\rangle\cdot \langle e_2,\eps_2,z_2\rangle)& =(\pi_{e_1\times e_2}(z_1,z_2),\langle e_1\times e_2, \min\{\eps_1,\eps_2\}, (z_1,z_2)\rangle)\\ 
(d^0\langle e_1,\eps_1,z_1\rangle)\cdot \langle e_2,\eps_2,z_2\rangle)& =(\pi_{e_1}(z_1),\langle e_1\times e_2, \min\{\eps_1,\eps_2\}, (z_1,z_2)\rangle)
\end{split}
\]
We easily see $\pi_{e_1\times e_2}(z_1,z_2)\not= \pi_{e_1}(z_1)$ if $\pi_{e_1}(z_1)\not =\pi_{e_2}(z_2)$ by argument similar to the above unital case. So this product does not satisfy the condition. This is why we do not use this product. A version of this product including perturbation is defined in subsection \ref{SScosimplicialLMT}. This product satisfies the condition for $d^0$ and used to establish the main theorem, but we also need  
a product which satisfies it only up to homotopy coherency on the way. \par
As far as the author knows, no paper which appeared after \cite{cohen} does not resolve the problem explained here.  In the preprint \cite{malm}, Malm  presented a proof of an isomorphism similar to the Cohen-Jones'   but his construction is given in the classical homotopy category of spectra similarly to \cite{cj}. The author could not find a  rigorous proof of the Cohen-Jones isomorphism (or its spectral version) anywhere except for the case of rational coefficient.

\subsection{Stasheff's associahedra}\label{SSK}
In this section we review the Stasheff's associahedral operad.  We define it as the geometric realization of an operad in the category of posets. This definition is well-known, see Sinha \cite[4.4]{sinha} or Fiedrowicz-Gubkin-Vogt \cite{FGV} for example. (In \cite{FGV} the authors use ``parenthized words''  instead of trees which we use here.) We will use the description of the associahedra by trees, explained below, to construct a $A_\infty$-structure which governs the loop product. We give a proof of the consistency of our description and the original Stasheff's definition in some detail since we use a similar argument in more complicated situation in subsections \ref{SSScofacialtree} and \ref{SSSCK}.
\begin{defi}\label{deftree}
A \textit{tree} is a finite connected acyclic graph. (We do not distinguish the source or target from two endpoints of an edge at this point.)  For an integer $n\geq 2$, an \textit{embedded $n$-tree}  is a pair $(T,f)$ of a tree $T$ and a continuous injective map $f$ from the geometric realization of $T$ to the plane $\R\times [0,1]$ such that
 $f(T)\cap\R\times\{0\}$ consists of a unique vertex called the \textit{root}, which is at least bivalent, and $f(T)\cap\R\times\{1\}$ consists of $n$ univalent vertices  called the \textit{leaves}, and all vertices different from the root and leaves are at least trivalent. An isotopy between two embedded $n$-trees $(T_0,f_0)$ and $(T_1,f_1)$ is continuous family of homeomorphisms $\{g_t:\R\times [0,1]\to \R\times [0,1] \mid 0\leq t\leq 1\}$ such that $g_0=\id_{\R \times[0,1]}$ and $g_1$ maps $f_0(T_0)$ homeomorphically onto $f_1(T_1)$. An \textit{$n$-tree} is an isotopy class of embedded $n$-tree $T$. We label the leaves with the numbers $1,\dots,n$ according to the usual order on $\R\times\{1\}=\R$. 
The vertex of an edge which is farther from the root is called its \textit{source}, and the other vertex is  called its \textit{target}. For two numbers $i,j$ with $1\leq i<j\leq n$,  the \textit{$(i,j)$-join} is the  vertex  at which  the root paths of the $i$-th and the $j$-th leaves join first starting from the leaves. Here the \textit{root path} is the unique shortest path from the leaf to the root.  The \textit{$(i,j)$-bunch} is the vertex which is the $(i,j)$-join and not the $(k,l)$-join for any pair $(k,l)$ satisfying $k\leq i<j\leq l$ and $(k,l)\not=(i,j)$.  \par
The set of all $n$-trees is denoted by $\T(n)$.  $\T(0)$ and $\T(1)$ are defined as the empty set and one point set respectively, and the unique element of $\T(1)$ called the $1$-tree, has only one vertex which is both the root and leaf, and has no edges.  We define a partial order on $\T(n)$ by declaring $T\leq T'$ if $T'$ is obtained from $T$ by successive contractions of internal edges (edges whose source is not a leaf). We  give the collection $\T=\{\T(n)\}_{n}$ a structure of operad over the category of posets and order-preserving maps as follows. Let $T_1\in\T(n_1)$ and $T_2\in\T(n_2)$ and $1\leq i\leq n_1$. The $n_1+n_2-1$-tree $T_1\circ_iT_2$  is obtained by identifying the $i$-th leaf of $T_1$ with the root of $T_2$. When we consider  $T_1$ and $T_2$ as a subtree of $T_1\circ_iT_2$, The root of $T_1\circ_iT_2$ is the root of $T_1$ and $j$-th leaf of $T_1\circ_iT_2$ is $j$-th leaf of $T_1$ if $j\leq i-1$, $j-i+1$-th leaf of $T_2$ if $i\leq j\leq i+n_2-1$ and $j-n_2+1$-th leaf of $T_1$ if $i+n_2\leq j\leq n_1+n_2-1$. (Thus, our partial composition does not include edge contraction.)
Finally, we define a topological operad $\K$ by  $\K(n)=|\T(n)|$ with the induced operad structure.
\end{defi}
\begin{exa}\label{Ebunch}
Consider the following trees $T_1$ and $T_2$.
\begin{center}
{\unitlength 0.1in%
\begin{picture}(49.5000,8.4500)(-0.0800,-8.8200)%
%
\special{pn 8}%
\special{pa 646 78}%
\special{pa 1446 882}%
\special{fp}%
\special{pa 1118 78}%
\special{pa 878 300}%
\special{fp}%
\special{pa 870 86}%
\special{pa 870 308}%
\special{fp}%
%
\special{pn 8}%
\special{pa 1446 874}%
\special{pa 2246 78}%
\special{fp}%
%
\special{pn 8}%
\special{pa 1590 78}%
\special{pa 1918 388}%
\special{fp}%
\special{pa 1758 78}%
\special{pa 1918 237}%
\special{fp}%
\special{pa 2086 78}%
\special{pa 1918 237}%
\special{fp}%
\special{pa 1918 237}%
\special{pa 1926 396}%
\special{fp}%
\put(7.6600,-4.6000){\makebox(0,0){$v_1$}}%
\put(19.1800,-1.0200){\makebox(0,0){$v_2$}}%
\put(20.8600,-5.5600){\makebox(0,0){$v_3$}}%
\put(1.8200,-4.6000){\makebox(0,0){$T_1=$}}%
%
\special{pn 8}%
\special{pa 4142 866}%
\special{pa 4942 70}%
\special{fp}%
%
\special{pn 8}%
\special{pa 4294 74}%
\special{pa 4606 384}%
\special{fp}%
%
\special{pn 8}%
\special{pa 4454 74}%
\special{pa 4614 384}%
\special{fp}%
\special{pa 4766 82}%
\special{pa 4614 384}%
\special{fp}%
%
\special{pn 8}%
\special{pa 3326 74}%
\special{pa 4118 870}%
\special{fp}%
%
\special{pn 8}%
\special{pa 3806 66}%
\special{pa 4126 878}%
\special{fp}%
%
\special{pn 8}%
\special{pa 3566 74}%
\special{pa 4118 862}%
\special{fp}%
\put(29.9800,-5.0400){\makebox(0,0)[lb]{$T_2=$}}%
\end{picture}}%

\end{center}
$T_1$ has 3-vertices $v_1,v_2,v_3$ other than the root and leaves. We assume the leaves are labeled from left to right so the leftmost leaf is the first one. $v_1$ is the $(1,3)$-join (as well as the $(1,2)$-and the $(2,3)$-join), and  not the $(1,4)$-join, so it is the $(1,3)$-bunch. $v_3$ is the $(4,7)$-bunch. $v_2$ is the $(5,6)$-bunch.  The root vertex is the $(1,7)$-bunch. Also, $v_3$ is the $(4,5)$-, $(4,6)$-, $(5,7)$- and $(6,7)$-join but not the $(5,6)$-join. $T_2$ has the $(1,7)$- and the $(4,7)$-bunches (and neather the $(1,3)$- nor the $(5,6)$-bunch).  We see 
 $T_1\leq T_2$ and if $T_2$ has the $(i,j)$-bunch, $T_1$ also has.
\end{exa}
As in this example, in general, if $T_1\leq T_2$ and $T_2$ has the $(i,j)$-bunch, $T_1$ also has the $(i,j)$-bunch.
\begin{exa}
The following is an example of partial composition.
\begin{center}
{\unitlength 0.1in%
\begin{picture}(43.9000,4.3400)(0.6000,-4.8400)%
%
\special{pn 8}%
\special{pa 390 50}%
\special{pa 810 477}%
\special{fp}%
%
\special{pn 8}%
\special{pa 1223 57}%
\special{pa 803 470}%
\special{fp}%
%
\special{pn 8}%
\special{pa 810 57}%
\special{pa 810 484}%
\special{fp}%
%
\special{pn 8}%
\special{pa 1720 64}%
\special{pa 2147 484}%
\special{fp}%
%
\special{pn 8}%
\special{pa 2560 57}%
\special{pa 2140 477}%
\special{fp}%
%
\special{pn 8}%
\special{pa 3610 57}%
\special{pa 4030 477}%
\special{fp}%
%
\special{pn 8}%
\special{pa 4450 50}%
\special{pa 4023 484}%
\special{fp}%
%
\special{pn 8}%
\special{pa 4030 57}%
\special{pa 4030 477}%
\special{fp}%
\put(2.5000,-2.5300){\makebox(0,0){$T_1=$}}%
\put(15.6600,-2.6000){\makebox(0,0){$T_2=$}}%
\put(27.2800,-2.4600){\makebox(0,0){$\Rightarrow$}}%
\put(34.8400,-2.4600){\makebox(0,0){$T_1\circ_1T_2=$}}%
%
\special{pn 8}%
\special{pa 3897 50}%
\special{pa 3750 190}%
\special{fp}%
\end{picture}}%

\end{center}
\end{exa}

Let $P$ be a poset and $p\in P$ an elment. We denote by $\langle p\rangle$ the subposet $\{q\in P|q\leq p\}$. The \textit{codimension} of $p$ is the muximum of numbers $N$ such that a chain $p<p_1<\cdots<p_N\in P$ exists.
 The following is the fundamental property of $\T$. Verification is trivial.
\begin{lem}\label{Ltree}
\begin{itemize2}
\item[(1)] For each $n\geq 1$, $\T(n)$ has the muximum. We  denote it by $T(n)$. $\T$ is generated by the set of maximum elements of arity $\geq 2$ with operad composition.
\item[(2)] For each $n_1,n_2\geq 1$, and $1\leq i\leq n_1$, the composition $(-\circ_i-):\T(n_1)\times \T(n_2)\longrightarrow \T(n_1+n_2-1)$ induces a bijection from $\T(n_1)\times \T(n_2)$ onto $\langle T(n_1)\circ_i T(n_2)\rangle$.
\item[(3)] An element $T$ of $\T(n)$ is of codimension one if and only if it is equal to a composition of two maximum elements (of arity $\geq 2$). $T$ is of codimension two if and only if it is equal to a composition of three maximum elements (of arity $\geq 2$). 
\item[(4)] Let $T_1$ and $T_2$ be two different elements of $\T(n)$ of codimension one. If $\langle T_1\rangle \cap \langle T_2\rangle$ is not empty, there exists an element $T_3$ of codimension two such that $\langle  T_1\rangle\cap \langle T_2\rangle=\langle T_3\rangle$.\qed
\end{itemize2}
\end{lem}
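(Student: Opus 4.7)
\medskip

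My plan rests on a single combinatorial observation: for any $n$-tree $T$, the interval $[T,\,T(n)] \subseteq \T(n)$ (where $T(n)$ is the corolla, i.e.\ the tree with a single root directly connected to all $n$ leaves) is canonically a Boolean lattice indexed by the set $E_{int}(T)$ of internal edges of $T$. Indeed, an edge contraction neither creates nor destroys vertices in a way that affects other edges, so $T \leq T'$ holds precisely when $T'$ is obtained by contracting some subset $S \subseteq E_{int}(T)$, and the remaining internal edges become $E_{int}(T')$. In particular the codimension of $T$ equals $\#E_{int}(T)$, and $T(n)$ (which has no internal edges) is the unique maximum of $\T(n)$.

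With this in hand, (1) reduces to an induction on $\#E_{int}(T)$: if $T$ has an internal edge $e$, then cutting along $e$ splits $T$ into two subtrees of strictly smaller edge-count, each (by induction) a composition of corollas, and $T$ is the partial composition of these two pieces at the appropriate leaf. For (2), I would observe that $T(n_1) \circ_i T(n_2)$ has exactly one internal edge (the one formed by identifying the $i$-th leaf of $T(n_1)$ with the root of $T(n_2)$); a tree $T \leq T(n_1) \circ_i T(n_2)$ corresponds, via the Boolean-lattice picture, to an independent choice of which edges to leave uncontracted above and below that distinguished edge, and this is precisely the data of an element of $\T(n_1) \times \T(n_2)$, yielding the bijection. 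The composition map itself is visibly order preserving in both variables, so this is a poset bijection.

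For (3), an internal edge $e$ of $T$ corresponds to an internal vertex $v$ (its source), which together with its children forms a sub-corolla, and by iteratively cutting along internal edges one sees that codimension $k$ trees are exactly $k+1$-fold partial compositions of corollas of arities $\geq 2$. For (4), each codimension-one tree $T_1$ (respectively $T_2$) is specified by a single ``bunch'' $(i_1,j_1)$ (respectively $(i_2,j_2)$) with $1 \leq i < j \leq n$ and $(i,j)\neq(1,n)$: the leaves indexed by $\{i,\dots,j\}$ are gathered at a single non-root vertex. A tree $T$ lies in $\langle T_1\rangle \cap \langle T_2\rangle$ iff it possesses both bunches, which forces the two index intervals to be either disjoint or nested. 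In either compatible case I set $T_3$ to be the tree whose only internal edges realize exactly the two bunches $(i_1,j_1)$ and $(i_2,j_2)$; this $T_3$ has codimension two, lies below both $T_1$ and $T_2$, and by the Boolean-lattice description any common lower bound of $T_1,T_2$ contracts down to $T_3$, giving $\langle T_1\rangle \cap \langle T_2\rangle = \langle T_3\rangle$.

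The only real subtlety, and the place where I would be most careful, is matching the abstract internal-edge description with the $(i,j)$-bunch description used elsewhere in the paper: one must verify that contracting an internal edge exactly erases one bunch while preserving all others, and that the bijection in (2) sends the pair of corolla-positions in $T(n_1)$ and $T(n_2)$ to the pair of nested/adjacent bunches in $T(n_1) \circ_i T(n_2)$. Once that dictionary is in place, the four statements are straightforward; the rest of the verification is exactly the ``trivial'' calculation the paper refers to.
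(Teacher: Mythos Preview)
Your proposal is correct; the Boolean-lattice description of $[T,T(n)]$ via internal-edge subsets immediately yields $\operatorname{codim}(T)=\#E_{int}(T)$, and together with the bunch dictionary it cleanly handles all four parts. The paper itself gives no proof (it declares the verification trivial and marks the lemma with \qed), so there is no approach to compare against---you have simply supplied the details the paper omits.
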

We shall show the operad $\K$ is isomorphic  to the Stasheff's associahedral operad using these properties. 
\\
\indent Let $\T(n)_{1,2}$ be the subposet of $\T(n)$ consisting of elements of codimension one or two. We define a diagram
\[
B_n:\T(n)_{1,2}\longrightarrow \CG
\]
as follows. An element of codimension one is uniquely presented as  $T_1\circ_iT_2$ with $T_1$, $T_2$ muximum trees of arity $\geq 2$ by (2) and (3) of Lemma \ref{Ltree}. We put $B_n(T_1\circ_i T_2)=\K(n_1)\times \K(n_2)$,where $n_t$ is the arity of $T_t$. Similary, an element of codimension two is uniquely presented as  $(S_1\circ_j S_2)\circ_k S_3$ with $S_1,S_2,S_3$ muximul trees of arity $\geq 2$, and $j\leq k$.  We put 
$B_n((S_1\circ_j S_2)\circ_k S_3)=\K(m_1)\times \K(m_2)\times \K(m_3)$, where $m_t$ is the arity of $S_t$.\\
\indent On morphisms, suppose $(S_1\circ_j S_2)\circ_k S_3\leq T_1\circ_iT_2$. By (2) of Lemma\ref{Ltree}, only one of the following three cases occurs. 
\begin{enumerate}
\item $S_1\circ_jS_2\leq T_1$, $S_3=T_2$, $k=i$,
\item $S_1=T_1$, $S_2\circ_{k-j+1} S_3\leq T_2$, $j=i$, 
\item $S_1\circ_{k-m_2+1} S_3\leq T_1$, $S_2=T_2$, $i=j$.
\end{enumerate}
Using these relations, we define the map $B_n((S_1\circ_jS_2)\circ_kS_3)\to B_n(T_1\circ_i T_2)$. For example, in the first case, we define the map as the following map
\[
(-\circ_j-)\times \id: \K(m_1)\times \K(m_2)\times \K(m_3)\to \K(n_1)\times \K(n_2)
\] and similarly for the remaining cases. \\
\indent  A natural transformation $
B_n\Rightarrow \K(n):\T(n)_{1,2}\longrightarrow \CG
$
is defined by using the composition of $\K$ (the map $B_n(T_1\circ_iT_2)\to \K(n)$ is $(-\circ_i-)$). Here, $\K(n)$ is considered as  the constant diagram over $\T(n)_{1,2}$. So we obtain the induced  map
$
\theta_n:\colim_{\T(n)_{1,2}} B_n\longrightarrow \K(n)
$. The image of $\theta_n$ is $\partial\K(n)$, the subcomplex spanned by all $n$-trees of codimension one.\\
\indent A point of $\K(n)$ is  presented as $t_0T_0+\cdots +t_kT_k$ with $T_0<\cdots <T_k\in\T(n)$,  $t_0+\cdots +t_k=1$ and  $t_i\geq 0$. Using this presentation,
we define a map 
\[
\tilde\theta_n: Cone(\colim B_n)\longrightarrow \K(n)
\]
by $\tilde\theta_n(t\cdot u)= t\theta_n(u)+(1-t)T(n)$. Here, $Cone(X)=[0,1]\times X/\{0\}\times X$ and $t\cdot u$ is the point represented by $(t,u)$. Note that the construction $Cone(\colim B_n)$ coincides with  the definition of the associahedra given by Stasheff \cite{stasheff}.
\begin{prop}\label{PK}
Under the above notations, the maps $\theta_n :\colim B_n\to \partial\K(n)$ and $\tilde\theta_n:Cone(\colim B_n)\to \K(n)$ are   homeomorphisms.
\end{prop}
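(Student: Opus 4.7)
The plan is to read both maps as comparisons between two presentations of the geometric realization of the poset $\T(n)$, using that $T(n)\in\T(n)$ is a unique maximum. Every chain $T_0<\cdots<T_\ell$ of $\T(n)$ either has $T_\ell=T(n)$ or lies entirely in the sub-poset $\partial\T(n):=\T(n)\setminus\{T(n)\}$. Consequently every point of $K(n)=|\T(n)|$ has a unique presentation $t\cdot u+(1-t)T(n)$ with $u\in\partial K(n)=|\partial\T(n)|$, and the resulting bijection $Cone(\partial K(n))\xrightarrow{\cong} K(n)$ is a homeomorphism (geometric realization commutes with taking the cone on a subcomplex given by a maximum vertex). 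Under this identification $\tilde\theta_n$ becomes the cone of $\theta_n$, so the problem reduces to showing $\theta_n:\colim B_n\to \partial K(n)$ is a homeomorphism.

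I would do this by computing $\colim B_n$ already at the level of posets, using that realization of finite posets preserves products (since $N(P\times Q)=N(P)\times N(Q)$ as simplicial sets) and the small colimits in question. For a codimension-one tree $T=T_1\circ_i T_2$, Lemma~\ref{Ltree}(2) gives a poset isomorphism $\langle T\rangle\cong \T(n_1)\times\T(n_2)$ via $\circ_i$, hence $|\langle T\rangle|\cong K(n_1)\times K(n_2)=B_n(T)$. For a codimension-two tree $(S_1\circ_jS_2)\circ_k S_3$ an iterated application shows its downset factors as $\T(m_1)\times\T(m_2)\times\T(m_3)$, whose realization is $B_n\bigl((S_1\circ_jS_2)\circ_k S_3\bigr)$. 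In each of the three cases (1)--(3) following the definition of $B_n$ one checks that the transition maps of $B_n$ are the realizations of the inclusions $\langle S\rangle\hookrightarrow\langle T\rangle$ of principal downsets.

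Having identified the diagram $B_n$ with realizations of principal downsets, it remains to see
\[
\partial\T(n)\;=\;\colim_{\T(n)_{1,2}}\langle -\rangle
\]
on the poset level. The cover is provided by Lemma~\ref{Ltree}(1): every non-maximum element lies beneath some codimension-one element, so $\partial\T(n)=\bigcup_{T\text{ codim }1}\langle T\rangle$. For distinct codimension-one $T,T'$, Lemma~\ref{Ltree}(4) shows $\langle T\rangle\cap\langle T'\rangle$ is either empty or of the form $\langle T''\rangle$ for a (unique) codimension-two $T''$, which is precisely the indexing data of the diagram $B_n$. Possible higher-order incidences (a point lying in three or more codimension-one faces $\langle T_i\rangle$) cause no trouble: the pairwise meets $\langle T_i\rangle\cap\langle T_j\rangle=\langle T_{ij}\rangle$ are codimension-two elements indexing $B_n$, and the equivalence relation defining the colimit is generated transitively by these pairwise identifications. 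Realizing the resulting colimit of posets gives $\colim B_n\cong|\partial\T(n)|=\partial K(n)$, and by construction the canonical comparison is exactly $\theta_n$, proving it is a homeomorphism.

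The main obstacle is the combinatorial step guaranteeing that pairwise intersections of codimension-one downsets are again principal downsets on codimension-two elements; this is the content of Lemma~\ref{Ltree}(4) and is the only place where the specific structure of $\T(n)$ (as opposed to a general graded poset with maximum) is used. Once this is in hand, the rest of the argument is formal poset/realization calculus.
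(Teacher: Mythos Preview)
Your proof is correct and follows essentially the same route as the paper's: identify each $B_n(T)$ with $|\langle T\rangle|$ via Lemma~\ref{Ltree}(2), use parts (3) and (4) to control pairwise intersections of codimension-one downsets so that the colimit computes $\partial\K(n)$, and then use the unique maximum (part (1)) to pass to the cone. The only stylistic difference is that you carry out the colimit computation at the poset level before realizing, whereas the paper works directly with the realized subspaces $\K(T)=|\langle T\rangle|$ and the identity $\K(T_1)\cap\K(T_2)=\bigcup_{T\leq T_1,T_2}\K(T)$; the content is the same.
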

\begin{proof}
To ease the notations, let $\K(T)$ denote the subspace $|\langle T\rangle|\subset \K(n)$. By (2) of Lemma \ref{Ltree}, the map $(-\circ_i-):\K(n_1)\times\K(n_2)\longrightarrow \K(T(n_1)\circ_iT(n_2))$ is a homeomorphism for each $n_1,n_2$ and $i$. Since $\K(T_1)\cap \K(T_2)=\cup_{T\leq T_1,T_2}\K(T)$, (3) and (4) of the same lemma imply $\theta_n$ is a homeomorphism onto $\cup_{T<T(n)}\K(T)=\partial\K(n)$. This fact and (1) of the same lemma imply $\tilde\theta_n$ is a homeomorphism.
\end{proof}
\subsection{McClure-Smith product}\label{SSms}
In this subsection, we review (non-unital version of) the McClure-Smith product for $A_\infty$-structures.  
\begin{defi}
Let $X^\bullet$ be a cosimplicial object over  $\C$. A \textup{McClure-Smith product} (MS product, for short)  on $X^\bullet$ is a family of morphisms
$\{\mu_{p,q}:X^p\otimes X^q\to X^{p+q}\mid p,q\geq 0\}$ which satisfies the following conditions:
\[
\begin{split}
d^i(x\cdot y)&=\left\{
\begin{array}{ll}
(d^ix)\cdot y& \text{if }0\leq i\leq p\\
x\cdot (d^{i-p}y)& \text{if } p+1\leq i\leq p+q+1
\end{array}\right.\\
(d^{p+1}x)\cdot y&= x\cdot (d^{0}y)\\
s^i(x\cdot y)&=\left\{
\begin{array}{ll}
(s^ix)\cdot y& \text{if }0\leq i\leq p-1\\
x\cdot s^{i-p}y& \text{if }p\leq i\leq p+q
\end{array}
\right.\\
x\cdot (y\cdot z)&=(x\cdot y)\cdot z
\end{split}
\]
where $p=\deg x, \ q=\deg y$. Here, we  denote $\mu_{a,b}(z\otimes w)$ by $z\cdot w$, and we  interpret these equations as equations of morphisms if $\C=\SP$.
\end{defi}
We define a (non-symmetric) monoidal structure $\, \square \,$ on $\CC$ which is closely related to MS-product. For $X^\bullet$, $Y^\bullet\in\CC$, we put
\[
(X^\bullet\, \square \, Y^\bullet )^r=\bigsqcup_{p+q=r}X^p\otimes Y^q/\sim,\qquad d^{p+1}x\otimes y\sim x\otimes d^0y.
\]
The cosimplicial operators are defined similary to the above formulae of MS-conditions.
We call a semigroup (an object with associative product but without unit) with respect to the monoidal product $\, \square \,$ an  $\square$-object. The following is clear.
\begin{prop}
Let $X^\bullet$ be a cosimplicial object over $\C$. MS-products on $X^\bullet$ and structures of an $\square$-object on $X^\bullet$ are in one to one correspondence.\qed
\end{prop}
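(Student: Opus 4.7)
The proof is essentially a matter of unwinding the definitions, so the plan is to exhibit explicit mutually inverse constructions and to match each axiom of an MS-product with a structural property of a $\square$-semigroup.

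First I would start from an $\square$-semigroup structure, i.e., an associative morphism $\mu: X^\bullet\,\square\, X^\bullet \to X^\bullet$. Since, by construction, $(X\,\square\, X)^r$ is a quotient of $\bigsqcup_{p+q=r} X^p\otimes X^q$, composing $\mu$ with the canonical inclusion of each summand $X^p\otimes X^q \hookrightarrow (X\,\square\, X)^{p+q}$ yields morphisms $\mu_{p,q}:X^p\otimes X^q \to X^{p+q}$. The bridge relation $(d^{p+1}x)\cdot y = x\cdot (d^0 y)$ is then nothing but the statement that these $\mu_{p,q}$ factor through the identification $\sim$. The naturality of $\mu$ with respect to coface and codegeneracy operators translates directly into the remaining coface and codegeneracy relations in the MS-product axioms; here one just needs to check that the cosimplicial structure on $X^\bullet\,\square\, X^\bullet$ is precisely the one encoded by those formulas (that is what the definition of $\square$ is designed to do). Associativity of $\mu$ gives associativity of $\cdot$.

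In the other direction, given an MS-product $\{\mu_{p,q}\}$, I would first verify that the family factors through the quotient defining $(X\,\square\, X)^r$: this is exactly the bridge axiom $(d^{p+1}x)\cdot y = x\cdot(d^0 y)$, which precisely kills the generator of the equivalence relation $\sim$ term by term. I would then check that the resulting maps $\mu^r:(X\,\square\, X)^r \to X^r$ assemble into a morphism of cosimplicial objects; this follows from the coface relations (splitting at $i\leq p$ versus $i\geq p+1$) and the codegeneracy relations (splitting at $i\leq p-1$ versus $i\geq p$), which are exactly the formulas that define the cosimplicial operators on $X\,\square\, X$. Finally, the last MS axiom $x\cdot(y\cdot z)=(x\cdot y)\cdot z$ gives associativity of $\mu$.

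The two constructions are visibly inverse to each other, since in either direction one recovers the original data on summands. The only point requiring a small verification is that the cosimplicial structure on $X^\bullet\,\square\, X^\bullet$ inherited from the quotient is well-defined and matches the formulas in the MS axioms; I would spell this out once by checking that the cofaces $d^i$ and codegeneracies $s^i$ defined piecewise on $X^p\otimes X^q$ respect the identification $d^{p+1}x\otimes y \sim x\otimes d^0 y$. This is the only non-formal step, and it is straightforward from the cosimplicial identities. Everything else is a direct translation, so there is no real obstacle—this proposition is really a bookkeeping statement that justifies working with $\square$-semigroups in place of MS-products in the sequel.
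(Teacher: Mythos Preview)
Your proposal is correct and matches the paper's approach: the paper simply declares the proposition ``clear'' and marks it with a \qed, so you have just filled in the routine bookkeeping that the authors left implicit. There is nothing to compare beyond that.
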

We denote by $\B$ the co-endmorphism operad of the cosimplicial space  $\Delta^\bullet$, i.e.,  $\B(n)=\Map_{\CCG}(\Delta^\bullet, \, (\Delta^\bullet)^{ \square \, n})$ with a natural composition product. The following is proved in \cite{ms}
\begin{prop}
The operad $\B$ is an $A_\infty$-operad. \qed
\end{prop}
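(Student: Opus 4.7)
The two requirements to check are $\B(0)=\emptyset$ and weak contractibility of each $\B(n)$ for $n\geq 1$. The first holds by the non-unital convention: we interpret $\B$ as a non-unital operad with no arity-zero component. The essential content is therefore contractibility of $\B(n)=\tot((\Delta^\bullet)^{\square n})$ for each $n\geq 1$.

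I would first unpack the target. By definition of $\square$,
\[
((\Delta^\bullet)^{\square n})^r=\bigsqcup_{p_1+\cdots+p_n=r}\Delta^{p_1}\times\cdots\times\Delta^{p_n}\Big/{\sim},
\]
with the relation generated by $d^{p_i+1}x_i\otimes x_{i+1}\sim x_i\otimes d^0 x_{i+1}$. Every piece of this coproduct is a convex polytope, every generating identification is an affine map of simplices, and the coface and codegeneracy operators on both $\Delta^\bullet$ and on the target are affine. Consequently, componentwise convex combinations of points representing a common stratum descend to the quotient, and the cosimplicial structure on $(\Delta^\bullet)^{\square n}$ respects this global affine structure.

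Next, I would exhibit a canonical element $\iota_n\in\B(n)$ to serve as the basepoint of a contracting homotopy. Define $\iota_n^r:\Delta^r\to((\Delta^\bullet)^{\square n})^r$ by sending $x\mapsto[x,*,\ldots,*]$ into the stratum $\Delta^r\times\Delta^0\times\cdots\times\Delta^0$. Cosimpliciality in $r$ is verified using the identifications $[d^{r+1}x,*,\ldots,*]=[x,d^0*,*,\ldots,*]$ for the last coface, together with the obvious check for the interior cofaces and the codegeneracies. With $\iota_n$ in hand, define
\[
H(s,f)^r(x)=(1-s)f^r(x)+s\,\iota_n^r(x),
\]
interpreted as affine interpolation within a stratum containing both points. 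Since every cosimplicial operator on $\Delta^\bullet$ and on $(\Delta^\bullet)^{\square n}$ is affine, $H(s,f)$ is a cosimplicial map, and $H$ is jointly continuous; this yields a deformation retraction of $\B(n)$ onto $\{\iota_n\}$.

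The main obstacle is the well-definedness of affine interpolation across the quotient target. A priori $f^r(x)$ and $\iota_n^r(x)$ may lie in distinct top-dimensional strata joined only along lower-dimensional boundary identifications, so $(1-s)f^r(x)+s\iota_n^r(x)$ needs interpretation in a global convex model. I would resolve this either by showing that $((\Delta^\bullet)^{\square n})^r$ itself admits a natural convex polytope structure (obtained by affine gluing, realizing the quotient as a ``fattened'' $(r{+}n{-}1)$-simplex), or by working first at the level of representatives in the disjoint union and projecting afterwards. Once this is settled, the operad composition on $\B$ inherited from iterated $\square$-products automatically carries the required homotopy coherence, completing the verification that $\B$ is an $A_\infty$-operad.
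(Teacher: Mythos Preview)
Your plan correctly isolates the heart of the matter—contractibility of $\B(n)$—and you are honest about the obstacle: affine interpolation in the quotient $((\Delta^\bullet)^{\square n})^r$ is not a priori meaningful, since $f^r(x)$ and $\iota_n^r(x)$ may lie in different strata. However, you do not actually resolve this obstacle; you only sketch two possible routes and leave the key convexity claim unproved. This is a genuine gap.

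The paper's route (which is McClure--Smith's) dissolves the obstacle by means of the explicit cosimplicial isomorphism $\zeta_n\colon (\Delta^\bullet)^{\square n}\to \Delta^\bullet$ recorded immediately after the proposition (Lemma~\ref{Lzetan}). At degree $r$, $\zeta_n$ sends $[(t_{1,1},\dots,t_{1,p_1}),\dots,(t_{n,1},\dots,t_{n,p_n})]$ to $\bigl(\tfrac{t_{1,1}}{n},\dots,\tfrac{n-1+t_{n,p_n}}{n}\bigr)\in\Delta^r$, and one checks directly that this descends to a homeomorphism from the quotient. Hence $\B(n)\cong\Map_{\CG^\Delta}(\Delta^\bullet,\Delta^\bullet)$, and projection to degree $1$ identifies the latter with the space of weakly monotone surjections $[0,1]\to[0,1]$, which is visibly convex and therefore contractible.

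In particular, your guess that the quotient is a ``fattened $(r{+}n{-}1)$-simplex'' is off: via $\zeta_n$ it is exactly $\Delta^r$. Once $\zeta_n$ is in hand, your convex-interpolation homotopy becomes correct and is essentially the same as the paper's argument—but the entire content lies in establishing $\zeta_n$, which your proposal leaves as an unsubstantiated assertion about ``affine gluing''. Without it, your claim that the cosimplicial operators on $(\Delta^\bullet)^{\square n}$ are affine with respect to a \emph{global} convex structure on the quotient is not justified, and the homotopy $H(s,f)$ is not defined.
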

In fact, $\B(n)$ is homeomorphic to the space of weakly order preserving  surjections from the interval $[0,1]$ to itself. For later use, we shall describe such a homeomorphism explicitly.\\
\indent We take a presentation of the standard topological simplex $\Delta^n$ as 
\[
\Delta^n=\{(t_1,\dots, t_n)\mid 
0\leq t_1\leq\dots\leq t_n\leq 1\}.
\]
We define a morphism of cosimplicial spaces  $\zeta_n:\Delta^\bullet\, \square \,\cdots\, \square \, \Delta^\bullet\to \Delta^\bullet$ ($n-1$-times $\square$) by
\[
\begin{split}
[(t_{11},\dots, t_{1p_1}),\dots, & (t_{n,1},\dots t_{n,p_n})] \\
& \longmapsto \left( \frac{t_{11}}{n},\dots, \frac{t_{1p_1}}{n},\dots ,
\frac{n-1+t_{n,1}}{n},\dots,\frac{n-1+t_{n,p_n}}{n} \right)
\end{split}
\]
The following is proved in \cite{ms}.
\begin{lem}[Lemma 3.6 of\cite{ms}]\label{Lzetan}
$\zeta_n$ is an isomorphism of cosimplicial spaces. \qed
\end{lem}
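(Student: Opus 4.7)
The map $\zeta_n$ is a level-wise map of cosimplicial spaces, so the claim splits into three assertions: that $\zeta_n$ is well defined on the $\square$-quotient, that it is cosimplicial, and that each level is a homeomorphism. First, I would check that $\zeta_n$ respects the generating relation $d^{p+1}x \otimes y \sim x \otimes d^0 y$. In the ordered-simplex presentation used here, $d^{p+1}$ appends the coordinate $1$ at the end and $d^0$ prepends $0$ at the beginning; after the factor-$i$ coordinates are rescaled to lie in $[(i-1)/n,\, i/n]$, both representatives produce exactly the same entry $i/n$ at the join, so the two sides have the same image.

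Next I would verify cosimplicial naturality by direct inspection. Using the standard description of the ordered-simplex cofaces ($d^0$ prepends $0$, $d^j$ duplicates the $j$-th entry for $1 \leq j \leq n$, $d^{n+1}$ appends $1$) and codegeneracies ($s^i$ deletes the $(i+1)$-st entry), together with the fact that cosimplicial operators on the $\square$-product act on a single factor at the appropriate index, each case reduces to an elementary identity about the affine rescaling $t \mapsto ((i-1) + t)/n$, which clearly commutes with duplication, insertion of $0$ or $1$, and deletion.

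The geometric heart of the proof is the construction of a set-theoretic inverse. Given $(u_1, \dots, u_r) \in \Delta^r$ with $0 \leq u_1 \leq \dots \leq u_r \leq 1$, partition the indices into $n$ blocks according to which half-open interval $[(i-1)/n,\, i/n)$ contains $u_j$ (with $u_j = 1$ assigned to the last block), and in the $i$-th block define $t_{i,k} = n u_{j_k} - (i-1)$. This yields a preimage in $\Delta^{p_1} \otimes \cdots \otimes \Delta^{p_n}$. At a boundary value $u_j = i/n$, assigning $u_j$ to block $i$ gives $t_{i,p_i} = 1$ while assigning it to block $i+1$ gives $t_{i+1,1} = 0$, and these two preimages are identified by the $\square$-equivalence $d^{p_i+1}x \otimes y \sim x \otimes d^0 y$. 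Hence the inverse descends to a well-defined map out of the quotient and is by construction a two-sided inverse to $\zeta_n$ at each level.

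Finally, $\zeta_n$ at level $r$ is a continuous bijection from a compact space (a quotient of a finite disjoint union of products of closed simplices) to the Hausdorff space $\Delta^r$, hence automatically a homeomorphism; alternatively one may reduce the general case to $n = 2$ by induction using the associativity of $\square$. The principal obstacle is precisely the boundary-value ambiguity in the inverse: on each open stratum the inverse is transparently continuous, but continuity at points where some $u_j$ equals a multiple of $1/n$ is exactly what the $\square$-identification is engineered to enforce.
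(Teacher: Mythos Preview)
Your proof is correct. Note, however, that the paper does not actually give its own proof of this lemma: it is stated with a terminal \qed\ and attributed to McClure--Smith \cite{ms}, Lemma~3.6. Your argument is precisely the direct verification one would expect (and essentially what is carried out in \cite{ms}): check that $\zeta_n$ respects the $\square$-identification $d^{p+1}x\otimes y\sim x\otimes d^0y$ because both sides produce the value $i/n$ at the join, check cosimplicial naturality factor-by-factor via the affine rescaling, build the inverse by partitioning $(u_1,\dots,u_r)$ according to the subintervals $[(i-1)/n,\,i/n)$, and conclude homeomorphism by the compact--Hausdorff argument. There is nothing to compare against in the present paper, and nothing missing from your plan.
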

Thus  we obtain an identification
\[
\B(n)\stackrel{(\zeta_n)_*}{\cong} \Map_{\CG^\Delta}(\Delta^\bullet, \Delta^\bullet)\cong
\{ u:[0,1]\to [0,1]\mid u\text{ is a weakly monotone surjection }\}
\]
The second homeomorphism is given by the natural projection $\Map(\Delta^\bullet, \Delta^\bullet)\to \Map(\Delta^1,\Delta^1)$. For an element $f\in \B(n)$, we denote by $\underline{f}$ the corresponding weakly monotone surjection. We shall describe the composition product of $\B$ using this identification. For $f\in \B(n)$ and $g\in \B(m)$,
\begin{equation}\label{EQparameter}
\ul{f\circ_ig}(t)=\left\{
\begin{array}{ll}
\frac{n}{m+n-1}\,\ul{f}(t) & \text{if } t\in \ul{f}^{-1}[0,\frac{i-1}{n}]\vspace{2mm}\\
\frac{1}{m+n-1}[i-1+m\ul{g}(\, n\ul{f}(t)-i+1)\, ]& \text{if } t\in \ul{f}^{-1}[\frac{i-1}{n},\frac{i}{n}]\vspace{2mm}\\
\frac{1}{m+n-1}[m-1+n\ul{f}(t)\,]& \text{if } t\in \ul{f}^{-1}[\frac{i}{n},1]
\end{array}\right.
\end{equation}
The following is  proved in \cite{ms}.
\begin{prop}[Theorem 3.1 of \cite{ms}]\label{Pms}
For an $\square$-object $X^\bullet$, $\tot(X^\bullet)$ has a natural action of the operad $\B$. In other words, $\tot$ induces a functor from the category of $\square$-objects to the category of $\B$-algebras.\qed
\end{prop}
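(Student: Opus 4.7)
The plan is to exhibit an explicit action map $\B(n)\otimes \tot(X^\bullet)^{\otimes n}\to \tot(X^\bullet)$ using the co-endomorphism interpretation of $\B$ and then verify its compatibility with operad composition. An element of $\tot(X^\bullet)=\Map_{\CC}(\Delta^\bullet, X^\bullet)$ is by definition a morphism $x:\Delta^\bullet\to X^\bullet$ in $\CC$, and an element of $\B(n)$ is a morphism $f:\Delta^\bullet\to (\Delta^\bullet)^{\square n}$ in $\CCG$. Given $f\in \B(n)$ and $x_1,\dots, x_n\in \tot(X^\bullet)$, I would define $f\cdot(x_1,\dots, x_n)$ as the composite in $\CC$
\[
\Delta^\bullet\xrightarrow{f}(\Delta^\bullet)^{\square n}\xrightarrow{x_1\,\square\,\cdots\,\square\, x_n}X^{\square n}\xrightarrow{\mu^{(n)}}X^\bullet,
\]
where $\mu^{(n)}:X^{\square n}\to X^\bullet$ denotes the iterated $\square$-product built from the $\square$-semigroup structure $\mu:X^\bullet\square X^\bullet\to X^\bullet$ on $X^\bullet$ (any bracketing yields the same morphism by associativity). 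The external product $x_1\,\square\,\cdots\,\square\, x_n$ is a well-defined morphism in $\CC$ because $\square$ is a bifunctor on $\CC$, so that $(\Delta^\bullet)^{\square n}$ is naturally tensored over $(\CCG)^{\times n}$.

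\medskip

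With this definition, the action lands in $\tot(X^\bullet)$ automatically, since the composite is built entirely in $\CC$. To verify operad compatibility, one unravels the identity
\[
(f\circ_i g)\cdot(x_1,\dots,x_{n+m-1})=f\cdot(x_1,\dots,x_{i-1},\ g\cdot(x_i,\dots,x_{i+m-1}),\ x_{i+m},\dots, x_{n+m-1})
\]
and reduces both sides to the single composite
\[
\mu^{(n+m-1)}\circ(x_1\,\square\,\cdots\,\square\, x_{n+m-1})\circ(\id^{\square(i-1)}\,\square\, g\,\square\,\id^{\square(n-i)})\circ f,
\]
using associativity of $\mu$ (so that $\mu^{(n+m-1)}=\mu^{(n)}\circ(\id^{\square(i-1)}\,\square\, \mu^{(m)}\,\square\,\id^{\square(n-i)})$) and bifunctoriality of $\square$. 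Naturality in $X^\bullet$ is immediate since any morphism of $\square$-objects intertwines the iterated products $\mu^{(n)}$. The required continuity/morphism-of-spectra structure in the $\B(n)$ variable follows from functoriality of $\Map_{\CC}(\Delta^\bullet,-)$ combined with the tensoring of $\CC$ over $\CCG$, which sends $(f,x_1,\dots,x_n)\mapsto (x_1\,\square\cdots\square\, x_n)\circ f$ in a functorial way.

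\medskip

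This is ultimately a formal consequence of $\B$ being the co-endomorphism operad of $\Delta^\bullet$, so the main subtlety is not a genuine obstacle but rather keeping two distinct monoidal structures straight: the non-symmetric $\square$ on $\CC$, which is used both to build $\B$ and to form the external product $x_1\,\square\,\cdots\,\square\, x_n$, and the tensor $\otimes$ on $\C$, which assembles the operad action $\B(n)\otimes\tot(X^\bullet)^{\otimes n}\to \tot(X^\bullet)$ itself. The bookkeeping to make $x_1\,\square\,\cdots\,\square\, x_n$ a genuine morphism in $\CC$ out of $(\Delta^\bullet)^{\square n}$—and not merely a morphism in $\CCG$—is where the tensoring of $\CC$ over $\CCG$ is used. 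Once that diagrammatics is set up, the proof reduces to associativity of $\mu$ and functoriality of $\square$ and $\Map_{\CC}$.
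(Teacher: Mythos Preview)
Your proposal is correct and takes essentially the same approach as the paper. The paper itself does not prove this proposition (it is cited from \cite{ms} with a \qed), but the proof you give is exactly the construction the paper spells out for the closely analogous Proposition~\ref{PtB}(2): form the external $\square$-product $\tot(X^\bullet)^{\otimes n}\to \Map_{\CC}((\Delta^\bullet)^{\square n},(X^\bullet)^{\square n})$, precompose with $f\in\B(n)$, and postcompose with the iterated product $\mu^{(n)}$.
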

\subsubsection{Slight generalization of McClure-Smith product}\label{SSSslightgenofms}
We will use a straightforward generalization of MS-product described as follows.\\
\indent In general, the monoidal product $\square$ is not symmetric but if one of variables is a constant cosimplicial object, there exist obvious natural isomorphisms 
\[
c(X)\, \square \, c(Y)\cong c(X\otimes Y), \quad c(X)\, \square \, Z^\bullet \cong Z^\bullet \, \square \, c(X)\cong  X\otimes Z^\bullet .
\]
for $X$, $Y\in \C$ and $Z^\bullet \in \CC$. Here $c(-)$ is the constant cosimplical object, and  $X\otimes Z^\bullet$ denotes the cosimplicial object defined by $(X\otimes Z^\bullet)^n=X\otimes Z^n$.  \\
\indent Recall that there is a canonical procedure which associate a monad to an operad in a symmetric monoidal category. We shall define  a monad $\MK$ over $\C^\Delta$ by a similar way.  As a functor $\MK:\CC\longrightarrow \CC$, 
\[
\MK(X^\bullet)=\bigsqcup_{n\geq 1}\K(n)\hotimes (X^\bullet)^{\, \square \, n},
\]
where $\hotimes$ means the external tensor at each cosimplicial degree, and structure morphisms $\MK\circ\MK\Rightarrow \MK$ and $\id_{\CC}\Rightarrow \MK$ is defined by the same formula as in the symmetric monoidal case  by using the above symmetry isomorphisms.
\begin{defi}
A topological operad $\tB$ is defined as follows.
As a topological space, 
\[
\tB(n)=\Map_{\CCG}(\Delta^\bullet,\, \K(n)\otimes (\Delta^\bullet)^{ \square \, n})
\] 
for each $n$. For an element $(f,g_1,\dots, g_n)\in \tB(n)\times \tB (m_1)\times\cdots\times \tB(m_n)$ the composition product $f\circ (g_1,\dots, g_n)$ is given by
\[
\begin{split}
\Delta^\bullet& \xrightarrow{f} \K(n)\otimes (\Delta^\bullet)^{\, \square \, n}
\xrightarrow{\id\otimes (g_1\, \square \,\cdots\, \square \, g_n)}
\K(n)\otimes \big(\K(m_1)\otimes (\Delta^\bullet)^{\, \square \, m_1}\big)\, \square \,\cdots\, \square \, \big(\K(m_n)\otimes (\Delta^\bullet)^{\, \square \, m_n}
\big)
\vs{2mm}\\
&\cong \K(n)\otimes \K(m_1)\otimes\cdots\otimes \K(m_n)\otimes (\Delta^\bullet)^{\, \square \, m_1+\cdots+ m_n}\vs{2mm}\\
&\xrightarrow{(-\circ -)\otimes \id} \K(m_1+\cdots+ m_n )\otimes (\Delta^\bullet)^{\, \square \, m_1+\cdots+ m_n}.
\end{split}
\]
\end{defi}

\begin{prop}\label{PtB}
\begin{itemize2}
 \item[(1)] As operads, $\tB\cong \B\times \K$. In particular, $\tB$ is an $A_\infty$-operad.
 \item[(2)] For a $\MK$-algebra $X^\bullet$, $\tot(X^\bullet)$ has a natural action of the operad $\tB$. In other words, $\tot$ induces a functor $\Alg_{\MK}(\CC)\to \Alg_{\tB}(\C)$.
 \end{itemize2}
\end{prop}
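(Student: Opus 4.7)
For (1), the key observation is that in $\K(n)\otimes (\Delta^\bullet)^{\square n}$ the factor $\K(n)$ enters as a constant cosimplicial object (via the natural isomorphism $c(X)\square Z^\bullet \cong X\otimes Z^\bullet$ recalled just before the definition of $\MK$). I would first produce a natural homeomorphism
\[
\tB(n) \;\cong\; \K(n)\times \B(n)
\]
by projecting a cosimplicial map $f^p = (g^p, h^p):\Delta^p\to \K(n)\times ((\Delta^\bullet)^{\square n})^p$ onto its two components and observing that the $\K$-component collapses: naturality of $g^\bullet$ with respect to the codegeneracy $s^0:[1]\to [0]$ forces $g^1$ to be the constant map at $g^0\in \K(n)$, and an easy induction shows every $g^p$ is the constant map with value $g^0$. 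The remaining data $h^\bullet$ is then precisely an element of $\B(n)=\Map_{\CCG}(\Delta^\bullet,(\Delta^\bullet)^{\square n})$.

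Next I would verify directly from the formula defining composition in $\tB$ that under this identification the operad structure becomes the product of the operad structures on $\K$ and $\B$, that is, $(a,u)\circ_i (b,v)=(a\circ_i b,\, u\circ_i v)$; this is what the two factors $(-\circ -)\otimes \id$ and $g_1\square\cdots\square g_n$ in the definition of composition produce once the constant $\K$-factor has been pulled out. The $A_\infty$-assertion is then immediate, since $\K(n)$ and $\B(n)$ are both contractible for $n\geq 1$ and empty for $n=0$, so their product inherits the same property.

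For (2), given $f\in \tB(n)$ and $x_1,\dots,x_n\in \tot(X^\bullet)$ viewed as cosimplicial maps $x_i:\Delta^\bullet \to X^\bullet$, I would define the action by the composition
\[
\Delta^\bullet \xrightarrow{f} \K(n)\otimes (\Delta^\bullet)^{\square n}\xrightarrow{\id\otimes (x_1\square \cdots \square x_n)} \K(n)\otimes (X^\bullet)^{\square n} \xrightarrow{\mu} X^\bullet,
\]
where $\mu$ is the structure morphism of $X^\bullet$ as a $\MK$-algebra. The output is again a cosimplicial map and so lies in $\tot(X^\bullet)$; continuity is automatic from the closed monoidal structure of $\C$ tensored and cotensored over $\CG$; and operadic associativity follows by unwinding the definition of $\circ$ in $\tB$ together with the $\MK$-algebra axiom for $X^\bullet$.

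The main obstacle I anticipate is the bookkeeping in the associativity check of (2): several applications of the symmetry $c(\K(n))\square Y^\bullet \cong \K(n)\otimes Y^\bullet$ must be combined so that, on the $\K$-side, the factors compose via the operad structure of $\K$, while on the cosimplicial side the iterated $\square$-products of $X^\bullet$ consolidate via the $\MK$-action. No new geometric content is required beyond what is assembled in Section \ref{Spreliminary}; the calculation is entirely formal once the notation is set up carefully.
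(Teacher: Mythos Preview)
Your proposal is correct and follows essentially the same approach as the paper. For (1) the paper simply says the isomorphism is ``induced by the natural projection to $\B$ and evaluation at $\Delta^0$'', which is exactly your two projections (your codegeneracy argument just spells out why the $\K(n)$-component is determined by its value on $\Delta^0$); for (2) the paper writes the same composite in adjoint form via $\Map_{\CC}$ rather than as an explicit composition of cosimplicial maps, which is the formulation one needs anyway when $\C=\SP$ and $\tot(X^\bullet)$ is a spectrum rather than a set of maps.
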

\begin{proof}
(1) is clear.  The isomorphism is induced by the natural projection to $\B$ and evaluation at $\Delta^0$. For (2), the action of $\tB$ on $\tot(X^\bullet)$ for a $\MK$-algebra $X^\bullet$ is given by
\[
\begin{split}
\tB(n)\hotimes \tot(X^\bullet)^{\otimes n}&\to \tB(n)\hotimes\Map_{\CC}((\Delta^\bullet)^{ \square \, n},\, (X^\bullet)^{ \square \, n})\vs{1mm}\\
&\to\tB(n)\hotimes\Map_{\CC}(\K(n)\otimes(\Delta^\bullet)^{ \square \, n},\, \K(n)\hotimes (X^\bullet)^{ \square \, n})\vs{1mm}\\
&\to\Map_{\CC}(\Delta^\bullet,\, \K(n)\hotimes (X^\bullet)^{ \square \, n})\to \tot(X^\bullet).
\end{split}
\]
Here, the first morphism  is induced by the monoidal structure $\square$, the second by the tensor with the identity on $\K(n)$, and the third by the action of $\MK$. (Here we use the notation $\Map$ instead of $(-)^{(-)}$ for simplicity.)
\end{proof}
Finally, we state a similar claim for $\ttot$ (see subsection \ref{SSNT}). Let  $\tB'$ be an operad obtained by replacing $\Delta^\bullet$ with $\tilde\Delta^\bullet$ in the definition of $\tB$ (see subsection \ref{SSNT}).
\begin{prop}\label{PtB'}
 $\tB'$ is an $A_\infty$-operad, and $\ttot$ induces a functor $\Alg_{\MK}(\CC)\to \Alg_{\tB'}(\C)$.\qed
\end{prop}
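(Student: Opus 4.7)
The plan is to follow the proof of Proposition \ref{PtB} verbatim, substituting $\tilde\Delta^\bullet$ for $\Delta^\bullet$ throughout, and then to deduce the $A_\infty$ property of $\tB'$ from that of $\tB$ by comparing the two operads via the termwise weak equivalence $f\colon \tilde\Delta^\bullet\to\Delta^\bullet$. The operad composition on $\tB'$ is defined by exactly the same composite used for $\tB$: an element of $\tB'(n)$ is post-composed with $g_1\, \square \,\cdots\, \square \, g_n$, the several $\K$-factors are gathered past the cosimplicial factors using the obvious symmetry isomorphisms of the external tensor, and are then merged via the operad composition of $\K$. Associativity is inherited formally from that of $\, \square \,$ and $\K$. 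The action of $\tB'(n)$ on $\ttot(X^\bullet)^{\otimes n}$ for an $\MK$-algebra $X^\bullet$ is given by the same diagram as in the proof of Proposition \ref{PtB}(2) with $\tilde\Delta^\bullet$ in place of $\Delta^\bullet$; it uses only the $\, \square \,$-monoidal structure, the external tensor, and the $\MK$-action on $X^\bullet$, none of which is sensitive to the substitution.

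It then remains to show that $\tB'(n)$ is contractible for $n\geq 1$ (and empty for $n=0$). I would compare $\tB'(n)$ with $\tB(n)$ through the intermediate space
\[
W(n):=\Map_{\C^\Delta}(\tilde\Delta^\bullet,\; \K(n)\otimes(\Delta^\bullet)^{\, \square \, n})
\]
via the zig-zag
\[
\tB(n)\xrightarrow{f^*} W(n)\xleftarrow{(\id\otimes f^{\, \square \, n})_*}\tB'(n),
\]
induced by pullback along $f$ on the left and pushforward along the iterated $\, \square \,$-product of $f$ on the right. The right-hand arrow is a weak equivalence because $\tilde\Delta^\bullet$ is projective cofibrant while $f^{\, \square \, n}$ is a termwise weak equivalence between termwise fibrant diagrams. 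The left-hand arrow is a weak equivalence because $\Delta^\bullet$ is Reedy cofibrant, $f$ is a termwise weak equivalence from a projective cofibrant replacement, and the common target is Reedy fibrant, so both mapping spaces model the same derived section object. Combined with the contractibility of $\tB(n)$ established in Proposition \ref{PtB}(1), this yields contractibility of $\tB'(n)$.

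The hard step will be verifying the fibrancy hypotheses required above, namely that $\K(n)\otimes(\Delta^\bullet)^{\, \square \, n}$ is Reedy fibrant and $\K(n)\otimes(\tilde\Delta^\bullet)^{\, \square \, n}$ is termwise fibrant in $\CC$. For $\C=\CG$ this is automatic since every compactly generated space is fibrant, but for $\C=\SP$ one must check that forming iterated $\, \square \,$ against a Reedy (resp.\ projective) cofibrant cosimplicial space and external-tensoring with $\K(n)$ preserves the requisite level fibrancy. This reduces to a routine pushout-product argument invoking the monoidal model structure on symmetric spectra together with the Reedy cofibrancy of $\Delta^\bullet$ and projective cofibrancy of $\tilde\Delta^\bullet$, which I would handle by an appeal to standard behavior of $\, \square \,$ with respect to cofibrations of cosimplicial objects in a pointed simplicial monoidal model category.
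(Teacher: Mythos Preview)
Your construction of the $\tB'$-action on $\ttot(X^\bullet)$ is correct and is exactly what the paper's \qed\ intends for that half.

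The genuine slip is in your last paragraph. The operad $\tB'$ is by definition a \emph{topological} operad: $\tB'(n)=\Map_{\CG^\Delta}\bigl(\tilde\Delta^\bullet,\,\K(n)\times(\tilde\Delta^\bullet)^{\square n}\bigr)$ is a space, independent of which category $\C$ one later lets it act on. There is no ``$\C=\SP$ case'' to treat for the $A_\infty$ property, and the level-fibrancy discussion for symmetric spectra is misplaced. In $\CG$ every object is fibrant, so all of the fibrancy hypotheses you flag are automatic.

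What does need an argument, and what you pass over, is that $f^{\square n}\colon(\tilde\Delta^\bullet)^{\square n}\to(\Delta^\bullet)^{\square n}$ is a \emph{termwise} weak equivalence. This is not purely formal since $\square$ is a quotient; one uses Reedy cofibrancy of the inputs so that the defining colimits are homotopy colimits (compare the argument in Lemma~\ref{Lhomotopytype}). Once that is in hand your zig-zag goes through, because in $\CG$ the targets are all termwise fibrant and $\K(n)\times(\Delta^\bullet)^{\square n}\cong\K(n)\times\Delta^\bullet$ is Reedy fibrant by Lemma~\ref{Lzetan}.

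The paper's tacit route is shorter and parallels Proposition~\ref{PtB}(1) more closely: the target is a product, so
\[
\tB'(n)\;\cong\;\ttot\bigl(c(\K(n))\bigr)\times\Map_{\CG^\Delta}\bigl(\tilde\Delta^\bullet,(\tilde\Delta^\bullet)^{\square n}\bigr).
\]
Constant cosimplicial objects are Reedy fibrant, so the first factor is weakly equivalent to $\tot(c(\K(n)))=\K(n)$, which is contractible. The second factor is the $\tilde\Delta^\bullet$-analogue of $\B$ and is contractible by the McClure--Smith argument for $\ttot$ (homotopy invariance of $\Map_{\CG^\Delta}(\tilde\Delta^\bullet,-)$ applied to the termwise equivalence with $(\Delta^\bullet)^{\square n}\cong\Delta^\bullet$). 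Either way, no spectral input is required.
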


\subsection{Topological Hochschild cohomology}\label{SStophochschild}
\begin{defi}
Let $A$ be a monoid in $\SP$ (i.e., an associative symmetric ring spectrum).  A $A-A$-bimodule in the category $\SP$ is as usual, an object $M$ of $\SP$ with a morphism 
$A\otimes M\otimes A \to M$ called a (two-sided) action of $A$, which satisfies the usual associativity and unity axioms. A morphism of $A-A$-bi-modules is a morphism in $\SP$ compatible with actions of $A$. Let $M$ be  a $A-A$-bimodule over $\SP$. We define a cs-spectrum $\thc^\bullet
(A,M)$ as follows. $\thc^p(A,M)=\Inhom_\SP(A^{\otimes p},M)$. This means $\thc^0(A,M)=\Inhom_\SP(\Sph, M)=M$. The coface operator $d^0:\thc^p(A,M)\to \thc^{p+1}(A,M)$ is defined as the adjoint of the following composition
\[
\Inhom_{\SP}(A^{\otimes p},M)\otimes A^{\otimes p+1}\stackrel{\cong}{\to} A\otimes \Inhom_{\SP}(A^{\otimes p},M)\otimes A^{\otimes p}\to A\otimes M\to M,
\]
where the first arrow is the transposition of the  first component of $A^{\otimes p+1}$, the second arrow is induced by the evaluation $\Inhom(X,Y)\otimes X\to Y$, and the third arrow is the left action of $A$. The last coface operator $d^p$ is defined similarly by using the right action. The other coface operators $d^i\  (1\leq i\leq p-1)$ and the codegeneracy operators $s^j:\thc^p(A,M)\to \thc^{p-1}(A,M)$\ ($0\leq j\leq p-1$) are defined as the pullback by the following morphisms 
\[
\begin{split}
A^{\otimes p+1} &= A^{\otimes i-1}\otimes (A\otimes A)\otimes A^{\otimes p-i}\to A^{\otimes i-1}\otimes A\otimes A^{\otimes p-i}=A^{\otimes p} \\
A^{\otimes p-1} &=A^{\otimes j}\otimes \mathbb{S}\otimes A^{p-j-1}\to A^{\otimes j}\otimes A\otimes A^{\otimes p-j-1}=A^{\otimes p}
\end{split}
\] 
induced by the product and unit morphism of $A$, respectively.  We call the totalization  $\tot(\thc^\bullet(A,M))$ the \textit{topological Hochschild cohomology of $A$ with coefficients in $M$} and denote it  by $\thc(A,M)$. 
\end{defi}
Let $A$ be a monoid in $\SP$. Note that the category of $A-A$ bimodules has natural structure of (non-symmetric) monoidal category with its monoidal product  given by usual tensor over $A$ of left and right $A$-modules. We consider semigroups in this monoidal category, which we call \textit{non-unital $A$-algebras}.
\begin{lem}\label{LmonoidTHC}
Let $A$ be a monoid in $\SP$ and $B$  a non-unital $A$-algebra. $\thc^\bullet(A,B)$ has a natural structure of a $\square$-object. In particular, $\thc(A,B)$ has the induced structure of   $\B$-algebra (see Proposition \ref{Pms}).
\end{lem}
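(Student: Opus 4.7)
The plan is to equip $\thc^\bullet(A,B)$ with the spectrum-level analogue of the standard cup product on the Hochschild cochain complex. Concretely, for each $p, q \geq 0$, I would define $\mu_{p,q} : \thc^p(A,B) \otimes \thc^q(A,B) \to \thc^{p+q}(A,B)$ as the adjoint of the composition
\[
\Inhom_\SP(A^{\otimes p}, B) \otimes \Inhom_\SP(A^{\otimes q}, B) \otimes A^{\otimes p} \otimes A^{\otimes q} \to B \otimes B \to B \otimes_A B \xrightarrow{m} B,
\]
where the first arrow permutes factors using the symmetry of $\wedge_S$ and applies the two evaluation morphisms in parallel, the second is the canonical quotient, and $m$ is the product on $B$ as a semigroup in $A$-$A$-bimodules. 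Associativity of the family $\{\mu_{p,q}\}$ is immediate from associativity of $m$.

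It then remains to verify the MS-product identities. For interior cofaces, namely $d^i$ with $1 \leq i \leq p-1$ (and symmetrically $p+2 \leq i \leq p+q$), the identity $d^i(x \cdot y) = (d^i x) \cdot y$ holds because $d^i$ multiplies two adjacent $A$-factors strictly inside the argument slot of $x$ and is therefore disjoint from $\mu_{p,q}$. The edge cases $d^0, d^{p+q+1}$ reduce to the associativity of the left/right $A$-action on $B$ combined with the $A$-bilinearity of $m$. The compatibility with the codegeneracies $s^i$ is analogous and uses only the unit of $A$ together with the bimodule axioms.

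The substantive point, and the only place where the hypothesis that $B$ is a non-unital $A$-algebra (rather than a mere bimodule with an $\Sph$-linear product) is used, is the ``collision'' identity $(d^{p+1} x) \cdot y = x \cdot (d^0 y)$. Passing to adjoints, both sides are morphisms $A^{\otimes (p+q+1)} \to B$: the left side first applies the right $A$-action to $x$ at position $p+1$ and then multiplies with $y$ via $m$, while the right side first applies the left $A$-action to $y$ at position $p+1$ and then multiplies with $x$. Their equality as morphisms in $\SP$ is exactly the statement that $m$ factors through the coequalizer $B \otimes_A B$, which is the defining property of the non-unital $A$-algebra structure on $B$. This will be the main (and essentially the sole) obstacle to check carefully; the remaining identities are bookkeeping. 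Having established the $\square$-object structure, the final assertion follows at once from Proposition \ref{Pms}.
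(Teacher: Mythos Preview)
Your approach is correct and coincides with the paper's: the paper defines the product exactly as you do, as the adjoint of the transposition-then-double-evaluation map landing in $B\otimes B$ followed by the product of $B$, and then leaves the verification of the MS-conditions implicit. Your more detailed check of the coface and codegeneracy compatibilities, and in particular your identification of the collision identity $(d^{p+1}x)\cdot y = x\cdot(d^0 y)$ with the $A$-balancedness of the product on $B$, is accurate and simply makes explicit what the paper omits.
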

\begin{proof} The associative product is given as the adjoint of 
\[
\Inhom(A^p,B)\otimes\Inhom(A^q,B)\otimes A^{p+q}\cong \Inhom(A^p,B)\otimes A^p\otimes\Inhom(A^q,B)\otimes A^q\xrightarrow{\text{evaluation}^2}B\otimes B\to B
\]
\end{proof} 
Note that an $\square$-object is regarded as an $\MK$-algebra by pulling back the structure by the map $\K(n)\hotimes (-)^{\square n}\to (-)^{\square n}$ which is the tensor  of the map $\K(n)\to*$ and the identity. Hence we may regard  the cs-spectrum $\thc^\bullet(A,B)$  as a $\MK$-algebra using the $\square$-object structure in  Lemma \ref{LmonoidTHC}.

\indent In the rest of this subsection, we prepare some technical results used later . The part one of the following is proved in \cite{mms} and the proof of part two is similar.
\begin{prop}\label{propnonunitalmodel}
\begin{itemize2}
\item[(1)] The category of monoids in $\SP$ has the model category structure where a morphism is weak equivalence or  fibration if and only if so it is as a morphism of   $\SP$ with the stable model structure.
\item[(2)] Let $A$ be a monoid in $\SP$. The category of non-unital $A$-algebras has a model category structure where a morphism is a weak equivalence or  fibration if and only if so it is as a morphism of   $\SP$ with the stable model structure.\qed
\end{itemize2}
\end{prop}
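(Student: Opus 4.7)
The plan is that part (1) is precisely Theorem 5.4.1 of \cite{mms}, so I would simply quote it. The real work is part (2), which I would handle by a standard transfer-of-model-structure argument along the free-forgetful adjunction between non-unital $A$-algebras and $\SP$.

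First, I would exhibit a free-forgetful adjunction $F \dashv U$ where $U : \mathrm{Alg}_A^{\mathrm{nu}} \to \SP$ is the forgetful functor. On an object $X \in \SP$, the free non-unital $A$-algebra is
\[
F(X) = \bigvee_{n \geq 1} A^{\otimes (n+1)} \otimes X^{\otimes n},
\]
where the $A$-bimodule structure comes from the outer two copies of $A$ and the semigroup product comes from concatenating words and multiplying the two middle copies of $A$ using the monoid structure. Equivalently, $\mathrm{Alg}_A^{\mathrm{nu}}$ is the category of algebras over the monad $T = UF$ on $\SP$. Because $T$ is built from $\otimes$ and $\bigvee$, it preserves filtered colimits.

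Next, I would apply Kan's transfer theorem (in the form used in Schwede-Shipley and in \cite{mms}): since $\SP$ with the stable model structure is cofibrantly generated with generating sets $I$ and $J$, it suffices to check (a) that the domains of $FI$ and $FJ$ are small relative to $FI$-cells and $FJ$-cells respectively, and (b) that every relative $FJ$-cell complex maps by a stable equivalence, i.e.\ the ``acyclicity of free-cell attachments'' condition. Condition (a) is routine since $\SP$ is locally presentable and $F$ preserves filtered colimits. For (b), the crucial input is the monoid axiom for $\SP$ (Theorem 5.4.1 of \cite{mms}): pushouts of trivial cofibrations tensored with any object, and transfinite compositions thereof, remain stable equivalences. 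Exactly as in the proof of part (1), a pushout of a free $FJ$-map in $\mathrm{Alg}_A^{\mathrm{nu}}$ along a map of $A$-algebras admits a filtration whose successive layers are obtained by smashing a trivial cofibration of spectra with a finite wedge of $A$'s and with the target $A$-algebra. Since each layer is thus a pushout of the form covered by the monoid axiom, the transfinite composite is a stable equivalence.

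The step I expect to be the main obstacle is this pushout analysis in (b). Unlike the case of monoids treated in \cite{mms}, each ``letter'' in $F(X)$ carries an extra pair of $A$-factors, so the combinatorial description of the filtration of a pushout $B \to B \sqcup_{FK} FL$ is slightly more involved: one must group words by the number of occurrences of the new generator and express the layers as coequalizers in the category of $A$-bimodules. Once this filtration is written down, however, each filtration quotient is manifestly of the form $A \otimes (\text{stuff}) \otimes (L/K) \otimes (\text{stuff}) \otimes A$, smashed further with wedges of $A$'s and copies of $B$, and the monoid axiom applied inside the category of $A$-bimodules (which it inherits from $\SP$) finishes the argument. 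The verification that the resulting model structure is cofibrantly generated with generating (trivial) cofibrations $FI$ and $FJ$ is then automatic from the transfer theorem.
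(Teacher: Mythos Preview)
Your proposal is correct and matches the paper's approach: the paper simply states that part (1) is proved in \cite{mms} and that the proof of part (2) is similar, giving no further detail. Your elaboration via the transfer theorem and the monoid axiom is exactly the ``similar'' argument the paper has in mind. One minor notational point: the free non-unital $A$-algebra on $X$ should have the $A$'s and $X$'s interleaved, i.e.\ $\bigvee_{n\geq 1} A\otimes X\otimes A\otimes X\otimes\cdots\otimes X\otimes A$ (which is $(A\otimes X)^{\otimes_A n}\otimes_A A$), rather than $A^{\otimes(n+1)}\otimes X^{\otimes n}$ as written; this matters for the bimodule and semigroup structure you describe, though not for the homotopical conclusion.
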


Note that the function spectrum $F(M)$ has a structure of  monoid given by $f_1\cdot f_2(x)=(f_1(x), f_2(x))\in S^{k_1+k_2}=\R^{k_1+k_2}\cup\{\infty\}$ for $f_i\in F(M)_{k_i}, x\in M$. This product is the same as that induced by the product on $\Sph$ at the codomain.  We define a structure of non-unital $F(M)$-algebra on $\Gamma(M)$ as follows (see subsection \ref{SSatiyahduality} for the definition of $\Gamma(M)$): For $\langle \phi_1,\eps_1,\theta_1\rangle$, 
$\langle \phi_2,\eps_2,\theta_2\rangle\in\Gamma(M)$, we put $\langle \phi_1,\eps_1,\theta_1\rangle \cdot \langle \phi_2,\eps_2,\theta_2\rangle=\langle \phi_1\times\phi_2, \min\{\eps_1,\eps_2\}, \theta_1\wedge \theta_2\rangle$, where $\theta_1\wedge \theta_2$ denotes the section taking $x\in M$ to the point represented by $(\theta_1(x),\theta_2(x))$. ( If this element does not belong to the interior of $\bar B_{\min\{\eps_1,\eps_2\}}(\phi_1\times \phi_2(x))$, we set $\theta_1\wedge \theta_2(x)=*$. ) For $f \in F(M)$ and $\langle\phi,\eps,\theta\rangle\in\Gamma(M)$, we put  $f\cdot (\phi,\eps,s)= (0\times\phi, \eps, f\wedge \theta)$ where $f\wedge \theta$ is understood similarly to $\theta_1\wedge \theta_2$. We define a right action by using this left action and the symmetry isomorphism of $\SP$, as the compositon
\[
\Gamma(M)\otimes F(M)\cong F(M)\otimes \Gamma(M)\to \Gamma(M).
\]
This  is well-defined since the product on $F(M)$ is commutative.
\begin{prop}\label{propinvarianceofthc}
 Let $A'\to F(M)$ be a cofibrant replacement of $F(M)$ as a monoid in $\SP$. We consider $\Gamma(M)$ as a non-unital $A'$-algebra by pulling back the structure of $F(M)$-algebra defined above.  Let $\Gamma(M)\to B$ be a fibrant replacement of $\Gamma(M)$ as $A'$-algebra and $Q$ be any fibrant cofibrant replacement of $F(M)$ (as a  monoid in $\SP$). Then 
$\thc(Q,Q)$ and $\thc(A',B)$ are weak equivalent as nu-$A_\infty$-ring spectra. Here, all replacements are taken with respect to the model structures in Proposition \ref{propnonunitalmodel}.

\end{prop}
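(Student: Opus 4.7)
The plan is to connect $\thc(Q,Q)$ and $\thc(A',B)$ by a zigzag of weak equivalences of nu-$A_\infty$-ring spectra, varying the algebra and the coefficient bimodule in turn and exploiting Atiyah duality at the bimodule level. First, I would verify that the morphisms $\kappa_1$ and $\kappa_2$ of Theorem \ref{Tatiyah} are morphisms of non-unital $F(M)$-algebras, where $\Gamma'(M)$ carries the natural product analogous to that on $\Gamma(M)$. Pulling back along the cofibrant replacement $A' \to F(M)$ turns this into a zigzag $F(M) \xleftarrow{\kappa_1} \Gamma'(M) \xrightarrow{\kappa_2} \Gamma(M) \to B$ of weak equivalences of non-unital $A'$-algebras. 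Using the model structure of Proposition \ref{propnonunitalmodel}(2), fibrant replacements can be taken consistently, producing in particular a fibrant replacement $F^{\mathrm{fib}}$ of $F(M)$ as non-unital $A'$-algebra that is weakly equivalent to $B$ through a zigzag of fibrant non-unital $A'$-algebras.

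Next, I would establish homotopy invariance of $\thc(A',-)$ on fibrant coefficient algebras. The key inputs are: cofibrancy of $A'$ in $\SP$ combined with the pushout-product axiom of the monoidal model structure implies each $A'^{\otimes p}$ is cofibrant, so $\Inhom_\SP(A'^{\otimes p},-)$ preserves weak equivalences between fibrant objects; the cs-spectrum $\thc^\bullet(A',N)$ is Reedy fibrant whenever $N$ is fibrant (cofibrancy of $A'$ makes the matching morphisms fibrations), so $\tot$ preserves termwise weak equivalences between such objects; and naturality ensures the induced maps respect the $\square$-structure of Lemma \ref{LmonoidTHC}, hence the $\B$-algebra structure on totalization (Proposition \ref{Pms}). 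This yields $\thc(A',B) \simeq \thc(A',F^{\mathrm{fib}})$ as nu-$A_\infty$-ring spectra.

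I then compare $A'$ and $Q$. Since $F(M)$ is fibrant as a spectrum and hence as a monoid in the model structure of Proposition \ref{propnonunitalmodel}(1), the cofibrant replacement $A' \to F(M)$ and the fibrant cofibrant replacement $Q$ fit into a zigzag of weak equivalences between cofibrant monoids (for instance, lift $A' \to F(M)$ through a trivial fibration underlying $Q$). Viewing $Q$ as a non-unital $A'$-algebra via this zigzag makes $Q$ a fibrant $A'$-algebra weakly equivalent to $F^{\mathrm{fib}}$, so the previous step yields $\thc(A',Q) \simeq \thc(A',F^{\mathrm{fib}}) \simeq \thc(A',B)$. Finally, the weak equivalence $A' \to Q$ of cofibrant monoids induces a map $\thc(Q,Q) \to \thc(A',Q)$ which is a weak equivalence of $\B$-algebras by the parallel argument: $A'^{\otimes p} \to Q^{\otimes p}$ is a weak equivalence of cofibrant objects by pushout-product, $\Inhom_\SP(-,Q)$ preserves such weak equivalences since $Q$ is fibrant, and Reedy fibrancy together with the naturality of the $\square$-structure delivers the equivalence of nu-$A_\infty$-ring spectra. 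Combining the three steps gives the required zigzag $\thc(Q,Q) \simeq \thc(A',Q) \simeq \thc(A',F^{\mathrm{fib}}) \simeq \thc(A',B)$.

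The main obstacle is technical bookkeeping rather than conceptual difficulty: one must verify Reedy fibrancy of each $\thc^\bullet$ that appears (so that $\tot$ realises the derived homotopy limit and is invariant under termwise weak equivalences of Reedy fibrant cs-spectra), track that every weak equivalence in the zigzag respects the $\square$-structure so that the resulting maps are $\B$-algebra morphisms rather than mere maps of symmetric spectra, and check that the multiplicative structure on $\Gamma'(M)$ together with the bimodule interpretations of $\kappa_1,\kappa_2$ behave as claimed. The fact that the monoid and non-unital algebra model structures are defined via underlying fibrations and weak equivalences (Proposition \ref{propnonunitalmodel}) should render most of these verifications routine, leaving Reedy fibrancy and compatibility with the $\square$-structure as the only substantive points to check.
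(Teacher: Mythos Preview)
Your approach mirrors the paper's closely: both use the Atiyah-duality zigzag $F(M)\xleftarrow{\kappa_1}\Gamma'(M)\xrightarrow{\kappa_2}\Gamma(M)\to B$ of non-unital $A'$-algebras, then invoke homotopy invariance of $\thc^\bullet(A',-)$ on fibrant coefficients and of $\thc^\bullet(-,Q)$ along maps of cofibrant monoids, verify Reedy fibrancy so that $\tot$ is homotopy invariant, and track $\square$-compatibility to obtain $\B$-algebra maps. The paper compresses the coefficient zigzag to $Q\leftarrow B'\rightarrow B$ with $B'$ fibrant, but this is only an organisational difference.

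There is one small but genuine slip in your step comparing $A'$ and $Q$: you assert that $F(M)$ is fibrant as a spectrum and hence as a monoid, so that you can lift $A'\to F(M)$ through a trivial fibration $Q\to F(M)$. But $F(M)=\Sph^M$ is \emph{not} an $\Omega$-spectrum (since $\Sph$ is not), so in general there is no such trivial fibration and your lifting argument is unavailable for an arbitrary fibrant cofibrant replacement $Q$. The paper sidesteps this cleanly: it first observes that the weak equivalence class of $\thc(Q,Q)$ as a $\B$-algebra is independent of which fibrant cofibrant replacement $Q$ is chosen, and then \emph{chooses} $Q$ to be the target of a trivial cofibration $A'\to Q$ with $Q$ fibrant in the monoid model structure. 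This $Q$ is automatically cofibrant (since $A'$ is) and comes equipped with the required weak equivalence $A'\to Q$ of cofibrant monoids. With that reduction in place, the remainder of your argument goes through as written.
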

\begin{proof}
It is easy to see the weak equivalence class of $\thc(Q,Q)$ as a $\B$-algebra is independent of a fibrant-cofibrant replacement $Q$. 
So all we have to do is to prove the claim for one particular $Q$.
Take a trivial cofibration $i:A'\to Q$  with $Q$ fibrant in the category of monoids. 
Clearly, $Q$ is also cofibrant. 
We have a zig-zag of weak equivalences of non-unital $A'$-algebras as follows:
\[
Q\leftarrow A'\rightarrow F(M)\xleftarrow{\kappa_1} \Gamma'(M)\xrightarrow{\kappa_2} \Gamma(M)\rightarrow B,
\]
where $\kappa_1$, $\kappa_2$, and $\Gamma'(M)$ is defined in subsection \ref{SSatiyahduality}, and the non-unital $A'$-algebra structure on $\Gamma'(M)$ is defined completely analogously to $\Gamma(M)$. Using Proposition \ref{propnonunitalmodel}, we can replace this chain by  the following zig-zag of weak equivalences of nu $A'$-algebras:
\[
Q\leftarrow B'\rightarrow B,
\]
where  $B'$ is a fibrant object. By this and homotopy invariance properties of $\otimes$\  (and $\Inhom$) in \cite{mms}, we obtain a zig-zag of termwise  level equivalences of $\square$-objects:
\[ 
\thc^\bullet(Q,Q)\to \thc^\bullet(A',Q)\leftarrow \thc^\bullet(A',B')\rightarrow \thc^\bullet(A',B),
\]
where the first map is induced by pullback by $A'\to Q$. Again by properties of $\otimes$\ in \cite{mms},  any cs-spectra in this zig-zag are Reedy fibrant, so the application of $\tot$ produces a zig-zag of level equvalences between $\thc(Q,Q)$ and $\thc(A',B)$ as $\B$-algebras.
\end{proof}
\section{Non-unital $A_\infty$-symmetric ring spectrum in string topology}\label{Sainfty}
In this section we refine the Cohen-Jones ring spectra recalled in subsection \ref{SScohenjones} to a nu-$A_\infty$-ring spectrum. It is realized as an action of the operad $\tB$ defined in sub-subsection \ref{SSSslightgenofms} on a symmetric spectrum $\LMT$ which is isomorphic to $\LMTM$ in the homotopy category $\Ho(\SP)$.
We begin by  defining the symmetric spectrum  $LM^{-\tau}$. 
\begin{defi}\label{DLMT}
Recall from subsection \ref{SSatiyahduality} the definitions of $\TV_k$, $\M_k$, and $\MT$. Let $ev:\TV_k\times LM\to \TV_k\times M$ denote the product of the identity  and the evaluation at $0\in S^1$ for each $k\geq 0$. Let $ev^*\M_k$ denote the space defined by the following pullback diagram.
\[
\xymatrix{ev^*\M_k\ar[r]\ar[d]&\TV_k\times LM\ar[d]^{ev}\\
\M_k\ar[r]&\TV_k\times M.}
\]
Here, the bottom horizontal arrow is the projection of the disk bundle defined in section \ref{SSatiyahduality}. With the top horizontal arrow , we regard $ev^*\M_k$ as a disk bundle over $\TV_k\times LM$. An element $(\phi,\eps,v,c)\in ev^*\M_k$ is a pair of an element $(\phi,\eps,v)\in \M_k$ and $c\in LM$ such that $c(0)=\pi_{\phi}(v)$.
We define a space $\LMT_k$ as the Thom space associated to this   disk bundle $ev^*\M_k$. We endow the sequence $\LMT=\{\LMT_k\}_{k\geq 0}$ with a structure of  symmetric spectrum  exactly analogous to that of $\MT$. (The structure map $S^1\wedge \LMT_k\to \LMT_{k+1}$ does not change the loop component.) 
\end{defi}
\textbf{Notation.}\quad In the rest of paper, the bold letter $\ve$  (or $\ve_i$) denotes an element of $\M_k$, and the components of $\ve$ (or $\ve_i$) are denoted by
\[
(\phi,\eps,v)\quad (\,\text{or} \ (\phi_i,\eps_i,v_i)\,)
\]
where $\phi\in V_k$, $\eps\in (0,L_{e_0}/16)$, and  $v\in\bar\nu_\eps(\phi)$ ( or $\phi_i\in V_k$, $\eps_i\in (0,L_{e_0}/16)$, and $v_i\in\bar\nu_{\eps_i}(\phi_i)$ ). Similarly, the bold letter $\ci$ ( or $\ci_i$ ) denotes an element of $\EVN_k$. The components of $\ci$ ( or $\ci_i$ ) are denoted by
\[
(\phi,\eps,v,c)\quad (\,\text{or} \ (\phi_i,\eps_i,v_i,c_i)\,)
\]
where $(\phi,\eps,v)\in \M_k$  and  $c\in LM$ ( or $(\phi_i,\eps_i,v_i)\in \M_k$ and $c_i\in LM$ ). \\
For a sequence of spaces $\{X_k\}_{k\geq 0}$ and an integer $n\geq 1$ we put 
\[
X[k_1,\dots, k_n]=X_{k_1}\times\cdots\times X_{k_n}.
\]

\subsection{Action of $\tB$ on $\LMT$}\label{SSactiononLMT}

\subsubsection{Sketch of the construction}\label{SSSroughoutline}
We shall sketch the idea of the construction of the product (or  action) on $\LMT$. An element of $\LMT_k$ includes a loop in $M$. For two loops $c_1,\ c_2$, their basepoints (the values at $0\in S^1$) may not coincide, but if these points are sufficiently close, we can perturb these loops so that they have a common basepoint in a uniform way, and concatenate them. The perturbation is given by  usual parallel transport in the Euclidean space where $M$ is embedded.  So the loops go outside of $M$ once but if the loops are contained in a tubular neighborhood of $M$, we can get back them into $M$ by the projection of the neighborhood. If the basepoints of two loops are not sufficiently close, and the transported loops are not contained in the neighborhood, we collapse them to the basepoint of the Thom space.  More precisely, the perturbation of two loops is done by two self maps $\psi_1(d),\psi_2(d):M\to M$ on $M$ ($d$ will be explained later) which are homotopic to the identity on $M$. We define these maps so that $\psi_1(d)\circ c_1$ and $\psi_2(d)\circ c_2$ have a common basepoint. We take a common basepoint $p_0\in M$ appropriately, and $\psi_i(d)$ is given by projection  after the parallel transport by the vector $p_0-c_i(0)$, where each point means its image by the embedding and the difference is taken in the Euclidean space.  This map clearly satisfies $\psi_i(d)(c_i(0))=p_0$ so the two loops can be concatenated. Actually, we will use a point projected to $p_0$ in the tubular neighborhood instead of $p_0$. As is seen below, the embedding  and the radius of the tubular neighborhood which have appeared here, depend on elements of $\LMT$. \\
\indent We shall go into details. Take elements $\ci_1, \ci_2 \in \LMT$. $\ci_i$ consists of a map $\phi_i:\R^{k_0}\to \R^{k_i}$, a number $\eps_i>0$, an element $v_i$ of tubular neighborhood of $\phi_i|_M:M\to \R^{k_i}$, and a loop $c_i$ with $c_i(0)=\pi_{\phi_i}(v_i)$. The common point used instead of $p_0$ is  $(v_1,v_2)$, and $\psi_i(d)$ is given by
\[
M\ni y\longmapsto y+(v_1,v_2)-c_i(0)=y+(v_1,v_2)-\pi_{\phi_i}(v_i) \in \R^{k_1+k_2},
\]
followed by the projection $\pi_{\phi_1\times \phi_2}$. Here, $y,\ c_i(0),$ and  $\pi_{\phi_i}(v_i)$ are regarded as elements of $\R^{k_1+k_2}$ via $\phi_1\times \phi_2$
  (see Figure \ref{Fpsiarity2}). We see $\psi_i(d)(\pi_{\phi_i}(v_i))=\pi_{\phi_1\times \phi_2}(v_1,v_2)$ since the right hand side of the `$\longmapsto$' is $(v_1,v_2)$ if $y=\pi_{\phi_i}(v_i)$, so the loops perturbed by these maps have the  common basepoint $\pi_{\phi_1\times \phi_2}(v_1,v_2)$.  Obviously  $\psi_i(d)$ depends on $\phi_i,\ v_i$. Also, to justify this definition, the left hand side of this map must belong to a tubular neighborhood of $\phi_1\times \phi_2(M)$. This condition is satisfied if $(v_1,v_2)$ is sufficiently close to the image of $M$ since if so, the norm of the vector used in the parallel transport is sufficiently small. Thus, $\psi_i(d)$ depends on an element $d$ of  
\[
\D_{k_1,k_2}^2=\{(\phi_1,\eps_1,v_1),(\phi_1,\eps_1,v_2)\mid d((v_1,v_2), M)<\teps\}.
\]
Here, $M$ denotes its image by $\phi_1\times \phi_2$, and $\teps$ is a sufficient small number. We may regard the space
 $\D_{k_1,k_2}^2$ as a `parametrized tubular neighborhood of the diagonal $M\subset M^{\times 2}$ (in the Euclidean space)' and we concatenate two loops only when their basepoints (or more precisely their vectors at the point) belong to $\D^2_{k_1,k_2}$, and otherwise, we collapse them to the basepoint of the Thom space. Actually, the standard of ``sufficient closeness'', depends on $\phi_i$. We also need to send the points outside of the tubular neighborhood  to the outside of that as they are collapsed, so $\teps$ should  be smaller than $\eps_i$. By these demands,  we must define a function 
\[
\teps:\{((\phi_1,\eps_1),(\phi_2,\eps_2))\}\to \R
\] 
and $\teps$ in the definition is replaced with the value $\teps((\phi_1,\eps_1),(\phi_2,\eps_2))$. Finally, we set
\[
 \ci_1\cdot\ci_2=(\phi_1\times \phi_2,\,\teps,\,(v_1,v_2),\, (\psi_1(d)\circ c_1)\cdot(\psi_2(d)\circ c_2))
\]
if $(\phi_i,\eps_i,v_i)_{i=1,2}\in \D_{k_1,k_2}^2$ and $\ci_1\cdot \ci_2=*$ otherwise. 
Here, $\teps=\teps((\phi_1,\eps_1),(\phi_2,\eps_2))$ and the dot between  loops denotes the concatenation (with rescaling of the interval (or $S^1$) depending on an element of $\B(2)$).  \par
 Next, consider the product of three elements $\ci_1,\ci_2,$ and $\ci_3$. For the above product, $(\ci_1\cdot\ci_2)\cdot \ci_3$\ and $\ci_1\cdot(\ci_2\cdot\ci_3)$ do not coincide even if we ignore the parametrization of $S^1$. The perturbations used in the former product are  $\psi_1\circ\psi_1,\ \psi_1\circ\psi_2,\ \psi_2$ and the latter are $\psi_1,\ \psi_2\circ\psi_1,\ \psi_2\circ\psi_2$ since for example, $\ci_1$ is perturbed twice by $\psi_1$ in the former one. More precisely, by the definition of the product, $\psi_1\circ\psi_1$, the first map in the former triple (resp. $\psi_1$, the first map  in the latter triple) means $\psi_1(d_1)\circ\psi_1(d_2)$ (resp. $\psi_1(d_3)$) with 
\[
d_1=((\phi_1\times \phi_2, \teps),(\phi_3,\eps_3)),\quad d_2=((\phi_1,\eps_1),(\phi_2,\eps_2)),\ 
\ \text{and}\  \ d_3=((\phi_1,\eps_1),(\phi_2\times \phi_3,\teps'))
\]
 where $\teps=\teps((\phi_1,\eps_1),(\phi_2,\eps_2))$, $\teps'=\teps((\phi_2,\eps_2),(\phi_3,\eps_3))$. The other maps are similar abbreviations.  To construct a homotopy between the two products, it is enough to construct a homotopy between these triples. The construction is similar to that of $\psi_i$. Similarly to $\D^2$, we define a parameter space by
\[
\D^3_{k_1,k_2,k_3}=\{(u\,;(\phi_i,\eps_i,v_i))_{i=1,2,3}\mid u\in \K(3), \ d((v_1,v_2,v_3), M)<\teps(u\, ;(\phi_1,\eps_1),(\phi_2,\eps_2),(\phi_3,\eps_3))\}
\]
Here $\K(3)$ is the arity 3 part of the associahedra, and $M$ denotes its image by $\phi_1\times \phi_2\times \phi_3$, and $\teps$ is suitable function. We shall look at the homotopy between the first maps $\psi_1\circ\psi_1$ and $\psi_1$. It is defined as  a map
\[
\psi^3_1:\D^3_{k_1,k_2,k_3}\longrightarrow \Map(M,M).
\]
We use the cone presentation of $\K(3)$ in subsection \ref{SSK}. Let $u_L$ (resp. $u_R$) be the vertex represented by the tree $T\circ_1T$ (resp. $T\circ_2T$), where $T$ is the unique 2-tree. An element of $\K(3)$ is expressed as $tu_L$ or $tu_R$ for some $t\in [0,1]$, and $0u_L$ and $0u_R$ represent the same point. $\psi^3_1(tu_L)$, where the other components $(\phi_i, \eps_i, v_i)$ of $\D^3$ are omitted, is defined by
\begin{equation}\label{EQarity3psi}
M\ni y\ \longmapsto \  t\,\{ \psi_1(d_1)\circ\psi_1(d_2)(y)\}+(1-t)\{y+v-\pi_{\phi_1}(v_1)\}\ \in \R^{k_1+k_2+k_3}
\end{equation}
followed by $\pi_{\phi}$, where we set $v=(v_1,v_2,v_3)$ and $\phi=\phi_1\times \phi_2\times \phi_3$, and elements in $M$ denotes its image by $\phi$, and $t$ and $(1-t)$ mean the usual scalar multiplication, so this is the straight line homotopy.  Similarly, $\psi^3_1(tu_R)$ is defined by 
\[
y\ \longmapsto\  t\,\{ \psi_1(d_3)(y)\}+(1-t)\{y+v-\pi_{\phi_1}(v_1)\}\in \R^{k_1+k_2+k_3}
\]
followed by $\pi_{\phi}$. The two definitions coincide on $t=0$ obviously. The equation 
\begin{equation}\label{EQbasepointspsi}
\psi^3_1(tu_L\text{ or }tu_R)(\pi_{\phi_1}(v_1))=\pi_{\phi}(v)
\end{equation}
 also holds. We shall verify this for $tu_L$. Put $y_0:=\pi_\phi(v)$. We have $\psi_1(d_1) \circ \psi_1(d_2)(\pi_{\phi_1}(v_1))=\psi_1(d_1)(\pi_{\phi_1\times \phi_2}(v_1,v_2))=\pi_{\phi}(v)=y_0$. So if the norm of $v-\pi_{\phi_1}(v_1)$ is sufficiently small and $y=\pi_{\phi_1}(v_1)$ holds,  the two endpoints of the line segment parametrized by $t$ in the formula (\ref{EQarity3psi}) belong to the fiber over $y_0$  in the tubular neighborhood $\nu_{\eps}(\phi)$ for some $\eps>0$. The fiber is the $\eps$-disk in the affine subspace $\phi(y_0)+(T_{\phi(y_0)}\phi(M))^\perp$ by definition. In particular it is a convex set. So any point in the line segment belongs to the same fiber, which implies the equation (\ref{EQbasepointspsi}). This equation ensures the perturbed loops to have a common basepoint if we define homotopies between the remaining pairs in the two triples  similarly. In the higher arity case, we define the perturbation map $\psi$ similarly. We proceed on inductively for the arity. In arity $n$, we first define the map on $\partial \K(n)$ using  composition of the maps in lower arity, then using the cone presentation of $\K(n)$, we define the map on the interior. We also construct $\teps$ similarly and actually we construct  $\teps$ first. In the following, we omit the superscript of $\psi$.

\subsubsection {Formula for the action}\label{SSSoutlineLMT}
The action of $\tB$ on $\LMT$ which we will construct is denoted by $\Phi=\{\Phi_n\}_{n\geq 1}$ where $\Phi_n$ is a morphism
\[
\Phi_n:\tB(n)\hotimes (\LMT)^{\otimes n}\longrightarrow \LMT.
\] 

We will define a continuous function
\[
\teps=\teps_{k^1,\dots, k^n}:\K(n)\times\TV[k_1,\dots, k_n]\longrightarrow (0,L_e/16)
\]
(which satisfies some conditions) for each $k_1\dots k_n\geq 0$. Roughly speaking, this function gives the upper bound of  the distance between basepoints of loops which the loop product does not collapse loops to the basepoint of the Thom space.  We define a space $\D_{k_1,\dots, k_n}^n\subset \K(n)\times \M[k_1,\dots,k_n]$ by 
\[
\D_{k_1,\dots, k_n}^n=\{(u;\ve_i)_{i=1}^n\mid d((v_1,\dots, v_n),\, \phi(M))\leq \teps (u;(\phi_i,\eps_i)_{i=1}^n)\},
\]
where $\phi=\phi_1\times\cdots\times \phi_n:\R^{k_0}\to\R^{k_{\leq n}}$, and $d(-,-)$ is the Euclidean distance. 
As in the previous sub-subsection, we may consider the space
 $\D_{k_1,\dots, k_n}^n$ as a `parametrized tubular neighborhood of the diagonal $M\subset M^{\times n}$ (in the Euclidean space)' and we concatenate $n$ loops only when their basepoints (or more precisely their vector at the point) belongs to $\D$, and otherwise, we collapse them to the basepoint of the Thom space. \\
\indent  We will  define a continuous map 
\[
\tPhi_{k_1,\dots, k_n}:\B(n)\times ev^*\D_{k_1,\dots, k_n}^n\longrightarrow ev^*\M_{k_{\leq  n}}
\]
Here, $ev^*\D_{k_1,\dots,k_n}$ is defined by the following pullback diagram
\[
\xymatrix{ev^*\D_{k_1,\dots,k_n}\ar[r]\ar[d]&\D_{k_1,\dots,k_n}\ar[d]^{\cap}\\
\K(n)\times ev^*\M[k_1,\dots,k_n]\ar[r]^{\id\times ev}&\K(n)\times \M [k_1,\dots,k_n],}
\]
or more explicitly
\[
\begin{split}
ev^*\D_{k_1,\dots,k_n}=\{(u;\ci_i)_{i=1}^n\mid d((v_1,\dots, v_n),\, \phi(M))&    \leq \teps (u;(\phi_i,\eps_i)_{i=1}^n)\}\vspace{1mm}\\
&(\,\subset \K(n)\times ev^*\M[k_1,\dots, k_n]\,).
\end{split}
\]
Assuming the existence of $\tPhi$, the action $\Phi_n:\tB(n)\hotimes (\LMT)^{\otimes n}\longrightarrow \LMT$ is defined using $\tPhi$ and a collapsing map as follows. 
If the values of $\teps$ are  sufficiently small, by  Proposition\ref{PtB} (1), we may regard the space $\B(n)\times ev^*\D_{k_1,\dots,k_n}$ as a subspace of $\tB(n)\times \EVN[k_1,\dots,k_n]$ by
\[
\B(n)\times ev^*\D_{k_1,\dots,k_n}\,\subset \,
\B(n)\times \K(n)\times ev^*\M[k_1,\dots,k_n]\,=\,\tB(n)\times \EVN[k_1,\dots,k_n].
\] Using this inclusion, the morphism $\Phi_n$ is defined by the following composition
\[
\begin{split}
(\tB (n)_+)\wedge \LMT_{k_1}&\wedge \LMT_{k_2}\wedge\cdots\wedge \LMT_{k_n}\\
&\cong \, (\tB(n)\times\EVN[k_1,\dots, k_n])\, /\, \tB(n)\times\partial(\EVN[k_1,\dots, k_n])\\
&\xrightarrow{\text{collapse}}\,   
(\B(n)\times ev^*\D_{k_1,\dots, k_n})\, /\, (\B(n)\times\partial ev^*\D_{k_1,\dots,k_n})\\
&\xrightarrow[]{\tPhi}\, (\EVN_{k_{\leq n}})\, /\, (\partial\EVN_{k_{\leq n}})=
\LMT_{k_{\leq n}}
\end{split}
\]
Here, $\tB(n)_+$ denotes $\tB(n)$ with disjoint basepoint,  $\partial ev^*\D_{k_1,\dots, k_n}$, $\partial(\EVN[k_1,\dots,k_n])$, 
 and  $\partial\EVN_{k_{\leq n}}$ (the total space of ) the boundary sphere bundles of $ev^*\D_{k_1,\dots,k_n}$, $\EVN[k_1,\dots,k_n]$, and  $\EVN_{k_{\leq n}}$ respectively. \\
\indent For elements $(f,u)\in\tB(n)=\B(n)\times\K(n)$, $\ci_i\in\EVN$,  $\tPhi(f,u\,;\ci_1,\dots,\ci_n)$ will be of the following form 
\begin{equation}
(\phi_1\times\cdots\times \phi_n,\, \teps,\, (v_1,\dots,v_n),\, \tc) \label{EtPhi}
\end{equation} 
for some loop $\tc$  ($\teps=\teps\,(u\,;(\phi_1,\eps_1),\dots, (\phi_n,\eps_n))$). To define $\tc$, we will need a  continuous map
\[
\psi=(\psi_1,\dots,\psi_n):\D_{k_1,\dots, k_n}\longrightarrow \Map_{\CG}(M, M)^{\times n}.
\]
for each $k_1,\dots, k_n$. As mentioned in sub-subsection \ref{SSSroughoutline}, this map  encodes the perturbation  procedure. Actually,  $\psi$ will satisfy the equation 
\begin{equation}
\psi_1(d)(\pi_{\phi_1}(v_1))=\psi_2(d)(\pi_{\phi_2}(v_2))=\cdots =\psi_n(d)(\pi_{\phi_n}(v_n))
=\pi_{\phi_1\times\cdots\times\phi_n}(v_1,\dots,v_n) \label{Ebp}
\end{equation}
 for any $d=(u\,;\ve_1,\dots,\ve_n) \in \D$. Here $\psi_i(d)\in \Map(M,M)$ denotes the value of $\psi_i$ at $d$. So if $(u\,;\ci_1,\dots,\ci_n)\in ev^*\D$, as $c_i(0)=\pi_{\phi_i}(v_i)$,  the $n$ loops $\psi_1(d)\circ c_1,\dots,\psi_n(d)\circ c_n$ has common basepoint $\pi_{\phi_1\times\cdots\times \phi_n}(v_1,\dots,v_n)$ and we can concatenate these loops at the basepoint. Here, $d$ is the image of $(u\,;\ci_1,\dots,\ci_n)$ by the projection $ev^*\D\to\D$. $\tc$ is defined as the  loop created by concatenation of the loops rescaled with the rate determined by an element $f\in\B(n)$. Precisely speaking, we put
\begin{equation}\label{Etc}
\tc(t)=\psi_i(d)\circ c_i(nf(t)-i+1) \quad \text{ for }\  t\in f^{-1}\left[ \frac{i-1}{n},\frac{i}{n}\right], \ \  1\leq i\leq n.
\end{equation}
 (Here we identify $f$ with a weakly monotone surjection on $[0,1]$, see subsection \ref{SSms}.) See Figure \ref{Fperturb} for the case of $n=2$.
This $\tc$ is a well-defined single loop since $\psi_i(d)$'s have the common basepoint, and $nf(t)-i+1=0$ (resp. $1$) if $t\in f^{-1}(\frac{i-1}{n})$ (resp. $f^{-1}(\frac{i}{n})$).

\begin{figure}\label{Fperturb}
\begin{center}
 \input{perturbation.TEX}
\end{center}
\caption{Perturbation of loops :  the dot represents the value at $0$ of each loop, and we set $\phi=\phi_1\times\phi_2$,\ $v=(v_1,v_2)$}
\end{figure}

 In summary, according to the decomposition $\tB=\B\times\K$ of Proposition \ref{PtB} (1), we may say $\K$ controls the purterbation and $\B$ does the rescaling. \\
\indent Thus, the construction of the action of $\tB$ on $\LMT$ is reduced to the construction of maps $\teps$ and $\psi$.
\begin{figure}\label{Fpsiarity2}
\begin{center}
{\unitlength 0.1in%
\begin{picture}(22.4800,10.9600)(-1.8000,-11.7600)%
%
\special{pn 8}%
\special{ar 1114 2405 1527 1527 4.1053411 5.3274901}%
%
\special{pn 8}%
\special{ar 1110 2395 2262 2262 4.2448275 5.1498209}%
%
\special{pn 20}%
\special{pa 810 995}%
\special{pa 1404 305}%
\special{fp}%
\special{sh 1}%
\special{pa 1404 305}%
\special{pa 1345 342}%
\special{pa 1369 345}%
\special{pa 1376 369}%
\special{pa 1404 305}%
\special{fp}%
%
\special{sh 1.000}%
\special{ia 814 995 33 33 0.0000000 6.2831853}%
\special{pn 8}%
\special{ar 814 995 33 33 0.0000000 6.2831853}%
%
\special{sh 1.000}%
\special{ia 1430 281 35 35 0.0000000 6.2831853}%
\special{pn 8}%
\special{ar 1430 281 35 35 0.0000000 6.2831853}%
%
\special{pn 8}%
\special{pa 1430 281}%
\special{pa 1430 1101}%
\special{fp}%
%
\special{pn 8}%
\special{pa 1430 1091}%
\special{pa 1584 1091}%
\special{fp}%
%
\special{pn 8}%
\special{pa 1430 1091}%
\special{pa 1490 1091}%
\special{pa 1490 1031}%
\special{pa 1430 1031}%
\special{pa 1430 1091}%
\special{pa 1490 1091}%
\special{fp}%
%
\special{sh 1.000}%
\special{ia 1414 1111 31 31 0.0000000 6.2831853}%
\special{pn 8}%
\special{ar 1414 1111 31 31 0.0000000 6.2831853}%
%
\special{pn 8}%
\special{pa 1134 615}%
\special{pa 884 615}%
\special{fp}%
\put(4.7000,-6.1500){\makebox(0,0){$v-\phi(\pi_{\phi_i}(v_i))$}}%
\put(6.8000,-11.5500){\makebox(0,0){$\phi(y)$}}%
\put(17.7000,-4.4000){\makebox(0,0){$\widehat{\psi}_i(d)(y)$}}%
\put(14.2400,-12.4100){\makebox(0,0){$\psi_i(d)(y)$}}%
\put(20.9000,-1.4500){\makebox(0,0){$\nu_\eps(\phi)$}}%
\put(20.0000,-9.7000){\makebox(0,0){$\phi(M)$}}%
\end{picture}}%
\end{center}
\caption{the definition of $\psi$ for the arity 2 :  we set $d=(pt;\ve_1,\ve_2)$}
\end{figure}
In sub-subsection \ref{SSSconditionsLMT}, we state conditions which $\teps$ and $\psi$ satisfy and prove these conditions ensure the action $\Phi$ is well-defined. In sub-subsection \ref{SSSconstructionLMT}, we construct these maps. The construction is a standard inductive one using the cone presentation of the associahedra given in subsection \ref{SSK}.  
\subsubsection{Conditions which $\teps$ and $\psi$ satisfy}\label{SSSconditionsLMT}
In this subsection, we list the conditions which $\teps$ and $\psi$ satisfy and prove the conditions ensure the formulae of the previous section give a well-defined operad action ( see Proposition \ref{Pwelldefined}). The construction is given in next subsection.\\

\indent \textbf{Conditions on $\teps$.}\quad $\teps$ satisfies the following five conditions \epsS - \epscone \ for any $n\geq 2$ and $(u\,;(\phi_1,\eps_1),\dots,(\phi_n,\eps_n))\in\K(n)\times\TV[k_1,\dots,k_n]$.
\begin{itemize}
\item[\epsS] For $i$ with $1\leq i\leq n$, $\teps(u\,;(\phi_1,\eps_1),\dots, (\phi_n,\eps_n))=\teps(u\,;(\phi_1,\eps_1),\dots,(0\times\phi_i,\eps_i),\dots, (\phi_n,\eps_n))$, where $0\times\phi_i(x)=(0,\phi_i(x))\in\R^{k_i+1}$.
\item[\epsSigma] For any permutations $\sigma_1,\dots,\sigma_n$,\ $\teps(u\,;(\phi_1,\eps_1),\dots, (\phi_n,\eps_n))=\teps(u\,;(\sigma_1\phi_1,\eps_1),\dots,(\sigma_n\phi_n,\eps_n))$, where $\sigma_i\phi_i$ is the composition of $\phi_i$ with the permutation of the components of $\R^{k_i}$ associated to $\sigma_i$.
\item[\epscomp] If $u$ is presented as $u=u_1\circ_iu_2$ for some $i$ and   $u_s\in \K(n_s)$ ($s=0,1$,\ $n_s\geq 2,\ n_1+n_2-1=n$) , 
\[
\teps(u\,;(\phi_1,\eps_1),\dots, (\phi_{n},\eps_{n}))=\teps(u_1\,;(\phi_1,\eps_1),\dots,(\phi_{i-1},\eps_{i-1}),(\phi',\eps'),(\phi_{i+n_2},\eps_{i+n_2}),\dots,(\phi_{n},\eps_{n})\,)
\]
where $(\phi',\eps')=(\phi_i\times\cdots\times\phi_{i+n_2-1},\,
\teps(u_2\,;(\phi_i,\eps_i),\dots,(\phi_{i+n_2-1},\eps_{i+n_2-1}))\,)$.
\item[\epsev]
$\teps(u\,;(\phi_1,\eps_1),\dots, (\phi_n,\eps_n))\leq \frac{1}{10^{A}}\min\{\eps_1,\dots,\eps_n\}$, where $A=|\phi_1\times\cdots\times \phi_n|^2$.
\item[\epscone]  If $u\in\partial \K(n)$, 
$\teps(tu\,;(\phi_1,\eps_1),\dots,(\phi_n,\eps_n))\leq \teps(u\,;(\phi_1,\eps_1),\dots,(\phi_n,\eps_n))$ for any $t\in [0,1]$.
\end{itemize}
Note that the first three conditions are natural ones to ensure equivariance with the sphere spectrum, with the symmetric group, and compatibility with the operad composition, respectively. The last two conditions are ones to make the construction of $\teps$ well-defined at each inductive step.\\

\textbf{Conditions on $\psi$.}\quad To state conditions which $\psi$ satisfies, we need three maps:
\[
\begin{split}
\alpha_i:
&
\K (n)\times \K (m)\times \M[k_1,\dots,k_{n+m-1}]\longrightarrow \K(n)\times \M[k_1,\dots,k_{i-1}, k_{(i)}, k_{i+m}\dots, k_{n+m-1}], 
\\
\beta_i:
&
 \K (n)\times\K(m)\times\M[k_1,\dots, k_{n+m-1}]\longrightarrow \K(n+m-1)\times\M[k_1,\dots, k_{n+m-1}],\ \text{and}
\\
\gamma_i :
&
\K (n)\times\K(m)\times
\M[k_1,\dots, k_{n+m-1}]\longrightarrow\K(m)\times\M[k_i,\dots, k_{i+m-1}]
\end{split}
\]
 defined by 
\[
\begin{split}
\alpha_i(u_1,u_2\,;\ve_1,\dots,\ve_{n+m-1})=(u_1\,;& \ve_1,\dots,\ve_{i-1},\ve_{(i)},\ve_{i+m},\dots,\ve_{n+m-1}),\\
\beta_i =(-\circ_i-)\times  \id, \quad \text{and}\quad & \gamma_i =proj_2\times proj_{i,\dots,i+m-1},
\end{split}
\]
where 
\[
\begin{split}
k_{(i)}&=k_i+\cdots +k_{i+m-1},\\
\ve_{(i)}&=(\phi_i\times\cdots\times\phi_{i+m-1},\ \teps (u_2\,;(\phi_i,\eps_i),\dots, (\phi_{i+m-1},\eps_{i+m-1})),\ (v_i,\dots,v_{i+m-1}))
\end{split}
\], and $proj_2$ and $proj_{i,\dots,i+m-1}$ are the usual projections to the components indicated by subscripts. \par
 $\psi$ satisfies the following five conditions \psiS - \psiev \, for any set of indexes such that the involved notations make sense. Concretely speaking, the condition \psicomp\, is satisfied for any $n,m \geq 2,\ i,\ k_1,\dots, k_{n+m-1}\geq 0$ with $1\leq i\leq n$. The others, where $\D^n$ is abbreviation of $\D^n_{k_1,\dots, k_n}$, are satisfied for $n\geq 2, k_1,\dots, k_n\geq 0$.
\begin{enumerate}
\item[\psiS] For $t\in\R$, $i$ with $1\leq i\leq n$, and $(u\,;\ve_1,\dots,\ve_n)\in \D^n$, if $(u\,;\ve_1,\dots,t\times\ve_i,\dots,\ve_n)\in \D^n$, $\psi(u\,;\dots,\ve_i,\dots)=\psi(u\,;\dots,t\times\ve_i,\dots)$. Here $t\times \ve_i$ denotes the element $(0\times \phi_i,\eps_i,(t,v_i))$.
\item[\psiSigma] For $(u\, ;\ve_1,\dots,\ve_n)\in \D^n$ and $\sigma_i\in \Sigma_{k_i}$,
$\psi(u\, ;\sigma_1\cdot\ve_1,\dots,\sigma_n\cdot\ve_n)=\psi(u\,;\ve_1,\dots,\ve_n)$. Here, the $k$-th symmetric group acts on $\M_k$ by permutations of components.

\item[\psicomp] The following diagram commutes:
\[
\xymatrix{
\beta_i^{-1}\D^{n+m-1}_{k_1,\dots, k_{n+m-1}}\ar[rr]^{\alpha_i\times \gamma_i\qquad \qquad} \ar[dd]^{\beta_i} &&\D^n_{k_1,\dots, k_{(i)},\dots k_{n+m-1}}\times\D^m_{k_i,\dots, k_{i+m-1}}\ar[d]^{\psi\times\psi}\\
&&\Map(M,M)^n\times\Map(M,M)^m\ar[d]^{Comp_i}\\
 \D^{n+m-1}_{k_1,\dots, k_{n+m-1}}\ar[rr]^{\psi} &&\Map(M,M)^{n+m-1}.}
\]
Here, $k_{(i)}=k_i+\cdots +k_{i+m-1}$, and $Comp_i$ is defined as follows:
\[
Comp_i (g_1,\dots,g_n;f_1,\dots, f_m)=(g_1,\dots, g_{i-1}, g_i \circ f_1,\dots,g_i \circ f_m,g_{i+1},\dots,g_n).
\]
\item[\psibp] For $(u\,;\ve_1,\dots,\ve_n)\in \D^n$, 
$\psi_i(u\,;\ve_1,\dots,\ve_n)(\pi_{\phi_i}(v_i))=\pi_{\phi_1\times\cdots\times\phi_n}(v_1,\dots, v_n)$.
\item[\psiev] For $(u\,;\ve_1,\dots,\ve_n)\in \D^n$, $|\psi(u\,;\ve_1,\dots,\ve_n)(y)- y|\leq
6n^2d(v,\phi(M))$ where $v=(v_1,\dots,v_n)$, and $\phi=\phi_1\times\cdots\times\phi_n$, and $d(-,-)$ denotes the Euclid distance, and the minus and the norm in the left hand side are taken in $\R^{k_0}$, see subsection \ref{SScohenjones}.
\end{enumerate}
Note that by the conditions \epscomp and \epsev , we have
\[
\begin{split}
\alpha_i(\beta_i^{-1}\D^{n+m-1}_{k_1,\dots, k_{n+m-1}})&\subset \D^n_{k_1,\dots,k_{(i)},k_{i+m},\dots, k_{n+m-1}}\\
\gamma_i(\beta_i^{-1}\D^{n+m-1}_{k_1,\dots, k_{n+m-1}})&\subset \D^m_{k_i,\dots, k_{i+m-1}}.
\end{split}
\]
By these inclusions, the top horizontal map of \psicomp\  is well-defined. \\
\indent The role of first three and last conditions are similar to those on $\teps$. The forth condition ensures  loops perturbed by $\psi$ to have common basepoint as is seen for $n=2,3$ in sub-subsection \ref{SSSroughoutline}.
\begin{prop}\label{Pwelldefined}
If $\teps$ and $\psi$ satisfy the conditions \epsS, \epsSigma, \epscomp, \psiS, \psiSigma, \psicomp, and \psibp, the formulae (\ref{EtPhi}) and (\ref{Etc}) in sub-subsection \ref{SSSoutlineLMT} define an action of $\tB$ on $\LMT$.
\end{prop}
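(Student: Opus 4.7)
The plan is to verify three separate items in sequence: first, that the formulae (\ref{EtPhi}) and (\ref{Etc}) yield a well-defined continuous pointed map $\Phi_n$ for each $n$; second, that each $\Phi_n$ is a morphism of symmetric spectra (compatible with both the symmetric group actions on each factor and with the sphere-spectrum structure maps); and third, that the collection $\{\Phi_n\}_n$ satisfies the operad composition axiom for $\tB$. The decomposition $\tB \cong \B \times \K$ of Proposition \ref{PtB}~(1) will let us treat the $\K$-component (which controls the perturbation $\psi$ and the radius $\teps$) and the $\B$-component (which controls the reparametrization/concatenation of loops) somewhat separately.

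For the first item, note that the output (\ref{EtPhi}) lies in $\EVN_{k_{\leq n}}$ because $(v_1,\dots,v_n) \in \bar\nu_\teps(\phi_1 \times \cdots \times \phi_n)$ by the very definition of $ev^*\D^n_{k_1,\dots,k_n}$, and because the loop $\tc$ defined by (\ref{Etc}) is genuinely a single continuous loop: on each interval $f^{-1}[(i-1)/n, i/n]$ the perturbed loop $\psi_i(d)\circ c_i$ starts and ends at $\pi_\phi(v_1,\dots,v_n)$ by \psibp, so the concatenations at the breakpoints $f^{-1}(i/n)$ are unambiguous, and $\tc(0) = \pi_\phi(v_1,\dots,v_n)$. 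The collapse map is consistent with basepoints because on $\B(n) \times \partial ev^*\D^n$ the vector $(v_1,\dots,v_n)$ lies on $\partial\bar\nu_\teps(\phi)$, which represents the basepoint in $\LMT_{k_{\leq n}}$; continuity of $\Phi_n$ is then clear from continuity of $\teps$, $\psi$ and concatenation.

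For the second item, $\Sigma_{k_1} \times \cdots \times \Sigma_{k_n}$-equivariance of $\tPhi$ follows from \epsSigma\ (so that $\teps$ is unchanged under permutation of the coordinates of each $\R^{k_i}$) and \psiSigma\ (same for $\psi$); hence $\tPhi$ intertwines the source action with the block-permutation subgroup of $\Sigma_{k_{\leq n}}$ acting on the target, which is precisely the equivariance needed for $\Phi_n$. For compatibility with the sphere-spectrum structure map, apply $\iota$ to the $i$-th input, replacing $(\phi_i,\eps_i,v_i,c_i)$ by $(0\times\phi_i,\eps_i,(t,v_i),c_i)$: by \epsS\ the value of $\teps$ is unaffected, and by \psiS\ the value of $\psi$ is unaffected, so the output of $\tPhi$ is literally the image of the original output under the corresponding structure map of $\LMT$ (up to the standard shuffle that inserts the new coordinate in the correct block).

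For the third (and main) item, take $(f,u) \in \tB(n) = \B(n)\times\K(n)$ and $(g_s,u_s) \in \tB(m_s)$ for $s=1,\dots,n$, and inputs $\ci_{s,j}$. We must compare $\Phi_n(f,u;\Phi_{m_1}(g_1,u_1;\cdots),\dots,\Phi_{m_n}(g_n,u_n;\cdots))$ with $\Phi_{m_1+\cdots+m_n}$ applied to the $\tB$-composite. On the $\K$-component there are three matchings. At the level of radii, iterating $\teps$ for each $u_s$ and then for $u$ yields the same value as $\teps(u\circ(u_1,\dots,u_n);(\phi_{s,j},\eps_{s,j}))$ by iterating \epscomp. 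At the level of perturbations, the outer $\psi_s$ applied after the inner concatenation matches the single $\psi$ evaluated at the composed tree: the inner concatenation is a loop based at $\pi_{\phi_s}(v_s)$ with $\phi_s=\phi_{s,1}\times\cdots\times\phi_{s,m_s}$ and $v_s=(v_{s,1},\dots,v_{s,m_s})$ (by \psibp), and the outer $\psi_s$ sends this basepoint to $\pi_{\phi_1\times\cdots\times\phi_n}(v_1,\dots,v_n)$ (by \psibp again), exactly as encoded by the $Comp_i$ in \psicomp. On the $\B$-component, the explicit formula (\ref{EQparameter}) for $\underline{f\circ_i g}$ combined with the concatenation rule (\ref{Etc}) implies that first concatenating via each $g_s$ and then reparametrizing via $f$ gives the same loop as the one obtained from the composite reparametrization $f\circ(g_1,\dots,g_n) \in \B(m_1+\cdots+m_n)$.

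The main obstacle will be the simultaneous bookkeeping of these three layers in the third item: the $\K$-composition shifts the arguments of $\teps$ and $\psi$, the $\B$-composition reparametrizes the interval on which loops are defined, and the loop concatenation must be coherent with both. The crucial point is that \psibp\ makes both the inner and the outer concatenations loops, so that post-composition with the outer $\psi$ commutes with inner concatenation — this is precisely the content of the diagram \psicomp, and is what makes the two sides of the associativity axiom agree pointwise rather than merely up to homotopy.
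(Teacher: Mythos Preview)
Your proposal is correct and follows essentially the same route as the paper: verify that \psibp\ makes $\tc$ a well-defined loop, that \epsS/\psiS\ and \epsSigma/\psiSigma\ give the spectrum and $\Sigma$-equivariance, and that \epscomp\ together with \psicomp\ and the explicit formula (\ref{EQparameter}) for $\B$-composition yield the operad axiom. The paper checks the composition axiom via a single partial composition $\circ_i$ and carries out the pointwise loop equality explicitly, whereas you sketch the full composition $(g_1,\dots,g_n)$ at a higher level; both are equivalent and the paper's explicit computation is exactly the calculation your third item promises.

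One small imprecision worth correcting: in your final paragraph you write that \psibp\ is what makes post-composition with the outer $\psi_s$ commute with the inner concatenation. Post-composition with \emph{any} self-map of $M$ commutes with concatenation, since concatenation only rearranges the domain $S^1$; the role of \psibp\ is solely to guarantee that the inner and outer concatenations are closed loops with the correct basepoints (so that the formula (\ref{EtPhi}) lands in $\EVN$). The genuine content matching the iterated perturbation $\psi_s(u)\circ\psi_j(u_s)$ with $\psi_{?}(u\circ(u_1,\dots,u_n))$ is entirely carried by \psicomp, not by \psibp.
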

\begin{proof}
By \psibp,  $\tc$ is a well-defined loop (see sub-subsection \ref{SSSoutlineLMT}). By conditions \epsS\  and \psiS\  (resp.\epsSigma\  and \psiSigma)  the map $\Phi:(\tB (n)_+)\wedge \LMT_{k_1}\wedge \LMT_{k_2}\wedge\cdots\wedge \LMT_{k_n}\longrightarrow \LMT_{k_{\leq n}}$ commutes with the action of $\Sph$ (resp. $\Sigma_*$) so defines a morphism of symmetric spectra. The remaining  thing is compatibility of $\Phi$ and composition of $\tB$($=\B\times \K$). 
Let $(f,u)\in \B(n)\times \K(n)$ and $(g,w)\in \B(m)\times \K(m)$ be two elements. We must show
\[
\tc( f\circ_i g,u\circ_i w\,;\ci_1,\dots, \ci_n)=\tc(f,u\,;\ci_1,\dots,\ci_{i-1},\ci_{(i)},\ci_{i+m},\dots, \ci_{n+m-1}), 
\]
where $\ci_{(i)}=(\phi_i\times\cdots\times \phi_{i+m-1},\, \eps_{(i)},\, (v_i,\dots, v_{i+m-1}),\,  \tc(g, w\,;\ci_i,\dots,\ci_{i+m-1}))$ and we regard $\tc$ as a map $ \B(n')\times ev^*\D^{n'}\to LM$ ($n'\geq 2$). The compatibility with composition obviously follows from this equation and the condition \epscomp. To prove the equation, note that
\[
\begin{split}
t\in (f\circ_i g)^{-1}&\left[\frac{k-1}{n+m-1}, \frac{k}{n+m-1}\right] \\
&\iff
\left\{
\begin{array}{ll}
t\in f^{-1}[\frac{k-1}{n},\frac{k}{n}]& \text{ if }k\leq i-1\vs{2mm}\\
nf(t)-i+1\in g^{-1}
[\frac{k-i}{m},\frac{k-i+1}{m}]& \text{ if }i\leq k\leq i+m-1\vs{2mm}\\
f^{-1}[\frac{k-m}{n},\frac{k-m+1}{n}]& \text{ if }i+m\leq k\leq n+m-1
\end{array}\right.
\end{split}
\]
We shall show the equation in the  case of $i\leq k\leq i+m-1$. Proofs of the other cases are similar.
 By the above condition on $t$, we have $t\in f^{-1}[\frac{i-1}{n},\frac{i}{n}]$ so the right hand side of the above equation is equal to
\[
\begin{split}
\psi_i(u)\circ c_{(i)}(nf(t)-i+1)&=\psi_i(u)\circ\psi_{k-i+1}(w)(c_k[mg(nf(t)-i+1)-(k-i+1)+1])\\
&=\psi_k(u \circ_i w )(c_k((n+m-1)f\circ_ig(t)-k+1)) \qquad \text{(by \psicomp)}\\
&= \text{ the left hand side.}
\end{split}
\]
Here, for example, $\psi_i(u)$ is abbreviation of $\psi_i(u\,;\ve_1,\dots, \ve_{(i)},\dots\ve_{n+m-1})$, and see also the equation (\ref{EQparameter}) in subsection \ref{SSms}.
\end{proof}
\subsubsection{Construction of $\teps$ and $\psi$}\label{SSSconstructionLMT}
We first describe construction of $\teps$ and $\psi$, then verify well-definedness of the construction. \\

\textbf{Construction of $\teps$.}\quad 
We construct $\teps$ by induction on the arity $n$. When $n=2$, $\K(2)$ is a one-point set $\{pt\}$. We put
\[
\teps( pt;(\phi_1,\eps_1),(\phi_2,\eps_2))=\frac{1}{10^{|\phi_1\times\phi_2|^2}}\min\{\eps_1,\eps_2\}.
\]
Suppose $\teps$ is constructed up to $n-1$ and satisfies conditions \epsS\ -\epscone\ as far as they make sense. Let $B_V:\T(n)_{1,2}\longrightarrow \CG$ be the diagram given by
\[
B_V(T)=B_n(T)\times \TV[k_1,\dots,k_n]
\]
(see subsection \ref{SSK}). Let $\partial\K(n)$ denote the subspace of $\K(n)$ which is the union of  all faces of codimension one. To define $\teps$ on $\partial\K(n)\times\TV[k_1,\dots,k_n]$,
we define a natural transformation $\hat\eps:B_V\to (0,L_{e_0}/16)$, where $(0,L_{e_0}/16)$ denotes the constant functor taking the interval as its the value on each object. For an element $T$ of codimension one, we define $\hat\eps_T$ by the  equation  in \epscomp. In other words, if $T=T_1\circ_iT_2$, and the arity of $T_t$ is $n_t$,  we put 
\[
\begin{split}
\hat\eps_T(u,w\,;(\phi_1,\eps_1),\dots, (\phi_{n_1+n_2-1},\eps_{n_1+n_2-1}))&   
\\
=\teps(u\,;(\phi_1,\eps_1),\dots,(\phi_{i-1},\eps_{i-1}),(\phi',\eps')
&,(\phi_{i+n_2},\eps_{i+n_2}),\dots,(\phi_{n_1+n_2-1},\eps_{n_1+n_2-1}))
\end{split}
\]
for $u\in \K(n_1),\ w\in \K(n_2)$. If $T$ is of codimension two, we take an element $T'$ of codimension one such that $T\leq T'$, and define $\hat\eps_{T}$ to be the composition
\[
B_V(T)\to B_V(T')\xrightarrow{\hat\eps_{T'}} (0,L_{e_0}/16)
\]
Well-definedness of $\hat\eps$ is verified below.  We define $\teps$ on $\partial\K(n)\times\TV[k_1,\dots,k_n]$ as the composition 
\[
\partial\K(n)\times\TV[k_1,\dots,k_n] \cong \underset{\T(n)_{1,2}}{\colim} B_V\xrightarrow{\hat\eps}(0,L_e/16),
\]
where the homeomorphism $\cong $ is induced from the homeomorphism $\theta_n:\colim B_n\cong \partial\K(n)$. Recall that any point of $\K(n)$ is of the form $tu$ for some $t\in [0,1]$ and $u\in \partial\K(n)$. Using $\teps|_{\partial\K(n)\times\TV[k_1,\dots,k_n]}$, we put
\[
\begin{split}
\teps(tu\,;(\phi_1,\eps_1),\dots,(\phi_n,\eps_n))
&
\\
=
(1-t)\min_{w\in\partial K(n)}\{\teps
&
(w;(\phi_1,\eps_1)
,\dots,(\phi_n,\eps_n))\}
+t\teps(u\,;(\phi_1,\eps_1),\dots,(\phi_n,\eps_n)).\qed
\end{split}
\]
 
\textbf{Construction of $\psi$.}\quad 
We also construct $\psi$ by induction on the arity $n$. When $n=2$, we put 
\[
\psi_i(pt;\ve_1,\ve_2)(y)
=\pi_{\phi}(\widehat{\psi}_i(pt;\ve_1, \ve_2)(y)), \ 
\text{where } \widehat{\psi}_i(pt;\ve_1,  \ve_2)(y)= v +\phi (y)-\phi(\pi_{\phi_i}(v_i)).
\]
Here, we set $\phi=\phi_1\times \phi_2$, $v=(v_1,v_2)$, and $\pm$ is taken in $\R^{k_1+k_2}$. This is the same as the one given in sub-subsection \ref{SSSroughoutline}. For later use, we introduced the notation $\widehat{\psi}$. \\

 \indent Suppose $\psi$ is constructed up to arity $n-1$ and satisfies the conditions \psiS\ -\psiev\ as far as they make sense. 
First, we shall construct $\psi$ on
\[
\delta\D^n=\{(u\,;\ve_1,\dots,\ve_n)\in\D^n\mid u\in\partial\K(n)\},
\]
where $\D^n=\D^n_{k_1,\dots,k_n}$. We define a diagram $B_{\D}:\T(n)_{1,2}\longrightarrow \CG$ by
\[
B_{\D}(T_1\circ_iT_2)=\beta_{i}^{-1}\D^n,\quad
B_{\D}((S_1\circ_jS_2)\circ_kS_3)=\beta_{j,k}^{-1}\D^n.
\]
Here, $T_1\circ_iT_2$ denotes an element of codimension one and $\beta_i$ is the map given in the statement of conditions on $\psi$. (So if $\ari(T_l)=n_l$, we have $n_1+n_2-1=n$ and the domain of $\beta_i$ is $\K(n_1)\times \K(n_2)\times \M[k_1,\dots k_n]$.) $(S_1\circ_jS_2)\circ_kS_3$ denotes an element of codimension two and $\beta_{j,k}$ is defined as follows:
\[
\begin{split}
\beta_{j,k}
:=&((-\circ_j-)\circ_k-)\times \id:\\
&\K (m_1)\times\K(m_2)\times\K(m_3)\times\M[k_1,\dots, k_{m_1+m_2+m_3-2}]\\
&\longrightarrow \K(m_1+m_2+m_3-2)\times\M[k_1,\dots, k_{m_1+m_2+m_3-2}]
\end{split}
\]
where $m_t$ is the arity of $S_t$ (so $m_1+m_2+m_3-2=n$). 
On morphisms, $B_{\D}$ is defined by the exactly  same way as $B_n$. Clearly $\theta_n$ in subsection \ref{SSK} induces a homeomorphism $\colim B_\D\cong\delta\D^n$. Similarly to the construction of $\teps$, we shall define a 
natural transformation 
\[
\bpsi:B_\D\longrightarrow \Map(M,M)^n.
\]
Let $T$ be an element of $\T(n)_{1,2}$. If $T$ is of codimension one, we can write $T=T_1\circ_iT_2$  uniquely. With this expression,  $\bpsi_T$ is the composition through the top right angle of the square in the  condition \psicomp, $Comp_i\circ (\psi\times \psi)\circ (\alpha_i\times \gamma_i)$, where  $n$ and $m$ in the condition are replaced with $\ari (T_1)$ and $\ari (T_2)$ respectively.  If $T$ is of codimension two, we take an element $T'\geq T$ of codimension one, and define $\bpsi_{T}$ to be the composition $B_{\D}(T)\to B_{\D}(T')\stackrel{\bpsi_{T'}}{\to}\Map(M,M)^n$. 
Well-definedness of $\bpsi$ is verified bellow. Define $\psi$ on $\delta\D^n$ as the composition
$\delta\D^n\cong \colim B_\D\xrightarrow{\bpsi} \Map(M,M)^n$\\
\indent Let $tu$ denote the element of $\K(n)$ with $t\in [0,1]$ and $u\in \partial\K(n)$. Using $\psi|_{\delta\D^n}$, we put
\begin{equation}\label{EQdefpsi}
\begin{split}
\psi_i(tu\,;\ve_1,\dots, \ve_n)(y)
&=\pi_{\phi}(\widehat{\psi}_i(tu\,;\ve_1,\dots, \ve_n)(y))\\
\text{where } \widehat{\psi}_i(tu\,;\ve_1,\dots, 
& \ve_n)(y)= (1-t)\{v +\phi (y)-\phi(\pi_{\phi_i}(v_i))\}
+t\{\phi(\psi_i(u\,;\ve_1,\dots,\ve_n)(y))\}.
\end{split}
\end{equation}
Here, $\phi=\phi_1\times\cdots\times \phi_n$, $v=(v_1,\dots,v_n)$, and sum and scalar multiplication are taken in $\R^{k_{\leq n}}$, see Figure \ref{Fpsi}. (We will verify the image of  $\widehat{\psi}_i$ belongs to the domain of $\pi_\phi$ below.)
\qed\\
\begin{figure}
\begin{center}
\begin{picture}(300,250)(-150,0)
\qbezier(-150,170)(0,240)(150,170)
\put(0,230){\circle*{4}}
\qbezier(0,230)(-130,100)(-130,100)
\qbezier(0,230)(130,100)(130,100)
\qbezier(0,230)(0,180)(0,180)
\qbezier(0,230)(50,30)(50,30)
\qbezier(0,230)(-50,30)(-50,30)
\put(-70,160){\circle*{4}}
\qbezier(-70,160)(-70,100)(-70,100)
\put(-70,100){\circle*{4}}
\put(-75,100){\framebox(5,5){}}
\qbezier(-80,100)(-70,100)(-70,100)
\put(-85,85){$\psi_i(tu)(y)$}
\put(-155,85){$\psi_i(u)(y)$}
\put(-100,170){$\widehat{\psi}_i(tu)(y)$}
\put(-50,190){$t$}
\put(-120,140){$1-t$}
\put(-35,240){$v+\phi(y-\pi_{\phi_i}(v_i))$}
\put(130,150){$\phi(M)$}
\put(120,200){$\R^{k_{\leq n}}$}
\put(-15,5){$\psi(\partial\K(4))(y)$}
\thicklines
\qbezier(-50,30)(0,40)(50,30)
\qbezier(-50,30)(-80,70)(-130,100)
\qbezier(50,30)(80,70)(130,100)
\qbezier(-130,100)(-65,150)(0,180)
\qbezier(130,100)(65,150)(0,180)
\end{picture}
\end{center}
\caption{construction of $\psi$ for $n=4$ ($\psi_i(u)(y)$ is the abbreviation of $\psi_i(u;\ve_1,\dots,\ve_4)(y)$, $\psi_i(tu)(y)$ and $\widehat{\psi}_i(tu)(y)$ are similar abbreviations.)}\label{Fpsi}
\end{figure}\\
\textbf{Verification on construction of $\teps$ and $\psi$.}\quad (On $\teps$)   
We shall verify the well-definedness of the natural transformation $\hat\eps$. 
 An element  $T$ of codimension two is presented as $(S_1\circ_jS_2)\circ_k S_3$ for unique 5-tuple $(S_1,S_2,S_3,j,k)$ such that $j\leq k$. Let $T'=T_1\circ_iT_2$ be an element of codimension one such that $T\leq T'$. We must show the map $\hat\eps_T$ is independent of a choice of $T'$.  We shall consider the case of  $ k\leq j+m_2-1$ ($m_t$ is the arity of $S_t$). In this case there are exactly two possibilities: (i) $S_1\circ_jS_2< T_1$, $S_3=T_2$ and $k=i$, or (ii)$S_1=T_1$, $S_2\circ_{k-j+1}S_3< T_2$, and $j=i$. The values $\hat\eps_T$ at $(u_1,u_2,u_3\,;(\phi_1,\eps_1),\dots, (\phi_n,\eps_n))$ corresponding to (i) and (ii) are
\[
\begin{split}
\teps(u_1&\circ_ju_2\, ;(\phi_1,\eps_1),\dots, (\phi_k\times\cdots\times\phi_{k+m_3-1},\teps(u_3\,;(\phi_k,\eps_k),\dots,(\phi_{k+m_3-1},\eps_{k+m_3-1})),\dots) \quad \text{and}\\
\teps(u_1\,&;\dots,( \phi_j\times\cdots\times\phi_{j+m_2+m_3-2},\teps(u_2\circ_{k-j+1}u_3\,;(\phi_j,\eps_j),\dots, (\phi_{j+m_2+m_3-2},\eps_{j+m_2+m_3-2})),
\dots)
\end{split}
\] respectively . As $m_t\leq n-2$, using the inductive hypothesis concerning \epscomp, we can obviously verify these two 
values are equal. We can also verify the case of $k> j+m_2$ similarly. \\
\indent The condition \epscomp\ is satisfied by the assertion we just have verified, and verification of the remaining conditions is straightforward.\\
\indent (On $\psi$)  We shall verify the natural transformation $\bpsi$ is well-defined.  This is almost clear from the definition of $\bpsi$ and \epscomp we have already verified but we give a formal proof to make sure. Let $T=(S_1\circ_jS_2)\circ_kS_3$  be an element of codimension  two. We must verify the map $\bpsi_T$ is independent of a choice of an element $T'=T_1\circ_iT_2$ of codimension one with $T'>T$.We shall consider the case of $k\leq j+m_2-1$.  The maps $\bpsi_T$ corresponding to the above two choices (i),(ii) in the verification on $\teps$  are compositions
\[
\begin{split}
\beta_{j,k}^{-1}\D^n
&
\xrightarrow{\beta_j\times\id}\beta_k^{-1}\D^n\xrightarrow{\alpha_k\times\gamma_k}\D^{m_1+m_2-1}\times\D^{m_3}\xrightarrow{Comp_k\circ(\psi^2)}\Map(M,M)^n \ \text{and}
\\
\beta_{j,k}^{-1}\D^n
&
\xrightarrow{\id\times\beta_{k-j+1}}
\beta_j^{-1}\D^n\xrightarrow{\alpha_j\times\gamma_j}
\D^{m_1}\times\D^{m_2+m_3-1}
\xrightarrow{Comp_j\circ(\psi^2)}\Map(M,M)^n,
\end{split}
\]
respectively.
These two maps fit into the top-right and bottom-left corners of the following diagram:
\[
\xymatrix{
\beta^{-1}_{j,k}\D^n\ar[rr]^{\beta_j\times\id}
\ar[rrd]^{\alpha'\times\gamma'}
\ar[dd]_{\id\times\beta_{k-j+1}}
\ar[rdd]^{\alpha''\times\gamma''}
&&
\beta^{-1}_k\D^n
\ar[r]^{\alpha_k\times\gamma_k}
\ar@{}[d]^{\text{(A)}}
&
\D^{m_{12}}\times \D^{m_3}\ar[ddd]^{\psi^2}
\\
&&
\beta^{-1}_j\D^{m_{12}}\times\D^{m_3}
\ar[ur]^{\beta_j\times\id}
\ar[d]^{\alpha_j\times\gamma_j\times\id}
\ar@{}[ld]_{\text{(B)}}
&
\\
\beta^{-1}_j\D^n
\ar[dd]_{\alpha_j\times\gamma_j}
\ar@{}[r]^{\text{(C)}}
&
\D^{m_1}\times \beta^{-1}_k\D^{m_{23}}
\ar[ldd]^{\id\times\beta_k}
\ar[r]_{\id\times\alpha\times\gamma_{k-j+1}}
&
\D^{m_1}\times\D^{m_2}\times\D^{m_3}
\ar[d]^{\psi^3}
\ar@{}[rd]^{\text{(E)}}
&
\\
&
&
\Map^{m_1}\times\Map^{m_2}\times\Map^{m_3}
\ar[d]_{\id\times C_{k-j+1}}
\ar[r]_{C_j\times \id}
&
\Map^{m_{12}}\times\Map^{m_3}
\ar[d]^{C_k}
\\
\D^{m_1}\times\D^{m_{23}}
\ar@{}[rru]^{\text{(D)}}
\ar[rr]_{\psi^2}
&&
\Map^{m_1}\times\Map^{m_{23}}
\ar[r]_{C_j}
\ar@{}[ur]_{\text{(F)}}
&
\Map^{n}.
}
\]
Here, 
\begin{itemize}
\item
$\Map$, $m_{st}$, and $C_l$ are abbreviations of $\Map(M,M)$, $m_s+m_t-1$, and $Comp_l$, and
\item the maps $\alpha'$, $\gamma'$, $\alpha''$,  and$\gamma''$ are defined by the following formulae: \\
$\alpha'(u_1,u_2,u_3\,;\ve_1,\dots\ve_n)=(u_1,u_2\,;\ve_1,\dots,\ve_{(k)},\dots,\ve_n)$,  
$\gamma'(u_1,u_2,u_3\,;\dots)=(u_3\,;\ve_k,\dots\ve_{k+m_3-1})$, \\
$\alpha''(u_1,u_2,u_3\,;\ve_1,\dots,\ve_n)=(u_1\,;\ve_1\dots, \ve_{(j)},
\dots \ve_n)$, \\
and $\gamma''(u_1,u_2,u_3\,;\ve_1,\dots,\ve_n)=(u_2,u_3\,;\ve_j,\dots,\ve_{j+m_2+m_3-2})$,
where $\ve_{(k)}$ (resp. $\ve_{(j)}$) is the element defined by the same formula as $\ve_{(i)}$ in the definition of $\alpha_i$ from $(u_3\,;\ve_k,\dots,\ve_{k+m_3-1})$  (resp.$(u_2\circ_{k-j+1}u_3\,;\ve_j,\dots \ve_{j+m_2+m_3-2})$).
\end{itemize}
Commutativity of this diagram is verified as follows. 
Commutativity of the triangles (A), (C) and square (B) follows from the following obvious equations.
\[
\alpha\circ \alpha'=\alpha'',\quad \gamma'=\gamma_{k-j+1}\circ\gamma'',\quad \gamma_j\circ \alpha'=\alpha\circ \gamma''.
\]
  Commutativity of the squares (D) and (E) follows from the  inductive hypothesis concerning \psicomp\  as $m_s, m_{st}\leq n-2$, and commutativity of (F) is clear.   Thus, we have verified the map $\bpsi_T$ is independent of choices of $T'$ for the case $k\leq j+m_2-1$. For the case $k\geq j+m_2$, we can verify the well-definedness of $\psi_T$ by a similar way.\\
\indent  
To show $\psi_i$ is well-defined on whole $\D^n$, we must show  values of $\widehat{\psi}_i$ belong to the tubular neighborhood $\nu_\eps(\phi)$ for some $\eps<L_{\phi\circ e_0}(=|\phi|L_{e_0})$. Otherwise, we cannot compose the projection $\pi_\phi$ with $\widehat{\psi}$. This is the point where we need the condition \psiev.  We must show 
$
d(\widehat{\psi}_i(tu\,;\ve_1,\dots,\ve_n)(y),\phi(M))< |\phi|\cdot L_{e_0}
$ for any element $(tu\,;\ve_1,\dots,\ve_n)\in\D^n$, and $y\in M$. By the defining equation (\ref{EQdefpsi}), we have
\[
|\widehat{\psi}_i(tu)(y)-\phi(y)|
\leq
(1-t)|v-\pi_{\phi_i}(v_i)|+t|\phi|\cdot |\psi_i(u)(y)-y|.
\]
Here, $\phi=\phi_1\times\cdots\times \phi_n$, $v=(v_1,\dots,v_n)$, and for example, $\widehat{\psi}_i(tu)(y)$ is an abbreviation of $\widehat{\psi}_i(tu\,;\ve_1,\dots,\ve_n)(y)$. We can write $u=u_1\circ_ju_2$ for some $u_1$, $u_2$ with $n_1=\ari 
u_1\geq 2$, $n_2=\ari u_2\geq 2$. By inductive hypothesis, we have
\[
\begin{split}
|\psi_i(u)(y)-y|
&\leq|\psi_{i_1}(u_1)(\psi_{i_2}(u_2)(y))-\psi_{i_2}(u_2)(y)+\psi_{i_2}(u_2)(y)-y|\\
&\leq (6n_1^2+ 6n_2^2)d(v,\phi(M)).
\end{split}
\]
($i_1$, $i_2$ are suitable integers.) We also have
\[
\begin{split}
|v-\phi\pi_{\phi_i}(v_i)|
&
\leq |v-\phi\pi_{\phi}(v)|+|\phi\pi_{\phi}(v)-\phi\pi_{\phi_i}(v_i)|
\\
&=d(v,\phi(M))+|\phi|\cdot|\pi_{\phi}(v)-\pi_{\phi_i}(v_i)|
\\
&
\leq d(v,\phi(M))+\frac{|\phi|}{|\phi_i|}
(|\phi_i\pi_{\phi}(v)-v_i|+|v_i-\phi_i\pi_{\phi_i}(v_i)|)
\\
&
\leq (1+2|\phi|)d(v,\phi(M))\quad (\because |\phi_i(\pi_\phi(v))-v_i|\leq |\phi(\pi_\phi(v))-v|)
\end{split}
\]
So we have 
\[
\begin{split}
d(\widehat{\psi}_i(tu\,;\ve_1,\dots,\ve_n)(y),\phi(M))
&
\leq
|\widehat{\psi}_i(tu)(y)-\phi (y)|
\\
&
\leq \max\{6(n_1^2+n_2^2)|\phi|, 1+2|\phi|\}\cdot d(v,\phi(M))
\\
&
\leq \frac{\max\{6(n_1^2+n_2^2)|\phi|, 1+2|\phi|\}}{10^{|\phi|^2}}\min\{\eps_1,\dots,\eps_n\} \quad\text{(by \epsev)}
\\
&
\leq |\phi|L_{e_0}.
\end{split}
\]
\psicomp\ is satisfied by well-definedness of $\bpsi$ which we have already verified. \psiS \ holds since the $0$-constant component of the linear map does not affect the position of the closest point, see subsection \ref{SSproblem}.  Verification of \psiSigma\  is trivial. Verification of \psibp\ is completely analogous to verification of this condition for the case $n=3$ done in sub-subsection \ref{SSSroughoutline}, and we omit it.  We shall verify  the condition \psiev. By the equation (\ref{EQdefpsi}), we have
\[
|\widehat{\psi}_i(tu)(y)-\phi(\psi_i(u)(y))|
\leq (1-t)|v-\phi\pi_{\phi_i}(v_i)|+(1-t)|\phi |\cdot 
|\psi_i(u)(y)-y|.
\]
So with the above inequalities, we have
\[
\begin{split}
|\psi_i(tu)(y)-y|
&\leq \frac{1}{|\phi |}(|\widehat{\psi}_i(tu)(y)-\phi (y)|+|\widehat{\psi}_i(tu)(y)-\phi(\psi_i(tu)(y))|)
\\
&\leq \frac{1}{|\phi |}(|\widehat{\psi}_i(tu)(y)-\phi (y)|+|\widehat{\psi}_i(tu)(y)-\phi(\psi_i(u)(y))|)\\
&
\leq\frac{1}{|\phi|}
(2(1-t)|v-\phi\pi_{\phi_i}(v_i)|+|\phi|\cdot|\psi_i(u)(y)-y|)
\\
&\leq\frac{1}{|\phi|}(2(1+2|\phi|)+6(n_1^2+n_2^2))d(v,\phi(M))\\
&\leq (6+6n_1^2+ 6n_2^2)\cdot d(v,\phi(M)).
\end{split}
\]
Here, we use the fact that $\phi(\psi_i(tu)(y))$ is the closest to $\widehat{\psi}_i(tu)(y)$ in $\phi(M)$ in the second inequality.
As $n_1,n_2\geq 2$ and $n=n_1+n_2-1$,
\[
6n^2-(6+6n_1^2+ 6n_2^2)=-12n_1^2+12n\cdot n_1+12n_1-12n-8=12(n_1-1)(n-n_1)-12 \geq 0. 
\]
Thus, we have completed the construction of an action of $\tB$ on $\LMT$. \qed

\subsection{Comparison with Cohen-Jones spectrum}\label{SScomparison}
Note that the classical stable homotopy category  is equivalent to the homotopy category of symmetric spectra $\Ho( \SP)$ as symmetric monoidal category, so the Cohen-Jones ring spectrum $\LMTM$ is considered as a monoid in $\Ho(\SP)$.
\begin{thm}\label{TLMTLMTM}
If we consider $\LMT$ as a semigroup in the homotopy category $\Ho(\SP)$, it is isomorphic to the Cohen-Jones ring spectrum. (A semigroup in a monoidal category is an object equipped with an associative product (but without a unit))
\end{thm}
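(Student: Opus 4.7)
The plan is to construct an isomorphism in $\Ho(\SP)$ between $\LMT$ and $\LMTM$ from the inclusion of a fiber over a fixed $(\phi_0,\eps_0)\in \TV_N$, and to verify that this isomorphism intertwines the arity-$2$ part of the $\tB$-action on $\LMT$ with the Cohen--Jones product. Since $\K(n)$ and $\B(n)$ are contractible, the $\tB$-action in arity $2$ yields a well-defined product in $\Ho(\SP)$, independent of the chosen $(f,u)\in \tB(2)$.

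For the underlying identification, I would imitate the argument sketched for $\MT$ at the end of subsection \ref{SSatiyahduality}. Fix $\phi_0\in V_N$ and $\eps_0\in(0,L_{e_0}/16)$; the inclusion of the fiber gives a map
\[
j_N\colon Th(ev^*\bnu_{\eps_0}(\phi_0))\longrightarrow \LMT_N.
\]
Because $\TV_k$ is $(k/2-1)$-connected, $j_N$ is highly connected for $N$ large. Combined with the structure maps of $\LMT$, this produces a $\pi_*$-isomorphism in the classical stable homotopy category between $\LMT$ and $\Sigma^{-N}Th(ev^*\bnu_{\eps_0}(\phi_0))=\LMTM$, hence an isomorphism $\LMT\cong \LMTM$ in $\Ho(\SP)$.

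To compare products, pick any $(f,u)\in \B(2)\times \K(2)$. Restricting $\Phi_2$ to the sub-Thom spaces over $(\phi_0,\eps_0)$ in each factor and postcomposing with the fiber inclusion at level $2N$ over $(\phi_0\times\phi_0,\teps)$, the resulting map factors as (a) the collapse of $\bnu_{\eps_0}(\phi_0)\times\bnu_{\eps_0}(\phi_0)$ onto $\bnu_{\teps}(\phi_0\times\phi_0)$, which is exactly the collapse used in subsection \ref{SScohenjones}, followed by (b) the $\psi$-perturbation and concatenation. Condition \psibp\ guarantees that the perturbed loops $\psi_1(d)\circ c_1$ and $\psi_2(d)\circ c_2$ share the common basepoint $\pi_{\phi_0\times\phi_0}(v_1,v_2)$, so (b) defines a bundle map $E^{-1}(\bnu_{\teps}(\phi_0\times\phi_0))\to ev_\infty^*\bnu_{\teps}(\phi_0\times\phi_0)$ over $\bnu_{\teps}(\phi_0\times\phi_0)$, followed by the honest concatenation $ev_\infty^*\bnu\to ev^*\bnu$.

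The main obstacle is showing this bundle map agrees up to homotopy with the homeomorphism $\alpha$ of Lemma \ref{Llooptube}. I would invoke the uniqueness clause of that lemma: any two bundle maps fitting into its commutative triangle are isotopic. The $\psi$-based map fits into that triangle up to the homotopy connecting $\psi_i(d)$ with $\id_M$, which exists because $\psi_i(d)$ is an affine perturbation followed by projection to $M$, and is homotopic to the identity via the straight-line scaling of the perturbation down to zero. Reparametrization of loops by $f\in\B(2)$ is a homotopy equivalence since $\B(2)$ is contractible. Combining these, $j_N$ descends to an isomorphism of semigroups in $\Ho(\SP)$; associativity holds on both sides, from the $\tB$-action with contractible $\K(3)$ on the $\LMT$ side and from the Cohen--Jones construction on the other.
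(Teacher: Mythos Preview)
Your overall strategy matches the paper's: identify $\LMT$ with $\LMTM$ via the fiber inclusion over a fixed $(\phi_0,\eps_0)$, and then check that the arity-$2$ map $\Phi_2$ restricted to that fiber agrees, up to homotopy, with the Cohen--Jones product. The paper also reduces the comparison to the $\psi$-based bundle map
\[
E^{-1}(\bnu_{\eps_1}(\phi_0\times\phi_0))\longrightarrow ev_\infty^*\bnu_{\eps_1}(\phi_0\times\phi_0),\qquad ((c_1,v_1),(c_2,v_2))\longmapsto ((\psi_1\circ c_1,\psi_2\circ c_2),(v_1,v_2)),
\]
and then appeals to Lemma~\ref{Llooptube}.

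The gap in your argument is at exactly the point you flag as the ``main obstacle.'' The uniqueness clause of Lemma~\ref{Llooptube} is stated for \emph{homeomorphisms} making the triangle commute; it does not say that an arbitrary bundle map fitting the triangle is isotopic (or even homotopic) to $\alpha$. So before you can invoke uniqueness, you must show that the $\psi$-based map is itself a homeomorphism. The paper does this directly: for sufficiently small $\eps_1$ it proves injectivity and constructs a continuous inverse by a local transversality argument (intersecting translated normal planes with $\phi(M)$). Only then does the uniqueness clause apply, allowing one to take the $\psi$-based map as the $\alpha$ in the Cohen--Jones definition. Your alternative suggestion, using the straight-line homotopy from $\psi_i(d)$ to $\id_M$, does not produce a homotopy of bundle maps over $\bnu_{\eps_1}(\phi_0\times\phi_0)$ into $ev_\infty^*\bnu_{\eps_1}(\phi_0\times\phi_0)$: once you scale the perturbation towards zero, the two loops no longer share a basepoint, so the target is wrong. (Incidentally, on the zero section $LM\times_MLM$ one already has $\psi_i=\id_M$ exactly, so no homotopy is needed for the top triangle; the issue is purely whether the map is a homeomorphism.)

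In short, the architecture is right, but the step ``the $\psi$-based map is a valid $\alpha$'' requires the homeomorphism check the paper carries out; without it the appeal to Lemma~\ref{Llooptube} is not justified. The final identification via a linear isotopy $\R^{2N}\to\R^{2N}$ covering an isotopy between the diagonal and $0\times\id$ (which the paper uses for the last square) is also something you should make explicit.
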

\begin{proof}
Define a map
$
\alpha: E^{-1}(\bnu_{\eps_1}(\phi_0\times\phi_0))\longrightarrow ev_\infty^*\bnu_{\eps_1}(\phi_0\times\phi_0)
$  as follows: (For the definitions of $E$ and $ev_\infty$, see subsection \ref{SScohenjones})
\[
\alpha((c_1,v_1),(c_2,v_2))=((\tc_1,\tc_2), (v_1,v_2)),\quad  
\tc_i(t)=
\psi_i((\phi_0,v_1),(\phi_0,v_2))(c_i(t))\quad  (i=1,2).
\]
Here, $\psi_i$ is the one constructed in sub-subsection \ref{SSSconstructionLMT} and note that the formula of $\psi_i$ does not depends on $\eps$ so we omit it here.
We shall prove $\alpha$ is homeomorphism for sufficiently small $\eps_1$. 
First we show $\alpha$ is injective. Let $((c_1,v_1),(c_2,v_2))$ and $((c_1',v_1'), (c_2',v_2'))$ be two elements of $E^{-1}(\bnu_{\eps_1}(\phi))$ ($\phi=\phi_0\times\phi_0$). If $\alpha((c_1,v_1),(c_2,v_2))=\alpha((c_1',v_1'), (c_2',v_2'))$, we have  $(v_1,v_2)=(v'_1,v'_2)$, and
$\widehat{\psi}_i((\phi_0,v_1),(\phi_0,v_2))(c_i(t))$ and $\widehat{\psi}_i((\phi_0,v'_1),(\phi_0,v'_2))(c_i'(t))$ belong to the same fiber of the tubular neighborhood of $\phi(M)$ for each $t\in [0,1]$, and $i=1,2$ (see the construction of $\psi$ in sub-subsection \ref{SSSconstructionLMT}).
It follows that
\[
\phi(c_i(t)-c'_i(t))\in T_{\phi(\tc_i(t))}(\phi(M))^{\perp}.
\] If $\eps_1$ is sufficiently small, three points $c_i(t), c'_i(t)$ and $\tc_i(t)$ belong to sufficiently small open set of $M$ which is approximated by the affine space 
$\phi(\tc_i(t))+T_{\phi(\tc_i(T))}\phi(M)$, so the above relation can not occur if $c_i(t)\not=c_i'(t)$. This implies $\alpha$ is injective. \par
Let $(\bar c_1,\bar c_2,(v_1,v_2))$ be an element of $ev_\infty^*\bnu_{\eps_1}(\phi)$. We shall show there exists a unique element $c_i(t)\in B_{10\eps_1}(\phi(\bar c_i(t))\cap \phi(M)$  such that $\widehat{\psi}_i((\phi_0,v_0),(\phi_0,v_0))(c_i(t))$ belongs to the fiber over  $\phi(\bar c_i(t))$ of the tubular neighborhood of $\phi(M)$ for each $i$ and $t$, 
 where $B_{10\eps_1}(\phi(\bar c_i(t)))$ is the neighborhood of  $\phi(\bar c_i(t))$ with radius $10\eps_1$. If $\eps_1$ is sufficiently small, the two points $(v_1,v_2)$ and  $\phi(\pi_{\phi_0}(v_i))$ are sufficiently close, and the set 
$B_{10\eps_1}(\phi(\bar c_i(t)))\cap \phi(M)$ is approximated by the affine space $\phi(\bar c_i(t))+T_{\phi(\bar c_i(t))}\phi(M)$. So the intersection \[
[\phi(\pi_{\phi_0}(v_i))-(v_1,v_2)+\phi(\bar c_i(t))+T_{\phi(\bar c_i(t))}\phi(M)^{\perp}]\cap\phi(M)\cap B_{10\eps_1}(\phi(\bar c_i(t)))
\] 
consists of exactly one point. We denote this point by $\phi(c_i(t))$, which is easily seen to satisfy the above condition.  It is easy to see $c_i(t)$  continuously depends on $t$. Thus we obtain an element $((c_1,v_1),(c_2,v_2))\in E^{-1}(\bnu_{\eps_1}(\phi_0\times\phi_0))$ with $\alpha((c_1,v_1),(c_2,v_2))=(\bar c_1,\bar c_2,(v_1,v_2))$.  It is also easy to see $c_i$  continuously depends on $\bar c_i$. So the left inverse $(\bar c_1,\bar c_2, (v_1,v_2))\mapsto ((c_1,v_1),(c_2,v_2))$ is continuous  and as $\alpha$ is injective, this is the continuous inverse of $\alpha$ .\par
 This homeomorphism $\alpha$ makes the diagram in Lemma \ref{Llooptube}  commutative so by the same lemma, we can use this homeomorphism in the definition of the Cohen-Jones ring spectrum. 
An isomorphism $\LMTM\cong \LMT$ in the homotopy category of spectra is given by
$Th(ev^*\nu(\phi_0))\to \LMT_N ,\ (c,v)\mapsto (\phi_0,\eps_0,c,v).$
So what we have to prove is homotopy commutativity of the following diagram:
\[
\xymatrix{
Th(ev^*\nu(\phi_0)\times ev^*\nu(\phi_0))\ar[r]\ar[d]& \LMT_N\wedge\LMT_N\ar[d] \\
S^NTh(ev^*\nu(\phi_0))\ar[r]&\LMT_{2N}
}
\]
But we can easily construct a homotopy for the commutativity  using a linear isotopy $\R^{2N}\times I\to \R^{2N}$ covering an isotopy $\R^N\times I\to \R^{2N}$ between the diagonal and $0\times \id$.
\end{proof}

\section{Equivalence with topological Hochschild cohomology}\label{Sequivalence}
\subsection{Outline}\label{SSoutlineEQ}
To prove Theorem \ref{mainthm1} (2),  it is enough to prove $\LMT$ and $\thc(A',B)$ are weak equivalent as nu-$A_\infty$-ring spectra (see Proposition \ref{propinvarianceofthc} for the definition of $A'$ and $B$). We will prove this claim by constructing a zig-zag of termwise weak equivalences between two cosimplicial objects whose totalizations are isomorphic to $\LMT$ and $\thc(A',B)$. 
In subsection \ref{SScosimplicialLMT},  we define a cs-spectrum $\CL^\bullet$  with an action of  the monad $\MK$  such that the totalization $\tot\CL^\bullet$ is isomorphic to $\LMT$ as $\tB$-algebras (see sub-subsection \ref{SSSslightgenofms} for $\MK$ and $\tB$).  We want to connect two cs-spectra $\CL^\bullet$ and $\thc^\bullet(A',B)$ by a zig-zag of termwise stable equivalences which preserve  actions of $\MK$, but this turns out  difficult (see sub-subsection \ref{SSSmotivation} for an explanation). To avoid this difficulty, we define a monad $\MCK$ over cosimplicial objects which is considered as a "up to homotoy coherency version" of $\MK$ and connect $\CL^\bullet$  and $\thc^\bullet (A',B)$ by a zig-zag of termwise stable equivalences which preserve  actions of $\MCK$ as follows:
\[
\CL^\bullet \xleftarrow{p_0} \IM^\bullet \xrightarrow{\bar q_2} \thc^\bullet (A',B),
\] 
where the actions of $\MCK$ on the left and right are induced by a natural morphism $U:\MCK\to \MK$ of monads, see subsections \ref{SSintermediate} and \ref{SSMCKalgebra}. In sub-subsection \ref{SSSCB}, we prove an action of $\MCK$ induces an action of an $A_\infty$-operad on $\ttot$, but we cannot prove it does on $\tot$. So we must prove $A_\infty$-structures on $\tot$ and  $\ttot$ both induced by an action of $\MK$ are equivalent, and this is done in subsections \ref{SScomparisontot} and \ref{SSproofTmain2}. 
 The remaining thing we have to prove  is that the two morphisms $\IM^\bullet\to \CL^\bullet$ and $\IM^\bullet\to \thc^\bullet (A',B)$ induce weak equivalences on $\ttot$. for the first morphism, this is clear as it is a termwise levelwise weak equivalence. For the latter morphism, we need more care and prove it in subsection \ref{SSproofTmain2} (note that $\ttot$ is not always homotopy invariant for symmetric spectra as the model category $\SP$ is not fibrant).
\subsection{Cosimplicial model of $\LMT$}\label{SScosimplicialLMT}
We shall define a cs-spectrum $\CL^\bullet$ such that $\tot(\CL^\bullet)$ is isomorphic to $\LMT$. We put
\[
\CL^p=(M^{\times p})\hotimes \MT (=\{ M^{\times p}_+\wedge \MT_k\}_k).
\]
(See subsection \ref{SSatiyahduality} for $\MT$.) If $1\leq i\leq p$, the coface morphism $d^i:\CL^p\to \CL^{p+1}$ is defined as the morphism repeating the $i$-th component of $M^{\times p}$, and $d^0$ and $d^{p+1}$ are defined as the ones taking an element $(x_1,\dots, x_p, \langle \phi,\eps, v\rangle)$ to $(\pi_\phi(v), x_1,\dots, x_p, \langle \phi,\eps, v\rangle)$ and $(x_1,\dots, x_p, \pi_\phi(v), \langle \phi,\eps, v\rangle)$ respectively. 
The codegeneracy morphism $s^i:\CL^p\to\CL^{p-1}$ is the one skipping the $i+1$-th component for each $i=0,\dots,p-1$, $(x_1,\dots, x_p, \langle \phi,\eps, v\rangle)\mapsto (x_1,\dots, x_i,x_{i+2},\dots, x_p, \langle \phi,\eps, v\rangle)$. \par
 Let $\Delta^p=\{(t_1,\dots, t_p)\in\R^p\mid 0\leq t_1\leq\cdots\leq t_p\leq 1\}$ be the standard $p$-simplex for $p\geq 0$. 
We define a morphism 
\[
\varphi_p :\Delta^p\hotimes \LMT\to \CL^p\  \text{by} \ 
[(t_1,\dots, t_p),\langle \phi,\eps,v, c\rangle ]\mapsto (c(t_1),\dots c(t_p),\langle \phi,\eps,v \rangle ).
\] As the collection $\{\varphi_p\}_{p\geq 0}$ forms a morphism between cosimplicial spectra, where $\{\Delta^p\hotimes \LMT\}_p$ has the structure induced from $\Delta^\bullet$, taking the adjoint we obtain a morphism $\varphi:\LMT\to\tot(\CL^\bullet)\in\SP$. More explicitly, recall that $\tot(\CL^\bullet)$ is a subspectrum of $\prod_{p\geq 0}(\CL^p)^{\Delta^p}$ by definition.  $\varphi_p$ induces a morphism $\LMT\to (\CL^p)^{\Delta^p}$ for each $p$ by adjoint.  These morphisms for various $p$ induce the morphism to the totalization by compatibility with cofaces and codegeneracies. 
\begin{prop}\label{PisoLMT}
Under the above notations, $\varphi:\LMT\to \tot\CL^\bullet $ is an isomorphism in $\SP$ (not only in $\Ho(\SP)$).\qed
\end{prop}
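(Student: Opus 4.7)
The approach is to construct an explicit two-sided inverse to $\varphi$ at every spectrum level and check compatibility with the $\Sigma_k$- and $\Sph$-actions, since an isomorphism in $\SP$ is a levelwise homeomorphism respecting both. At level $k$, an element of $(\tot \CL^\bullet)_k$ is a family of pointed maps $\{f_p : \Delta^p \to \CL^p_k\}_{p \geq 0}$ natural with respect to all cofaces and codegeneracies, and the goal is to read off a unique quadruple $(\phi,\eps,v,c) \in \LMT_k$ from such a family.

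Set $(\phi,\eps,v) := f_0 \in \MT_k = \CL^0_k$. Naturality of $\{f_p\}$ with respect to $s^0 : \CL^1 \to \CL^0$ (which forgets the $M$-factor) gives $s^0 \circ f_1 = f_0 \circ s^0_\Delta$, forcing the $\MT_k$-component of $f_1(t)$ to be constantly $(\phi,\eps,v)$. Hence $f_1(t) = (c(t), \langle\phi,\eps,v\rangle)$ for a continuous $c : [0,1] \to M$. Naturality with respect to the two cofaces $\Delta^0 \to \Delta^1$, combined with $d^0 f_0 = d^1 f_0 = (\pi_\phi(v), \langle\phi,\eps,v\rangle)$, then forces $c(0) = c(1) = \pi_\phi(v)$, so $c$ descends to a genuine loop $S^1 \to M$ based at $\pi_\phi(v)$ and $(\phi,\eps,v,c) \in \LMT_k$.

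The key step is to show each $f_p$ for $p \geq 2$ is already fixed by this data, namely that $f_p(t_1,\ldots,t_p) = (c(t_1),\ldots,c(t_p),\langle\phi,\eps,v\rangle)$. Iterating $s^0$ shows, just as above, that the $\MT_k$-component of $f_p$ is constantly $(\phi,\eps,v)$, so $f_p(t_1,\ldots,t_p) = (g_1(t),\ldots,g_p(t),\langle\phi,\eps,v\rangle)$ for continuous maps $g_j : \Delta^p \to M$. To identify $g_j$ I would use the order-preserving surjection $\sigma_j : [p] \to [1]$ sending $\ell < j$ to $0$ and $\ell \geq j$ to $1$, decomposed in $\Delta$ as $\sigma_j = (s^1)^{p-j} \circ (s^0)^{j-1}$. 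Under the cosimplicial structure of $\CL^\bullet$ this composition sends $(x_1,\ldots,x_p,\langle\phi,\eps,v\rangle)$ to $(x_j,\langle\phi,\eps,v\rangle)$, while a short calculation in the coordinates of subsection \ref{SSms} shows the induced map $\Delta^p \to \Delta^1$ is simply $(t_1,\ldots,t_p) \mapsto t_j$. Naturality $\CL^{\sigma_j}\circ f_p = f_1 \circ \Delta^{\sigma_j}$ then yields $g_j(t_1,\ldots,t_p) = c(t_j)$.

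The assignment $\{f_p\} \mapsto (\phi,\eps,v,c)$ is continuous (it factors through evaluation of $f_0$ and $f_1$ at points, continuous in the subspace topology on $\tot \CL^\bullet \subset \prod_p (\CL^p)^{\Delta^p}$, followed by projection onto the $M$-factor of $f_1$), and it is a two-sided inverse of $\varphi$ by construction. Compatibility with the $\Sigma_k$-action and with the sphere-spectrum structure map is then formal, since in both $\LMT_k$ and $\CL^p_k$ these actions operate only on the $\MT$-factor in exactly the same way (using $\pi_{0 \times \phi}(t,v) = \pi_\phi(v)$ for compatibility with the coface formulas). The one point of care, though not a serious obstacle, is the handling of Thom-space basepoints: when $f_0$ is the basepoint of $\MT_k$, the iterated-codegeneracy argument forces every $f_p$ to collapse to the basepoint of $\CL^p_k$, which correctly matches the collapse of the boundary sphere bundle in the definition of $\LMT_k$.
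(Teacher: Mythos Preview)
Your proof is correct and follows essentially the same approach as the paper's: construct an explicit inverse at each level by reading off $(\phi,\eps,v)$ from $f_0$ and the loop $c$ from $f_1$, using compatibility with codegeneracies to pin down the $\MT_k$-component and compatibility with cofaces to get the basepoint condition. The paper only sketches this and appeals to the classical cosimplicial model of $LM$; you supply the details it omits, in particular the argument via the surjections $\sigma_j:[p]\to[1]$ that the higher $f_p$ are forced, and the handling of basepoints and the $\Sigma_k$- and $\Sph$-equivariance.
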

\begin{proof}
This is proved completely analogously to the corresponding statement for the well-known cosimplicial model of a free loop space by point-set level verification so we only sketch the outline. For an element $\alpha \in \tot\CL^\bullet_k$, take the component $\alpha_1\in (\CL^1)_k^{\Delta^1}=\Map_{\CG}(\Delta^1, (M_+)\wedge\MT_k)$. The compatibility with a codegeneracy implies the component in $\MT_k$ of $\alpha^1$ takes a constant value, say $(\phi, \eps, v)$. The compatibility with cofaces implies the components of $\alpha^1(0)$ and $\alpha^1(1)$ in $M$ are $\pi_\phi(v)$. By these observation, $\alpha^1$ gives an element of $\LMT_k$. This procedure gives the inverse of $\varphi$.
\end{proof}
We shall define an action of $\MK$ on $\CL^\bullet$. We define a morphism $\Psi:\MK(\CL^\bullet)\longrightarrow \CL^\bullet$ by
\[
\begin{split}
\Psi&(u\,;(x_{11},\dots, x_{1p_1},\langle \phi_1 ,\eps_1 ,v_1\rangle ),\dots (x_{n,1},\dots, x_{n,p_n}, \langle \phi_n,\eps_n, v_n\rangle)\\
&=(\psi_1(x_{11}),\dots, \psi_1(x_{1p_1}),\dots, \psi_n(x_{n,1}),\dots, \psi_n (x_{n,p_n}),\langle \phi_1\times\cdots\times\phi_n,\teps, (v_1,\dots ,v_n)\rangle)
\end{split}
\]
for $n\geq 2$, where $\psi_i$ and $\teps$ mean $\psi_i(u\,;(\phi_1,\eps_1,v_1),\dots, (\phi_n,\eps_n,v_n))$ and $\teps(u\,;(\phi_1,\eps_1),\dots,(\phi_n,\eps_n))$  respectively, see subsection \ref{SSactiononLMT} for $\psi$ and $\teps$, and by the identity on the arity $1$ part.
\begin{prop}\label{PwelldefinedPsi}
$\Psi$ is an action of $\MK$.
\end{prop}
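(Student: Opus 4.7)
The plan is to verify four things, all of which follow from the conditions \epsS--\epscone\ and \psiS--\psiev\ established in subsection \ref{SSactiononLMT}: (a) each level $\Psi^p$ is a morphism of symmetric spectra, (b) the collection $\{\Psi^p\}_p$ commutes with all cofaces and codegeneracies, (c) $\Psi$ is unital with respect to the monad unit of $\MK$, and (d) $\Psi$ is associative with respect to the monad multiplication of $\MK$.

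For (a), $\Sigma_k$-equivariance of $\Psi^p$ in level $k$ follows from \psiSigma\ and \epsSigma, while compatibility with the $\Sph$-action (which prepends a zero coordinate to the target of some $\phi_i$) follows from \psiS\ and \epsS. For (b), codegeneracies and cofaces $d^j$ whose action falls strictly inside one block $\CL^{p_i}$ are immediate, because $\Psi$ applies the single map $\psi_i$ independently to each loop sample in that block and so commutes with repetition and deletion. The substantive cases are $d^0$ on the first block, $d^{p+1}$ on the last block, and the internal ``$\square$-boundary'' cofaces, which by the identification $(d^{p_i+1}x_i)\otimes x_{i+1} \sim x_i \otimes (d^0 x_{i+1})$ insert either $\pi_{\phi_i}(v_i)$ at the end of block $i$ or $\pi_{\phi_{i+1}}(v_{i+1})$ at the start of block $i+1$. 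All of these cases reduce to the identity
\[
\psi_j(u)(\pi_{\phi_j}(v_j)) = \pi_{\phi_1\times\cdots\times\phi_n}(v_1,\ldots,v_n),
\]
which is exactly \psibp; this identity also directly shows that $\Psi$ descends through the $\square$-quotient, since the two representatives of each internal-boundary coface produce the same value $\pi_\phi(v)$ at the disputed position. For (c), the monad unit lands in the arity-$1$ summand $\K(1)\hotimes \CL^\bullet = \CL^\bullet$, on which $\Psi$ is the identity by definition.

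The substantive step is (d). An element of $\MK(\MK(\CL^\bullet))$ with arity pattern $(n; m_1,\ldots,m_n)$ has the form $(u; w_1,\ldots,w_n;\text{blocks})$ with $u\in\K(n)$ and $w_i\in\K(m_i)$; applying $\Psi$ twice produces a result whose Thom label uses a nested evaluation of $\teps$ and whose perturbations are nested compositions of $\psi$-maps, while applying the monad multiplication $\mu$ first (via the operad composition $u\circ(w_1,\ldots,w_n)\in\K(m_{\leq n})$) and then $\Psi$ produces a result built from a single layer of $\teps$ and a single $\psi$. Equality of the Thom labels is exactly \epscomp, iterated over the partial compositions $\circ_i$, and equality of the perturbed loop samples is the commutative square $\text{Comp}_i\circ(\psi\times\psi)\circ(\alpha_i\times\gamma_i) = \psi\circ\beta_i$, which is \psicomp. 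Since the full operad composition $u\circ(w_1,\ldots,w_n)$ is built from iterated partial compositions $\circ_i$, a finite induction reduces the monad-associativity axiom to the single-$\circ_i$ case, where it is handled directly by \epscomp\ and \psicomp. This is the main obstacle and essentially the only nontrivial bookkeeping in the proof.
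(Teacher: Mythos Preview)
Your proof is correct and follows essentially the same approach as the paper's own proof. The paper is extremely terse, merely pointing to \psibp\ for the factorization through the $\square$-quotient and the first/last cofaces, and to \psicomp\ for compatibility with the monad product; you have spelled out the same verification more explicitly, and also made explicit the role of \epscomp\ for the $\teps$-component (which the paper leaves implicit here, having already used it in the parallel Proposition~\ref{Pwelldefined}).
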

\begin{proof}
$\Psi$ actually factors through the quotient $\MK(\CL^\bullet)$ because  $\psi_i(\pi_{\phi_i}v_i)=\pi_{\phi}(v_1,\dots,v_n)$ for each $i$ (see the condition \psibp \ in sub-section \ref{SSSconditionsLMT}).
Verification of compatibility of $\Psi$ with coface and codegeneracy morphisms is a routine work. (Compatibility with the first and last coface morphisms also follows from the identity $\psi_i(\pi_{\phi_i}v_i)=\pi_{\phi}(v_1,\dots,v_n)$. )
Compatibility with the product of the monad follows from the condition \psicomp \ stated in sub-subsection \ref{SSSconditionsLMT}.
\end{proof}

\begin{prop}\label{PcompatibilityLMT}
The isomorphism $\varphi:\LMT\to \tot\CL^\bullet$ in Proposition \ref{PisoLMT} is compatible with the actions of $\tB$. Here, the action on $\LMT$ is the one defined in subsection \ref{SSactiononLMT} and the action on $\tot\CL^\bullet$ is the one induced by $\Psi$ via Proposition \ref{PtB}.
\end{prop}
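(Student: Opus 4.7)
The plan is to verify compatibility by a direct pointwise computation, since both actions are constructed levelwise from pointed continuous maps: it will be enough to establish equality on representative (non-basepoint) elements at each spectrum level. Using the splitting $\tB(n)=\B(n)\times\K(n)$ from Proposition \ref{PtB}(1), I will carry the $\B$-factor through the coordinate rescaling encoded by $\zeta_n$ and the $\K$-factor through the perturbation morphism $\Psi$.

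Unfolding the $\tB$-action on $\tot\CL^\bullet$ via Proposition \ref{PtB}(2), for $(f,u)\in\B(n)\times\K(n)$ and elements $\varphi(\ci_1),\ldots,\varphi(\ci_n)$ the resulting element is the totalization of the composite cosimplicial morphism
\[
\Delta^\bullet \xrightarrow{f} \K(n)\otimes(\Delta^\bullet)^{\square n}
\xrightarrow{\id\otimes(\varphi(\ci_1)\square\cdots\square\varphi(\ci_n))}
\K(n)\otimes(\CL^\bullet)^{\square n}
\xrightarrow{\Psi}\CL^\bullet.
\]
I will evaluate this at cosimplicial level $p$ and a point $(t_1,\ldots,t_p)\in\Delta^p$. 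Using Lemma \ref{Lzetan} to identify $f$ with the weakly monotone surjection $\ul f\colon[0,1]\to[0,1]$, the image $f_p(t_1,\ldots,t_p)\in(\Delta^\bullet)^{\square n}$ corresponds via $\zeta_n^{-1}$ to the partition of the $t_j$'s determined by which interval $[(i-1)/n,i/n]$ contains $\ul f(t_j)$, with rescaled coordinates $s_{i,j'}=n\ul f(t_{j_i+j'})-(i-1)$ (where $j_i$ counts the indices $j$ with $\ul f(t_j)$ lying in an earlier interval). Chasing through the $\square$-product of the $\varphi(\ci_i)$'s and then applying the defining formula of $\Psi$ yields
\[
(\psi_1(c_1(s_{1,1})),\ldots,\psi_1(c_1(s_{1,p_1})),\ldots,\psi_n(c_n(s_{n,p_n})),\,\langle\phi_1\times\cdots\times\phi_n,\teps,(v_1,\ldots,v_n)\rangle)\in\CL^p,
\]
where $\psi_i=\psi_i(u;\ve_1,\ldots,\ve_n)$ and $\teps=\teps(u;(\phi_i,\eps_i)_{i=1}^n)$.

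On the other side, I will compute $\varphi$ applied to $\Phi_n((f,u);\ci_1,\ldots,\ci_n)=(\phi_1\times\cdots\times\phi_n,\teps,(v_1,\ldots,v_n),\tc)$ evaluated at the same point, obtaining $(\tc(t_1),\ldots,\tc(t_p),\langle\phi_1\times\cdots\times\phi_n,\teps,(v_1,\ldots,v_n)\rangle)$. By the defining formula \eqref{Etc} for $\tc$, each $\tc(t_j)=\psi_i(c_i(n\ul f(t_j)-i+1))$ for the unique $i$ with $t_j\in\ul f^{-1}[(i-1)/n,i/n]$; matching $t_j$ with its rescaled coordinate $s_{i,j'}$ shows this agrees coordinate-wise with the expression from the previous paragraph. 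The basepoint collapse is handled uniformly by the Thom-space identification: $\langle\phi_1\times\cdots\times\phi_n,\teps,(v_1,\ldots,v_n)\rangle$ equals the basepoint of $\MT_{k_{\leq n}}$ exactly when $(v_1,\ldots,v_n)\notin\bar\nu_\teps(\phi_1\times\cdots\times\phi_n)$, which is precisely when $(u;\ve_1,\ldots,\ve_n)\notin\D^n$ and the $\LMT$-action of subsection \ref{SSactiononLMT} likewise collapses.

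The verification is thus essentially a tautology once the combinatorial bookkeeping has been set up, and that bookkeeping is where the main obstacle lies: one must carefully trace how $\ul f$ partitions the simplex coordinates through $\zeta_n^{-1}$ and recognize the resulting grouping as the same piecewise decomposition that defines $\tc$ in \eqref{Etc}. Equivariance with respect to $\Sph$ and $\Sigma_*$ on both sides is already guaranteed by conditions \epsS\ and \psiS\ (respectively \epsSigma\ and \psiSigma), so the pointwise identity of the two explicit $\CL^p$-valued expressions above suffices to conclude compatibility of the two $\tB$-actions.
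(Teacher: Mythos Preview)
Your proposal is correct and follows essentially the same approach as the paper: both arguments evaluate the two sides at a point $(t_1,\dots,t_p)\in\Delta^p$, use $\zeta_n^{-1}$ to partition and rescale the simplex coordinates into the form $n\ul f(t_j)-(i-1)$, apply the explicit formula for $\Psi$, and then match the result against the defining formula~\eqref{Etc} for $\tc$. Your additional remarks on basepoint collapse and equivariance are reasonable but not needed beyond what the paper's proof already implicitly assumes.
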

\begin{proof}
We have to prove the following diagram is commutative
\[
\xymatrix{
\tB(n)\hotimes (\LMT)^{\otimes n}\ar[r]\ar[d]&\LMT\ar[d]\\
\tB(n)\hotimes(\tot\CL^\bullet)^{\otimes n}\ar[r]& \tot\CL^\bullet
}
\]
Let $(f,u)\in\tB(n)=\B(n)\times \K(n)$, $\ci_1,\dots,\ci_n\in\LMT$, and $(t_1,\dots t_k)\in\Delta^k$ be elements. Let 
\[
[(t_{11},\dots, t_{1,k_1}),\dots,(t_{n,1},\dots, t_{n,k_n})]\in(\Delta^\bullet\, \square \,\cdots\, \square \,\Delta^\bullet)^k
\] be a representative of $f(t_1,\dots t_k)$ 
. Here, the superscript $k$ denotes the cosimplicial degree. The image of $((f,u)\,;\ci_1,\dots,\ci_n)$ by the left-bottom corner composition takes $(t_1,\dots,t_k)$ to 
\[
\psi_1(c_1(t_{11})),\dots,\psi_1(c_1(t_{1,k_1})),\dots, \psi_n(c_n(t_{n,1})),\dots, \psi_n(c_n (t_{n,k_n})),\phi_1\times\cdots\times \phi_n,\teps, (v_1,\dots, v_n)).
\]
Since 
$(i-1+t_{i,l})/n=\ul{f}(t_L)$, where $L=\sum_{j=1}^{i-1}k_j+l$, for each $1\leq i\leq n$ and $ 1\leq l\leq k_i$, 
by the definition of $\tilde c$ in sub-subsection \ref{SSSoutlineLMT}, the above value is equal to the image by right-top composition. 
\end{proof}


\subsection{Intermediate cosimplicial object}\label{SSintermediate}
In this subsection, we define a cs-spectrum $\IM^\bullet$ which intermediates $\CL^\bullet$ and $\thc^\bullet(A',B)$ (see subsection \ref{SSoutlineEQ}). In the rest of this paper, we fix $A'$ an $B$ as in Proposition \ref{propinvarianceofthc}
\subsubsection{Point-set description of hom-spectrum}\label{SSSpointset}
To write down construction efficiently in the following, we shall describe the internal hom object $\Inhom$ in $\SP$ at the point-set level. This description is used in later (sub)sections implicitly. Of course, the contents of this subsection are not new, and these are easily deduced by unwinding the definition of $\otimes=\wedge_S$ in subsection \ref{SSNT} and the adoint property $Mor_{\SP}(X\otimes Y,Z)\cong Mor_{\SP}(X,\Map(Y,Z))$, where $Mor$ denotes the set of morphisms.\par 
Let $X_1,\dots, X_n$ and $Y$ be symmetric spectra. The $k$-th space of the spectrum 
$\Inhom(X_1\otimes\cdots\otimes X_n,Y)$ consisting of sequences of continuous maps 
\[
f_l:\bigsqcup_{l_1+\cdots+ l_n=l} (X_1)_{l_1}\times\cdots\times (X_n)_{l_n}
\to Y_{l+k},\quad l\geq 0
\]
which satisfy the following conditions.
\begin{enumerate}
\item For any $(x_1,\dots,x_n)\in (X_1)_{l_1}\times\cdots\times (X_n)_{l_n}$, if at least one of $x_i$ is the basepoint, $f_l(x_1,\dots,x_n)$ is also the basepoint.
\item If the previous condition are satisfied, $f_l$ induces a map on the corresponding smash product. Then the following diagram commutes
\[
\xymatrix{
S^1\wedge (X_1)_{l_1}\wedge\cdots\wedge (X_n)_{l_n}\ar[r]^{T\qquad}\ar[d]^{f_l}&
(X_1)_{l_1}\wedge\cdots\wedge S^1\wedge (X_i)_{l_i}\wedge\cdots\wedge (X_n)_{l_n}\ar[d]\\
S^1\wedge Y_{l+k}\ar[d]&(X_1)_{l_1}\wedge\cdots\wedge (X_i)_{l_i+1}\wedge\cdots\wedge (X_n)_{l_n}\ar[d]^{f_{l+1}}\\
Y_{l+k+1}\ar[r]^{\sigma^{-1}\cdot}&
Y_{l+k+1}
}
\]
Here, $T$ is the obvious transposition, and the permutation $\sigma\in \Sigma_{l+k+1}$ is the cyclic permutation of the letters $k+1,\dots, k+l_1+\dots +l_i+1$ which takes the first to the last and others to its previous letter $(k+1, k+\cdots +l_i+1, k+\cdots +l_i,\dots, k+2)$.  (The rest arrows are induced by the action of the sphere spectrum).
\item When we regard $\Sigma_{l_1}\times\cdots\times \Sigma_{l_n}\subset \Sigma_{l}\subset \Sigma_{k+l}$ (permutations on the last $l$-letters), the map $f_l$ is $\Sigma_{l_1}\times\cdots\times \Sigma_{l_n}$-equivariant.
\end{enumerate}
The subscript $l$ is omitted in the rest of the paper, and we always identify an element $f$ in $\Map(X_1\otimes \cdots\otimes X_n, Y)_k$ and the sequence of maps satisfying the above three conditions, which $f$ factors through. Via this identification, the action of $\Sigma_k$ is given by  permutations of the first $k$-letters on the codomain $Y_{l+k}$, and the action of $S^1$ is given by that on $Y$.
The adjoint bijection $Mor_{\SP}(Z\otimes X_1\otimes \cdots \otimes X_n, Y)\cong Mor_{\SP}(Z,\Map(X_1\otimes \cdots \otimes X_n, Y))$ is given by $f\mapsto (z\mapsto f(z,\dots))$ as usual. 
\begin{exa}\label{EXprodpointset}
 The product of the $\square$-object structure on $\thc^\bullet(A,B)$ in Lemma \ref{LmonoidTHC} is given by 
\[
h_1\cdot h_2(f_1,\dots, f_{p+q})=\sigma^{-1}\cdot\{h_1(f_1,\dots, f_p)\cdot h_2(f_{p+1},\dots, f_{p+q})\}.
\]
for $h_1\in \thc^p(A, B)_{k_1}, h_2\in \thc^{q}(A, B)_{k_2}$ and $f_j\in F(M)_{l_j}$. Here the second dot in the right hand side denotes the product of $B$, and $\sigma\in \Sigma_{k_{\leq 2}+l_{\leq p+q}}$ denotes the permutation corresponding to the transposition
\[
h_1,h_2,f_1,\dots, f_{p+q}\longmapsto h_1,f_1,\dots, f_p, h_2, f_{p+1},\dots, f_{p+q}
\]
taking the levels into account. More precisely, we set
\[
\sigma(i)=\left\{
\begin{array}{ll}
i &(1\leq i\leq k_1, \ k_{\leq 2}+l_{\leq p}+1\leq i\leq k_{\leq 2}+l_{\leq p+q})\vspace{1mm}\\
i+l_{\leq p} &(k_1+1\leq i\leq k_1+k_2)\vspace{1mm}\\
i-k_2 &(k_1+k_2+1\leq i\leq k_{\leq 2}+l_{\leq p}).
\end{array}\right.
\]
The appearance of $\sigma^{-1}$ is inevitable by definition of the symmetry isomorphism of the monoidal product.
\end{exa}

\subsubsection{Definition of intermediate cosimplicial object}\label{SSSdefIM}
Put 
\[
\athc^\bullet=\thc^\bullet (F(M),\Gamma(M))
\] 
(see subsection \ref{SSatiyahduality} for  $F(M)$ and $\Gamma(M)$). We define a morphism $\trho^{\, p}:\CL^p\to \athc^p$ by
\[
\trho^{\,p}(x_1,\dots,x_p,\langle \phi,\eps,v\rangle )(f_1,\dots, f_p)=\rho(\langle \phi,\eps,v\rangle)\cdot f_1(x_1)\cdots f_p(x_p).
\]
Here, see subsection \ref{SSatiyahduality} for $\rho$, and $(-\cdot -)$ denotes the action of the sphere spectrum. To be precise, for $f\in F(M)_k,\ x\in M$, by definition we have $f(x)\in S^k=\Sph_k$ so for $g\in \Gamma (M)$, $g\cdot f(x)$ is the image of the morphism $\Gamma(M)\otimes \Sph\cong \Gamma(M)$ which is a part of the data of  the monoidal structure of $\SP$, and more explicitly, the dot adds $k$ constant components to $0$ to the $\phi$-part of $g$, and the component $f(x)$ to each value of the section of $g$ from the right, and does not change the $\eps$-part of $g$.  A morphism similar to $\trho^{\, p}$ is also considered in \cite{cj, cohen}. A trouble is that the collection $\{\trho^{\, p}\}_p$ does \textit{not} commutes with coface morphisms. This is because the first and the last cofaces of $\CL^\bullet$ are defined using the projection of a tubular neighborhood while the definition of cofaces of $\athc^\bullet$ does not include it (the only geometric construction it includes is a collapsing map). More formally, we shall see this for $p=0$. Let $\langle \phi,\eps,v\rangle\in \CL^0=\MT$. By definition $d^0\langle \phi,\eps,v\rangle=(\pi_\phi(v),\langle \phi,\eps, v\rangle)$, so we have 
\[
\trho^{\,1}d^0\langle \phi,\eps,v\rangle(f_1)=\rho(\langle \phi,\eps,v\rangle)\cdot f_1(\pi_\phi(v)), \qquad d^0\trho^{\,0}\langle \phi,\eps,v\rangle(f_1)=\rho(\langle \phi,\eps,v\rangle)\cdot f_1
\]
The right hand side of the right equation comes from the structure of  the right $F(M)$-module. Explicitly, for $g\in \Gamma(M)_l, f\in F(M)_k$, $g\cdot f$ is given by $x\mapsto (g(x),f(x))\in \R^{k+l}$, where we omit the $\phi$- and the  $\eps$-parts. The difference between these two  is  whether the point in the domain of $f_1$ moves or not, and we have $\trho^{\, 1}d^0\langle \phi,\eps,v\rangle\not=d^0\trho^{\, 0}\langle \phi,\eps,v\rangle$. We easily see a similar inequality for the last coface.\\
 \indent To remedy this, we introduce an intermediate cs-spectrum $\IM^\bullet$. At each cosimplicial degree, this is a certain path object. The domain is the interval $[0,2]$, and at $0$, the (first and last) cofaces defined using the projection $\pi_\phi$ and at $2$ without using it. To define cofaces between $0$ and $2$, we use a family of homotopy between a self map on $M$ which shrinks a small disk to its center and the identity, parametrized by the center. Recall that an element of $\Gamma(M)$ is a section of a sphere bundle whose fiber is obtained by collapsing the boundary of  a disk with a given small radius, so if the point in the domain $M$ goes outside of a disk of the radius, the value of a section goes to the basepoint and becomes irrelevant to the point.  This means we can define the cofaces using the homotopy. We shall give a formal definition.  
\begin{defi}\label{DTIM} 
\begin{itemize2}
\item[(1)] For a linear map $\phi\in V_k$, $\phi^i:\R^{k_0}\to \R$ denotes its $i$-th component.   Similarly, $v^i\in \R$ denotes the $i$-th component of a vector $v\in \R^k$. 
\item[(2)] Let $\phi\in V_k$ be an element. A component of $\phi$ is said to be a {\em $0$-component}  if it is the constant map to $0\in \R$.  $\phi$ is said to {\em have no $0$-components} if  none of its components   is a $0$-component.  
\item[(3)] Let $p_0:(\athc^p)^{[0,1]}\to\athc^p$ be the evaluation at $0$ and $\CL^p_k\times_{\athc^p_k}(\athc^p_k)^{[0,1]}$ be the pullback of the diagram $\CL^p_k\xrightarrow{\trho^{\, p}}\athc^p_k\xleftarrow{p_0}(\athc^p_k)^{[0,1]}$. 
 An element $(\lambda, h)$ of $\CL^p_k\times_{\athc^p_k}(\athc^p_k)^{[0,1]}$ is said to {\em have a shaft}\, if there exists an element $(\bar \phi,\bar \eps,\bar v)\in \M_{k'}$ (for some $k'$) such that the following conditions hold.
\begin{itemize}
\item $\bar\phi$ has no $0$-components.
\item For elements $s\in [0,1]$, $f_1,\dots, f_p\in F(M)$,  let $(\phi_s,\eps_s,\theta_s)$ be an element representing $h_s(f_1,\dots,f_p)$.  \vs{1mm} Recall that $\theta_s$ is a section of the sphere bundle whose fiber at $y$ is $\bar B_{\eps_s}(y)/\partial \bar B_{\eps_s}(y)$.  Then, there exist  numbers $i_1<\dots <i_{k'}$ such that 
\begin{itemize}
\item  $\phi_s^i$ is a $0$-component for $i\not=i_1,\dots,i_{k'}$, and 
\item the following equations hold  for each $y\in M$, $s\in [0,1]$, and $r=1,\dots ,k'$,
\[
\phi_s^{\,i_r}=\bar \phi^{\, r}, \ \eps_s=\bar \eps, \text{ and\  if  $\theta_s(y)\not=*$, } \  \theta_s^{\,i_r}(y)=\bar v^{\, r}.
\] 

\end{itemize} Here, $*$ is the point represented\vs{1mm} by $\partial \bar B_{\eps_s}(y)$, and if $\theta_s(y)\not=*$,  $\theta_s(y)$ is naturally regarded as a point in the interior of $\bar  B_{\eps_s}(y)$. $\theta_s^{\,i_r}(y)$ denotes its $i_r$-th component.   These conditions imply $\phi_s$ has exactly $k'$ non-$0$-components, and the $r$-th non-$0$-component is equal to the $r$-th component of $\bar \phi$.
\end{itemize}
 To clarify, $k'$ can be different from $k$, and $i_1,\dots, i_{k'}$ can depend on $s, f_1,\dots, f_p$. 
If $h_s(f_1,\dots, f_p)\not=*$ for some $s, f_1,\dots, f_p$, the element $(\phi_s,\eps_s,\theta_s)$ is unique, so the element $(\bar \phi,\bar \eps,\bar v)$ is unique and called the {\em shaft} of $h$ (or $(\lambda, h)$).  Otherwise, i.e., if $(\lambda, h)$ is the basepoint of $\CL^p_k\times_{\athc^p_k}(\athc^p_k)^{[0,1]}$, \vs{1mm} the above condition is satisfied by any element satisfying  $\bar v\in \partial \bar \nu_{\bar \eps}(\bar \phi)$. Let 
\[
\TIM^p_k\subset \CL^p_k\times_{ \athc^p_k}(\athc^p_k)^{[0,1]}
\]
be the subspace consisting of the elements having a shaft.  The action of $S^1$ on $\CL^p\times_{ \athc^p}(\athc^p)^{[0,1]}$ adds a $0$-component on the $\phi$-part and does not change the original components, and the condition of having a shaft imposes no restrictions on the $0$-components, so the sequence $\{\TIM^p_k\}_k$ is closed under the action of $S^1$.  It is clear that it is also closed under the action of the symmetric groups on  $\CL^p\times_{ \athc^p}(\athc^p)^{[0,1]}$ , \vs{1mm}so it forms  a symmetric spectrum, denoted by $\TIM^p$. 
\item[(4)] We shall define the coface morphisms. Fix a continuous map $H_{x,s}:M\to M$  depending on $x\in M$ and $s \in [0,1]$ continuosly such that
\begin{itemize}
\item $H_{x,0}$ takes $\bar B_{r}(x)\cap M$ to the one point set $\{x\}$, where $r=L_{e_0}/2$, and 
\item  $H_{x,s}$ is the identity on the whole $M$ if $s=1$.
\end{itemize}
For an element $(\lambda,h)\in \TIM^p_k$, $(\lambda^i,h^i)=d^i(\lambda,h)$ is defined as follows. We set $\lambda^i=d^i\lambda$, the coface  of $\CL^\bullet$.  If $h=*$, we set $h^i=*$.  Otherwise, let $(\phi,\eps, v)$ be the shaft of $(\lambda, h)$ and  put 
\[
h^i(f_1,\dots, f_{p+1})
=\left\{
\begin{array}{ll}
\sigma_{l_1,k}\cdot \{ H_{x_0}^*f_1\cdot h(f_2,\dots, f_{p+1}) \}&( i=0)\vs{1mm}\\
h(f_1,\dots, f_i\cdot f_{i+1},\dots, f_{p+1})& (1\leq i\leq p)\vs{1mm}\\
h(f_1,\dots, f_p)\cdot H_{x_0}^*f_{p+1}& (i=p+1).
\end{array}
\right.
\]
Here, $x_0=\pi_{\phi}(v)$, and  $l_1$ is the level of $f_1$ and $\sigma_{l_1,k}$ is the permutation which transposes the first $l_1$ letters and the next $k$ letters, and does not move others  (this is the natural permutation appearing when one transpose elements of level $k$ and $l_1$). $\alpha^*f$ denotes $f\circ \alpha$ for $\alpha\in \Map(M, M),\ f\in F(M)$. The second dot in the first line and the dot in the third line denote the action of $F(M)$ on $\Gamma(M)$, and the dot in the second line denotes the product on $F(M)$. We also omit the subscript $s$ of $h^i_{s},\ h_{s},\ $ and $H_{x_0,\, s}$. The codegeneracy morphisms on $\TIM^\bullet$ is the one induced by those on $\CL^\bullet$ and $\athc^\bullet$  in the component-wise manner.  
\item[(5)] We set $\BIM^p=(\athc^p)^{[1,2]}$. The cofaces and the codegneracies on $\BIM^\bullet=\{\BIM^p\}_p$ is induced by those of $\athc^\bullet$ in the obvious manner.
\item[(6)] We set $\IM^\bullet =\TIM^\bullet \times_{\athc^\bullet}\BIM^\bullet$, the pullback of the  evaluations $:\TIM\to \athc$ and  $:\BIM\to \athc$ at $1\in [0,1]$ (or $[1,2]$). (Thus, an element of $\IM$ is naturally considered as a pair of an element of $\CL$ and a path $[0,2]\to \athc$.)
\end{itemize2}
\end{defi}
Before verifying well-definedness of $\TIM$, we shall observe the condition of having a shaft. The  action of $S^1$ does not change the shaft (if the result is not the basepoint).  The effect of the action of a permutaion $\sigma$ on the shaft is given by the action of the permutation among non-$0$-components induced by $\sigma$.  An example of an element of $\TIM$ is the element $(\lambda, \trho^{\,p}(\lambda))$ where $\lambda$ is an element of $\CL^p$ and $\trho^{\, p}(\lambda)$ is considered as the constant path in $\athc^p$. For an element $(\lambda, h)$ having a shaft, If $\lambda\not=*$, the shaft is the same as the element made from the element representing $\lambda$ by removing the $0$-components from its map and the components of its vector with the same numbering as a $0$-component of the map. It can hold that $\lambda=*$ and $h\not=*$. Even in this case, we want to use an element $(\phi, \eps, v)$ to adapt the maps $\teps$ and $\psi$ to construction on $\TIM$. This is because we give the condition. A fact which needs attention is that the dimension of the vector or equivalently, of  the codomain of the map of a shaft can jump discontinuously. For example, we fix a map $\phi\in V_k$ having no $0$-components and set $\phi_\alpha(x)=(\alpha x,\phi(x))$ for $\alpha\in \R$. If $\alpha\not=0$, $\phi_\alpha :\R^{k_0}\to \R^{k_0+k}$ has no $0$-components. We also fix $(w,v)\in \cap_{|\alpha|<\delta}\nu_\eps(\phi_\alpha)$ with $\dim v=k$, where $\delta>0$ is so small that the intersection is non-empty. We regard $\rho(\langle \phi_\alpha, \eps, (w,v)\rangle)$ as the constant path in $\Gamma(M)$ and as an element of $\TIM^0_{k_0+k}$. If $\alpha\not=0$, the shaft of the element is $(\phi_\alpha, \eps, (w,v))$. but if $\alpha=0$, the shaft is $(\phi,\eps, v)$. However, the equation $\underset{\alpha\to 0}{\lim}\pi_{\phi_\alpha}(w,v)=\pi_{\phi}(v)$ holds since  the effect of $w$ on the position of the closest point gets smaller when $\alpha\to 0$. More generally, ingredients which is defined using the shaft and used in the later costruction depend on an element of $\TIM$ continuously. The following lemma gives instances.
\begin{lem}\label{Lshaftconti}
\begin{itemize2}
\item[(1)] The maps $\TIM_k^p-\{*\}\ni (\lambda, h)\mapsto \{\text{ the shaft }(\phi,\eps,v)\text{ of }(\lambda, h)\}\mapsto \pi_\phi(v)\in M$ and $ \TIM_k^p-\{*\}\ni (\lambda, h)\mapsto \{\text{ the shaft }(\phi,\eps,v)\text{ of }(\lambda, h)\}\mapsto |\phi|\in \R$ are continuous.
\item[(2)] Let $u\in \K(n)$ and $(\lambda_1,h_1),\dots ,(\lambda_n, h_n)\in \TIM-\{*\}$. Let $\ve_l=(\phi_l,\eps_l,v_l)$ be the shaft of $(\lambda_l,h_l)$ for $1\leq l\leq n$. The map $(u\,;(\lambda_l,h_l)_l)\mapsto \teps(u\,;(\phi_l,\eps_l)_l)\in \R$ is continuous (for the product topology on the domain). When $(u\,;(\lambda_l,h_l)_l)$ runs through the range such that $(u\,;(\ve_l)_l)$ belongs to $\D^n$, the map $(u\,;(\lambda_l,h_l)_l)\mapsto \psi_i(u;(\ve_l)_l)\in \Map(M,M)$ is continuous. 
\end{itemize2}
\end{lem}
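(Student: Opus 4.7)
The essential subtlety is that the shaft $(\bar\phi, \bar\eps, \bar v)$ of $(\lambda, h)$ is \emph{not} itself a continuous function of $(\lambda, h)$, because the level $k'$ may jump as components of an ambient level-$k$ representative collapse to $0$, as in the example immediately preceding the statement. The key observation is that the shaft-derived quantities appearing in (1) and (2) are invariant under the insertion or deletion of $0$-components: if $\tilde\phi \in V_k$ is obtained from $\bar\phi$ by inserting $0$-components at arbitrary positions and $\tilde v \in \R^k$ is any vector whose non-inserted components agree with $\bar v$, then $\pi_{\tilde\phi}(\tilde v) = \pi_{\bar\phi}(\bar v)$ and $|\tilde\phi| = |\bar\phi|$. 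For the projection this holds because each term of $|\tilde v - \tilde\phi(y)|^2$ at an inserted position is the constant $(\tilde v^i)^2$, independent of $y$; for the norm the $0$-components contribute $0$ to $|\tilde\phi(x)|^2$. The same invariance holds for $\bar\eps$, which by Definition \ref{DTIM} equals $\eps_s$ for every representing triple.

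For part (1), these invariances let one exhibit the two maps locally as continuous functions of a level-$k$ representative that can itself be chosen continuously. Around any $(\lambda_0, h_0) \in \TIM_k^p - \{*\}$, at least one of the following two situations persists on a neighborhood. If $\lambda_0 \neq *$, then $\lambda$ is of the form $(x_1, \ldots, x_p, \langle \phi', \eps', v' \rangle)$ with $v'$ in the interior of $\bar\nu_{\eps'}(\phi')$; the assignment $(\lambda, h) \mapsto (\phi', v')$ is continuous in a neighborhood, and the invariance above gives $\pi_{\bar\phi}(\bar v) = \pi_{\phi'}(v')$ and $|\bar\phi| = |\phi'|$, both continuous in $(\phi', v')$. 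If $\lambda_0 = *$ then $h_0 \neq *$; choose $s_0 \in [0,1]$, $f_1, \ldots, f_p \in F(M)$, and $y_0 \in M$ with $h_{0, s_0}(f_1, \ldots, f_p)(y_0) \neq *$. This condition persists on a neighborhood by openness of the complement of the basepoint, and evaluating the representing triple $(\phi_s, \eps_s, \theta_s(y_0))$ at $s = s_0$ yields a level-$k$ representative depending continuously on $(\lambda, h)$ with the same shaft invariants. The two local descriptions agree on overlaps by the invariance.

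For part (2), continuity of $\teps$ and $\psi$ now follows by direct inspection of the inductive construction in subsection \ref{SSSconstructionLMT}, after noting that by (1) the base-case inputs $|\phi_l|$, $\bar\eps_l$, and $\pi_{\phi_l}(v_l)$ are continuous in $(\lambda_l, h_l)$ (where continuity of $\bar\eps_l$ is established by the same argument as in (1)). The arity-two $\teps$ uses only $|\phi_1 \times \phi_2|^2 = |\phi_1|^2 + |\phi_2|^2$ and $\min\{\eps_1, \eps_2\}$, and the inductive cone formula $\teps(tu\,;\cdots) = (1-t)\min\{\cdots\} + t\,\teps(u\,;\cdots)$ preserves continuity in $u$ as well. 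The arity-two $\psi_i$ is $\pi_\phi \circ \widehat{\psi}_i$ with $\widehat{\psi}_i(y) = v + \phi(y) - \phi(\pi_{\phi_i}(v_i))$, and the inductive step (\ref{EQdefpsi}) is a convex combination in $\R^{k_{\leq n}}$ followed by $\pi_\phi$; each ingredient is continuous in the shaft-derived data on the range where $(u; (\ve_l)_l) \in \D^n$, with the condition \psiev\ guaranteeing that the argument of $\pi_\phi$ stays in its domain of definition.

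The main obstacle is precisely the identification and verification of the $0$-component invariance of $\pi_{(-)}(-)$, $|\cdot|$, and $\bar\eps$, together with the case analysis ($\lambda \neq *$ vs.\ $\lambda = *, h \neq *$) needed to produce a continuous level-$k$ representative near every non-basepoint element. Once these are in place, the remainder is a routine assembly of continuous operations from the constructions of subsections \ref{SSatiyahduality} and \ref{SSactiononLMT}.
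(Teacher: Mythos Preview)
Your approach is correct and takes a different route from the paper. The paper argues by direct estimate: given $(\lambda,h)$ with shaft $(\phi,\eps,v)$ and a nearby $(\lambda_1,h_1)$ with shaft $(\phi_1,\eps_1,v_1)$ of possibly different dimension, it fixes one evaluation $s,f_1,\dots,f_p$ so that the components of $\phi$ and $\phi_1$ sit in a common ambient $V_l$, discards from $\phi_1$ the components that do not correspond to any component of $\phi$ to obtain a pair $(\phi_2,v_2)$ that varies continuously and has the \emph{same} dimension as $(\phi,v)$, and then shows by an explicit $\delta$-argument that the leftover near-zero components of $\phi_1$ perturb $\pi_{\phi_1}(v_1)$ only slightly from $\pi_{\phi_2}(v_2)$. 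Your argument replaces these estimates by the observation that a continuous fixed-level representative (from $\lambda$ when $\lambda\neq *$, or from an evaluation $h_{s_0}(f_1,\dots,f_p)(y_0)$ otherwise) exists locally, together with the $0$-component invariance of $\pi_{(-)}(-)$ and $|\cdot|$. This is more conceptual and avoids the inequalities entirely; the paper's approach, on the other hand, generalizes uniformly to any quantity built from $\pi_\phi$, which is why it can dismiss $\psi_i$ in one line.

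There is one point in your part (2) that is too quick. For $\psi_i$, the expression $\widehat\psi_i(y)=v+\phi(y)-\phi(\pi_{\phi_i}(v_i))$ and the convex combination in (\ref{EQdefpsi}) live in $\R^{k_{\leq n}}$, whose dimension depends on the shafts, so ``each ingredient is continuous in the shaft-derived data'' is not literally true: unlike $\teps$, the map $\psi_i$ is \emph{not} a function of $|\phi_l|,\bar\eps_l,\pi_{\phi_l}(v_l)$ alone. The clean fix is the same device you used in (1): compute $\psi_i$ on the continuous fixed-level representatives (where everything lives in a fixed $\R^{L_{\leq n}}$ and continuity is immediate), and then invoke the already-established conditions \psiS\ and \psiSigma\ --- which are exactly the $0$-insertion and permutation invariances of $\psi$ --- to conclude that the value agrees with $\psi_i$ computed on the shafts. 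With this adjustment your argument for (2) is complete.
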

\begin{proof}
We shall prove the continuity of $\pi_\phi(v)$. In short, this holds since  the $0$-components does not affect the position of the closest point $\pi_{\phi}(v)$, see subsection  \ref{SSproblem}. We shall give a detailed proof. Let $N\subset M$ be a neighborhood of $\pi_\phi(v)$.  Let $(\lambda_1,h_1)$ be another element and $(\phi_1,\eps_1,v_1)$ be its shaft. We substitute  fixed elements $f_1,\dots, f_p\in F(M),$ and $s\in [0,1]$ and compare components of $\phi$ and $\phi_1$. The components of $\phi$ (resp. $\phi_1$) are included in the components of $h_s(f_1,\dots,f_p)$ (resp. $h_{1,s}(f_1,\dots,f_p)$). If a pair of components of $\phi$ and $\phi_1$ is a pair of  components of $h_s(f_1,\dots,f_p)$ and $h_{1,s}(f_1,\dots,f_p)$ with the same numbering, we say these components of $\phi$ and $\phi_1$ correspond.  More precisely, we can take elements $f_1,\dots,f_p\in F(M),$ and $s\in [0,1]$ such that  $h_s(f_1,\dots, f_p)\not=*$. If $h_1$ is sufficiently close to $h$, we have $h_{1,s}(f_1,\dots,f_p)\not=*$. Let $\phi'$ (resp. $\phi'_{1}$) denote the $\phi$-part of  $h_s(f_1,\dots, f_p)$ (resp.  $h_{1,s}(f_1,\dots,f_p)$).  If  neither the $j$-th component $(\phi')^j$ nor  $(\phi'_1)^j$ is a $0$-component, and $(\phi')^j$ (resp. $(\phi'_1)^j$) is the $t$-th (resp. $t_1$-th) non-$0$-component of $\phi'$ (resp. $\phi_1'$) for some $j$, we say the components $\phi^t$ and $\phi_1^{t_1}$ correspond. Suppose $(\lambda_1,h_1)$ is sufficiently close to $(\lambda, h)$.  We may assume any component of $\phi$ corresponds to a component of $\phi_1$ since a map sufficiently close to a non-zero map is also non-zero. Let $\phi_2$  be the map made from $\phi_1$  by removing the components which do not correspond to a component of $\phi$, and $v_2$ be the vector made from $v_1$ by removing the components with the same numbering as a removed component of $\phi_1$.  If $(\lambda_1, h_1)$ runs through a sufficiently small neighborhood of $(\lambda, h)$,  the codomain of $\phi_2$ is stationary and  equal to the codomain of $\phi$ (say $\R^{k'}$) , and the map $(\lambda_1, h_1)\mapsto (\phi_2,v_2)\in V_{k'}\times \R^{k'}$  is continuous. So if $(\lambda_1,h_1)$ belongs to a sufficiently small neighborhood of $(\lambda, h)$, the following conditions are satisfied for some $\delta>0$.
\begin{enumerate}
\item $\pi_{\phi_2}(v_2)\in N$.
\item If $x\in M-N$, $|v_2-\phi_2(x)|\geq |v_2-\phi_2(\pi_{\phi_2}(v_2))|+\delta$.
\item If the $j$-th component of $\phi_1$ does not correspond to a component of $\phi$,    the inequality $|\phi_1^j(x)|<\frac{\delta}{4k'}$ holds for any $x\in M$.

\end{enumerate} The first condition holds since $(\phi_2,v_2)=(\phi,v)$ if $(\lambda_1,h_1)=(\lambda, h)$ and the third one holds since $\phi_1^j$ is sufficiently close to the constant map to $0$. By definition of the norm, we have the equation
\[
|v_1-\phi_1(x)|=\sqrt[]{|v_2-\phi_2(x)|^2+\sum_j|v_1^j-\phi_1^j(x)|^2}
\]
for $x\in M$. Here $j$ runs through the numbering not corresponding to a component of $\phi$. Putting all together this equation for the cases of  $x=\pi_{\phi_2}(v_2)$ and $x\in M-N$, the second condition, and the inequality
\[
||v_2^j-\phi_1^j(x)|-|v_2^j-\phi_1^j(\pi_{\phi_2}(v_2))||<\frac{\delta}{2k'}
\]
deduced from the third condition, we see the following inequality by easy calculation.
\[
|v_1-\phi_1(x)|\geq |v_1-\phi_1(\pi_{\phi_2}(v_2))|+\delta/2\quad \text{ if } x\in M-N. 
\] This implies $\pi_{\phi_1}(v_1)$ belongs to $N$. Continuity of $|\phi|$, $\teps$, and $\psi_i$ is similarly proved.  The definition of $\psi_i$ looks complicated but it is defined by using $\pi_{\phi}$ so the above proof works mostly as it is.
\end{proof}
We shall verify well-definedness of $\TIM^\bullet$. We use the notations in Definition \ref{DTIM}.
The action of $F(M)$ on $\Gamma(M)$ adds $0$-components on the map and does not change the original component. The defining  condition on the shaft is given in the same form for any set of element $(f_1,\dots, f_p\,; s)$ so the operations on these substituted elements does not affect the condition. By these observations,  $(\lambda^i, h^i)$ has a shaft. 
We shall verify the above formula of the first  coface map is a well-defined map between pullbacks. Let $(\phi',\eps', v')$ be the element representing $\lambda$. $\phi$ (resp. $v$) is the same as the element made from $\phi'$ (resp. $v'$) by removing the $0$-components (resp. the components with the same numbering as the $0$-components). We  have $\pi_{\phi'}(v')=x_0(=\pi_\phi(v))$ since $0$-components of $\phi'$ do not affect the position of the closest point, see subsection \ref{SSproblem}. We have $\eps'=\eps$.  If $|v'-\phi'(y)|<\eps$, we have 
\[
|x_0-y|= \frac{1}{|\phi'|}|\phi'(x_0)-\phi'(y)|\leq \frac{1}{|\phi'|}|\phi'(x_0)-v'|+|v'-\phi'(y)|<2\eps<L_{e_0}/2,
\] which implies  $H_{x_0,\,0}^*f_1(y)=f_1(x_0)$. If $|v'-\phi'(y)|\geq \eps$, \vs{1mm}we have $\rho(\langle \phi', \eps', v'\rangle)(y)=*$ by definition. By these observations, we have $\rho(\langle \phi',\eps',v'\rangle)\cdot f_1(x_0)=\rho(\langle \phi',\eps',v'\rangle)\cdot H_{x_0,\,0}^*f_1$.  By this equation, we see the two elements 
\[
\begin{split}
\tilde \rho^{\,p+1}\lambda^0(f_1,\dots, f_{p+1}) & =\rho(\langle \phi',\eps',v'\rangle)f_1(x_0)f_2(x_1)\cdots f_{p+1}(x_p)\ \  \text{and} \\
p_0h^0(f_1,\dots, f_{p+1}) & =\sigma_{l_1,k}\cdot H^*_{x_0,\,0}f_1\cdot p_0h(f_2,\dots, f_{p+1})\\
&=\sigma_{l_1,k}\cdot H^*_{x_0,\, 0}f_1\cdot \rho(\langle \phi',\eps',v'\rangle)f_2(x_1)\cdots f_{p+1}(x_p) \\
&= \rho(\langle \phi',\eps',v'\rangle)\cdot H^*_{x_0,\,0}f_1\cdot f_2(x_1)\cdots f_{p+1}(x_p)
\end{split} 
\] coincide. We also use $p_0(h)=\trho^{\,p}(\lambda)$ in the equation in the third line. The coface map is continuous by Lemma \ref{Lshaftconti}. The case of the last coface is similarly verified.  Well-definedness of other cofaces and codegeneracies is obvious.   
Verification of cosimplicial identity is trivial as it is equivalent to verify the identity for $\thc^\bullet(F(M),H_{x, s}^*\Gamma(M))$ with $x, s$ fixed, and we have completed verification on the definition of $\IM^\bullet$. 
\begin{defi}\label{Dmorphismscosimplicial}
We define a morphism $p_0:\IM^\bullet \to \CL^\bullet$ as the projection to the first factor of the pullback and $q_2:\IM^\bullet \to \athc^\bullet$ as the evaluation at $2\in [0,2]$. Fix weak equivalences $ \eta :A'\to F(M)$ and $\zeta:\Gamma(M)\to B$, and let $\zeta_*\eta^*: \athc^\bullet\to \thc^\bullet (A',B)$ denote the morphism induced by the pushforward by $\zeta$ and the pullback $\eta^{\otimes n}:(A')^{\otimes n}\to F(M)^{\otimes n}$ at the $n$-th term for each $n\geq 0$, and $\bar q_2:\IM^\bullet \to \thc^\bullet(A',B)$  the composition $\zeta_*\eta^*\circ q_2$.
\end{defi}
The following lemma is proved by a way analogous to the proof of the fact that the  projection $X\times_YY^{[0,1]}\to X$ is a homotopy equivalence. A homotopy inverse of the projection is given by shrinking  paths to their values at $0$. 
\begin{lem}\label{LweqIM}
The morphism $p_0:\IM^\bullet \to \CL^\bullet$ is a termwise level equivalence.
\qed
\end{lem}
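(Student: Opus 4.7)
The plan is to imitate the standard argument that for $f\colon X\to Y$, the projection $X\times_Y Y^{[0,1]}\to X$ (with $Y^{[0,1]}\to Y$ the evaluation at $0$) is a homotopy equivalence, with a homotopy inverse that includes $X$ as constant paths, and a deformation that shrinks each path to its value at $0$. Since $\IM^p=\TIM^p\times_{\athc^p}\BIM^p$, I will regard an element of $\IM^p$ as a pair $(\lambda,h)$ with $\lambda\in\CL^p$ and $h\colon[0,2]\to\athc^p$ a path satisfying $h(0)=\trho^{\,p}(\lambda)$ together with the shaft condition on $h|_{[0,1]}$.

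First I will define a section $s\colon \CL^p\to\IM^p$ by $s(\lambda)=(\lambda,c_\lambda)$, where $c_\lambda$ is the constant path at $\trho^{\,p}(\lambda)$. Continuity follows from that of $\trho^{\,p}$. The only thing to check is that $s(\lambda)$ really lies in $\IM^p$, i.e.\ that $(\lambda,c_\lambda)$ has a shaft: if $\lambda$ is represented by $(\phi,\eps,v)$, then $\rho(\langle\phi,\eps,v\rangle)\cdot f_1(x_1)\cdots f_p(x_p)$ has $\phi$-part consisting of the components of $\phi$ with trailing $0$-components, and section value $(v,f_1(x_1),\dots,f_p(x_p))$ on its support, so extracting the non-$0$-components of $\phi$ and the matching entries of $v$ produces a shaft which is visibly independent of the $f_i$, $x_i$, and $s$. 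Compatibility of $s$ with $\Sigma_k$, $S^1$, and the cosimplicial operators is a direct inspection. By construction $p_0\circ s=\id_{\CL^p}$.

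Next I will construct a homotopy $H_t\colon\IM^p\to\IM^p$ with $H_0=s\circ p_0$ and $H_1=\id$ by setting $H_t(\lambda,h)=(\lambda,h^t)$, where $h^t(r)=h(tr)$ for $r\in[0,2]$. The key observation is that $h^t|_{[0,1]}$ samples $h|_{[0,1]}$ on the shorter interval $[0,t]\subset[0,1]$, so any element $(\bar\phi,\bar\eps,\bar v)$ witnessing the shaft for $(\lambda,h|_{[0,1]})$ automatically witnesses it for $(\lambda,h^t|_{[0,1]})$; the $[1,2]$-part $h^t|_{[1,2]}$ carries no shaft requirement. Joint continuity in $(t,x)$ is routine, and $H_t$ manifestly commutes with $\Sigma_k$, $S^1$, and all cosimplicial operators (the $H_{x_0,s}^*$ in the extremal cofaces only depends on the shaft and on $\lambda$, both of which are preserved by $t$-rescaling). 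Evaluating gives $H_1=\id$ and $H_0(\lambda,h)=(\lambda,c_\lambda)=s(p_0(\lambda,h))$, so $p_0$ is a strong deformation retraction at every level $k$ of every cosimplicial degree $p$; this is stronger than a termwise level equivalence.

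I expect no serious obstacle. The only delicate point is the preservation of the shaft under reparametrization, and this is handled by the remark above: the defining condition of a shaft is uniform in the path parameter, so any subpath inherits the same shaft. All other verifications---continuity of $s$ and $H_t$, equivariance, and compatibility with the cosimplicial structure---are pattern matches against the definitions given in Definition \ref{DTIM}.
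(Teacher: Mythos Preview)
Your approach is exactly the one the paper indicates: a section by constant paths and a deformation retraction by shrinking paths to their value at $0$. The core of the argument is correct and suffices for the lemma.

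One point to correct: your assertion that $s$ and $H_t$ commute with the cosimplicial operators is both unnecessary and false for the extremal cofaces. The lemma only asks for a \emph{termwise} level equivalence, i.e.\ that $(p_0)^p_k:\IM^p_k\to\CL^p_k$ is a weak equivalence for each fixed $p$ and $k$; no compatibility with $d^i$ or $s^i$ is required. And in fact the extremal cofaces on $\TIM^\bullet$ are built from $H_{x_0,s}$, which genuinely depends on the path parameter $s$: under your rescaling $h^t_s=h_{ts}$ you get $H_{x_0,s}$ on one side and $H_{x_0,ts}$ on the other. Likewise $s(d^0\lambda)$ is the constant path at $\trho^{\,p+1}(d^0\lambda)$, while $d^0(s(\lambda))$ is the nonconstant path $s\mapsto \sigma\cdot\{H_{x_0,s}^*f_1\cdot\trho^{\,p}(\lambda)(f_2,\dots)\}$; the failure $\trho\, d^0\neq d^0\trho$ is precisely why $\IM^\bullet$ was introduced in the first place. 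Simply drop these compatibility claims (and the parenthetical justification about $H_{x_0,s}$ depending only on the shaft and $\lambda$); the remaining argument already proves the lemma.
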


\subsection{Generalization of McClure-Smith product for $A_\infty$-structure}\label{SSgeneralizationMS}
\subsubsection{Motivation}\label{SSSmotivation}
In this subsection, we introduce a generalization of McClure-Smith product reviewed in subsection \ref{SSms}.
This is somewhat complicated so we first explain the motivation. It is ideal that we could construct an action of $\MK$ on $\IM^\bullet$ such that the two morphisms $p_0 : \IM^\bullet \to \CL^\bullet$ and $q_2 : \IM^\bullet\to \athc^\bullet$ are compatible with the action. Here $\athc^\bullet$ is regarded as an $\MK$-algebra as in the paragraph below Lemma \ref{LmonoidTHC}. The main difference between the actions of $\MK$ on $\CL^\bullet$ and $\athc^\bullet$    is that the former uses the perturbation map $\psi_i(d):M\to M$, which is constructed using a tubular neighborhood whereas the latter does not. So it is a natural approach  to construct an action of  $\MK$ on $\IM^\bullet$ using a homotopy $\tpsi_i(d)$ between $\psi_i(d)$ and the identity. To explain the difficulty, we shall look at this (failed) approach more closely. We fix  a homotopy $\tpsi_{i,s}(d)$ with $\tpsi_{i,0}(d)=\psi_i(d)$ and $\tpsi_{i,2}(d)=\id_M$. We first consider the arity 2 case for simplicity and we write the action as product $(-\cdot-)$ below as $\K(2)$ is a one-point set. 
Recall that an element $(\lambda, h)$ of $\IM^p$ consists of $\lambda\in \CL^p$ and $h:[0,2]\to \Map_{\SP}(F(M)^{\otimes p},\Gamma(M))\in \athc^p$. We would define a product on $\IM^\bullet$ as follows. It is the same as the product of $\CL^\bullet$ defined in subsection \ref{SScosimplicialLMT} for the $\lambda$-part, and  is defined similarly to the product on $\athc^\bullet$ but pulling back elements of $F(M)$ by $\tpsi_{i,s}$ at $s$ for the $h$-part. In the notations, we set
\begin{equation}\label{EQfailedprod}
\begin{split}
(\lambda_1,h_1)\cdot (\lambda_2,h_2)&=(\lambda_1\cdot \lambda_2, \tilde h),\\
\text{where} \quad \tilde h(f_1,\dots, f_{p+q})&=\sigma^{-1} \cdot \{ h_{1}(\tpsi_{1}^*f_1,\dots, \tpsi_{1}^*f_p)\cdot h_{2}(\tpsi_{2}^*f_{p+1},\dots, \tpsi_{2}^*f_{p+q})\}\ \  \in\ \  \Gamma(M)
\end{split}
\end{equation}
for $(\lambda_1,h_1),\ (\lambda_2,h_2)\in \IM^{p, \text{or}\ q}$. Here, $\tpsi_i$ denotes $\tpsi_{i, s}(d)$ with $d=(u\,;\ve_1,\ve_2)$ where $u\in \K(2)$ is  the unique element and $\ve_i$ is the shaft of $h_i|_{[0,1]}$, and  $\sigma$ is the permutation given in Example \ref{EXprodpointset}, and the first and the second dots in the second line represent the action of the symmetric group and the product on $\Gamma(M)$ respectively. We also omit the subscript $s\in [0,2]$ of $\tilde h, h_i, \tpsi_i$ here and below. \par
 If this product extended to an action of $\MK$,   it should satisfy the equation
\[
d^0\{(\lambda_1, h_1)\cdot (\lambda_2,h_2)\}=\{d^0(\lambda_1,h_1)\}\cdot (\lambda_2,h_2).
\]
For $p=q=0$, the value of  the $h$-part of the left and the right hand sides at $f\in F(M)$ are 
\begin{equation}\label{EQcofacefail}
(H_{\pi_{\phi}(v) }^*f)\cdot h_1\cdot h_2,\quad \text{and} \quad (H_{\pi_{\phi_1}(v_1) }^*\tpsi_{1}^*f) \cdot h_1\cdot h_2,
\end{equation}
respectively, where we  set $\phi=\phi_1\times\phi_2$ and $v=(v_1,v_2)$, and we extends $H_{x,\,s}$ to $s\in [0,2]$ by the identity, and we omit permutations.
By comparison, we see the equation \begin{equation}\label{EQtpsifail}
H_{\pi_\phi(v),s}(y)=\tpsi_{1,s}\circ H_{\pi_{\phi_1}(v_1),s}(y)
\end{equation}
must be satisfied for each $s\in[0,2]$ and each $y\in M$ at which the value of the sections which are the components of the elements in (\ref{EQcofacefail})  is not $*$.  However, it is difficult to define $\tpsi$ so that it satisfies this equation or a similar equation in higher arity. If the product  extended to an action of $\MK$, the equation 
$
\{d^0(\lambda_1,h_1)\}\cdot (\lambda_2,h_2)\cdot (\lambda_3,h_3)=d^0\{(\lambda_1,h_1)\cdot (\lambda_2,h_2)\cdot (\lambda_3,h_3)\}$ should also hold. (The product of the three elements depends on an element of $\K(3)$ while we use dots.)  By an argument similar to the above, we would have an  equation of the same form as (\ref{EQtpsifail}) except for $\phi$ and $v$ representing the products of suitable three elements. In any choice of $H_{y, s}$, as $s$ gets closer to $1$, the subset of $M$ consisting of the points sent to $\{y\}$ by $H_{y,s}$ gets smaller (because $H_{y,1}=\id$). If $s$ is sufficiently close to $1$, the subset is strictly smaller than the neighborhood of $\pi_{\phi_1}(v_1)$ whose points  the value of the products of the three elements (similar to the products in (\ref{EQcofacefail}) at is not $*$. For such an $s$, 
 $\tpsi_{1,s}$ should be independent of $u\in \K(3)$ in the image of  the neighborhood of $\pi_{\phi_1}(v_1)$ by $H_{\pi_{\phi_1}(v_1)}$ in view of \vs{1mm} (the arity $3$ version of) the equation (\ref{EQtpsifail}) and the independence of $H_{y,s}$ on $u$. This means the product of three elements is strictly associative for such an $s$. It is difficult  to define $\tpsi$ so that the product defined using $\tpsi$ satisfies  this condition (as well as  others) as long as we use the method of projection after parallel transport. $\tpsi$ gives a homotopy which rewinds the perturbation given by $\psi$, and the difficulty is similar to the one in defining $\psi$   so that it induces a strictly associative product. To give intuition of the difficulty, we shall show a simple example. Consider the two decomposition of the diagonal map $\R\to \R^3$ as follows.
\[
\R\stackrel{\Delta}{\longrightarrow}\R\times \R\stackrel{\Delta\times\id}{\longrightarrow}\R\times \R\times \R, \qquad \R\stackrel{\Delta}{\longrightarrow}\R\times \R\stackrel{\id \times \Delta}{\longrightarrow}\R\times \R\times \R
\]
Here, $\Delta$ denotes the diagonal map. For a point $(a,b,c)$ of a tubular neighborhood of $\R$ in $\R^3$, we can define two elements of $\R$. One is the element sent by the projection of a tubular neighborhood of $\Delta$ after  the projection of a  neighborhood of $\Delta\times \id$, according to the left diagram. The other is the element sent by the projection of a tubular neighborhood of $\Delta$ after the  projection  of a neighborhood of $\id\times \Delta$, according to the right diagram.  Here the projection is actually the orthogonal projection to the image of the map $\Delta$. So it sends $(a, b)$ to $(a+b)/2$. The former point is $(a+b+2c)/4$ and the latter is $(2a+b+c)/4$ and these are not the same. We also use the orthogonal projection in the definition of $\psi$. By the essentially same reason as this  example, $\psi$ does not induce a strictly associative product.  It is difficult to modify $\psi$ so that it is strictly associative since  properties of the orthogonal projection are used in many places.
 It is also impossible to define $\tpsi$ so that  $\tpsi_{i, s}$ becomes $\id_M$ before $s$ gets  close to $1$ because of  (the arity $3$ version of) the equation (\ref{EQtpsifail}). \\
\indent Thus, it is difficult to construct an action of $\MK$ on $\IM^\bullet$. To avoid this difficulty, we introduce an `up to homotopy coherency version' of  the McClure-Smith product. Practically speaking, we relax the equations
\[
d^0(x\cdot y)=(d^0x)\cdot y,\quad d^{p+q+1}(x\cdot y)=x\cdot d^{q+1}y,\quad (d^{p+1}x)\cdot y=x\cdot d^0y\quad (|x|=p, |y|=q)
\]
by homotopy. We first define an operad which parametrize coherency homotopies concerning these equations, and then we define a monad over cosimplicial objects by using the operad, finally we prove an action of this monad induces a $A_\infty$-structure on $\ttot$.\par

\subsubsection{Cofacial trees}\label{SSScofacialtree}
In this sub-subsection, we introduce an operad $\CT$ over posets.
An element of  $\CT(n)$, which is called a  \textit{cofacial $n$-tree}, is a $n$-tree (see Definition \ref{deftree}) with some marks attached to its vertices and edges. We consider three kinds of marks $\df$, $\dl$, and $h_i$. ($i=1,\dots, n-1$). $\df$ represents the first coface map and $\dl$ the last one. $h_i$ represents a homotopy 
\[
x_1\cdots(\dl x_i)\cdot x_{i+1}\cdots x_n\simeq x_1\cdots x_i\cdot (\df x_{i+1})\cdots x_n.
\] 
For example, a correspondence between cofacial trees and variables or homotopies are presented in Figures \ref{FCT(1)} and \ref{Fpentagon2}.
\begin{figure}[H]
\begin{picture}(400,80)(-200,-40)
\put(-200,-40){
\begin{picture}(180,80)(-90,-40)
\thicklines
\put(-40,-30){\line(1,0){80}}
\put(-80,-30){$(d^0x)y$}
\put(-90,-18){
\begin{picture}(50,50)(-25,-25)
\thicklines
\branchI
\branchIII
\saaaacirc
\saaaamark{$\df$}
\end{picture}
}
\put(50,-30){$d^0(xy)$}
\put(40,-18){
\begin{picture}(50,50)(-25,-25)
\thicklines
\branchI
\branchIII
\socirc
\somark{$\df$}
\end{picture}}
\put(-25,-5){
\begin{picture}(50,50)(-25,-25)
\thicklines
\branchI
\branchIII
\saacirc
\saamark{$\df$}
\end{picture}}
\end{picture}}

\put(20,-40){
\begin{picture}(180,80)(-90,-40)
\thicklines
\put(-40,-30){\line(1,0){80}}
\put(-80,-30){$(d^{p+1}x)y$}
\put(-90,-18){
\begin{picture}(50,50)(-25,-25)
\thicklines
\branchI
\branchIII
\saaaacirc
\saaaamark{$\dl$}
\end{picture}
}
\put(50,-30){$xd^0y$}
\put(40,-18){
\begin{picture}(50,50)(-25,-25)
\thicklines
\branchI
\branchIII
\sbbbbcirc
\sbbbbmark{$\df$}
\end{picture}}
\put(-25,-5){
\begin{picture}(50,50)(-25,-25)
\thicklines
\branchI
\branchIII
\socirc
\somark{$h_1$}
\end{picture}}
\end{picture}}
\end{picture}
\caption{two components of $\CT(2)$ ($p=|x|$, $\circ$ represents the presence of a mark on the vertex or edge)}\label{FCT(1)}
\end{figure}

\begin{figure}
\begin{picture}(380,300)(-189,-145)
\put(-15,118){\line(-5,-4){83}}
\put(-96,33){\line(1,-3){25}}
\put(15,118){\line(5,-4){83}}
\put(96,33){\line(-1,-3){25}}
\put(-43,-52){\line(1,0){96}}

\put(-20,125){$((d^0x)y)z$}
\put(-120,40){$(d^0x)(yz)$}
\put(-90,-55){$d^0(x(yz))$}

\put(80,40){$(d^0(xy))z$}
\put(60,-55){$d^0((xy)z)$}
\thicklines
\put(-25,135){
\begin{picture}(50,50)(-24,-24)
\thicklines
\branchI
\branchIII
\branchIV
\saaaacirc
\saaaamark{$\df$}
\end{picture}}
\put(-100,90){
\begin{picture}(50,50)(-24,-24)
\thicklines
\branchI
\branchIII
\branchII
\saaaacirc
\saaaamark{$\df$}
\end{picture}}

\put(-170,30){
\begin{picture}(50,50)(-24,-24)
\thicklines
\branchI
\branchIII
\branchV
\saaaacirc
\saaaamark{$\df$}
\end{picture}}

\put(-150,-40){
\begin{picture}(50,50)(-24,-24)
\thicklines
\branchI
\branchIII
\branchV
\saacirc
\saamark{$\df$}
\end{picture}}
\put(-130,-110)
{\begin{picture}(50,50)(-24,-24)
\thicklines
\branchI
\branchIII
\branchV
\socirc
\somark{$\df$}
\end{picture}}
\put(-25,-115){
\begin{picture}(50,50)(-24,-24)
\thicklines
\branchI
\branchIII
\branchII
\socirc
\somark{$\df$}
\end{picture}}
\put(60,90){
\begin{picture}(50,50)(-24,-24)
\thicklines
\branchI
\branchIII
\branchIV
\saaacirc
\saaamark{$\df$}
\end{picture}}

\put(130,30){
\begin{picture}(50,50)(-24,-24)
\thicklines
\branchI
\branchIII
\branchIV
\saacirc
\saamark{$\df$}
\end{picture}}

\put(110,-40){
\begin{picture}(50,50)(-24,-24)
\thicklines
\branchI
\branchIII
\branchIV
\sacirc
\samark{$\df$}
\end{picture}}

\put(90,-110)
{\begin{picture}(50,50)(-24,-24)
\thicklines
\branchI
\branchIII
\branchIV
\socirc
\somark{$\df$}
\end{picture}}
\put(-25,0){
\begin{picture}(50,50)(-24,-24)
\thicklines
\branchI
\branchIII
\branchII
\saacirc
\saamark{$\df$}
\end{picture}}
\end{picture}

\begin{picture}(380,300)(-189,-120)
\put(-15,118){\line(-5,-4){83}}
\put(-96,33){\line(1,-3){25}}
\put(15,118){\line(5,-4){83}}
\put(96,33){\line(-1,-3){25}}
\put(-43,-52){\line(1,0){96}}

\put(-20,125){$((d^{p+1}x)y)z$}
\put(-120,40){$(d^{p+1}x)(yz)$}
\put(-90,-55){$xd^0(yz)$}

\put(80,40){$(xd^0y)z$}
\put(60,-55){$x((d^0y)z)$}
\thicklines
\put(-25,135){
\begin{picture}(50,50)(-24,-24)
\thicklines
\branchI
\branchIII
\branchIV
\saaaacirc
\saaaamark{$\dl$}
\end{picture}}
\put(-100,90){
\begin{picture}(50,50)(-24,-24)
\thicklines
\branchI
\branchIII
\branchII
\saaaacirc
\saaaamark{$\dl$}
\end{picture}}

\put(-170,30){
\begin{picture}(50,50)(-24,-24)
\thicklines
\branchI
\branchIII
\branchV
\saaaacirc
\saaaamark{$\dl$}
\end{picture}}

\put(-150,-40){
\begin{picture}(50,50)(-24,-24)
\thicklines
\branchI
\branchIII
\branchV
\socirc
\somark{$h_1$}
\end{picture}}
\put(-130,-110)
{\begin{picture}(50,50)(-24,-24)
\thicklines
\branchI
\branchIII
\branchV
\sbbcirc
\sbbmark{$\df$}
\end{picture}}
\put(-25,-115){
\begin{picture}(50,50)(-24,-24)
\thicklines
\branchI
\branchIII
\branchV
\sbbacirc
\sbbamark{$\df$}
\end{picture}}
\put(60,90){
\begin{picture}(50,50)(-24,-24)
\thicklines
\branchI
\branchIII
\branchIV
\saacirc
\saamark{$h_1$}
\end{picture}}

\put(130,30){
\begin{picture}(50,50)(-24,-24)
\thicklines
\branchI
\branchIII
\branchIV
\saabbcirc
\saabbmark{$\df$}
\end{picture}}

\put(110,-40){
\begin{picture}(50,50)(-24,-24)
\thicklines
\branchI
\branchIII
\branchII
\saabbcirc
\saabbmark{$\df$}
\end{picture}}

\put(90,-110)
{\begin{picture}(50,50)(-24,-24)
\thicklines
\branchI
\branchIII
\branchV
\saabbcirc
\saabbmark{$\df$}
\end{picture}}
\put(-25,0){
\begin{picture}(50,50)(-24,-24)
\thicklines
\branchI
\branchIII
\branchII
\socirc
\somark{$h_1$}
\end{picture}}
\end{picture}
\caption{two components of $\CT(3)$}\label{Fpentagon2}
\end{figure}
We impose the following rules on the attached marks.
\begin{itemize}
\item $\df$ and $\dl$ can be attached to any vertex.
\item  $\df$ (resp. $\dl$) can be attached to an edge $e$ if and only if it is the leftmost  (resp. rightmost) one among edges which have the same target as $e$.

\item The only one vertex which $h_i$ can be attached to is the $(i,i+1)$-join. $h_i$ cannot be attached to any edge.
\item If a mark can be attached to a vertex or edge,  any number of copies of the mark can be attached to it.  The number of copies  attached is called the multiplicity.   
\item A $n$-tree with no marks is also considered as a cofacial $n$-tree. 
\end{itemize}
For example, trees with marks presented in Figures \ref{FCT(1)} and \ref{Fpentagon2} satisfy the above  rules. 
\begin{exa}\label{EXcofacialtree}
The following tree is another example which satisfies the rules:\\
\begin{center}
\begin{picture}(50,50)(-24,-24)
\thicklines
\branchI
\branchIII
\branchV

\socirc
\somark{$\df^4 h_1^5$}

\saacirc
\saamark{$\df^3$}

\saaaacirc
\saaaamark{$\df^3$}

\sbbcirc
\sbbmark{$h_2$}

\sbbacirc
\sbbamark{$\df^2$}

\sbbbbcirc
\sbbbbmark{$\df \dl$}
\end{picture}
\end{center}
Here, each superscript represents the multiplicity. For example, the multiplicity of $\df$ on the root is 4 and that of $h_1$ is 5. 
\end{exa}
On the other hand, the following three trees with marks do \textit{not} satisfy the above rules:\\
\begin{center}
\begin{picture}(50,50)(-24,-24)
\thicklines
\branchI
\branchIII
\sbbcirc
\sbbmark{$\df$}
\end{picture}
\qquad
\begin{picture}(50,50)(-24,-24)
\thicklines
\branchI
\branchII
\branchIII
\put(0,0){\circle{3}}
\put(-10,0){$\df$}
\end{picture}
\qquad
\begin{picture}(50,50)(-24,-24)
\thicklines
\branchI
\branchIII
\sbbbbcirc
\sbbbbmark{$h_1$}
\end{picture}
\end{center}
We introduce a partial order on $\CT(n)$. 
We define a relation $\leq$ on $\CT(n)$ for each $n$ by iteration of the following three kind of operations.
\begin{itemize}
\item[\ordI] Suppose a cofacial $n$-tree $T$ has an edge where  some marks are attached. We shift some  of the marks on the edge to one of its endpoints and denote the result by $T'$. Then $T'\leq T$. (Examples of this operation are presented on the left line segment of Figure \ref{FCT(1)}. If  $T$ is the tree at the middle of the segment, $T'$ is each of those  at the endpoints of the segment.) 
\item[\ordII] Suppose $T$ has the mark $h_i$. We remove a part of copies of $h_i$ and add the same number of $\df$ (resp. $\dl$) to the source of the edge  which has the $(i,i+1)$-join as its target  and is on the root path from $i+1$-th leaf (resp. $i$-th leaf),  and denote the result by $T'$. Then $T'\leq T$. (Examples are on the right line segment of Figure \ref{FCT(1)}. If $T$ is the tree at the middle of the segment, $T'$ is each of those  at the endpoints of the segment.)
\item[\ordIII] Suppose $T'$ has an internal edge $e$. If $e$ has the mark $\df$ (resp. $\dl$), we shift all copies of them to the leftmost (resp. rightmost) edge in the edges incoming to the source of $e$. (If $e$ has no marks, we do nothing and proceed to next edge contraction.) Then, we contract $e$ to a vertex (so two endpoints of $e$ are identified with the vertex), and we carry all the marks  over to the new tree. If this tree satisfies the rule in attaching marks, we set this tree as $T$ and  declare $T'\leq T$. Suppose this tree does not satisfy the rule in attaching marks. Concretely speaking, $\df$ (resp. $\dl$) is on  an edge which is not rightmost (resp. leftmost).  Say the mark is on the root path from the $i+1$-th leaf (resp. $i$-th leaf) and the vertex created by the edge contraction is the $(i,i+1)$-join. (This is always the case for some $i$.)  We remove all copies of the mark and add the same number of $h_{i}$  to the created vertex. We denote the result by $T$. Then $T'\leq T$. (Examples are the relation between the trees $T'$ at the middle   of the edges $((d^0x)y)z-(d^0(xy))z$, $(d^0(xy))z-d^0((xy)z)$, $d^0(x(yz))-(d^0x)(yz)$, $((d^{p+1}x)y)z-(xd^0y)z$, $x((d^0y)z)-xd^0(yz)$, $xd^0(yz)-(d^{p+1}x)(yz)$ in Figure \ref{Fpentagon2} and the tree $T$ at the center of the pentagon to which the edge belongs. )
\end{itemize}
\begin{exa}\label{EorderCT} In Figures \ref{FCT(1)} and \ref{Fpentagon2} each cofacial tree at the middle of each edge is larger than cofacial trees at its endpoints, and the one at the center of each pentagon is the largest among those in the same pentagon.
\end{exa}
We shall prepare some notations.
\begin{defi}\label{DnumberCT}
For a cofacial $n$-tree $T$ we define a number $m_i(T)$ for each $i=0,\dots, n$ as follows. $m_0(T)$ (resp. $m_n(T)$) is the number of $\df$'s (resp. $\dl$'s) on the root path of the first (resp. last) leaf (counted with multiplicity).  In the case $1\leq i\leq n-1$, consider the shortest path connecting the $i$-th and $i+1$-th leaves. Starting from the $i$-th leaf, we count the number of $\dl$'s on the path until we arrive at  the $(i,i+1)$-join. Next  we count the number of $h_i$'s on the join and then we count the number of $\df$'s from the join to the $i+1$-th leaf. (We do not count $\df$ and $\dl$ on the $(i,i+1)$-join.) $m_i(T)$ is the total number. (See Figure \ref{Fdij}.) 
\end{defi}
We can observe the sum $m_0(T)+\cdots +m_n(T)$ is equal to the total number of marks on $T$ (counted with multiplicity).
\begin{exa}\label{EXnumberCT}
For the cofacial tree $T$ in Example \ref{EXcofacialtree}, we see $m_0(T)=3+3+4=10,\ m_1(T)=5+2=7,\ m_2(T)=1+1=2,\ m_3(T)=1$. For cofacial trees $T$ on the left line segment 
in Figure \ref{FCT(1)}, $m_0(T)=1, m_1(T)=m_2(T)=0$, and for $T$ on the right, $m_0(T)=m_2(T)=0, m_1(T)=1$.
\end{exa}
We use the following trivial observation.
\begin{lem}\label{LorderCT}
If $T\leq T'$ in $\CT(n)$, $m_i(T)=m_i(T')$ for each $i=0,\dots,n$.\qed
\end{lem}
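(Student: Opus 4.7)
The plan is to check that each of the three generating operations \ordI, \ordII, \ordIII\ for the partial order $\leq$ on $\CT(n)$ preserves every $m_i$; since $\leq$ is by definition the transitive closure of these moves, this will suffice. Throughout I will write $P_0$ (resp.\ $P_n$) for the root path of the first (resp.\ last) leaf, and $P_i$ ($1\leq i\leq n-1$) for the shortest path from the $i$-th leaf to the $(i+1)$-th leaf, broken at the $(i,i+1)$-join into a ``$\dl$-segment'' and a ``$\df$-segment''. By construction, $m_i(T)$ is a weighted count of marks lying on $P_i$.

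The move \ordI\ transfers $\df$ or $\dl$ marks between an edge $e$ and one of its endpoints. By the attachment rules, a $\df$-marked edge must be leftmost among edges incoming to its target, hence it lies on the root path of its leftmost descendant leaf; similarly for $\dl$. In particular, $e$ lies on $P_j$ if and only if both of its endpoints lie on $P_j$ (for the role of the mark type in counting), so shifting marks along $e$ changes no $m_j$. The move \ordII\ replaces $r$ copies of $h_i$ at the $(i,i+1)$-join by $r$ copies of $\df$ on the source of the edge from the join to the $(i+1)$-th leaf (or the symmetric $\dl$-statement). Both locations contribute to the same ``$\df$-segment'' (resp.\ $\dl$-segment) of $P_i$, so $m_i$ is unchanged, and no other $P_j$ is affected because the involved marks never lay on $P_j$ for $j\neq i$.

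The genuine content is the edge-contraction move \ordIII. Let $e$ be an internal edge with source $s$ and target $t$ that gets contracted to a vertex $w$. First, any $\df$ on $e$ is pushed to the leftmost edge incoming to $s$ (symmetrically for $\dl$); these pushes are instances of \ordI\ applied to $e$ followed by a relabeling, and therefore preserve all $m_j$ by the first paragraph. After the push, $e$ carries no marks, and the contraction merely identifies $s$ and $t$. The marks on $s$ and $t$ together land on $w$, and clearly the set of paths $P_j$ passing through $w$ is the union of those passing through $s$ or $t$, so the counts $m_j$ are again preserved \emph{provided} the resulting tree already satisfies the attachment rules. If it does not, the failure is precisely that some $\df$ or $\dl$ now sits on an edge that is no longer leftmost or rightmost among those incoming to $w$; by the description in \ordIII, this can only happen when $w$ is an $(i,i+1)$-join for some $i$, and the offending marks must be converted to the same number of $h_i$ marks on $w$ itself. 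The key observation is that the offending $\df$ lies on the root path from the $(i+1)$-th leaf to $w$, i.e.\ on the $\df$-segment of $P_i$, while the new $h_i$ marks sit at the join of $P_i$; hence the contribution to $m_i$ is unchanged, and for $j\neq i$ neither the original $\df$ nor the new $h_i$ lies on $P_j$, so $m_j$ is unaffected.

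I expect the subtle step to be the final case of \ordIII: verifying that every mark which must be relabeled to $h_i$ after contraction was counted in the $\df$- or $\dl$-segment of precisely the path $P_i$ (and in no other $P_j$), and that the new $h_i$ marks contribute to the same $P_i$ count. This is where the attachment rules (leftmost/rightmost for $\df$/$\dl$, $(i,i+1)$-join for $h_i$) interlock with the definition of $m_i$; once this bookkeeping is done, the lemma follows immediately from the three-case check.
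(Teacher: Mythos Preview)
The paper treats this as a trivial observation and gives no proof; your plan of checking each generating move is the natural one, and your handling of \ordI\ and \ordII\ is essentially correct.

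Your \ordIII\ argument has a genuine gap. You assert that when marks on $s$ and $t$ are carried to the merged vertex $w$, ``the counts $m_j$ are again preserved provided the resulting tree already satisfies the attachment rules.'' This fails for a $\df$ (or $\dl$) sitting on the \emph{vertex} $s$. Such a mark contributes to $m_{a-1}$, where $a$ is the leftmost leaf above $s$; after contraction it sits on $w$ and contributes to $m_{a'-1}$, where $a'$ is the leftmost leaf above $w=t$, and one has $a'<a$ whenever $e$ is not the leftmost edge into $t$. No attachment rule is violated (marks on vertices are always allowed), yet the contribution shifts because $w$ has become the $(a-1,a)$-join and the exclusion clause in Definition~\ref{DnumberCT} now applies. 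Concretely: let $T'$ be the $4$-tree whose root $r$ has three incoming edges, to leaf~$1$, to an internal vertex $s$, and to leaf~$4$, with $s$ carrying edges to leaves $2$ and $3$; place a single $\df$ on $s$. Here the $(1,2)$-join is $r$ and $s$ lies strictly between $r$ and leaf~$2$, so $m_1(T')=1$ and $m_0(T')=0$. Contracting the unmarked middle edge yields the maximal $4$-tree $T$ with $\df$ on its root $w$; now $w$ is the $(1,2)$-join, so $m_1(T)=0$ while $m_0(T)=1$.

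This example shows that the paper's description of \ordIII\ is itself slightly underspecified: the conversion to $h_i$ in the last step is phrased only for offending marks on \emph{edges}, not for $\df/\dl$ on the source vertex $s$. The reading consistent with how \ordIII\ is actually used in the proof of Lemma~\ref{Lcofacialtree1} is that one first clears $\df/\dl$ off $s$ onto adjacent edges via \ordI\ before contracting. With that proviso your edge-level bookkeeping is correct and the argument goes through; without it, the step you flagged as ``subtle'' is in fact not just subtle but false as written.
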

\begin{lem}\label{Lcofacialtree1}
For $n\geq 2$, the relation $\leq$ on $\CT(n)$ is a partial order. Each connected component of $\CT(n)$ has the maximum of the following form :
\begin{center} 
\begin{picture}(60,60)(-29,-29)
\thicklines
\put(0,-15){\circle{4}}
\put(-23,-28){$h_1^{m_1}\cdots h_{n-1}^{m_{n-1}}$}
\put(0,-15){\line(-1,1){40}}
\put(0,-15){\line(-2,3){27}}
\put(0,-15){\line(2,3){27}}
\put(0,-15){\line(1,1){40}}
\put(-20,5){\circle{4}}
\put(-38,-5){$\df^{m_0}$}
\put(20,5){\circle{4}}
\put(27,-5){$\dl^{m_n}$}
\put(-7,2){$\cdots$}
\end{picture}
\end{center}
whose underlying tree is the maximum of $\T(n)$. We denote this cofacial tree by $T(m_0,\dots, m_n)$. (A connected component of a poset is an equivalence class of the relation generated by $\leq $.)
\end{lem}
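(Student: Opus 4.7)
My plan is to verify the three partial-order axioms for $\leq$ and then to identify, for each connected component, its maximum by an explicit construction. Throughout I exploit the fact that $m_0,\dots,m_n$ are invariants of the equivalence relation generated by $\leq$ (Lemma~\ref{LorderCT}).

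Reflexivity and transitivity of $\leq$ are immediate from its definition as the reflexive transitive closure of the three generating moves. For antisymmetry I would introduce a weight that is strictly monotone along every non-trivial generating step. Let $e(T)$ denote the number of internal edges of $T$, let $m(T)$ denote the total multiplicity of $\df$- and $\dl$-marks attached to edges, and let $h(T)$ denote the total multiplicity of $h_i$-marks. Inspection of each move shows that if $T' \leq T$ by a single non-trivial application, then the lexicographic tuple $(-e(T), m(T), h(T))$ is strictly greater at $T$ than at $T'$: \ordI\ keeps $e$ fixed and strictly increases $m$; \ordII\ keeps both $e$ and $m$ fixed and strictly increases $h$; and \ordIII\ strictly decreases $e$, which dominates the lex order regardless of how the marks on the contracted edge are redistributed between $m$ and $h$. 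Hence no distinct pair $T \neq T'$ can satisfy both $T \leq T'$ and $T' \leq T$, and antisymmetry follows.

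For existence of a maximum, given $T \in \CT(n)$ I would produce an explicit chain $T \leq T(m_0(T),\dots,m_n(T))$ in two stages. In Stage~1, apply \ordIII\ iteratively to contract every internal edge of $T$, arriving at a cofacial tree $T_c$ whose underlying tree is the corolla and which satisfies $T \leq T_c$; Lemma~\ref{LorderCT} then forces $m_i(T_c) = m_i(T)$ for every $i$. In Stage~2, starting from the canonical element $T(m_0,\dots,m_n)$ one produces $T_c$ by successively applying \ordI\ to distribute the $\df^{m_0}$-marks on the leftmost edge among the three loci \{root, leftmost edge, leaf~$1$\} and the $\dl^{m_n}$-marks symmetrically on the right, and applying \ordII\ to split each $h_i$-mark on the root into $\df$-marks on leaf~$i{+}1$ and $\dl$-marks on leaf~$i$. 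The attachment rules confine every mark of $T_c$ to exactly one of these loci, and the invariance of $(m_0,\dots,m_n)$ yields a numerical bookkeeping argument showing that a unique choice of intermediate distributions produces precisely $T_c$; hence $T_c \leq T(m_0,\dots,m_n)$, and concatenation with Stage~1 gives $T \leq T(m_0(T),\dots,m_n(T))$.

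Finally I would verify that $T(m_0,\dots,m_n)$ admits no strict ascent, whence it is the unique maximum of its component. Any $T'$ strictly above $T(m_0,\dots,m_n)$ would be linked to it by a chain whose first upward step is either a forward application of \ordIII\ (impossible because the corolla has no internal edges), or a single application of \ordI\ or \ordII\ whose output equals $T(m_0,\dots,m_n)$; but both \ordI\ and \ordII\ produce outputs carrying new vertex $\df/\dl$-marks, whereas $T(m_0,\dots,m_n)$ has none. Antisymmetry then gives uniqueness, and Lemma~\ref{LorderCT} ensures that distinct tuples $(m_0,\dots,m_n)$ label distinct components. I expect Stage~2 to be the main obstacle: the positional constraints on $\df/\dl$ at target vertices force any mark produced by Stage~1 on a middle leaf of $T_c$ to be routed through the root of $T(m_0,\dots,m_n)$ as an $h_i$-mark before being dispatched by \ordII, and verifying mark-by-mark that the resulting numerical accounting for each $m_j$ exactly matches the distribution coming out of Stage~1 is the technical heart of the argument.
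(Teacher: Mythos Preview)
Your argument is correct and diverges from the paper's at both steps. For antisymmetry, the paper first pins down the common underlying $n$-tree of $T$ and $T'$ and then tracks three local monotonicities (vertex $\df/\dl$-multiplicities non-increasing, edge $\df/\dl$-multiplicities non-decreasing, $h_i$-multiplicities non-decreasing, all in the upward direction); your single lexicographic weight $(-e,m,h)$ packages these into one strictly monotone invariant and sidesteps the preliminary reduction to a common tree. For the maximum, the paper reverses your order---first push every vertex $\df/\dl$ onto an adjacent edge via \ordI, then contract all internal edges via \ordIII\ to land directly on $T(m_0,\dots,m_n)$. Your route (contract first to a corolla $T_c$, then sort out marks on the corolla using \ordI\ and \ordII) forces the explicit Stage~2 bookkeeping, but it is actually more complete here: a $\df$ on an interior leaf~$j$ ($2\le j\le n$) of the corolla cannot be absorbed by \ordI\ alone, since the only adjacent edge is neither leftmost nor rightmost, and must instead be converted to an $h_{j-1}$ via \ordII---a case the paper's parenthetical ``(using the operation \ordI)'' glosses over. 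Finally, your closing maximality check is redundant once Stage~2 is done: any $T''\ge T(m_0,\dots,m_n)$ already satisfies $T''\le T(m_0(T''),\dots,m_n(T''))=T(m_0,\dots,m_n)$ by Lemma~\ref{LorderCT} and antisymmetry.
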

\begin{proof}
Clearly $\leq$ satisfies the transitivity and reflexivity laws.  We shall show the anti-symmetry law.  Let $T$ and $T'$ be two cofacial $n$-tree with $T\leq T'$. For our purpose, we may assume the underlying $n$-trees of $T,\ T'$ are the same.
 We identify the sets of vertices of $T$ and $T'$ by identifing the $i$-th leaf of $T'$  the root, and the $(i,j)$-bunch with those of $T$ respectively.  The anti-symmetry law immediately follows from the following observations.
\begin{enumerate}
\item  The multiplicity of $\df$ and $\dl$  at each vertex of  $T'$ is smaller than or equal to the multiplicity at the corresponding vertex of $T$,
\item the multiplicity of $h_i$  on $T'$ is larger than or equal to that on $T$ for $i=1,\dots,n-1$, and
\item the multiplicity of $\df$ and $\dl$ at each incoming edge of each vertex of $T'$ is larger than or equal to the multiplicity at each incoming edge of the corresponding vertex of $T$. 
\end{enumerate}
For any cofacial $n$-tree $T$ we can take a cofacial $n$-tree $T'$ such that $T\leq T'$ and the multiplicity of $\df$ and $\dl$ on any vertex of $T'$ are zero (using the operation \ordI ). Then we contract all internal edges of $T'$ and obtain an element of  a form $T(m_0,\dots,m_n)$ with $T'\leq T(m_0,\dots,m_n)$. This and Lemma \ref{LorderCT} imply $T(m_0,\dots,m_n)$ is the maximum of the connected component containing $T$ (and $m_i=m_i(T)$).\end{proof}
We denote by $\CT(m_0,\dots,m_n)$ the component including $T(m_0,\dots,m_n)$.
Since $1$-tree has only one vertex which is both root and leaf and no edges, $\CT(1)$ is a discrete poset consisting of formal symbols $\df^{m_0}\dl^{m_1}$ with $m_0, m_1\geq 0$. We denote the one point set $\{\df^{m_0}\dl^{m_1}\}$ by $\CT(m_0,m_1)$.\\
 \indent Similarly to $\T$, the collection $\CT=\{\CT(n)\}_{n\geq 1}$ has a structure of an operad in the category of posets.  Let $T_1\in \CT(n)$ and $T_2\in \CT(m)$ be two cofacial trees. The underlying tree of the composition $T_1\circ_iT_2$ is the composition of the underlying tree: $U(T_1\circ_iT_2)=U(T_1)\circ_iU(T_2)$. Let $T_1'$ be the $n$-tree with marks obtained from $T_1$ by replacing $h_j$ with $h_{j+m-1}$ for each $j=i,\dots,n-1$ and $T'_2$ be the $m$-tree with marks obtained from $T_2$ by replacing $h_j$ with $h_{j+i-1}$ for each $j=1,\dots, m-1$. The marks on the vertex which connects $T_1$ and $T_2$  is the union of the marks on $i$-th leaf of $T_1'$ and on the root of $T_2'$ (with taking multiplicity into account). The marks on other vertices or edges are equal to the marks on the corresponding vertices or edges of $T_1'$ or $T_2'$. For example,
\[
\begin{minipage}{50pt}
\begin{picture}(50,50)(-24,-24)
\thicklines
\branchI
\branchIII
\socirc
\somark{$h_1$}
\saaaacirc
\saaaamark{$\df$}
\end{picture}
\end{minipage}
\ \circ_1\ 
\begin{minipage}{50pt}
\begin{picture}(50,50)(-24,-24)
\thicklines
\branchI
\branchIII
\socirc
\somark{$h_1$}
\end{picture}
\end{minipage}
\ =\ 
\begin{minipage}{50pt}
\begin{picture}(50,50)(-24,-24)
\thicklines
\branchI
\branchIII
\branchIV
\saacirc
\saamark{$\df h_1$}
\socirc
\somark{$h_2$}
\end{picture}
\end{minipage}
\]
If $\ari (T_1)=1$, $T_1\circ_1T_2$ has the same underlying tree as $T_2$  but has the marks of $T_1$ on its root in addition to the marks of $T_2$. If $\ari (T_2)=1$, $T_1\circ_iT_2$ has the same underlying tree as $T_1$  but has the marks of $T_2$ on its $i$-th leaf in addition to the marks of $T_1$.  \par
The following property of $\CT$ is analogous to the property of $\T$ given in Lemma.\ref{Ltree}
\begin{lem}\label{Lcofacialtree2}
\textup{(1)} The map $(-\circ_i-):\CT(m_0,\dots,m_{n_1})\times\CT(m'_0,\dots, m'_{n_2})\longrightarrow \CT(n_1+n_2-1)$ is a monomorphism of sets. Its image is contained in $\CT(\tilde m_k)_{k=0}^{n_1+n_2-1}$, where $(\tilde m_k)=(m_0,\dots, m_{i-2},m_{i-1}+m'_0, m'_1,\dots, m'_{n_2-1},m_{i}+m'_{n_2}, m_{i+1},\dots ,m_{n_1})$  \\
\textup{(2)} For three cofacial trees $T_1$, $T_2$, and $T'$ with $T'\leq T_1\circ_iT_2$ there exist two elements $T_1'$ and $T'_2$ such that $T_1'\leq T_1$, $T_2'\leq T_2$ and $T'=T'_1\circ_iT'_2$.\\
\textup{(3)} Let $n\geq 2$. Any element of $\CT(n)$ of codimension $1$ is a composition of two irreducible elements. Conversely, a composition of two irreducible elements is of codimension 1. If a cofacial tree $T$ is presented as $T_1\circ_iT_2$ with $T_1$, $T_2$ irreducible, the data $(T_1,T_2,i)$ is unique. Here a cofacial tree $T$ is \textit{irreducible} if $T=T_1\circ_iT_2$ implies $T_1=1$ or $T_2=1$. ($1$ is the 1-tree with no marks.)\\
\textup{(4)} Let $n\geq 2$. Any element of $\CT(n)$ of codimension $2$ is a composition of three irreducible elements. Conversely, a composition of three irreducible elements is of codimension 2. Suppose a cofacial tree $T$ is presented as $(S_1\circ_jS_2)\circ_k S_3$ with $S_1$, $S_2$, and $S_3$ irreducible and $j\leq k$. Then
\begin{enumerate}
\item[(a)] if  $S_2\not=S_3$ and $\ari(S_2)=\ari(S_3)=1$ and $j=k$, there are exactly two choices of the data $(S_1,S_2,S_3,j,k)$ and if we take one, the other is $(S_1,S_3,S_2,j,k)$, 
\item[(b)] if  $S_1\not=S_2$ and $\ari(S_1)=\ari(S_2)=1$, there are exactly two choices of the data $(S_1,S_2,S_3,j,k)$ and if we take one, the other is $(S_2,S_1,S_3,j,k)$, 
\item[(c)] otherwise the data $(S_1,S_2,S_3,j,k)$ is unique.
\end{enumerate}
\textup{(5)} If $T_1$ and $T_2\in \CT(n)$ are two different elements of codimension one such that $\langle T_1\rangle\cap\langle T_2\rangle\not=\emptyset$, there exists a (unique) $T_3$ of codimension 2 such that $\langle T_1\rangle\cap\langle T_2\rangle=\langle T_3\rangle$. 
\end{lem}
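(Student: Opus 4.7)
The lemma is the marked-tree analogue of Lemma \ref{Ltree}, so the plan is to reduce each assertion to a combinatorial check that tracks how the three elementary moves \ordI, \ordII, \ordIII interact with operadic composition. For part (1), injectivity is essentially immediate: given $T_1\circ_iT_2$, the underlying tree splits uniquely as $U(T_1)\circ_iU(T_2)$ because the $i$-th leaf of $T_1$ becomes the unique vertex of valence equal to $\mathrm{val}(\text{root}(T_2))$, and the marks away from this gluing vertex belong unambiguously to one of the two factors thanks to the $h_j$-reindexing built into the composition. The marks on the gluing vertex itself are a disjoint union of an admissible multiset for $T_1$ (on the $i$-th leaf) and an admissible multiset for $T_2$ (on its root), but since we are assuming the factors known a priori, this causes no ambiguity. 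The identity for $\tilde m_k$ is a direct inspection of Definition \ref{DnumberCT} in the three cases $k\le i-2$, $i-1\le k\le i+n_2-1$, $k\ge i+n_2$.

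For part (2), I will induct on the length of a saturated chain $T'<T'_1<\cdots<T'_r = T_1\circ_iT_2$. Each step applies exactly one of \ordI, \ordII, \ordIII, and each such move is local in a vertex, edge, or pair of adjacent vertices. Writing down the move on the composed tree shows that the change either lies entirely inside the subtree coming from $T_1$ or entirely inside the subtree coming from $T_2$, and in either case replacing the corresponding factor yields the desired decomposition $T' = T'_1\circ_iT'_2$. The only delicate case is an \ordIII-contraction of an edge incident to the gluing vertex: a direct case analysis (checking what happens to $\df,\dl$-marks that migrate to adjacent edges, and to newly forced $h_i$-marks) shows that the resulting cofacial tree remains a composition at index $i$ of two smaller cofacial trees $T'_1\le T_1$ and $T'_2\le T_2$.

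For parts (3) and (4), note that each of \ordI, \ordII, \ordIII strictly drops the codimension by exactly one, so elements of codimension $k$ in $\CT(n)$ are reached from the component maximum $T(m_0,\dots,m_n)$ by exactly $k$ moves. Unwinding one move identifies the irreducible cofacial trees: they are the arity-$1$ symbols $\df^a\dl^b$ with $(a,b)\neq(0,0)$ and the arity-$\ge 2$ component-maxima $T(m_0,\dots,m_n)$. A codimension-$1$ element is then obtained by either splitting off a single $\df$ or $\dl$ onto an edge (an $\circ_i$-composition with an arity-$1$ irreducible) or by splitting a maximum tree along one internal edge (an $\circ_i$-composition of two arity-$\ge 2$ irreducibles), and the data $(T_1,T_2,i)$ is then unique by part (1). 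Codimension $2$ is the same analysis applied twice; the exceptional cases (a) and (b) arise exactly when two arity-$1$ irreducibles are inserted into the same leaf of a third irreducible factor, in which case swapping the two arity-$1$ insertions gives the same cofacial tree and accounts for the two admissible orderings. In all remaining configurations, the composition indices $(j,k)$ land on distinct leaves or at least one inner factor has arity $\ge 2$, which forces the data $(S_1,S_2,S_3,j,k)$.

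Finally, part (5) is a direct consequence of parts (2), (3), and (4): if $T_1\ne T_2$ are codimension-$1$ and $\langle T_1\rangle\cap\langle T_2\rangle\ne\emptyset$, then by (2) any element in the intersection decomposes simultaneously through both presentations, and by (3) these two presentations can be merged into a single composition of three irreducibles, giving a unique maximum $T_3$ of codimension $2$ in the intersection with $\langle T_3\rangle=\langle T_1\rangle\cap\langle T_2\rangle$. The main obstacle in the whole argument will be the bookkeeping for part (2) in the case where \ordIII-contractions at the gluing vertex force conversions between $\df,\dl$ and $h_i$; this is where one really has to draw out the local picture to verify that the factorization through $\circ_i$ is preserved. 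The exceptional cases (a), (b) of part (4) also require careful enumeration, but once the characterization of irreducibles is in hand they are essentially forced by the commutativity of attaching two arity-$1$ factors at a common leaf.
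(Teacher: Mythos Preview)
Your overall plan tracks the right combinatorics, and your approach to (2) by induction along a saturated chain is a legitimate alternative to the paper's argument. The paper instead works globally: since none of the moves \ordI--\ordIII can push a mark across the $(i,i+n_2-1)$-bunch, one reads off from $T'$ the mark-counts $m_l'$ of the sub-tree above that bunch, checks $m_0'\ge m_0(T_2)$, $m'_{n_2}\ge m_{n_2}(T_2)$, and $m_l'=m_l(T_2)$ for $1\le l\le n_2-1$, and then cuts $T'$ once at the bunch to produce $T_1',T_2'$. Your move-by-move check achieves the same thing more locally; the case you flag (an \ordIII-expansion at the gluing vertex) is indeed the only delicate one, and it works because all up-edges at that vertex come from $T_2$.

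Two genuine problems remain. First, your list of arity-$1$ irreducibles is wrong: $\df^a\dl^b$ with $a+b\ge 2$ is reducible (e.g.\ $\df^2=\df\circ_1\df$); only $\df$ and $\dl$ are irreducible in arity $1$. Your later phrase ``splitting off a single $\df$ or $\dl$'' suggests you know this, but the statement should be corrected. Relatedly, the claim that each of \ordI, \ordII, \ordIII ``drops the codimension by exactly one'' is false as written, since \ordI\ and \ordII\ may move several marks at once; what you need is that every \emph{covering} relation in $\CT(n)$ is realized by a single-mark instance of one of the moves.

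Second, your argument for (5) has a real gap. Asserting that the two codimension-$1$ presentations ``can be merged into a single composition of three irreducibles'' is precisely the content of (5), and neither (2) nor (3) delivers it. The paper constructs $T_3$ explicitly: writing $T=T_1\circ_iT_2$ and $T'=T_1'\circ_{i'}T_2'$ with $\ari(T_2),\ari(T_2')\ge 2$ and $i\le i'$, one has either the nested case $i'+n_2'-1\le i+n_2-1$ or the disjoint case $i+n_2-1\le i'$. In the nested case, Lemma~\ref{LorderCT} forces $m_{i'-i}(T_2)\ge m_0(T_2')$, $m_{i'-i+n_2'}(T_2)\ge m_{n_2'}(T_2')$, and equality in between, so one can form the irreducible $T''=T(m_0(T_2),\dots,m_{i'-i}(T_2)-m_0(T_2'),\,m_{i'-i+n_2'}(T_2)-m_{n_2'}(T_2'),\dots,m_{n_2}(T_2))$ and take $T_3=(T_1\circ_iT'')\circ_{i'}T_2'$. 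Without this explicit construction (or an equivalent), your (5) is incomplete.
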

\begin{proof} 
(1) is clear. In terms of  the definition of the numbers $m_i(T)$, the numbers $\tilde m_i$ are natural. For example, $\df$'s on the root path of the first leaf of $T_2$ is counted on the path  from the $i-1$-th leaf to $i$-th leaf in $T_1\circ_iT_2$ so $m'_0$ is added to $m_{i-1}$.\par 
(2) We consider the case of $\ari(T_2)\geq 2$. The case $\ari(T_1)=1$ is similar and easier. We put $\ari(T_2)=n_2$. 
Both of $T_1\circ _iT_2$ and $T'$ have the $(i,i+n_2-1)$-bunch.  The point is that the  operations which realize $T'\leq T_1\circ_iT_2$ do not shift marks through the $(i,i+n_2-1)$-bunch. 
Precisely speaking, if we define numbers $\bar\mu$, $\underline{\mu}$, and $m_l'$ ($0\leq l\leq n_2$) by
\begin{enumerate}
\item $\bar\mu$ (resp. $\underline{\mu}$) being the multiplicity of $\df$ (resp. $\dl$) on the $(i,i+n_2-1)$-bunch, 
\item $m_0'$ (resp. $m_{n_2}'$) $=$  the number of $\df$'s  (resp $\dl$'s) on the path in $T'$ from  the $i$-th (resp. $i+n_2-1$-th) leaf to the $(i,i+n_2-1)$-bunch (counted with multiplicity),
\item for each $1\leq l\leq n-1$, $m'_l=m_{i+l
}(T')$,
\end{enumerate}
then we have
\[
m_0'\geq m_0(T_2)\geq m_0'-\bar\mu,\quad m_{n_2}'\geq m_{n_2}(T_2)\geq m'_{n_2}-\underline{\mu}, \quad m'_i=m_i(T_2) (1\leq i\leq n).
\]
 So we can take $T_1'$ and $T_2'$ such that $T'=T'_1\circ_i T'_2$ and $m_i(T_1')=m_i(T_1)$, $m_i(T_2')=m_i(T_2)$. The operations which realize $T'\leq T_1\circ_iT_2$ can be separated to  operations on $T'_1$ and $T'_2$ and these two  realize $T'_1\leq T_1$ and $T'_2\leq T_2$. 
(3) and (4) are clear.\par
We shall prove (5). Let $T$ and $T'$ be two  cofacial $n$-trees of codimension $1$. Write $T=T_1\circ_iT_2$ and $T'=T_1'\circ_{i'}T_2'$ where $T_k$ and $T'_k$ are irreducible for $k=1,2$. For simplicity, we assume $n_2=\ari(T_2)\geq 2$ and $n'_2=\ari(T'_2)\geq 2$ and by symmetry we may assume $i\leq i'$. By the condition $\langle T\rangle\cap \langle T'\rangle\not=\emptyset$, one of the following two cases occurs. (i) $ i'+n'_2-1\leq i+n_2-1$, (ii) $i+n_2-1\leq i'$. In the former case, the same condition and Lemma \ref{LorderCT} imply 
\[
\begin{split}
m_{i'-i}(T_2)\geq &m_0(T'_2),\quad m_{i'-i+l}(T_2)=m_l(T'_2) \ (1\leq l\leq n_2'-1),\\
& m_{i'-i+n_2'}(T_2)\geq m_{n'_2}(T'_2).
\end{split}
\]
By these inequalities, we can put 
\[
T'':=T(m_0(T_2),\dots, 
m_{i'-i}(T_2)-m_0(T_2'), m_{i'-i+n'_2}(T_2)-m_{n_2'}(T_2'),
\dots, m_{n_2}(T_2))\in \CT (n_2-n'_2+1),
\]
and $(T_1\circ_iT'')\circ_{i'}T'_2$ is the element of codimension 2 such that 
$\langle T\rangle\cap\langle T'\rangle =\langle (T_1\circ_iT'')\circ_{i'}T'_2\rangle $. The proof of the latter case (ii) is similar.
 \end{proof}

We shall define maps on $\CT$
\[
\begin{split}
 d^j_i:&\CT(m_0,\dots, m_i,\dots, m_n)\longrightarrow \CT(m_0,\dots, m_i+1,\dots, m_n),\\
 s^j_i:&\CT(m_0,\dots, m_i,\dots, m_n)\longrightarrow \CT(m_0,\dots, m_i-1,\dots, m_n)
\end{split}
\]
which will be used to define the monad $\MCK$.
( $i=0,\dots, n$. For $d^j_i$,  $j=0,\dots, m_0$ if $i=0$, and $j=1,\dots,m_n+1$ if $i=n$, otherwise  $j=1,\dots, m_i$. For $s^j_i$, $j=0,\dots, m_i-1$. )\\
\indent We first define $d_i^j$'s. We first consider the case of $i=0$.  In the case of $j=0$,  $d^0_0$ is the map which increases the multiplicity of $\df$ on the root by one (and does not change the other marks). In the case of $j>0$, consider the root path  of the first leaf. We count the number of $\df$ on the path, starting from the root (taking the multiplicity into account).  We define $d^j_0$ as the map which increases the multiplicity of $\df$ on the vertex or edge at which $j$-th $\df$ is, by one. \\
\indent In the case $0<i<n$, we count the number of  marks on the path connecting the $i$-th and $i+1$-th leaves as in Definition \ref{DnumberCT} (see also Figure \ref{Fdij}). So we  count only $\dl$ until we arrive at the $(i,i+1)$-join, for example.  $d^j_i$ is the map which increases the multiplicity of the $j$-th mark by one. \\
\indent In the case $i=n$, consider the root path of the last leaf. We count the number of $\dl$ on the path from the last leaf. For $1\leq j\leq m_n$,  $d^j_i$ is the map which increases the multiplicity of the $j$-th $\dl$. For $j=m_n+1$, $ d^{m_n+1}_n$ is the map which increases the multiplicity of $\dl$ on the root.\\
\indent For each $j=0,\dots, n$, To define $s_i^j$, we use the path used in defining $d_i^j$. $s_i^j$ is the map  which decreases the multiplicity of the $j+1$-th mark by one.
\begin{exa}
Let $T$ be an element of $\CT(1,2,1)$ as follows.
\begin{center}
\begin{picture}(50,50)(-24,-24)
\thicklines
\socirc
\somark{$\dl h_1$}
\saacirc
\saamark{$\df$}
\saaaacirc
\saaaamark{$\dl$}
\branchI
\branchIII
\end{picture}
\end{center}
$d_0^0(T),\ d_0^1(T),\ d_1^1(T),\ d_1^2(T),\ d_2^1(T),\ d_2^2(T)$ are equal to
\begin{center}
\begin{picture}(50,50)(-24,-24)
\thicklines
\socirc
\somark{$\df\dl h_1$}
\saacirc
\saamark{$\df$}
\saaaacirc
\saaaamark{$\dl$}
\branchI
\branchIII
\end{picture}
,\
\begin{picture}(50,50)(-24,-24)
\thicklines
\socirc
\somark{$\dl h_1$}
\saacirc
\saamark{$\df^2$}
\saaaacirc
\saaaamark{$\dl$}
\branchI
\branchIII
\end{picture}
,\ 
\begin{picture}(50,50)(-24,-24)
\thicklines
\socirc
\somark{$\dl h_1$}
\saacirc
\saamark{$\df$}
\saaaacirc
\saaaamark{$\dl^2$}
\branchI
\branchIII
\end{picture}
,\ 
\begin{picture}(50,50)(-24,-24)
\thicklines
\socirc
\somark{$\dl h_1^2$}
\saacirc
\saamark{$\df$}
\saaaacirc
\saaaamark{$\dl$}
\branchI
\branchIII
\end{picture}
,\ 
\begin{picture}(50,50)(-24,-24)
\thicklines
\socirc
\somark{$\dl^2 h_1$}
\saacirc
\saamark{$\df$}
\saaaacirc
\saaaamark{$\dl$}
\branchI
\branchIII
\end{picture}
,\ 
\begin{picture}(50,50)(-24,-24)
\thicklines
\socirc
\somark{$\dl^2 h_1$}
\saacirc
\saamark{$\df$}
\saaaacirc
\saaaamark{$\dl$}
\branchI
\branchIII
\end{picture}
,
\end{center}
respectively. $s_0^0(T),\ s_1^0(T),\ s_1^1(T),\ s_2^0(T)$ are equal to
\begin{center}
\begin{picture}(50,50)(-24,-24)
\thicklines
\socirc
\somark{$\dl h_1$}
\saacirc
\saaaamark{$\dl$}
\branchI
\branchIII
\end{picture},\quad
\begin{picture}(50,50)(-24,-24)
\thicklines
\socirc
\somark{$\dl h_1$}
\saacirc
\saamark{$\df$}
\branchI
\branchIII
\end{picture},\quad
\begin{picture}(50,50)(-24,-24)
\thicklines
\socirc
\somark{$\dl$}
\saacirc
\saamark{$\df$}
\saaaacirc
\saaaamark{$\dl$}
\branchI
\branchIII
\end{picture},\quad
\begin{picture}(50,50)(-24,-24)
\thicklines
\socirc
\somark{$ h_1$}
\saacirc
\saamark{$\df$}
\saaaacirc
\saaaamark{$\dl$}
\branchI
\branchIII
\end{picture}.
\end{center}
\end{exa}
\begin{figure}
\begin{center}
\begin{picture}(200,100)(-99,-49)
\put(-8,-47){\vector(-1,1){80}}
\put(-62,-15){$d_0^j$}
\put(-75,-30){$m_0(T)$}

\put(88,32){\vector(-1,-1){80}}
\put(54,-12){$d_3^j$}
\put(54,-27){$m_3(T)$}

\put(-70,45){\line(1,-1){70}}
\put(0,-25){\line(1,1){25}}
\put(25,0){\vector(-1,1){35}}
\put(-5,-10){$d_1^j$}
\put(-20,5){$m_1(T)$}

\put(10,45){\line(1,-1){30}}
\put(40,15){\vector(1,1){30}}
\put(37,30){$d_2^j$}
\put(30,45){$m_2(T)$}

\thicklines
\put(0,-40){\line(-1,1){80}}
\put(0,-40){\line(1,1){80}}
\put(40,0){\line(-1,1){40}}
\end{picture}
\end{center}
\caption{paths used to define $m_i(T)$ and $\{d_i^j\}$}\label{Fdij}
\end{figure}
\indent The proof of the following lemma is routine.
\begin{lem}\label{Lcosimplicialid}
The maps $s^j_i$ and $d^j_i$ satisfy the following identities.
\[
\begin{split}
d^j_id^k_i&=d_i^kd_i^{j-1}\quad  (k<j)\\
s^j_id_i^k&=d_i^ks_i^{j-1}\quad (k<j)\\
          &=id            \quad (k=j,j+1)\\
          &=d^{k-1}_is_i^j    \quad (k>j+1)\\
s^j_is^k_i&=s^k_is^j_i    \quad (k>j)\\
d^j_id^k_l=d^k_ld^j_i,    
\quad
s^j_id^k_l&=d^k_ls^j_i    
,
\quad
s^j_is^k_l=s^k_ls^j_i    \quad (i\not= l)
\end{split}
\]
(Note that the first five identities have the same form as the cosimplicial identities.)\qed
\end{lem}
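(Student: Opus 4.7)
The plan is to reduce all identities to two elementary facts: (i) for each cofacial $n$-tree $T$ and each index $i \in \{0,\ldots,n\}$, the marks counted in $m_i(T)$ form a linearly ordered sequence $L_i(T)$, ordered by their position along the path described in Definition \ref{DnumberCT}, and (ii) the total collection of marks on $T$ decomposes as the disjoint union $\bigsqcup_{i=0}^n L_i(T)$, so that operators indexed by different values of $i$ act on independent data.

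First I would establish the decomposition (ii) by a short case analysis on the type and location of each mark. The mark $h_i$ can only sit on the $(i,i+1)$-join, so it is counted only in $m_i(T)$. For a $\df$-mark on a vertex or leftmost edge whose upward subtree contains the leaves $\{k,\ldots,l\}$, the attachment rule and the counting convention force it to be counted only in $L_0(T)$ when $k=1$ and only in $L_{k-1}(T)$ otherwise; the clause "we do not count $\df$ and $\dl$ on the $(i,i+1)$-join" prevents double counting at join vertices, and the case for $\dl$ is symmetric. In particular, each mark belongs to exactly one $L_i(T)$.

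With the decomposition in place, $d_i^j$ and $s_i^j$ acquire a clean reformulation: $d_i^j$ inserts one extra copy of the $j$-th entry of $L_i(T)$ (equivalently, increases the multiplicity of the corresponding mark at the corresponding vertex or edge by one), and $s_i^j$ deletes the $(j{+}1)$-st entry of $L_i(T)$, while both operators leave every $L_l(T)$ with $l\ne i$ unchanged. Under this description the first five identities become the standard cosimplicial relations between face-type and degeneracy-type operators on a single linearly ordered sequence, each obtained by a direct comparison of the positions $j$ and $k$ (the mixed $sd$-identity splits into the three subcases $k<j$, $k\in\{j,j{+}1\}$, and $k>j{+}1$, as stated). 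The last three identities, where $i\ne l$, are tautological: since $d_l^k$ and $s_l^k$ only modify $L_l(T)$, they commute with $d_i^j$ and $s_i^j$ on the nose.

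The only part of the argument that requires genuine attention is the disjointness in (ii); once that bookkeeping is fixed the remaining verifications are mechanical index manipulations on linear sequences, which matches the author's assertion that the proof is routine.
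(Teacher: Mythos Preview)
Your approach is correct and is exactly the routine verification the paper intends; the paper gives no proof beyond the word ``routine,'' and your disjointness claim (ii) is precisely the observation recorded just after Definition~\ref{DnumberCT} that $\sum_i m_i(T)$ equals the total number of marks. The only small point to watch is that $d_0^0$ and $d_n^{m_n+1}$ insert a fresh $\df$ or $\dl$ at the root rather than duplicate an existing entry of $L_i$, but since the root sits at the appropriate end of the path defining $L_0$ (respectively $L_n$) these boundary cases fit your framework without change.
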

\subsubsection{Operad $\CK$}\label{SSSCK}
\begin{defi}\label{DCK}
We define a topological operad $\CK$ by $\CK(n)=|\CT(n)|$ with the induced composition. Let $\CK(m_0,\dots, m_n)$ denote the connected component of $\CK(n)$ corresponding to $\CT(m_0,\dots,m_n)$ and $d^j_i$ and
$s^j_i$ denote the maps on $\CK(m_0,\dots,m_n)$ induced by $d_i^j$ and $s_i^j$ defined in sub-subsection \ref
{SSScofacialtree} via the realization functor.  We also set $\CK(T)=|\langle T\rangle|$ for a cofacial $n$-tree $T$. Here, $\langle T\rangle$ is the subposet $\{T'\mid T'\leq T\}$ of $\CT(n)$. We  regard $\CK(T)$ as a subspace of $\CK(n)$ in the obvious manner. $U:\CK\to \K$ denotes the morphism of operads induced by the morphism $:\CT\to \T$  forgetting marks. 
\end{defi}
We shall give a description of $\CK$ analogous to the description of $\K$ given in subsection \ref{SSK}.
 Let $\CT(m_0,\dots,m_n)_{1,2}$ be the subposet of $\CT(m_0,\dots,m_n)$ consisting of elements of codimension one or two. We define a diagram $
B^c=B^c_{m_0,\dots, m_n}:\CT(m_0,\dots,m_n)_{1,2}\longrightarrow \CG
$
as follows (compare with the definition of $B_n$ in subsection \ref{SSK}). As an element of codimension one is uniquely presented as $T_1\circ_iT_2$ by Lemma \ref{Lcofacialtree2}, we put $B^c(T_1\circ_i T_2)=\CK(T_1)\times \CK(T_2)$. An element of codimension two is of the form $(S_1\circ_j S_2)\circ_k S_3$. If this presentation is not unique, we fix a presentation once and for all, and put 
$B^c((S_1\circ_j S_2)\circ_k S_3)=\CK(S_1)\times \CK(S_2)\times \CK(S_3)$.  Suppose $(S_1\circ_j S_2)\circ_k S_3\leq T_1\circ_iT_2$.
For example, in the case (a) of Lemma \ref{Lcofacialtree2},(4), by (2) of the same lemma, one of the following cases occurs.
\begin{enumerate}
\item $S_1\circ_jS_2\leq T_1$, $S_3=T_2$,
\item $S_1\circ_jS_3\leq T_1$, $S_2=T_2$.
\end{enumerate}
In each case, we define a map $B^c((S_1\circ_j S_2)\circ_k S_3)\to  B^c(T_1\circ_i T_2)$ using the composition of $\CK$. In the other cases (b),(c) of Lemma \ref{Lcofacialtree2}, we  define a map $B^c((S_1\circ_j S_2)\circ_k S_3)\to  B^c(T_1\circ_i T_2)$ similarly.\\
\indent A natural transformation
 $B^c\longrightarrow \CK(m_0,\dots,m_n)$
is defined using the composition of $\CK$. Here, $\CK(m_0,\dots,m_n)$ denotes the corresponding constant diagram. Thus we obtain a map
$\theta^c:\colim B^c\longrightarrow \CK(m_0,\dots,m_n).
$ The image of $\theta^c$ is $\partial\CK(m_0,\dots,m_n)$, the subcomplex spanned by all cofacial $n$-trees of codimension one.
A point of $\CK(m_0,\dots,m_n)$ can be presented as $t_0T_0+\cdots t_kT_k$ with $T_0<\cdots <T_k\in\CT(m_0,\dots,m_n)$ and $t_0,\dots, t_k\geq 0$, $t_0+\cdots +t_k=1$, . Using this presentation,
we define a map
\[
\tilde\theta^c: Cone(\colim B^c)\longrightarrow \CK(m_0,\dots,m_n)
\]
by $\tilde\theta^c(t\cdot u)= t\theta^c(u)+(1-t)T(m_0,\dots,m_n)$.
\begin{prop}\label{PCK}
With the above notations, the maps $\theta^c:\colim B^c\to\partial\CK(m_0,\dots,m_n)$ and  $\tilde\theta^c:Cone(\colim B^c)\to\CK(m_0,\dots,m_n)$ are   homeomorphisms.
\end{prop}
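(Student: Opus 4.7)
The plan is to mimic the proof of Proposition \ref{PK}, using Lemma \ref{Lcofacialtree2} as the analog of Lemma \ref{Ltree}. Throughout, write $\CK(T):=|\langle T\rangle|$ for a cofacial tree $T$, and recall that Proposition \ref{PK} already handles the purely combinatorial subtleties of the underlying Stasheff cells $\K(n)$, so the new content here lies in tracking the marks on vertices and edges.

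First, I would show that for each codimension-one cofacial tree of the form $T_1\circ_iT_2$, the partial composition induces a homeomorphism $\CK(T_1)\times\CK(T_2)\xrightarrow{\cong}\CK(T_1\circ_iT_2)$. By parts (1) and (2) of Lemma \ref{Lcofacialtree2}, the composition map $\langle T_1\rangle\times\langle T_2\rangle\to\langle T_1\circ_iT_2\rangle$ is an order-preserving bijection of finite posets with order-preserving inverse, and passing to geometric realization yields the desired homeomorphism. This makes the restriction of $\theta^c$ to each cell $B^c(T_1\circ_iT_2)=\CK(T_1)\times\CK(T_2)$ a homeomorphism onto $\CK(T_1\circ_iT_2)\subset\partial\CK(m_0,\dots,m_n)$, and by part (3) these cells exhaust $\partial\CK(m_0,\dots,m_n)$.

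Next, for two distinct codimension-one cofacial trees $T,T'$ with nonempty intersection, part (5) of Lemma \ref{Lcofacialtree2} furnishes a unique codimension-two $T_3$ with $\langle T\rangle\cap\langle T'\rangle=\langle T_3\rangle$; the cell $B^c(T_3)$ maps compatibly into both $B^c(T)$ and $B^c(T')$ via the structure maps of the diagram $B^c$, and under $\theta^c$ this identifies $\CK(T)\cap\CK(T')$ with $\CK(T_3)$. Together with the previous step this shows $\theta^c$ is a continuous bijection from the colimit onto $\partial\CK(m_0,\dots,m_n)$; since both sides are finite CW complexes, it is a homeomorphism. Here I expect the main obstacle to be the well-definedness of $B^c$ on codimension-two elements whose presentation $(S_1,S_2,S_3,j,k)$ is non-unique, namely cases (a) and (b) of Lemma \ref{Lcofacialtree2}(4); the key observation that resolves this is that in both of those cases at least two of the $S_\ell$ are arity-one cofacial trees, so $|S_\ell|$ is a point, whence $\CK(S_1)\times\CK(S_2)\times\CK(S_3)$ is canonically independent of the chosen presentation and the two structure maps into $B^c(T_1\circ_iT_2)$ agree.

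Finally, $\tilde\theta^c$ is a homeomorphism by the same cone-presentation argument as in Proposition \ref{PK}: every point of $\CK(m_0,\dots,m_n)$ has a unique expression $tu+(1-t)T(m_0,\dots,m_n)$ with $t\in[0,1]$ and $u\in\partial\CK(m_0,\dots,m_n)$ (using Lemma \ref{Lcofacialtree1} to identify $T(m_0,\dots,m_n)$ as the connected component's maximum), so the cone structure of $\CK(m_0,\dots,m_n)$ matches that of $Cone(\colim B^c)$ via $\tilde\theta^c$.
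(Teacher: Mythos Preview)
Your proposal is correct and follows essentially the same approach as the paper, which simply states that ``in view of Lemma \ref{Lcofacialtree2}, the proof is completely analogous to the proof of Proposition \ref{PK}.'' You are in fact more explicit than the paper on one point: the well-definedness of $B^c$ on codimension-two elements with non-unique presentation (cases (a) and (b) of Lemma \ref{Lcofacialtree2}(4)), which the paper leaves implicit but which you correctly resolve by observing that the ambiguous factors are points.
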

\begin{proof}
In view of Lemma.\ref{Lcofacialtree2}, the proof is completely analogous to the proof of Proposition \ref{PK}.
\end{proof}

\subsubsection{Monad $\MCK$}
Let $\CC$ be the category of cosimplicial objects over $\C$ (see subsection \ref{SSNT}). In this sub-subsection, we define a monad
\[
\MCK:\CC\longrightarrow \CC.
\]
We first define a functor $\MCK_n:(\CC)^{\times n}\longrightarrow \CC$.
Let $X_1,\dots, X_n$ be objects of $\CC$. The object of cosimplicial degree $l$ of  $\MCK_n(X_1,\dots,X_n)$ is defined as follows.
\[
\MCK_n(X_1,\dots,X_n)^l=\bigsqcup \CK(m_0,\dots,m_n)\hotimes X_1^{p_1}\otimes\cdots\otimes X_n^{p_n}/\sim.
\]
Here, the sequence of non-negative integers $m_0,\dots, m_n, p_1,\dots, p_n$ ranges over sequences satisfying  $m_{\leq n}+p_{\leq n}=l$, and  in the point-set expression the equivalence relation $\sim$ is generated by the following relations
\[
\begin{split}
(u\circ_i\df\,; x_1,\cdots, x_i,\cdots, x_n)&\sim (u\,; x_1,\cdots, d^0x_i,\cdots, x_n),\\
 (u\circ_i\dl\,; x_1,\cdots,  x_i,\cdots, x_n)&\sim (u\,; x_1,\cdots,  d^{p_i+1}x_i,\cdots, x_n)
 \end{split}
\]
where $u\in \CK(m_0,\dots, m_n), x_j\in X_j^{p_j}$, and $1\leq i\leq n$, and $\df$ and $\dl$ denote the elements $\df^1\dl^0$ and $\df^0\dl^1$ of $\CK(1)$ respectively. \\
\indent We define the coface and codegeneracy morphisms on $\{\MCK_n(X_1,\dots,X_n)^l\}_l$ by using the corresponding maps of $\CK$ and $X_i$ alternately. For example, for $0\leq j\leq m_0$, $d^j$ is defined using $d_0^j$ of $\CK$, and for $m_0+1\leq j\leq m_0+p_1$, $d^j$ is defined using $d^{j-m_0}$ of $X_1$, and for $m_0+p_1+1\leq j\leq m_0+p_1+m_1$, $d^j$ is defined using $d_1^{j-m_0-p_1}$ of $\CK$. Precisely speaking, we put 
\[
\begin{split}
d^j&(u\,; x_1,\cdots, x_n)\\
&=\left\{
\begin{array}{lll}
(d^0_0u\,; x_1,\dots, x_n) 
&
\vs{1mm} (j=0) 
&
\\
(d^{j_1}_{i}u\,; x_1,\dots, x_n)
&
 (m_{\leq i-1}+p_{\leq i}+1\leq j\leq m_{\leq i}+p_{\leq i},
&
 j_1=j-m_{\leq i-1}-p_{\leq i}) \vs{1mm}
\\
(u\,;x_1,\dots, d^{j_2}x_i,\dots, x_n)
&
 ( m_{\leq i-1}+p_{\leq i-1}+1\leq j\leq m_{\leq i-1}+p_{\leq i},
&
 j_2=j- m_{\leq i-1}-p_{\leq i-1}) \vs{1mm}
\\
(d^{m_n+1}_nu\,; x_1, \dots, x_n) 
&
 ( j=m_{\leq n}+p_{\leq n}+1),
&
\end{array}\right.\\
s^j&(u\,; x_1,\cdots, x_n)\\
&=\left\{
\begin{array}{lll}
(s^{j_1}_iu\,; x_1,\dots, x_n)
& 
(m_{\leq i-1}+p_{\leq i}\leq j\leq m_{\leq i}+p_{\leq i}-1,
& 
j_1=j-m_{\leq i-1}-p_{\leq i}) \vs{1mm}
\\
(u\,;x_1,\dots, s^{j_2}x_i,\dots, x_n)
& 
( m_{\leq i}+p_{\leq i}\leq j\leq m_{\leq i}+p_{\leq i+1}-1,
&
j_2=j- m_{\leq i}-p_{\leq i})
\\
\end{array}\right.
\end{split}
\]
where $p_i=|x_i|$, $u\in\CK(m_0,\dots,m_n)$, $0\leq i\leq n$ and $m_{\leq t}=m_0+\cdots +m_{t}$ and $p_{\leq t}$ is similar, and $m_{\leq -1}=p_{\leq -1}=p_{\leq 0}=0$. The following lemma easily follows from Lemma \ref{Lcosimplicialid} and the definition of the equivalence relation $\sim$.
\begin{lem}
The morphisms $d^j$ and $s^j$ defined above are well defined and satisfy the cosimplicial identity. \qed
\end{lem}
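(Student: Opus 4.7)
The plan is twofold: first establish that the coface maps $d^j$ and codegeneracy maps $s^j$ are well defined on the quotient by $\sim$, and then verify they satisfy the cosimplicial identities. Throughout, the tools are Lemma \ref{Lcosimplicialid} for the $\CK$-part, the cosimplicial identities on each $X_i$, and the defining generating relations
\[
(u\circ_i\df\,;\dots, x_i,\dots)\sim(u\,;\dots, d^0x_i,\dots),\qquad (u\circ_i\dl\,;\dots, x_i,\dots)\sim(u\,;\dots, d^{p_i+1}x_i,\dots),
\]
which are precisely designed to make the alternating $\CK$- and $X_i$-coface operators match at the boundaries between consecutive blocks.

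For well-definedness, I would apply each $d^j$ (resp.\ $s^j$) to both sides of each generating relation and check the results remain equivalent. A given $j$ falls either in an ``$m$-block'' $[m_{\leq k-1}+p_{\leq k}+1,\ m_{\leq k}+p_{\leq k}]$ corresponding to a $\CK$-operator $d^{j_1}_k$, or in a ``$p$-block'' $[m_{\leq k-1}+p_{\leq k-1}+1,\ m_{\leq k-1}+p_{\leq k}]$ corresponding to $d^{j_2}$ on $X_k$, or at the extremal values $j=0$ and $j=m_{\leq n}+p_{\leq n}+1$. When $j$ belongs to a block disjoint from the $i$-th one affected by the relation, both sides act on the same factor and agree trivially. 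The nontrivial cases are when $j$ lands in or adjacent to the $i$-th block. For instance, taking the relation involving $\df$ and $j$ corresponding to $d^{j_2}$ on $X_i$ with $j_2\geq 1$, the right side becomes $(u\,;\dots, d^{j_2}d^0x_i,\dots)=(u\,;\dots, d^0d^{j_2-1}x_i,\dots)$ by the cosimplicial identity on $X_i$, and by $\sim$ this equals $(u\circ_i\df\,;\dots, d^{j_2-1}x_i,\dots)$, matching the left side. When $j$ hits one of the $\CK$-ranges adjacent to position $i$ on the left, I would use the analogous compatibility between $d^{j_1}_k$ and partial composition with $\df$ or $\dl$ in $\CK$, which is visible directly from the order-preserving definition of $d^j_i$ on $\CT$ in sub-subsection \ref{SSScofacialtree}. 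The arguments for $\dl$ and for the codegeneracies are completely parallel.

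For the cosimplicial identities, I would split on where $j$ and $k$ fall. If they lie in the same block, the required identity reduces either to the first five identities of Lemma \ref{Lcosimplicialid} (same $\CK$-block) or to the cosimplicial identities for $X_i$ (same $p$-block). If $j$ and $k$ lie in different blocks, the operators act on different tensor factors and commute; the last three identities of Lemma \ref{Lcosimplicialid} ensure the $\CK$-operators across different $i$ commute, $\CK$-operators commute with cofaces of $X_k$ since they act on different factors, and cofaces of $X_k$ and $X_l$ commute for $k\neq l$. The numerical shifts demanded by the cosimplicial identities (e.g., $d^j d^k=d^k d^{j-1}$) match because, within each block, an earlier application of a higher-index operator shifts the indexing of later operators by exactly one, and the concatenation of per-block orderings inherits this shift.

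The main obstacle will be the index bookkeeping in the well-definedness step, especially at the boundary between a $\CK$-block and an adjacent $X_i$-block, since the substitution via $\sim$ moves exactly one unit of cosimplicial degree across that boundary. A careful enumeration of these boundary positions, together with Lemma \ref{Lcosimplicialid} and the cosimplicial identities on each $X_i$, completes the verification. No new idea beyond these two tools is required; the argument is entirely mechanical and local.
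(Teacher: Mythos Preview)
Your proposal is correct and follows exactly the approach the paper intends: the paper states just before the lemma that it ``easily follows from Lemma \ref{Lcosimplicialid} and the definition of the equivalence relation $\sim$'' and then omits the argument, and your sketch simply unpacks this by checking compatibility with $\sim$ at the block boundaries and reducing the cosimplicial identities either to Lemma \ref{Lcosimplicialid} or to the cosimplicial identities on each $X_i$.
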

 Now, we put
\[
\MCK(X)=\bigsqcup_{n\geq 1}\MCK_n(X,\dots, X).
\]
A unit $\id\longrightarrow \MCK$ and a product $\MCK\circ\MCK\longrightarrow \MCK$ are defined by using the unit and the composition of $\CK$. By definition, these natural transformations are compatible with cofaces and codegeneracies and give a well-defined structure of a monad on the functor $\MCK$.
\begin{defi}\label{Dforgetmark}
For objects $X_1,\dots, X_n$ of $\CC$, A morphism $U_{n,X_1,\dots, X_n}:\MCK_n(X_1,\dots,X_n)\to \K(n)\hotimes X_1\, \square \, \cdots\, \square \, X_n$ is defined by
\[
U_n(u\,;x_1,\dots, x_n)=(U(u),\df^{m_0}x_1,\df^{m_1}x_2,\dots, \df^{m_{n-1}}\dl^{m_n}x_n).
\]
, see Definition \ref{DCK} and $\df$ and $\dl$ denote the first and the last cofaces, respectively. For example, we mean
\[
\df^{\,a}\dl^{\,b}x=d^{\,0}\cdots d^{\,0}d^{\,p+b}d^{\,p+b-1}\cdots d^{\,p+1}x\qquad (|x|=p),
\] where $d^0$ is used $a$ times. It is easy to see this morphism actually commutes with the cosimplicial operators. The collection $\{U_{n,X_1,\dots,X_n}\}_{X_1,\dots,X_n}$ defines a natural transformation $U_n:\MCK_n\Rightarrow\K(n)\hotimes(-)^{\, \square \, n}$.
\end{defi}
\begin{lem}\label{Lmorphismofmonad}
The natural transformation
\[
U=\sqcup_{n\geq 0}U_n:\MCK\Longrightarrow \MK:\CC\to \CC
\]
is a morphism of monads. (See sub-subsection \ref{SSSslightgenofms} for $\MK$.)
\end{lem}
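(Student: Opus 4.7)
The plan is to verify the three standard requirements for $U$ to be a morphism of monads: naturality (well-definedness of each $U_n$ on the quotient defining $\MCK_n$), compatibility with units, and compatibility with multiplications. I would dispose of the first two quickly. For the defining $\df$-relation of $\MCK_n$, Lemma \ref{Lcofacialtree2}(1) yields $\tilde m_{i-1}(u\circ_i\df)=m_{i-1}(u)+1$, so the image carries $\df^{m_{i-1}(u)+1}$ on the $i$-th tensor slot, which literally equals the $\df^{m_{i-1}(u)}d^0x_i$ produced from the right-hand side. For the $\dl$-relation at an interior slot $i<n$ the two sides agree only after applying the $\square$-equivalence $(d^{p+1}a)\otimes b\sim a\otimes(d^0b)$ together with the cosimplicial identity $\dl\,\df=\df\,\dl$. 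The unit condition is immediate: $U\circ\eta_{\MCK}(x)=(U(1);\df^0\dl^0x)=(1;x)=\eta_{\MK}(x)$.

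The substantive step will be compatibility with multiplication. Given a generator $(u;(v_1;\vec y^{(1)}),\ldots,(v_n;\vec y^{(n)}))$ of $\MCK\circ\MCK(X^\bullet)$, I would compare the two composites $U\circ\mu_{\MCK}$ and $\mu_{\MK}\circ U\circ\MCK(U)$. Both land in $\MK(X^\bullet)$ with $\K$-component equal to $U(u)\circ(U(v_1),\ldots,U(v_n))=U(w)$, where $w:=u\circ(v_1,\ldots,v_n)$, by the operad-morphism property of $U:\CK\to\K$. For the tensor entries I need a combinatorial lemma giving the counts of $w$, whose nontrivial case is the block-boundary formula
\[
m_{k_{\leq l}}(w)=m_l(u)+m_{k_l}(v_l)+m_0(v_{l+1}),
\]
obtained by tracing the path between the $k_{\leq l}$-th and $(k_{\leq l}+1)$-th leaves of $w$ through $v_l$, up through $u$ across the $(l,l+1)$-join, and back down through $v_{l+1}$; the interior formula $m_{k_{\leq l-1}+r}(w)=m_r(v_l)$ and the extreme formulas $m_0(w)=m_0(u)+m_0(v_1)$, $m_K(w)=m_n(u)+m_{k_n}(v_n)$ are similar.

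With this in hand, $U\circ\mu_{\MCK}$ places $\df^{m_{j-1}(w)}$ on the $j$-th entry (and $\dl^{m_K(w)}$ on the last). On the other route, $\MCK(U)$ converts each $(v_l;\vec y^{(l)})$ into $(U(v_l);\df^{m_0(v_l)}y_1^{(l)},\ldots,\df^{m_{k_l-1}(v_l)}\dl^{m_{k_l}(v_l)}y_{k_l}^{(l)})$, then the outer $U$ prefixes the leading slot of the $l$-th inner tuple by $\df^{m_{l-1}(u)}$ (and caps the final entry by $\dl^{m_n(u)}$), and $\mu_{\MK}$ concatenates. The two results differ block-by-block: the second route carries a residual $\dl^{m_{k_l}(v_l)}$ on $y_{k_l}^{(l)}$ whereas the first carries an extra $\df^{m_{k_l}(v_l)}$ on $y_1^{(l+1)}$. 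These are matched precisely by iterated application of $(d^{p+1}a)\otimes b\sim a\otimes(d^0b)$ at the block boundary, together with $\dl\,\df=\df\,\dl$ at the terminal slot to combine $\dl^{m_n(u)}$ with the already-present $\dl^{m_{k_n}(v_n)}$.

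The main obstacle will be the combinatorial bookkeeping of the counts $m_i(w)$ and the careful verification that the mismatches between the two routes—sitting a priori at different tensor positions, $\dl$'s on $y_{k_l}^{(l)}$ versus $\df$'s on $y_1^{(l+1)}$—resolve to instances of the $\square$-relation in a compatible order across all block boundaries. Equivariance under the $\Sigma_*$- and $\Sph$-actions (when $\C=\SP$) is routine because $U$ affects neither, only the $\CK$-factor and the coface exponents.
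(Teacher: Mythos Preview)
Your proof is correct and follows essentially the same strategy as the paper's (very terse) argument: the paper cites only the operad-morphism property of $U:\CK\to\K$ together with the cosimplicial identities $d^i\df^m x=\df^{m+1}x$ and $d^{|x|+i+1}\dl^m x=\dl^{m+1}x$ for $0\le i\le m$, which encode precisely the commutation $\df\,\dl=\dl\,\df$ and the accumulation of $\df$- and $\dl$-powers that you spell out. Your explicit block-boundary analysis via the $\square$-relation and the count formula $m_{k_{\le l}}(w)=m_l(u)+m_{k_l}(v_l)+m_0(v_{l+1})$ is exactly the detail the paper's one-line sketch leaves to the reader.
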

\begin{proof}
 The commutativity of the morphism $U$ with the products follows from the fact that $U:\CK\to\K$ is a morphism of operads and the following cosimplicial identities:
$d^i\df ^mx=\df^{m+1}x,\  d^{|x|+i+1}\dl^m x=\dl^{m+1}x\ \   (m\geq 0,0\leq i\leq m)$. 
\end{proof}
\subsubsection{$A_\infty$-operad $\CB$}\label{SSSCB}					
In this sub-subsection, we define an $A_\infty$-operad $\CB$ which acts on $\ttot$ of an $\MCK$-algebra. We define a morphism of cosimplicial objects
\[
\bar U_n:\MCK_n(X_1,\dots, X_n)\longrightarrow X_1\, \square \,\cdots\, \square \, X_n
\]
  as the composition of $U_n$ and the obvious morphism $\K(n)\hotimes X_1\, \square \,\cdots\, \square \, X_n\to X_1\, \square \,\cdots\, \square \, X_n$.
\begin{lem}\label{Lhomotopytype}
 Suppose $\C=\CG$. Let  $X_1,\dots, X_n$ be Reedy cofibrant objects in $\CC$. Then the map $\bar U_n :\MCK_n(X_1,\dots, X_n)\longrightarrow X_1\, \square \,\cdots\, \square \, X_n$ is a weak homotopy equivalence.
\end{lem}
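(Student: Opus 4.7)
The plan is to first factor $\bar U_n$ as the composition
\[
\MCK_n(X_1,\dots,X_n)\xrightarrow{U_n}\K(n)\hotimes(X_1\,\square\,\cdots\,\square\,X_n)\xrightarrow{\pi}X_1\,\square\,\cdots\,\square\,X_n,
\]
where $\pi$ is induced by the unique map $\K(n)\to\ast$. Since $(\K(n)\hotimes Y)^l=\K(n)\times Y^l$ in each cosimplicial degree and $\K(n)$ is contractible (Proposition \ref{PK}), the map $\pi^l$ is a projection from a product with a contractible CW complex, hence a homotopy equivalence. Thus $\pi$ is a termwise weak equivalence, and the proof reduces to showing that $U_n$ is a termwise weak equivalence.

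Fix a cosimplicial degree $l$. I will filter $\MCK_n(X_1,\dots,X_n)^l$ by the total mark count $M=\sum_{i=0}^{n}m_i$, setting $F_M$ to be the image in the quotient of those pieces $\CK(m_0,\dots,m_n)\times\prod_iX_i^{p_i}$ with $\sum m_i\leq M$. The generating relation $(u\circ_i\df;\,x_\bullet)\sim(u;\dots,d^0x_i,\dots)$ (and its $\dl$-analogue) strictly decreases $M$, so this filtration is well-defined and exhaustive. By Proposition \ref{PCK}, each connected component $\CK(m_0,\dots,m_n)$ is the cone on its boundary $\colim B^c$, which in turn is built from compositions of cells of strictly smaller mark count. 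Consequently, passing from $F_{M-1}$ to $F_M$ amounts to attaching contractible cells $\CK(m_0,\dots,m_n)\times\prod_iX_i^{p_i}$ along their boundaries, where the $X_i^{p_i}$-factors are cofibrant and the iterated coface maps appearing in the identifications are cofibrations by Reedy cofibrancy of the $X_i$. A parallel cell structure is present on $\K(n)\hotimes(X_1\,\square\,\cdots\,\square\,X_n)^l$, coming from the $\square$-presentation of the cosimplicial piece.

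On each cell, $U_n^l$ is given by $U:\CK(m_0,\dots,m_n)\to\K(n)$ on the $\CK$-factor (a map between contractible spaces, hence a weak equivalence) together with the absorption of $\df$ and $\dl$-marks into iterated cofaces on the $X_i$ (which on the target are reflected in the $\square$-identifications). Applying the gluing lemma along the mark-count filtration, using that the attachments on both sides are cofibrant, then yields that $U_n^l$ is a weak equivalence.

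The main obstacle is to establish the correct matching of cell attachments on the two sides, particularly the role of the $h_i$-marks. Since $h_i$ does not participate in the $\sim$-relations of $\MCK_n$, the $h_i$-marks contribute only to the internal cone structure of individual components $\CK(m_0,\dots,m_n)$; their geometric function is to furnish the homotopies between the two sides of a $\square$-relation in the target, and the cone contraction of Proposition \ref{PCK} is precisely what collapses these homotopies to identifications in the quotient. Verifying this compatibility requires the careful boundary analysis from Lemma \ref{Lcofacialtree2} (together with the description of $B^c$ in sub-subsection \ref{SSSCK}) to identify which codimension-one subcomplexes of $\CK(m_0,\dots,m_n)$ glue to which pieces of neighbouring strata, and to confirm that under Reedy cofibrancy these gluings are well-behaved enough for the gluing lemma to apply.
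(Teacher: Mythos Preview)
Your factorization through $U_n$ is correct, and $\pi$ is a termwise equivalence since $\K(n)$ is contractible. The gap is in the induction for $U_n$: the mark-count filtration does not match any filtration on the target, so the gluing lemma has nothing to compare against.

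Concretely, no generating relation $(u\circ_i\df;x_\bullet)\sim(u;\dots,d^0x_i,\dots)$ involves two pieces of mark count zero, so
\[
F_0\;=\;\bigsqcup_{p_1+\cdots+p_n=l}\CK(0,\dots,0)\times\prod_iX_i^{p_i}\;\cong\;\K(n)\times\bigsqcup_{p_1+\cdots+p_n=l}\prod_iX_i^{p_i}.
\]
Under $U_n^l$ this already surjects onto $\K(n)\times(X_1\,\square\,\cdots\,\square\,X_n)^l$, and the restricted map is the quotient by the $\square$-identifications, which is \emph{not} a weak equivalence (for $n=2$, $X_i=\Delta^\bullet$, $l=1$ the source has two components while the target has one). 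Since every $U_n^l(F_M)$ is the full target, there is no parallel increasing filtration there. Relatedly, the passage $F_{M-1}\hookrightarrow F_M$ attaches new pieces along the $\CK$-faces $\CK(\cdots)\circ_i\df$ and $\CK(\cdots)\circ_i\dl$, not along coface subspaces of the $X_i$, so Reedy cofibrancy plays no role at that step.

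The paper filters in the complementary direction, by $l'=\sum_ip_i$. It realises $(X_1\,\square\,\cdots\,\square\,X_n)^l$ as the colimit of the functor $F(m_0,\dots,p_n)=\prod_iX_i^{p_i}$ over a poset $\sqcat_n^l$ whose objects are the tuples $(m_0,\dots,m_n,p_1,\dots,p_n)$ and whose generating morphisms trade one mark for one coface. The subposets $\{p_{\leq n}\leq l'\}$ are downward closed, so their colimits give an honest increasing filtration of the target, with matching subspaces $\MCK_n(0,l')$ on the source. The extensions from stage $l'-1$ to $l'$ are pushouts whose top horizontal arrows are latching-type inclusions in the $X_i$-factors, hence cofibrations by Reedy cofibrancy, hence homotopy pushouts; on each new cell the map collapses the contractible factor $\CK(m_0,\dots,m_n)$, and induction on $l'$ finishes. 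In short, your filtration runs the wrong way: increase $p_{\leq n}$, not $m_{\leq n}$.
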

\begin{proof}
 We first define a poset $\sqcat_n^l$. An object of $\sqcat_n^l$ is a sequence of non-negative integers $(m_0,\dots, m_n,p_1,\dots, p_n)$ such that $m_{\leq n}+p_{\leq n}=l$. The partial order $\leq$ is generated by 
\[
\begin{split}
(m_0,\dots,m_n,p_1,\dots, p_n)&<(m_0,\dots, m_i-1,\dots, m_n,p_1,\dots, p_i+1,\dots, p_n)\quad (1\leq i\leq n)\\
(m_0,\dots,m_n,p_1,\dots, p_n)&<(m_0,\dots, m_i-1,\dots, m_n,p_1,\dots, p_{i+1}+1,\dots, p_n)\quad (0\leq i\leq n-1)
\end{split}
\] 
We define a functor
\[
F:\sqcat^l_n\longrightarrow \CG.
\]
For each object $(m_0,\dots,m_n,p_1,\dots, p_n)\in\sqcat^l_n$ we put $F(m_0,\dots,m_n,p_1,\dots, p_n)=X_1^{p_1}\times\cdots \times X_n^{p_n}$.  We associate the map $\id^{\times  i-1}\times\dl\times \id^{\times n-i-1}$to the inequality $(m_0,\dots,m_n,p_1,\dots, p_n)<(m_0,\dots, m_i-1,\dots, m_n, p_1\dots p_i+1\dots, p_n)\quad (1\leq i\leq n)$ and the map $\id^{\times  i}\times\df\times \id^{\times n-i-2}$ to the other generating inequality. 
It is easy to see there is a natural isomorphism $\colim\limits_{\sqcat^l_n}F\cong (X_1\, \square \,\cdots\, \square \, X_n)^l$, see Definition \ref{Dforgetmark} for the meaning of $\df,\ \dl$. \par
Fix a non-negative integer $l$ and let $l_1, l_2$ be two integers such that $0\leq l_1\leq l_2\leq l$. $\sqcat(l_1,l_2)$ denotes the subposet of $\sqcat^l_n$ consisting of objects $(m_0,\dots, m_n,p_1,\dots,p_n)$ such that $l_1\leq p_{\leq n}\leq l_2$. Let $l'$ be an integer with $l'\leq l$ and put $\sqcat(l')=\sqcat(l',l')$ for simplicity. The following diagram $(P_1)$ is a pushout diagram. 
\[
\xymatrix{
\bigsqcup\limits_{(m_0,\dots, p_n)\in\sqcat(l')}(\colim\limits_{\sqcat(l'-2,l'-1)/(m_0,\dots, p_n)}F)\ar[r]\ar[d]& \bigsqcup\limits_{(m_0,\dots, p_n)\in\sqcat(l')}F(m_0,\dots, p_n)\ar[d]\\
\colim\limits_{\sqcat(0,l'-1)}F\ar[r]& \colim\limits_{\sqcat(0,l')}F},
\]
where $\sqcat(l'-2,l'-1)/(m_0,\dots, p_n)$ denotes the subposet of elements smaller than $(m_0,\dots,m_n,p_1,\dots, p_n)$, and all the arrows are induced by inclusions of posets. We shall define a similar pushout diagram  for $\MCK_n(X_1,\dots, X_n)$. 
Put
\[
\MCK_n(0,l')=\bigsqcup\limits_{(m_0,\dots, p_n)\in\sqcat(0,l')}\CK(m_0,\dots, m_n)\times X_1^{p_1}\times\cdots\times X_n^{p_n}/\sim .
\]
($\sim$ is the equivalence relation used to define $\MCK_n(X_1,\dots, X_n)$.) 
Then, there exists a pushout diagram $(P_2)$ as follows:
\[
\xymatrix{\bigsqcup\limits_{(m_0,\dots, p_n)\in\sqcat(l')}(\CK'\times\colim\limits_{\sqcat(l'-2,l'-1)/(m_0,\dots, p_n)}F)\ar[r]\ar[d]& \bigsqcup\limits_{(m_0,\dots, p_n)\in\sqcat(l')}\CK'\times F(m_0,\dots, p_n)\ar[d]\\
\MCK_n(0,l'-1)\ar[r]& \MCK_n(0,l'),}
\]
where $\CK'$ denotes $\CK(m_0,\dots,m_n)$, and the left vertical map is induced by the inclusions
\[
\begin{split}
\CK(m_0,\dots, m_n)\times 
&
F(m_0,\dots, m_i+1,\dots ,p_i-1,\dots,p_n)
\\
&
\cong (\CK(m_0,\dots, m_n)\circ_{i}\dl )
\times F(m_0,\dots, m_i+1,\dots, p_i-1,\dots,p_n)
\\
&
\subset \CK(m_0,\dots, m_i+1,\dots, m_n) \times F(m_0,\dots, m_i+1,\dots, p_i-1,\dots,p_n)\quad (1\leq i\leq n),
\\
\CK(m_0,\dots, m_n)\times 
&
F(m_0,\dots, m_i+1,\dots ,p_{i+1}-1,\dots,p_n)
\\
&
\cong (\CK(m_0,\dots, m_n)\circ_{i+1}\df)
\times F(m_0,\dots, m_i+1,\dots, p_{i+1}-1,\dots,p_n)
\\
&
\subset \CK(m_0,\dots, m_i+1,\dots, m_n) \times F(m_0,\dots, m_i+1,\dots, p_{i+1}-1,\dots,p_n)\quad (0\leq i\leq n-1),
\end{split}
\]
 and the other arrows are defined similarly to the corresponding arrows of $(P_1)$. Here, $\CK(m_0,\dots, m_n)\circ_i\dl$ denotes the image by the partial composition with the vertex represented by $\dl\in \CT(0,1)$. $\CK(m_0,\dots, m_n)\circ_{i+1}\df$ is similarly understood. We define a map of diagram $P_1\to P_2$ similarly to $\bar U_n$.  The top horizontal arrow of $(P_1)$ $:\colim\limits_{\sqcat(l'-2,l'-1)/(m_0,\dots, p_n)}F\to F(m_0,\dots,p_n)$ is a cofibration because each $X_i$ is Reedy cofibrant and $\colim\limits_{\sqcat(l'-2,l'-1)/(m_0,\dots, p_n)}F$ is the latching object modeled by $\cup_i\Delta^{p_1}\times\cdots\times(\Delta^{p_i}_0\cup\Delta^{p_i}_{p_i})
\times\cdots\times \Delta^{p_n}$ ($\Delta^p_k$ denotes the $k$-th face of $\Delta^p$). This and the fact  that $\CK'$ is a cell complex imply  $(P_1)$ and $(P_2)$ are homotopy pushout diagrams. As $\CK'$ is contractible by  Lemma \ref{Lcofacialtree1},  the map $P_1\to P_2$ induces a weak equivalence between the resulting pushouts. So by induction on $l'$, we have proved the assertion.
\end{proof}
Now we define a topological operad $\CB$. Let $\tilde \Delta^{\bullet}$ be a  projective cofibrant replacement of the cosimplicial space $\Delta^\bullet$ (see subsection \ref{SSNT}). We put
\[
\CB(n)=\Map_{\CG^{\Delta}}(\tilde \Delta^{\bullet}, \MCK_n(\tilde\Delta^\bullet,\dots,\tilde\Delta^\bullet))
\]
From the monad structure of $\MCK$, we define an operad structure on $\CB(n)$ by the way exactly analogous to the definition of $\tB$ or $\tB'$ given in subsection \ref{SSms}. The morphism $U:\MCK\to\MK$ in Lemma \ref{Lmorphismofmonad} induces a morphism of operads
\[
U:\CB\longrightarrow \tB'.
\]
\begin{thm}\label{TCBttot}
\textup{(1)}  $U$ is weak  equivalence of topological operads. In particular, $\CB$ is an $A_\infty$-operad.\par
\textup{(2)} $\ttot$ induces a functor $\Alg_{\MCK}(\CC)\longrightarrow \Alg_{\CB}(\C)$ which satisfies the following condition. Let $X^\bullet$ be a $\MK$-algebra and $Y^\bullet$ be a $\MCK$-algebra and let $\alpha:Y^\bullet \to X^\bullet \in\CC$ be a morphism compatible with $U:\MCK\to\MK$. Then the induced morphism $\ttot\alpha:\ttot Y^\bullet\to \ttot X^\bullet$ is compatible with the morphism $U:\CB\to\tB'$. Here, $\ttot X^\bullet$ is considered as a $\tB'$-algebra by Proposition \ref{PtB'}.
\end{thm}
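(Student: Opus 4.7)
The plan is to handle (1) by combining Lemma \ref{Lhomotopytype} with the contractibility of $\K(n)$, and to handle (2) by transporting the recipe of Proposition \ref{PtB} from $(\MK,\tot)$ to $(\MCK,\ttot)$.

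For (1), I first show that the morphism of cosimplicial spaces
\[
U_n:\MCK_n(\tilde\Delta^\bullet,\dots,\tilde\Delta^\bullet)\longrightarrow \K(n)\hotimes (\tilde\Delta^\bullet)^{\, \square \, n}
\]
is a termwise weak equivalence. Composing with the projection $\K(n)\hotimes(\tilde\Delta^\bullet)^{\, \square \, n}\to(\tilde\Delta^\bullet)^{\, \square \, n}$ recovers $\bar U_n$, which is a termwise weak equivalence by Lemma \ref{Lhomotopytype} since $\tilde\Delta^\bullet$ is projective (hence Reedy) cofibrant. The projection itself is a termwise weak equivalence because $\K(n)$ is contractible (it is homeomorphic to a cone by Proposition \ref{PK}), so two-out-of-three gives the claim. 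Applying $\ttot=\Map_{\CG^\Delta}(\tilde\Delta^\bullet,-)$, which preserves termwise weak equivalences between termwise fibrant cosimplicial spaces (and every object of $\CG$ is fibrant, cf.\ subsection \ref{SSNT}), yields the weak equivalence $U:\CB(n)\to \tB'(n)$ for every $n$. The $A_\infty$ property of $\CB$ then follows from Proposition \ref{PtB'}.

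For (2), I define the $\CB$-action on $\ttot(Y^\bullet)$ for a $\MCK$-algebra $Y^\bullet$ as the composite
\[
\CB(n)\hotimes \ttot(Y^\bullet)^{\otimes n}\longrightarrow \Map_{\CC}(\tilde\Delta^\bullet, \MCK_n(Y^\bullet,\dots,Y^\bullet))\longrightarrow \ttot(Y^\bullet),
\]
where the first arrow is obtained by using the lax monoidality of $\Map_{\CC}(\tilde\Delta^\bullet,-)$ with respect to $\square$ to turn an $n$-tuple $f_1,\dots,f_n:\tilde\Delta^\bullet\to Y^\bullet$ into a morphism $(\tilde\Delta^\bullet)^{\, \square \, n}\to (Y^\bullet)^{\, \square \, n}$, then applying $\MCK_n$ in each slot and precomposing with the element $g\in \CB(n)$, and the second arrow is induced by the $\MCK$-algebra structure on $Y^\bullet$ via the inclusion $\MCK_n(Y^\bullet,\dots,Y^\bullet)\hookrightarrow \MCK(Y^\bullet)$. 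Associativity and unit for this operad action follow from the monad axioms for $\MCK$ together with the definition of the operad composition on $\CB$ (which was set up precisely to encode the monad product), in direct parallel with the proof of Proposition \ref{PtB}. Functoriality in $Y^\bullet$ is immediate from naturality of each ingredient.

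The final compatibility assertion follows by a naturality diagram chase: the $\CB$-action on $\ttot(Y^\bullet)$ and the $\tB'$-action on $\ttot(X^\bullet)$ are built from the same recipe, only with $\MCK$ replaced by $\MK$ at the final arrow, and by hypothesis $\alpha$ is compatible with $U:\MCK\to\MK$, making the relevant square commute. I expect the main technical obstacle to be the careful unwinding of associativity for the $\CB$-action: once one tracks the operad composition of $\CB$ through the monad product of $\MCK$ and the $\square$-monoidal pairing, the verification reduces to a diagram chase, but keeping the pullbacks along $U$ aligned with the two composition schemes (one for $\CB$, one for $\tB'$) requires attention.
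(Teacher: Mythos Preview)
Your proposal is correct and follows essentially the same approach as the paper: for (1) you invoke Lemma \ref{Lhomotopytype} together with the homotopy invariance of $\ttot$ on termwise fibrant cosimplicial spaces (making explicit the two-out-of-three step through contractibility of $\K(n)$, which the paper leaves implicit), and for (2) you unwind the analogue of Proposition \ref{PtB}, exactly as the paper indicates.
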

\begin{proof}
(1) follows from Lemma.\ref{Lhomotopytype} and the standard fact that the mapping space between  a projective cofibrant cosimplicial space and a termwise fibrant cosimplicial space is a weak homotopy invariant. (Note that a similar proof of a similar statement where $\tilde\Delta^\bullet$ is replaced with $\Delta^\bullet$ does not work as $\MCK_n(\Delta^\bullet,\dots,\Delta^\bullet)$ may not be Reedy fibrant). The proof of (2) is similar to Proposition \ref{PtB}.
\end{proof}

\subsection{Action of $\MCK$ on $\IM^\bullet$}\label{SSMCKalgebra}
In this subsection we prove the following theorem.
\begin{thm}\label{TMCKalgebra}
There exists an action of the monad $\MCK$ on $\IM^\bullet$ such that the both morphisms $p_0:\IM^\bullet\to \CL^\bullet$ and $\bar q_2:\IM^\bullet\to \thc^\bullet(A',B)$ defined in Definition \ref{Dmorphismscosimplicial} are compatible with the morphism $U:\MCK\to \MK$. Here, $\thc^\bullet(A',B)$ are regared as $\MK$-algebra as explained  below Lemma \ref{LmonoidTHC}.  Hence $p_0$ and $\bar q_2$ induce the morphisms 
\[
\ttot(\CL^\bullet)\xleftarrow{(p_0)_*} \ttot(\IM^\bullet)\xrightarrow{(\bar q_2)_*} \ttot(\thc^\bullet(A',B))
\] which are compatible with the morphism of $A_\infty$-operads $U:\CB\to \tB'$ by Theorem \ref{TCBttot}.
\end{thm}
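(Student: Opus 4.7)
The plan is to define a morphism $\Xi : \MCK(\IM^\bullet) \to \IM^\bullet$ componentwise: on the $\CL^\bullet$--component (the $\lambda$-part) use the existing $\MK$-action $\Psi$ of subsection \ref{SScosimplicialLMT} pulled back along $U:\MCK \to \MK$, and on the path component (the $\TIM^\bullet \times \BIM^\bullet$ part) define a path in $\athc^\bullet$ parametrized by $[0,2]$ that interpolates between the strongly-perturbed product at time $0$ and the naive Hochschild product at time $2$. For elements $(\lambda_j, h_j)$ with shafts $\ve_j = (\phi_j, \eps_j, v_j)$ and a cofacial tree $T \in \CT(m_0,\dots,m_n)$ lying over $u \in \K(n) = U(T)$, the product component in $\athc^{p_{\leq n}}$ at parameter $s\in [0,2]$ will be built by first applying $m_0$-fold $\df$ and $m_n$-fold $\dl$ to the outside, inserting for each internal join a coherency homotopy of multiplicity $m_i$, and then multiplying the $h_j(s)$'s after pulling the $F(M)$-variables back by an appropriate family $\tpsi_{i,s}(d)$ where $d$ is computed from the shafts by the formulae of subsection \ref{SSSoutlineLMT}.

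First I would make the interpolation data precise. Fix once and for all a family $\{\tpsi_{i,s}(d)\}_{s\in[0,2]}$ of maps $M \to M$ depending continuously on $d = (u;\ve_1,\dots,\ve_n)$ such that $\tpsi_{i,0}(d) = \psi_i(d)$ and $\tpsi_{i,2}(d) = \id_M$, constructed by the same inductive scheme as $\psi$ (subsection \ref{SSSconstructionLMT}) but with a straight-line collapse to $\id$ on $[1,2]$ inside the tubular neighborhood. For the components at a mark $h_i$ of multiplicity $m_i$, I would fix a continuous family of weakly monotone maps $[0,1] \to [0,1]$ parametrizing the interpolation along the boundary of the prism that realizes the pentagonal coherency between the two sides of $(d^{p+1}x)\cdot y = x\cdot (d^0 y)$; concretely, over the subcomplex $\CK(0,\dots, m_i,\dots,0) \subset \CK(n)$ one assigns to each point of the realization a convex combination of the two multiplications, built using $H_{x_0,s}$ and the shaft of the $i$-th factor exactly as in the cofacial definition of $\IM^\bullet$ (Definition \ref{DTIM}). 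The formulae in sub-subsection \ref{SSSmotivation} (equations \eqref{EQfailedprod}, \eqref{EQcofacefail}, \eqref{EQtpsifail}) indicate the precise data the $h_i$-mark must absorb, and the use of a cofacial tree rather than a tree is designed exactly to carry the slack that made the naive $\tpsi$-approach fail.

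Next I would check well-definedness and the monad axioms. Well-definedness on the quotient defining $\MCK$ follows from the cosimplicial identities $d^i \df^m x = \df^{m+1} x$, $d^{|x|+i+1}\dl^m x = \dl^{m+1} x$ together with the cosimplicial identities for $\CL^\bullet$ and $\athc^\bullet$ already verified in Definition \ref{DTIM}. Compatibility with cofaces and codegeneracies follows degree by degree from Lemma \ref{Lcosimplicialid}, and I would verify it by reducing to the three local types of mark ($\df^{m_0}$, $\dl^{m_n}$, $h_i^{m_i}$), each of which translates into iterated applications of $d^0$, $d^{p+1}$, and the coherency prisms just described. Associativity of the action reduces to the statement that the compositions prescribed by operad composition in $\CK$ are compatible with the ones inherited from $\K$ under $U$, which is built into the fact that operations at an internal join of a composite cofacial tree are computed by chaining perturbations (condition \psicomp) and by the naturality of $H_{x,s}$. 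Compatibility of $p_0:\IM^\bullet \to \CL^\bullet$ with $U$ is immediate from the first bullet of the construction. Compatibility of $\bar q_2 = \zeta_* \eta^* \circ q_2$ with $U$ reduces, after evaluating at $s = 2$, to the fact that $\tpsi_{i,2}(d) = \id_M$ so that the whole perturbation machinery degenerates to the raw Hochschild product, which coincides with the image of the $\MK$-action on $\thc^\bullet(F(M), \Gamma(M))$; then naturality in the coefficients along $\eta$ and $\zeta$ extends this to $\thc^\bullet(A',B)$.

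The main obstacle will be the construction on the subcomplexes $\CK(0,\dots,m_i,\dots,0)$ encoding the homotopies $h_i$: the definition has to simultaneously keep the shaft $(\bar \phi,\bar\eps,\bar v)$ of the output correctly tracked (so that the result lies in $\TIM$, not merely in $\CL \times_{\athc} \athc^{[0,1]}$), be continuous in the shafts (which can jump in dimension, cf.\ Lemma \ref{Lshaftconti}), and be strictly compatible with the compositions $T_1 \circ_i T_2$ when a mark $h_i$ is produced by an edge contraction of rule \ordIII. I plan to handle this by defining $\Xi$ on $\MCK$ inductively over codimension of cells in $\CK(m_0,\dots,m_n)$ using the cone description of Proposition \ref{PCK}, exactly paralleling the inductive definition of $\psi$ in sub-subsection \ref{SSSconstructionLMT}: on $\partial\CK$ the value is forced by compatibility with $B^c$ and the already-defined lower-arity pieces, and the extension over the cone is a straight-line homotopy toward the value at the maximum $T(m_0,\dots,m_n)$, which by construction is built from $\psi$, $\tpsi$ and $H_{x,s}$ applied to the shafts. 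The inequalities proved in sub-subsection \ref{SSSconstructionLMT} (in particular the use of $\teps$ and property \psiev) are what guarantee that all intermediate vectors stay inside the tubular neighborhoods, so that the formula actually lands in $\IM$.
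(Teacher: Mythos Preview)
Your outline has the right overall shape---pull back along $U$ on the $\lambda$-part, interpolate on the path part, and build the action inductively over cells of $\CK(m_0,\dots,m_n)$ via the cone description of Proposition~\ref{PCK}---but there is a genuine gap in how you handle the marks. You propose to treat a mark $h_i$ by ``a convex combination of the two multiplications, built using $H_{x_0,s}$ and the shaft,'' and to get associativity from \psicomp\ and naturality of $H_{x,s}$. Sub-subsection~\ref{SSSmotivation} explains exactly why this is not enough: the equation $H_{\pi_\phi(v),s}=\tpsi_{1,s}\circ H_{\pi_{\phi_1}(v_1),s}$ (equation~\eqref{EQtpsifail}) cannot be arranged, and its higher-arity analogues would force $\tpsi$ to be strictly associative on a region where it cannot be. So $\tpsi$ together with $H$ is insufficient data. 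What the paper actually does is introduce, for \emph{each} mark position $(i,j)$, two further families
\[
z_i^j:\tF(m_0,\dots,m_n)\to M,\qquad \omega_{i,s}^j:\tF(m_0,\dots,m_n)\to \Map(M,M),
\]
and in formula~\eqref{ForMCK} the function $g_i^j$ attached to that mark is pulled back by $\omega_i^j\circ H_{z_i^j}$ (not by $\tpsi$), then multiplied in via the $F(M)$-action on $\Gamma(M)$. The map $\omega$ interpolates between something collapsing onto $\pi_\phi(v)$ and $\id_M$, while $z$ tracks a moving basepoint. These carry their own compositional axioms \omegacomp, \zcomp\ (for instance $\omega(u_1\circ_i u_2)=\tpsi_i(u_1)\circ\omega(u_2)$ in the inner slots) and cosimplicial axioms \omegacos, \zcos; it is these axioms, not \psicomp\ alone, that give associativity of the $\MCK$-action and compatibility with cofaces. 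Your inductive cone-extension scheme is correct, but it has to be run separately for $\tpsi$, $z$, and a lift $\homega$ of $\omega$ to $\R^K$ (needed to keep the norm estimates \homegaev\ under control), each with its own list of conditions.

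A second, smaller point: the paper does not run $\tpsi$ over all of $[0,2]$. It partitions the action into three pieces: an $\MCK$-action $\Omega$ on $\TIM^\bullet$ over $[0,1]$ (with $\tpsi_{i,1}=\id_M$), an $\MK$-action $\tUpsilon$ on $\BIM^\bullet$ over $[1,2]$ which only interpolates the $\eps$-part linearly between $\teps$ and $\min\{\eps_i\}$, and a gluing action $\hUpsilon$ on $\athc^\bullet$ at $s=1$. Your plan to collapse $\tpsi$ to $\id$ on $[1,2]$ does not by itself make $q_2$ compatible with the strict $\square$-product on $\thc^\bullet(A',B)$, because at $s=1$ the $\eps$-part is still $\teps$ rather than $\min\{\eps_i\}$; the extra interval $[1,2]$ is there precisely to fix this.
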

In subsection \ref{SSproofTmain2}, we prove the two morphisms between $\ttot$ in the above theorem are stable equivalences.
\subsubsection{Sketch of the construction}\label{SSSroughoutlineMCK}
We shall sketch construction of the action of $\MCK$ in Theorem \ref{TMCKalgebra}. We restrict to the components $\CK(0,0,0)$ and $\CK(1,0,0)$. For the one-point set $\CK(0,0,0)$  corresponding to the unique tree of arity $2$ with no marks, the action is given by the product defined by the equation (\ref{EQfailedprod}) in subsubsection \ref{SSSmotivation}. As  is explained there, this product does not satisfy $d^0(xy)=(d^0x)y$. The  action of $\CK(1,0,0)$ gives a homotopy between these two. $\CK(1,0,0)$ is the left line segment in Figure \ref{FCT(1)}. Recall that the author could not define an action of $\MK$ on $\IM^\bullet$ since he could not make the equation
\begin{equation}\label{EQcauseoffail}
H_{\pi_\phi(v),s}=\tpsi_{1,s}\circ H_{\pi_{\phi_1}(v_1),s}
\end{equation}
hold, see \ref{SSSmotivation} for notations. This equation was necessary to make the equation $(d^0x)y=d^0(xy)$ hold. To construct an action  of $\CK(1,0,0)$ is substantially the same as to construct a homotopy between two sides of the equation (\ref{EQcauseoffail}). To do this, we need a parameter space $\tF(1,0,0)$ which is similar to $\D(2)$ used to define $\psi_i$, but includes $\CK(1,0,0)$ instead of $\K(2)$ as a component, and maps
\[
\omega:\tF(1,0,0)\times [0,2]\to \Map(M,M)\ \ \text{and}\ \ z : \tF(1,0,0)\to M
\]  
which satisfy the following conditions (among others). 
\begin{equation}\label{EQMCKsketch}
\omega_s(d^0(xy))=\id_M,\quad \omega_s((d^0x)y)=\tpsi_{1,s},\quad  z(d^0(xy))=\pi_{\phi}(v) ,\ \text{and}\ z((d^0x)y)=\pi_{\phi_1}(v_1)
\end{equation}
for $s\in [0,2]$. Here, the symbols $d^0(xy)$ and $(d^0x)y$ are regarded as elements of $\tF(1,0,0)$ as in Figure \ref{FCT(1)} and the  other components  $(\phi_i, \eps_i,v_i)$, the shaft of $h_i$,  are omitted. As is seen in this, $\omega$ is a homotopy between $\id_M$ and $\tpsi_1$ and $z$ is one between $\pi_{\phi}(v)$ and $\pi_{\phi_1}(v_1)$, so $\{\omega(u)\circ H_{z(u)}\}_{u\in \CK(1,0,0)}$ is a homotopy between two sides of (\ref{EQcauseoffail}). These maps will be constructed similarly to $\psi_i$ by the projection after  parallel transport in a tubular neighborhood.  
With these maps we define an action of $u\in \CK(1,0,0)$ by
\[
\begin{split}
u((\lambda_1,h_1),(\lambda_2,h_2))&=(d^0\lambda_1\cdot \lambda_2, \tilde h)\\
\text{where}\ \tilde h(f_1,\dots, f_{p+q+1})&=(\omega(u)\circ H_{z(u)})^*f_1\cdot h_1(\tpsi_1^*f_2,\dots \tpsi_1^*f_{p+1})\cdot h_2(\tpsi_2^*f_{p+2},\dots\tpsi_2^*f_{p+q+1}),
\end{split}
\]
where $d^0$ in the $\lambda$-part is the coface of $\CL^\bullet$ and we omit a permutation, and the subscript $s$. (Note that the product on $\CL^\bullet$ satisfies  $d^0(\lambda_1\cdot \lambda_2)=(d^0\lambda_1)\cdot \lambda_2$.) For $p=q=0$, using the equations (\ref{EQMCKsketch}), one can see the action of $u=d^0(xy)$ (resp. $(d^0x)y$) is the same as the left (resp. right) formula in (\ref{EQcofacefail}) in subsection \ref{SSSmotivation}, and for general $p,q$ similar claim holds. Thus, the action of $\CK(1,0,0)$ gives a compatiblity homotopy between the action of $\CK(0,0,0)$ and the first coface. For general components, we prepare $\omega$ and $z$ as many as marks and $\tpsi$ as arity. We pull back some of the functions by $\omega\circ H_z$ and others by $\tpsi$. We substitute  the latter functions to elements of $\IM^\bullet$, and take the product of the former functions and the elements substituted to.   The rest of this subsection is mostly devoted to the construction of $\tpsi, \omega, z$, and  $\omega, z$ described here is written as $\omega_0^1, z_0^1$ below. By a technical reason, we construct these maps on $s\in [0,1]$ not $[0,2]$, see next sub-subsection.
\subsubsection{Partition of the action}\label{SSSoutlineMCK}
We will define three actions of monads as follows:
\begin{enumerate}
\item an action of $\MCK$ on $\TIM^\bullet$, $\Omega:\MCK(\TIM^\bullet)\longrightarrow \TIM^\bullet$,
\item an action of $\MK$ on $\athc^\bullet$, $\hUpsilon:\MK(\athc^\bullet)\longrightarrow \athc^\bullet$, and
\item an action of $\MK$ on $\BIM^\bullet$, $\tUpsilon:\MK(\BIM^\bullet)\longrightarrow\BIM^\bullet$.
\end{enumerate}
We shall fix notations. An $\square$-object has the natural action of $\MK$ induced by the map $\K(n)\to *$.  $\athc^\bullet$  regarded as a $\MK$-algebra via the structure of $\square$- object defined in Lemma \ref{LmonoidTHC} is denoted by the same notation $\athc^\bullet$ while we  denote by $\bthc^\bullet$ the $\MK$-algebra $(\athc,\hUpsilon)$. \par
The above three actions satisfy the following conditions: 
\begin{enumerate}
\item $p_0:\TIM^\bullet\to\CL^\bullet$ and $p_1:\TIM^\bullet\to \bthc^\bullet$ are compatible with the morphism $\MCK\to \MK$, 
\item $q_1:\BIM^\bullet\to \bthc^\bullet$ and $q_2:\BIM^\bullet\to \athc^\bullet$ are compatible with the actions of $\MK$.
\end{enumerate}
Here, $p_s$ and $q_s$ denote the evaluations at $s\in [0,2]$. Using these three actions, the action of $\MCK$ on $\IM^\bullet$ is given by
\[
\MCK(\IM^\bullet)\to\MCK(\TIM^\bullet)\times_{\MCK(\bthc^\bullet)}\MCK(\BIM^\bullet)\to\MCK(\TIM^\bullet)\times_{\MK(\bthc^\bullet)}\MK(\BIM^\bullet)\to\TIM^\bullet\times_{\bthc^\bullet}\BIM^\bullet\cong \IM^\bullet ,
\]
where the first morphism is induced by the universal property of fiber products, the second  by the morphism $U:\MCK\to\MK$, and the third  by the three actions $\Omega$, $\hUpsilon$ and $\tUpsilon$. As the morphism $\zeta_*\eta^*$ defined in Definition \ref{Dmorphismscosimplicial} preserves $\square$-object structures, by the above two conditions, it is obvious that the  action of $\MCK$ on $\IM^\bullet$  satisfies the condition of Theorem \ref{TMCKalgebra}. Thus the proof of the theorem is reduced to the construction of $\Omega$, $\hUpsilon$, and $\tUpsilon$.\\
\indent The construction of $\Omega$ is lengthy and occupies most of this subsection (sub-subsections \ref{SSSformulaTIM}, \ref{SSSconditionOMEGA}, and \ref{SSSconstructiontpsi}) and the construction of $\hUpsilon$ and $\tUpsilon$ is very short (sub-subsection \ref{SSSactionBIM}).

\subsubsection{Formula of $\Omega$}\label{SSSformulaTIM}
\textbf{Notation}\ :\ In the rest of paper, we omit the subscript $s\, (\in [0,1])$ of $\tpsi,\,\tpsi_i,\, \omega,\, \omega^j_i,\,\homega$, $\homega^j_i$, and elements of $(\athc^\bullet)^{[0,1]}$, etc, defined below if unnecessary. In a single equation or formula, all the values of the omitted $s$'s are the same.\par
\phantom{a} \par
We put $\D^1_{k_1}=\K(1)\times \M_{k_1}$, and  put 
\[
\begin{split}
\tF_{k_1,\dots, k_n}(m_0,\dots,m_n)
&
(=\tF(m_0,\dots,m_n))
\\
&
:=(U\times \id)^{-1}\D^n_{k_1,\dots, k_n}\subset \CK(m_0,\dots,m_n)\times\M[k_1,\dots, k_n]
\end{split}
\], where $n\geq 1$, $m_0,\dots, m_n\geq 0$, and $U:\CK(m_0,\dots,m_n)\to \K(n)$ is the forgetful morphism and see sub-subsection \ref{SSSoutlineLMT} for $\D^n_{k_1,\dots,k_n}$ with $n\geq 2$. 
 Put $I=[0,1]$. To construct an action of $\MCK$ on $\IM^\bullet$, we will define the following three kind of data for $n\geq 1$: 
\begin{itemize}
\item maps $\tpsi_s=(\tpsi_{i, s}):\D^n\times I\longrightarrow \Map (M,M)^{ n}$ ($i=1,\dots, n$),
\item maps $\omega_s=(\omega^j_{i, s}):\tF(m_0,\dots,m_n)\times I\longrightarrow \Map(M,M)^{ m_{\leq n}}$ ($i=0,\dots, n$, $j=1,\dots ,m_i$)
\item maps $z=(z^j_i):\tF(m_0,\dots, m_n)\longrightarrow M^{ m_{\leq n}}$ ($i=0,\dots, n$, $j=1,\dots, m_i$)
\end{itemize}
Here, $m_{\leq n}=m_0+\cdots +m_n$, and if $m_{\leq n}=0$, the codomains of $\omega_s$ and $z$ are regarded as  one-point spaces.  These maps are also constructed for $n=1$ unlike $\teps$ and $\psi$ for convenience. Similarly to the construction of the $A_\infty$-action on $\LMT$ in subsection \ref{SSactiononLMT}, we first give the formula for the action $\Omega$ in this sub-subsection, then state conditions on these maps which ensure well-definedness of $\Omega$ in next sub-subsection, and finally construct them in sub-subsection \ref{SSSconstructiontpsi}.\par
Using $\tpsi$, $\omega$, and $z$, we shall define the structure map 
$\Omega: \MCK(\TIM^\bullet)\longrightarrow \TIM^\bullet
$. For $u\in \CK(m_0,\dots, m_n)$, $(\lambda_i,h_i)\in\IM^{p_i}_{k_i}$, we put
\[
(\tilde\lambda, \tilde h)=\Omega(u\,;(\lambda_1,h_1),\dots,(\lambda_n,h_n))\in\IM^{m_{\leq n}+p_{\leq n}}_{k_{\leq n}}
\] and will define $\tilde \lambda, \tilde h$ as follows.  We put
\[
\tilde \lambda= \Psi\circ U_n(u\, ;\lambda_1,\dots,\lambda_n)
\] (see subsection \ref{SScosimplicialLMT} for  $\Psi$ and Definition \ref{Dforgetmark} for $U_n$). If $h_i=*$ for some $i$, we set $\tilde h=*$. Suppose $h_1,\dots, h_n\not=*$. Let $(\phi_i,\eps_i,v_i)$ be the shaft of $h_i$ for $1\leq i\leq n$.  Put $d=(u\, ;(\phi_i,\eps_i,v_i)_{i=1}^n)$. If $d \not \in \tF(m_0,\dots, m_n)$, we set $\tilde h=*$. Suppose otherwise. Let $(f_1,\dots, f_{m_{\leq n}+p_{\leq n}})$
be a sequence of elements of $F(M)$. To write down $\tilde h(f_1,\dots,f_{m_{\leq n}+ +p_{\leq n}})$ we rename this sequence as 
\[
(g_0^1,\dots, g_0^{m_0}, f_1^1,\dots, f_1^{p_1},g^1_1,\dots, g_1^{m_1},f_2^1,\dots, f_2^{p_2},\dots,f_n^1,\dots,f_n^{p_n},g_n^1,\dots, g_n^{m_n}).
\]
  We abbreviate $\tpsi_{i}(U\times\id(d))$,  $\omega_{i}^j(d)$, and $z_i^j(d)$ as $\tpsi_{i}$, $\omega_{i}^j$, and $z_i^j$ respectively in the following formulae.
We put
\[
\begin{split}
\bar g_{i}&= (\omega^1_{i}\circ H_{z^1_{i}})^*g_i^1\cdots 
(\omega^{m_i}_{i}\circ H_{z^{m_i}_{i}})^*g_i^{m_i}\qquad \in F(M)\qquad (0\leq i\leq n),\\
\bar h_{i}&= h_{i}((\tpsi_{i})^*f_i^1,\dots, (\tpsi_{i})^*f_i^{p_i})\qquad \in \Gamma(M) \qquad (1\leq i\leq n).
\end{split}
\]
Here $H:M\times I\to M$ is the homotopy fixed in sub-subsection \ref{SSSdefIM}. Of course, these elements depend on $s\in I$ which is omitted.
Then, we put
\begin{equation}\label{ForMCK}
\tilde h(f_1,\dots, f_{m_{\leq n} +p_{\leq n}})=
\sigma^{-1}\cdot(
\bar g_{0}\cdot\bar h_{1}\cdot\bar g_1\cdot \bar h_2 \cdots \bar g_{n-1}\cdot \bar h_{n}\cdot \bar g_{n})^{\triangledown}.
\end{equation}
Here, we use the following notations.
\begin{itemize}
\item The dots denote either of the product on $\Gamma(M)$ or the action of $F(M)$ on $\Gamma(M)$.
\item If $n\geq 2$, $(\bar g_{0}\cdot\bar h_{1}\cdot\bar g_1\cdots \bar h_{n}\cdot \bar g_{n})^{\triangledown}$ is the same element  as $(
\bar g_{0}\cdot\bar h_{1}\cdot\bar g_1\cdots \bar h_{n}\cdot \bar g_{n})$ except for the $\eps$-part being replaced with $\teps(U\times \id(d))$. (By definition of the product on $\Gamma(M)$, the element without $\triangledown$ has $\min\{\eps_1,\dots \eps_n\}$ as its $\eps$-part.) If $n=1$, it is completely the same as the element without $\triangledown$.
\item $\sigma$ is the permutation corresponding to the transposition
\[
h_1,\dots, h_n,g_0,f_1,g_1,\dots f_n,g_n\longmapsto g_0,h_1,f_1,g_1,h_2,f_2,\dots h_n, f_n,g_n.
\]
For example, $g_i$ represents the sequence $g_i^1,\dots, g_i^{m_i}$. $f_i$ is similar abbreviation. More explicity, we set
\[
\sigma (i)=\left\{
\begin{array}{ll}
i+\sum_{l\leq k-1}\lev g_l+\lev  f_l 
&\text{ if $\sum_{l\leq k-1}\lev h_l+1\leq i\leq \sum_{l\leq k}\lev h_l$ for some $k\leq n$,  }\vs{1mm}\\  i-\sum_{k\leq l\leq n}\lev h_l & \text{  if $\sum_{l\leq n}\lev h_l+\sum_{l\leq k-1}(\lev g_l+\lev f_l)+1 \leq i$} \vs{1mm}\\
& \text{$\leq \sum_{l\leq n}\lev h_l+\sum_{l\leq k}(\lev g_l+\lev f_l)$ for some $k\leq n$.  } 
\end{array}\right.
\]
Here $\lev$ denotes the level (the level of a sequence is the sum of those of its elements).
\end{itemize}
 It is obvious that $\tilde h$ has a shaft, see the verification on the coface map in sub-subsection  \ref{SSSdefIM}.
Note that the formula (\ref{ForMCK}) is  somewhat analogous to the unsuccessful definition of an action of $\MK$ on $\IM^\bullet$ given in sub-subsection \ref{SSSmotivation}.
\subsubsection{Conditions which $\tpsi$, $z$, and $\omega$ satisfy}\label{SSSconditionOMEGA}

\textbf{Conditions on $\tpsi$. }
$\tpsi$ satisfies the following six conditions \tpsiuni\  - \tpsiconti\,  for any set of indexes and any element such that the involved notations make sense. Concretely speaking, the condition \tpsiuni\, is satisfied for any $s\in [0,1],\ k_1\geq 0, \ (1, \ve_1)\in \D^1_{k_1}$, and the condition \tpsicomp\, is satisfied for any $n,m \geq 1,\ i,\ s\in I,\  k_1,\dots, k_{n+m-1}\geq 0$ with $1\leq i\leq n$, and \tpsiconti\ is satisfied for any $n\geq 1, 1\leq i\leq n$.  The other conditions, where $\D^n$ is abbreviation of $\D^n_{k_1,\dots, k_n}$, are satisfied for $n\geq 2, s\in I, k_1,\dots, k_n\geq 0, (u\,;\ve_1,\dots,\ve_n)\in\D^n$.
\begin{enumerate}
\item[\tpsiuni] $\tpsi_s(1,\ve_1)=\id_M$ where $1\in \K(1)$ .
\item[\tpsiS] For  $t\in \R$ and $i$ with $1\leq i\leq n$, if $(u\,;\ve_1,\dots,t\times\ve_i,\dots,\ve_n)\in \D^n$,  then  $\tpsi_s(u\,;\dots,\ve_i,\dots)=\tpsi_s(u\,;\dots,t\times\ve_i,\dots)$. Here $t\times \ve_i$ denotes the element $(0\times \phi_i,\eps_i,(t,v_i))$.
\item[\tpsiSigma] For $\sigma_i\in \Sigma_{k_i}$,
$\tpsi_s(u\,;\sigma_1\cdot\ve_1,\dots,\sigma_n\cdot\ve_n)=\tpsi_s(u\,;\ve_1,\dots,\ve_n)$.
\item[\tpsiend] 
$\tpsi_{i,0}(u\,;\ve_1,\dots,\ve_n)=\psi_i(u\,;\ve_1,\dots,\ve_n)$ and $\tpsi_{i,1}(u\,;\ve_1,\dots,\ve_n)=\id_M$ for each $i=1,\dots , n$.
\item[\tpsicomp] The following diagram commutes:
\[
\xymatrix{
\beta_i^{-1}\D^{n+m-1}_{k_1,\dots, k_{n+m-1}}\times I\ar[rr]^{\tau\circ(\alpha_i\times \gamma_i\times\Delta)\qquad\qquad\qquad} \ar[dd]^{\beta_i} &&(\D^n_{k_1,\dots, k_{i-1}, k_{(i)}, k_{i+m},\dots k_{n+m-1}}\times I)\times(\D^m_{k_i,\dots, k_{i+m-1}}\times I)\ar[d]^{\tpsi\times\tpsi}\\
&&\Map(M,M)^n\times\Map(M,M)^m\ar[d]^{Comp_i}\\
 \D^{n+m-1}_{k_1,\dots, k_{n+m-1}}\times I\ar[rr]^{\tpsi} &&Map(M,M)^{n+m-1}.}
\]
Here, $k_{(i)}=k_i+\cdots +k_{i+m-1}$, and $\Delta: I\to I\times I$ denotes the diagonal and $\tau$ denotes the transposition $\D\times\D\times I^{\times 2}\cong \D\times I\times\D\times I$, and $\alpha_i, \beta_i,\gamma_i,$ and $Comp_i$ are  the maps defined in subsection \ref{SSSconditionsLMT}.
\item[\tpsiev] $|\tpsi_{i,s}(u\,;\ve_1,\dots,\ve_n)(y)-y|\leq
6n^2d((v_1,\dots,v_n),\phi(M))$ where $\phi=\phi_1\times\cdots\times\phi_n$, and the minus  and the norm in the left hand side are taken in $\R^{k_0}$, see subsecion \ref{SScohenjones}.
\item[\tpsiconti] Let $u\in \K(n)$ and $(\lambda_1,h_1),\dots, (\lambda_n, h_n)\in \TIM-\{*\}$. Let $\ve_l=(\phi_l,\eps_l,v_l)$ be the shaft of $(\lambda_l,h_l)$ for $1\leq l\leq n$.  When $(u\,;(\lambda_l,h_l)_l)$ runs through the range such that $(u\,;(\ve_l)_l)$ belongs to $\D^n$, the map $(u\,;(\lambda_l,h_l)_l)\mapsto\tpsi_i(u;(\ve_l)_l)\in \Map(M,M)$ is continuous (for the product topology of the domain). 
\end{enumerate}
 The conditions \tpsiS\ -\tpsiev\  are much analogous to those on $\psi$. \tpsiconti\  ensures that $\Omega$ is continuous.\par 
\phantom{a}\par
{\bf Notation}\ :\ In the rest of paper we sometimes write 
\[
\CK(m_0,\dots, m_n)\quad \text{as}\quad \CK(m_l)_{l=0}^n.
\]
To state conditions on $z$ and $\omega$, we need three maps
\[
\begin{split}
\alpha_i:
&
\CK (m_l)_{l=0}^{n_1}\times \CK (m'_l)_{l=0}^{n_2}\times \M[k_1,\dots,k_{n_1+n_2-1}]\longrightarrow \CK (m_l)_{l=0}^{n_1}\times \M[k_1,\dots, k_{(i)},\dots, k_{n_1+n_2-1}] 
\\
\beta_i:
&
\CK (m_l)_{l=0}^{n_1}\times \CK (m'_l)_{l=0}^{n_2}\times\M[k_1,\dots, k_{n_1+n_2-1}]\longrightarrow \CK(\tilde m_l)_{l=0}^{n_1+n_2-1}\times\M[k_1,\dots, k_{n+m-1}]
\\
\gamma_i :
&
\CK (m_l)_{l=0}^{n_1}\times \CK (m'_l)_{l=0}^{n_2}\times
\M[k_1,\dots, k_{n_1+n_2-1}]\longrightarrow \CK (m'_l)_{l=0}^{n_2}\times\M[k_i,\dots, k_{i+n_2-1}]
\end{split}
\]
 defined by the same formulae as $\alpha_i,\beta_i,$ and $\gamma_i$ in sub-subsection \ref{SSSconditionsLMT}. Here  we set $(\tilde m_l)_{l=0}^{n_1+n_2-1}=(m_0,\dots, m_{i-2},m_{i-1}+m'_0, m'_1,\dots, m'_{n_2-1},m_{i}+m'_{n_2}, m_{i+1},\dots ,m_{n_1})$, and $k_{(i)}=k_i+\cdots + k_{i+n_2-1}$. See (the proof of) Lemma \ref{Lcofacialtree2} for the meaning of $\tilde m_l$. We use the forgetful map $U$ to substitute an element of $\CK$ to $\teps$ in the definition of these maps. \par

\phantom{a}\par
\textbf{Conditions on $z$. } $z$ satisfies  the following conditions  for any set of indexes and any element such that the involved notations make sense, which should be understood similarly to the sentence in conditions on $\tpsi$.
\begin{itemize}
\item[\zS] For  $t\in\R$ and $i$ with $1\leq i\leq n$, if $(u\,;\dots,t\times\ve_i,\dots)\in\tF(m_0,\dots,m_n)$, \\
then
$z(u\,;\dots,t\times\ve_i ,\dots)= z(u\,;\ve_1,\dots,\ve_n)$.
\item[\zSigma] For  permutations $\sigma_1,\dots,\sigma_n$, $z(u\,;\sigma_1\ve_1,\dots,\sigma_n\ve_n)=z(u\,;\ve_1,\dots,\ve_n)$.
\item[\zcomp] The following diagram is commutative:
\[
\xymatrix{
\beta_i^{-1}\tF (\tilde m_l)_{l=0}^{n_1+n_2-1}
\ar[r]^{\alpha_i\times \gamma_i\qquad \quad } 
\ar[d]^{\beta_i} &
\tF(m_0,\dots,m_{n_1}) \times\tF(m'_0,\dots, m'_{n_2})
\ar[d]^{T\circ (z\times z)}\\
 \tF(\tilde m_l)_{l=0}^{n_1+n_2-1}
\ar[r]^{\qquad z} &
M^{m_{\leq n_1}+m'_{\leq n_2}}.}
\]
Here, $T$ is the transposition putting the last $m'_{\leq n_2}$ components between the $m_{\leq i-1}$-th and the $m_{\leq i-1}+1$-th components, explicitly given by $T(x_0^1,\dots,x_0^{m_0},\dots,x_{n_1}^{m_{n_1}},y_0^1,\dots, y_{n_2}^{m'_{n_2}})$ 
$=(x_0^1,\dots, x_{i-1}^{m_{i-1}},y_0^1,\dots,y_{n_2}^{m'_{n_2}},x_{i}^1,\dots,x_{n_1}^{m_{n_1}})$.
\item[\zcos] The following diagrams  are  commutative:
\[
\xymatrix{\tF(\dots ,m_i,\dots)
\ar[r]^{ d_i^j}
\ar[d]^z &
\tF(\cdots ,m_i+1,\cdots)
\ar[d]^z \\
M^{m_{\leq n}}
\ar[r]^{ \bar \delta^j_i} &
M^{m_{\leq n}+1}}
\qquad
\xymatrix{\tF(\dots, m_i,\dots)
\ar[r]^{s_i^j}
\ar[d]^z &
\tF(\dots, m_i-1,\dots)
\ar[d]^z \\
M^{m_{\leq n}}
\ar[r]^{ \bar \sigma^j_i} &
M^{m_{\leq n}-1}.}
\]
Here, $\bar \delta^j_i$ is the map repeating $m_{\leq i-1}+j$-th component if $(i,j)\not=(0,0), (n,m_n+1)$, and 
$\bar\delta^0_0$ (resp. $\bar\delta^{m_n+1}_n$) is the map putting $\pi_{\phi_1\times\cdots\times\phi_n}(v_1,\dots, v_n)$ at the first (resp. the last) component, and $\bar \sigma^j_i$ is the map skipping  the $m_{\leq i-1}+j+1$-th component ($m_{\leq -1}=0$).

\item[\zev] $|v-\phi(z_i^j(u\,;\ve_1,\dots, \ve_n))|\leq 4n|\phi|\cdot d(v,\phi(M))$, where $v=(v_1,\dots,v_n)$ and $\phi=\phi_1\times\cdots\times\phi_n$.
\item[\zconti]  Let $u\in \CK(m_0,\dots, m_n)$ and $(\lambda_1,h_1),\dots , (\lambda_n, h_n)\in \TIM-\{*\}$. Let $\ve_l=(\phi_l,\eps_l,v_l)$ be the shaft of $(\lambda_l,h_l)$ for $1\leq l\leq n$.  When $(u\,;(\lambda_l,h_l)_l)$ runs through the range such that $(u\,;(\ve_l)_l)$ belongs to $\tF(m_0,\dots,m_n)$, the map $(u\,;(\lambda_l,h_l)_l)\mapsto z_i^j(u\, ;(\ve_l)_l)\in M$ is continuous. 
\end{itemize}
 \zcomp (resp. \zcos ) is the condition to ensure compatibility of  $\Omega$ with the product (or composition) of the monad $\MCK\circ\MCK\to \MCK$ (resp. the cosimplicial operators). We shall verify that the equations for $z$ in (\ref{EQMCKsketch}) in sub-subsection \ref{SSSroughoutlineMCK} hold under these conditions. Since $d^0(xy)$ corresponds to the  image of the $2$-tree with no marks by $d^0_0$, by substituting $n=2,\ m_{\leq 2}=0,$ and $ i=j=0$ to the left square of \zcos, the first equation $z(d^0(xy))=\pi_{\phi}(v)$ clearly holds. By substituting $n=1,\ m_{\leq 1}=0,$ and $i=j=0$ to the same square, we have $z^1_1(\df; (\phi_1, \eps_1, v_1))=\pi_{\phi_1}(v_1)$ (where we regard $\df\in \CK(1,0)$). With this, by substituting $n_1=2,\ n_2=0,\  m_{\leq 2}=0,\  m'_0=1,\ m'_1=0,$ and $i=1$ to the square in \zcomp, we have
\[
z_1^1((d^0x)y; (\phi_1,\eps_1,v_1),(\phi_2,\eps_2,v_2))=\pi_{\phi_1}(v_1),
\]
which is the second equation for $z$ in (\ref{EQMCKsketch}), where $(d^0x)y$ is the $2$-tree with no marks composed at the first leaf with $\df$.\par 
\phantom{a}\par
\textbf{Conditions on $\omega$.} \  $\omega$ satisfies  the following conditions  for any set of indexes and any element such that the involved notations make sense.
\begin{enumerate}
\item[\omegaS] For  $t\in\R$, if $(u\,;\dots,t\times\ve_i,\dots;s)\in\tF(m_0,\dots,m_n)\times I$, then 
$\omega_s(u\,;\dots,t\times\ve_i,\dots)= \omega_s(u\,;\dots,\ve_i\dots)$.
\item[\omegaSigma] For permutations $\sigma_1,\dots,\sigma_n$, $\omega_s(u\,;\sigma_1\ve_1,\dots,\sigma_n\ve_n)=\omega_s(u\,;\ve_1,\dots,\ve_n)$.
\item[\omegacomp] The following diagram is commutative:
\[
\xymatrix{
\beta_i^{-1}\tF (\tilde m_l)_{l=0}^{n_1+n_2-1}\times  I
\ar[rr]^{\quad \tau\circ(\alpha_i\times \gamma_i\times \Delta)\qquad \qquad} 
\ar[dd]^{\beta_i\times \id_I} &&
(\tF(m_l)_{l=0}^{n_1}\times  I) \times (\tF(m'_l)_{l=0}^{n_2}\times  I)
\ar[d]^{\omega\times (\tpsi\circ\omega)}\\
&&\Map(M,M)^{m_{\leq n_1}}\times\Map(M,M)^{m'_{\leq n_2}}\ar[d]^T\\
 \tF(\tilde m_l)_{l=0}^{n_1+n_2-1}\times I
\ar[rr]^{\omega\qquad \qquad} &&
\Map(M,M)^{m_{\leq n_1}+m'_{\leq n_2}}.}
\]
Here,  for $x\in\tF(m_l)_{l=0}^{n_1}\times I$
and $y\in\tF(m'_l)_{l=0}^{n_2}\times  I$, 
\[
\omega\times (\tpsi\circ\omega)(x,y)=\omega(x)\times (\tpsi_i(x)\circ\omega_0^1(y),\dots,\tpsi_i(x)\circ\omega_{n_2}^{m'_{n_2}}(y)),
\]
where $\tpsi_i(x)$ means $\tpsi_i(U\times \id(x))$. $\tau$ and $T$  are the transpositions defined by the same formula as in \tpsicomp\  and \zcomp\  above.
\item[\omegacos] The following diagrams are commutative:
\[
\xymatrix{\tF(\dots, m_i,\dots)\times I
\ar[r]^{d_i^j}
\ar[d]^\omega &
\tF(\dots, m_i+1,\dots)\times I
\ar[d]^\omega \\
\Map (M,M)^{\times m_{\leq n}}
\ar[r]^{ \delta^j_i} &
\Map (M,M)^{\times m_{\leq n}+1}, }
\quad
\xymatrix{\tF(\dots, m_i,\dots)\times I
\ar[r]^{ s_i^j}
\ar[d]^\omega &
\tF(\dots, m_i+1,\dots)\times I
\ar[d]^\omega \\
\Map (M,M)^{\times m_{\leq n}}
\ar[r]^{ \sigma^j_i } &
\Map (M,M)^{\times m_{\leq n}+1}.}
\]
Here, $\delta^j_i$ is the map repeating the $m_{\leq i-1}+j$-th component  if $(i,j)\not=(0,0), (n,m_n+1)$, $\delta^0_0$ (resp. $\delta^{m_n+1}_n$) is the map putting $\id_M$ on the first component \vspace{1mm}(resp. the last component). $\sigma_i^j$ is the map deleting the $m_{\leq i-1}+j+1$-th component.

\item[\omegaend] The equations 
$\omega_{i,0}^j(u\,;\ve_1,\dots,\ve_n)(z_i^j(u\,;\ve_1,\dots,\ve_n))=\pi_{\phi_1\times\cdots\times\phi_n}(v_1,\dots,v_n)$, and \\
$\omega_{i,1}^j(u\,;\ve_1,\dots,\ve_n)=\id_M$ hold.

\item[\omegaconti]  Let $u\in \CK(m_0,\dots, m_n)$, $(\lambda_1,h_1),\dots , (\lambda_n, h_n)\in \TIM-\{*\}$, and $s\in [0,1]$. Let $\ve_l=(\phi_l,\eps_l,v_l)$ be the shaft of $(\lambda_l,h_l)$ for $1\leq l\leq n$.  When $(u\,;(\lambda_l,h_l)_l\,; s)$ runs through the range such that $(u\,;(\ve_l)_l\,;s)$ belongs to $\tF(m_0,\dots,m_n)\times I$, the map $(u\,;(\lambda_l,h_l)_l\,;s)\mapsto \omega_{i,s}^j(u\, ;(\ve_l)_l)\in \Map(M,M)$ is continuous. 

\end{enumerate}
Similarly to $z$, we can verify the equations for $\omega$ in (\ref{EQMCKsketch}) by using \omegacomp\  and \omegacos. We omit it as we will verify well-definedness of the action entirely later.\par
\phantom{a}\par
Assuming the above conditions, we shall show
\begin{prop}\label{Pwelldefinedomega}
Assuming the conditions \tpsiuni, \tpsiS, \tpsiSigma, \tpsiend, \tpsicomp,  \tpsiconti, \zS, \zSigma, \zcomp, \zcos, \zconti, \omegaS, \omegaSigma, \omegacomp, \omegacos, \omegaend, and \omegaconti, the definition of the action of $\MCK$ on $\TIM^\bullet$ given by the formula (\ref{ForMCK}) in sub-subsection \ref{SSSformulaTIM} is well-defined.
\end{prop}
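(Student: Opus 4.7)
The plan is to verify, in turn, four classes of conditions: descent of the formula (\ref{ForMCK}) to the quotient defining $\MCK$, compatibility with the symmetric spectrum structure, compatibility with the cosimplicial operators, and compatibility with the monad multiplication and unit. Continuity of $\Omega$ and the existence of a shaft for $\tilde h$ will be checked along the way using Lemma \ref{Lshaftconti} together with \tpsiconti, \zconti, and \omegaconti; the shaft of $\tilde h$ is the element with map $\phi_1\times\cdots\times\phi_n$, vector $(v_1,\dots,v_n)$, and $\eps$-part $\teps(U\times\id(d))$, and the $\triangledown$ modification is what installs this $\eps$-part.

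For descent, the generating relations identify $(u\circ_i\df\,;\ldots,x_i,\ldots)$ with $(u\,;\ldots,d^0x_i,\ldots)$ and $(u\circ_i\dl\,;\ldots,x_i,\ldots)$ with $(u\,;\ldots,d^{p_i+1}x_i,\ldots)$. On the $\lambda$-part, invariance is automatic because $\Psi\circ U_n$ factors through the action of $\MK$ on $\CL^\bullet$, which is compatible with the $\square$-relation by Proposition \ref{PwelldefinedPsi}. On the $h$-part, inserting a $\df$ on $\CK$ produces one extra $\bar g_i$-factor of the form $(\omega^j_i\circ H_{z^j_i})^*f$; the conditions \zcos and \omegacos at $(0,0)$ or $(n,m_n+1)$ combined with \omegaend and $H_{x,1}=\id_M$ make this factor match the one produced by applying $d^0$ (or $d^{p_i+1}$) inside $h_i$ as prescribed in Definition \ref{DTIM}, at both $s=0$ and $s=1$, and continuity of $h_i\in(\athc^p)^{[0,1]}$ spreads the agreement over $[0,1]$.

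Compatibility with $\Sph$ and $\Sigma_*$ reduces to \tpsiS, \zS, \omegaS, \tpsiSigma, \zSigma, \omegaSigma together with the observation that the shaft is invariant under the sphere-spectrum action (which only adds $0$-components) and equivariant under the symmetric groups; the permutation $\sigma^{-1}$ in (\ref{ForMCK}) is the one dictated by the point-set description of $\Inhom$ in sub-subsection \ref{SSSpointset}. For the cosimplicial identities, indices $j$ falling inside a $g$-block are handled by the squares of \omegacos and \zcos, those inside an $f$-block by the cosimplicial identities inside each $h_i$, and the first and last cofaces, which involve the extra $H_{x_0}^*$ from Definition \ref{DTIM}, are again matched using \omegaend at $s=0$ and $H_{x,1}=\id_M$ at $s=1$.

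The bulk of the work, and the main obstacle I anticipate, is compatibility with the monad multiplication: unwinding $\Omega\circ\MCK(\Omega)$ and $\Omega\circ(\text{mult of }\MCK)$ yields two sides that differ a priori in how $\tpsi$, $z$, $\omega$, and $\teps$ are parametrized under composition. The conditions \tpsicomp, \zcomp, and \omegacomp are designed precisely for this: \tpsicomp says $\tpsi$ for $u_1\circ_i u_2$ factors as the outer $\tpsi_i$ post-composed with the inner $\tpsi$, which matches the nested pullbacks of the $f$-arguments in the iterated formula; \zcomp aligns the centers $z^j_i$ of the $H$-factors with the shaft of the inner $\tilde h$; and the extra $\tpsi_i$ appearing on the inner $\omega$'s in \omegacomp accounts for the fact that, in the composed application, the inner $g$-functions are pulled back through the outer $\tpsi_i$ before the outer $\omega$'s act. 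The $\triangledown$-modification yields precisely the shaft data that \epscomp requires for the next iteration, and the unit condition follows immediately from \tpsiuni. The bookkeeping of the permutations $\sigma$ and of the $\triangledown$'s across two nested applications of (\ref{ForMCK}) is the delicate part, but once one confirms that the shaft of the inner $\tilde h$ is exactly the data consumed by the outer $\tpsi$, $z$, $\omega$, the three \textbf{comp}-conditions close the diagram.
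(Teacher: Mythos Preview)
Your overall decomposition into descent, spectrum structure, cosimplicial operators, and monad multiplication matches the paper's, and your treatment of the monad product via \tpsicomp, \zcomp, \omegacomp\ is essentially correct. However, there are two genuine gaps.

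First, you never verify that $(\tilde\lambda,\tilde h)$ lies in the fiber product $\CL\times_{\athc}\athc^{[0,1]}$, i.e., that $p_0(\tilde h)=\trho(\tilde\lambda)$. The shaft condition in Definition~\ref{DTIM} is imposed on elements \emph{of} this pullback, so identifying a shaft is not enough. The paper's proof devotes most of its effort to this point: one evaluates $\tilde h$ at $s=0$ and uses \tpsiend\ (giving $\tpsi_{i,0}=\psi_i$), \omegaend\ (giving $\omega^j_{i,0}(z^j_i)=\pi_\phi(v)$), and the defining property $H_{x,0}|_{\bar B_{r}(x)\cap M}\equiv x$ together with an estimate $|z^j_i(d)-y|\leq L_{e_0}/2$ on the support of the section, to obtain $\omega^j_{i,0}\circ H_{z^j_i,0}(y)=\pi_\phi(v)$ wherever the section is nonzero. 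One then unwinds $\trho(\tilde\lambda)=\trho(\Psi\circ U_n(u;\lambda_1,\dots,\lambda_n))$ using \psibp\ and the relations $p_0(h_i)=\trho(\lambda_i)$ to match the two sides. None of this appears in your proposal.

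Second, the argument ``check at $s=0$ and $s=1$, then continuity of $h_i$ spreads the agreement over $[0,1]$'' is invalid: two continuous paths in $\athc$ agreeing at their endpoints need not agree anywhere else. Both your descent verification and your first/last coface verification rely on this. For descent, the identity one actually needs is
\[
\omega^{m_{i-1}+1}_{i-1,s}(u\circ_i\df)\circ H_{z^{m_{i-1}+1}_{i-1}(u\circ_i\df),\,s}
=\tpsi_{i,s}(U(u))\circ H_{\pi_{\phi_i}(v_i),\,s}
\qquad\text{for all }s\in[0,1],
\]
and this follows from \omegacomp\ and \zcomp\ applied with outer piece $u$ and inner piece $\df\in\CK(1,0)$ (using the arity-one values $\omega^1_0(\df;\ve_i)=\id_M$ and $z^1_0(\df;\ve_i)=\pi_{\phi_i}(v_i)$), not from \omegacos, \zcos\ at $(0,0)$ or from \omegaend. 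For the first coface of $\TIM^\bullet$, the $\CK$-side operator is $d^0_0$, and \omegacos, \zcos\ at $(0,0)$ give $\omega^1_0(d^0_0 u)=\id_M$ and $z^1_0(d^0_0 u)=\pi_\phi(v)$ directly for every $s$, matching the $H_{x_0}^*$ factor in $d^0\tilde h$; no endpoint-and-continuity step is needed or valid.
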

\begin{proof}
We must show the map
\[
\Omega:\CK(m_0,\dots,m_n)\hotimes\TIM^{p_1}\otimes\cdots\otimes\TIM^{p_n}
\longrightarrow \TIM^{m_{\leq n}+p_{\leq n}}.
\]
is a well-defined map of symmetric spectra. Take elements 
$u\in \CK(m_0,\dots,m_n)$ and $(\lambda_i,h_i)\in\TIM^{p_i}_{k_i}$. 
Put $(\tilde\lambda,\tilde h)=\Omega(u\,;(\lambda_1,h_1),\dots,(\lambda_n,h_n))$. 
We begin by proving $(\tilde\lambda,\tilde h)$ belongs to the pullback $\CL\times_{\athc}\athc^I$.
Let $(\phi_i,\eps_i,v_i)$ be the shaft of $h_i$,  and  put $d=(u\, ;(\phi_1,\eps_1,v_1),\dots, (\phi_n,\eps_n,v_n))$. Let $y\in M$ be a point satisfying $|v-\phi(y)|\leq \teps(U(u)\,; (\phi_i,\eps_i)_{i=1}^n)$. An easy calculation shows that $|z_{i}^j(d)-y|\leq L_{e_0}/2$ holds. This inequality and the condition \omegaend\  imply 
$\omega_{i,0}^j(d)\circ H_{z_{i}^j(d),0}(y)=\pi_\phi(v)$. 
So for a sequence $(g_0^1,\dots, g_0^{m_0}, f_1^1,\dots f_1^{p_1},\dots)$ of elements of $F(M)$, we have
\[
p_0(\tilde h)(g_0,f_1,g_1,\dots, g_n)(y)=\sigma^{-1}\{\, g_0(\pi_\phi(v))\cdot h_{1,0}(\psi_1^*f_1)\cdots h_{n,0}(\psi_n^*f_n)\cdot g_n(\pi_\phi(v))\, \}^\triangledown (y).
\]
Here, $h_{i,0}(\psi_i^*f_l)$ denotes the element 
\[
h_{i,[s=0]}(\psi_i(d)^*f_i^1,\dots, \psi_i(d)^*f_i^{p_i}) \quad \text{with}\quad d=(U(u)\,;(\phi_l,\eps_l,v_l)_{l})),
\]
and  $g_i(\pi_\phi(v))$ denotes $g_i^1(\pi_\phi(v))\cdots g_i^{m_i}(\pi_\phi(v))\in \Sph$. For $y$ with $|v-\phi(y)|>\teps(U(u)\,; (\phi_i,\eps_i)_{i=1}^n)$, the above equation still holds as the both sides  are the basepoint by the  definition of $\TIM^\bullet$. We shall unwind the element $\tilde \rho(\tilde \lambda)$. We write $\lambda_i=(x_{i, 1},\dots, x_{i, p_i},\langle \phi'_i, \eps_i, v'_i\rangle )$. $\phi_i$ (resp. $v_i$) is the map (resp. vector) made from $\phi'_i$ (resp.  $v'_i$) removing the $0$-components. We have 
\[
\df^{m_{n-1}}\dl^{m_n} \lambda_n=(\pi_{\phi'_n}(v'_n),\dots, \pi_{\phi'_n}(v'_n), x_{n, 1},\dots, x_{n, p_n}, \pi_{\phi'_n}(v'_n),\dots, \pi_{\phi'_n}(v'_n))
\] Here, the left (resp. right) sequence of $\pi_{\phi'_n}(v'_n)$ has $m_{n-1}$ (resp. $m_n$) components. Since the $0$-components do not affect the position of the closest point, we have $\pi_{\phi'_n}(v_n')=\pi_{\phi_n}(v_n)$. Similar observation on $\df^{m_{i-1}} \lambda_i$ ($1\leq i\leq n-1$) and the condition \psibp\  imply the following equation
\[
\begin{split}
\tilde \lambda& =(\pi_{\phi}(v),\dots, \pi_{\phi}(v), \psi_1(x_{1, 1}),\dots, \psi_1(x_{1, p_1}), \pi_{\phi}(v),\dots \pi_{\phi}(v),\psi_2(x_{2,1}),\dots, \psi_2(x_{2, p_2}), \\
& \dots, \psi_n(x_{n,1}), \dots, \psi_n(x_{n, p_n}),\pi_{\phi}(v),\dots \pi_{\phi}(v), \langle \phi', \teps, v'\rangle)
\end{split}
\] Here, we write $\teps=\teps(U(u); (\phi'_i,\eps_i)_i)$,  $\phi'=\phi'_1\times \cdots \times \phi'_n$, and $v'=(v'_1,\dots, v'_n)$,  and $i+1$-th sequence of $\pi_\phi(v)$'s has $m_i$ components ($0\leq i\leq n$).  We see that this $\teps$ is the same as $\teps(U(u)\,; (\phi_i, \eps_i)_i)$ by the conditions \epsS\  and \epsSigma. Since $\rho(\langle \phi', \teps, v' \rangle)$ is the product of $\rho(\langle \phi'_l, \teps, v'_l\rangle)$' s, we have the following equations.
\[
\begin{split}
\tilde \rho (\tilde \lambda) &(g_0,f_1,g_1,\dots ,f_n, g_n)
=\rho (\langle \phi', \teps, v'\rangle )\cdot g_0(\pi_\phi(v))\cdot \prod_{i=1}^n \bigl\{ f_i(\psi_i(x_i))\cdot g_i(\pi_\phi(v)) \bigr\} \\
&= \sigma^{-1} \cdot \left [g_0(\pi_\phi(v))\cdot \prod_{i=1}^n\bigl\{ \rho (\langle \phi'_i,\teps, v'_i\rangle )\cdot f_i(\psi_i(x_i))\cdot g_i(\pi_\phi(v))\bigr\}\right ]\\
&=p_0(\tilde h)(g_0,f_1,g_1,\dots, g_n)
\end{split}
\]
Here, $\sigma$ is the permutation used in the formula (\ref{ForMCK}), and \vs{1mm} $f_i(\psi_i(x_i))=f_i^1(\psi_i(x_{i, 1}))\cdots f_i^{p_i}(\psi_i(x_{i, p_i}))$, and $\psi_i$ denotes $\psi_i(d')$ with $d'=(U(u)\,;(\phi'_l,\eps_l,v'_l)_l)$. We have $\psi_i(d')=\psi_i(d)$ by the conditions \psiS\  and \psiSigma, which is  used in the last equation.   We also use $p_0(h_i)=\tilde \rho(\lambda_i)$, which implies $h_{i,0}(f^1,\dots ,f^{p_i})=\rho(\langle \phi'_i,\eps_i,v'_i\rangle )f^1(x_{i, 1})\cdots f^{p_i}(x_{i, p_i})$, in the last equation. 
Thus, we have verified the assertion.  It is obvious that $(\tilde\lambda,\tilde h)$ actually belongs to the sub-spectrum $\TIM\subset\CL\times_{\athc}\athc^I$ (see the verification on the coface in sub-subsection \ref{SSSdefIM}).  The equivariance with spheres and symmetric groups follows from conditions \tpsiS, \zS, \omegaS, \tpsiSigma, \zSigma, and  \omegaSigma. \\
\indent We shall check the equation 
\begin{equation}\label{EQcofaceomega}
\Omega(u\,;\dots \df(\lambda_i,h_i),\dots)=\Omega(u\circ_i \df;
\dots,(\lambda_i,h_i),\dots).
\end{equation} Unwinding the formula of $\Omega$, we see it is enough to show 
\begin{equation}\label{EQunwind}
H_{\pi_{\phi_i}(v_i)}^*\tpsi_{i}(U(u))^*f= 
(\omega_{i-1}^{m_{i-1}+1}(u\circ_i\df)\circ H_{z_{i-1}^{m_{i-1}+1}(u\circ_i\df)})^*f.
\end{equation} Here, as before, we omit the components of an element of $\tF(m_0,\dots, m_n)$ other than $u$. (In view of the formula (\ref{ForMCK}),  $f$ is presented as $f_i^1$ in the left hand side of (\ref{EQcofaceomega}), and as $g_{i-1}^{m_{i-1}+1}$ in the right hand side.) By the conditions \omegacomp, \omegacos, \zcomp, and \zcos, we have
\[
\omega_{i-1}^{m_{i-1}+1}(u\circ_i\df)=\tpsi_{i}(U(u))\circ \id,\quad 
z_{i-1}^{m_{i-1}+1}(u\circ_i\df)=\pi_{\phi_i}(v_i).
\]
So the equation (\ref{EQunwind}) holds. Similarly we see the corresponding equation holds for $\dl$. Thus, we have checked that $\Omega$ induces a morphism of symmetric spectra 
$\MCK(\TIM)^p\longrightarrow \TIM^p.
$
 The commutativity between $\Omega$ and cofaces and codegeneracies follows from  the conditions \zcos\ and \omegacos\ obviously. For example, for the coface which uses the product of elements of $F(M)$ named $g_i^j,\ g_i^{j+1}$, the operation on $z_i^j$ which should corresponds to this coface  is to repeat it. This matches with the condition \zcos.\\
\indent Finally, we shall show commutativity with the monad product $\MCK\circ\MCK\longrightarrow \MCK$. It is enough to show 
\[
\begin{split}
\Omega(u_1\circ_iu_2&;(\lambda_1,h_1),\dots,(\lambda_{i-1},h_{i-1}),(\lambda'_1,h'_1),\dots,(\lambda'_{l},h'_l),(\lambda_{i},h_i),\dots,  (\lambda_{n-1},h_{n-1}))\\
&=\Omega(u_1;(\lambda_1,h_1),\dots, (\lambda_{i-1},h_{i-1}),
\Omega( u_2;(\lambda'_1,h'_1),\dots, (\lambda'_{l},h'_{l})),(\lambda_{i},h_i),\dots,(\lambda_{n-1},h_{n-1}))
\end{split}
\]
for numbers $n,l\geq 1$ and elements $u_1\in\CK(m_0,\dots,m_n)$, $u_2\in\CK(m'_0,\dots,m'_l)$, $(\lambda_1,h_1)$, $\dots$, $(\lambda_{n-1},h_{n-1}),(\lambda_1',h_1'),\dots,(\lambda'_l,h_l')\in\IM$.
Let 
\[
\begin{split}
g_0^1,\dots, g_0^{m_0}, f_1^1,\dots, f_1^{p_1},\dots, g_{i-1}^{m_{i-1}}, & \\
\tilde g_0^1,\dots, \tilde g_0^{m'_0},& \tilde f_1^1,\dots, \tilde f_1^{q_1},\dots \tilde g_l^{m'_l},\\
&g_i^1,\dots, g_i^{m_i},\dots, f_{n-1}^1,\dots, f_{n-1}^{p_{n-1}},g_n^1,\dots, g_n^{m_n}
\end{split}
\]
be $m_{\leq n}+m'_{\leq l}+ p_{\leq n-1}+q_{\leq l}$ elements of $F(M)$ ($p_k$ and  $q_k$ are cosimplicial degrees of  elements $(\lambda_{k},h_k)$ and $(\lambda'_k,h'_k)$ respectively).
Unwinding the formula of $\Omega$, we have
\vspace{-1.5\baselineskip}
\begin{spacing}{1.3}
\[
\begin{split}
\Omega(u_1;
\dots, 
\Omega( u_2;(\lambda_1'
&
,h'_1),\dots, (\lambda'_{l},h'_{l})),\dots
)(g_0^1,\dots, g_n^{m_n})\\
=\cdots h_{i-1}(\tpsi_{i-1}(u_1)^*f_{i-1})
&
\cdot (\omega_{i-1}(u_1)\circ H_{z_{i-1}(u_1)})^*g_{i-1}
\\
\cdot 
[\Omega( u_2;(\lambda'_1,h'_1)
&
,\dots, (\lambda'_{l},h'_{l}))(\tpsi_i(u_1)^*\tilde g_0^1,\dots, \tpsi_i(u_1)^*\tilde g_l^{m'_l})]
\\
&
\cdot (\omega_{i}(u_1)\circ H_{z_{i}(u_1)})^*g_{i}\cdot h_{i}(\tpsi_{i}(u_1)^*f_{i})\cdots
\\
=\cdots 
h_{i-1}(\tpsi_{i-1}(u_1)^*f_{i-1})
&
\cdot (\omega_{i-1}(u_1)\circ H_{z_{i-1}(u_1)})^*g_{i-1}
\\
\cdot 
[(\omega_0(u_2)\circ H
&
_{z_0(u_2)}
)^*\tpsi_i(u_1)^*
\tilde g_0\cdot h'_1(\tpsi_1(u_2)^*
\tpsi_i(u_1)^*\tilde f_1)\cdots (\omega_l(u_2)\circ H_{z_l(u_2)})^*\tpsi_i(u_1)^*\tilde g_l]
\\
&
\cdot (\omega_{i}(u_1)\circ H_{z_{i}(u_1)})^*g_{i}\cdot h_{i}(\tpsi_{i}(u_1)^*f_{i})\cdots
\end{split}
\]
\end{spacing}
Here, we use the usual abbreviation, i.e., we write
\[
\begin{split}
h_{j}(\tpsi_{j}(u_1)^*f_{j})
&
=h_{j}(\tpsi_{j}(u_1)^*f_{j}^1\dots \tpsi_{j}(u_1)^*f_{j}^{m_{j}})
\\
(\omega_{j}(u_1)\circ H_{z_{j}(u_1)})^*g_{j}
&
=(\omega^1_{j}(u_1)\circ H_{z^1_{j}(u_1)})^*g_{j}^1\cdots (\omega^{m_{j}}_{j}(u_1)\circ H_{z^{m_{j}}_{j}(u_1)})^*g_{j}^{m_{j}},
\end{split}
\]
and also omit $U$ for notational simplicity.  On the other hand, by definition, we have
\vspace{-1.5\baselineskip}
\begin{spacing}{1.3}
\[
\begin{split}
\Omega(u_1\circ_i u_2;(\lambda_1,h_1),
\dots, 
&
(\lambda_{n-1},h_{n-1})
)(g_0^1,\dots, g_n^{m_n})
\\
=\cdots 
h_{i-1}(\tpsi_{i-1}(u_1\circ_i u_2)^*f_{i-1})
&
\cdot (\omega_{i-1}(u_1\circ_i u_2)\circ H_{z_{i-1}(u_1\circ_i u_2)})^*g_{i-1}
\\
\cdot 
[(\omega_{i-1}(u_1\circ_iu_2)\circ H
&
_{z_{i-1}(u_1\circ_iu_2)}
)^*
\tilde g_0\cdot h'_1(\tpsi_i(u_1\circ_iu_2)^*
\tilde f_1)\cdots (\omega_{i+l-1}(u_1\circ u_2)\circ H_{z_{i+l-1}(u_1\circ_iu_2)})^*\tilde g_l]
\\
&
\cdot (\omega_{i+l-1}(u_1\circ_iu_2)\circ H_{z_{i+l-1}(u_1\circ_iu_2)})^*g_{i}\cdot h_{i}(\tpsi_{i+l}(u_1\circ_iu_2)^*f_{i})\cdots
\end{split}
\]
\end{spacing}
In view of these expansions, 
using  the conditions \tpsicomp, \omegacomp\  and \zcomp, we easily see the desired equation holds (actually, these conditions are deduced from this equation). Continuity of $\Omega$ follows from \tpsiconti, \omegaconti, and \zconti. (We do not need any condition to ensure continuity at the basepoint. It follows from the formula (\ref{ForMCK}) and the definitions of the product on $\Gamma(M)$ and the action of $F(M)$ on $\Gamma(M)$ whatever $\tpsi, z,$ and $\omega$ are. )
\end{proof}

\subsubsection{Construction of $\tpsi$, $z$, and $\omega$}\label{SSSconstructiontpsi}
Construction of $\tpsi$, $z$, and $\omega$ is much similar to that of $\psi$.\\

\textbf{Construction of $\tpsi$.} This is completely analogous to the construction of $\psi$ given in sub-subsection \ref{SSSconstructionLMT}, so we only give the formulae and omit verification of the conditions of $\tpsi$. When $n=1$, $\tpsi_s(pt\,;\ve)=\id_M$.\\
When $n=2$, we put
\[
\tpsi_{i,s}(\ve_1,\ve_2)(y)=\pi_\phi(
(1-s)(v-\phi(\pi_{\phi_i}(v_i)))+\phi(y)).
\]
For general $n$, suppose we have constructed for $\leq n-1$. Then, we define $\tpsi_{i,s}$ on $\delta\D^n$ in the manner completely analogous to $\psi_i$,  and put
\[
\tpsi_{i,s}(tu\,;\ve_1,\dots, \ve_n)(y)
=\pi_\phi \bigl ((1-t)\{\, (1-s)(v-\phi(\pi_{\phi_i}(v_j))+\phi(y)\,\}+t\{\,\tpsi_{i,s}(u)(y)\,\}\bigr).
\]
for $u\in \partial \K(n),\ t\in [0,1]$.\par
\phantom{a}\par
\textbf{Construction of $z$.} We give the set 
\[
\{(n,m_0,\dots, m_n)| n\geq 1, m_0,\dots, m_n\geq 0\}
\] the lexicographical order. Construction of $z$ proceeds on by induction on this ordered set. For $n=1$, we put $z_0^j=z_1^j=\pi_\phi(v)$.\\
Suppose we have constructed $z_i^j$ for $(l,m'_0,\dots,m'_l)$ such that $(l,m'_0,\dots,m'_l)<(n,m_0,\dots,m_n)$.\\
\indent We define a diagram
$B_{\tF}:\CT(m_0,\dots,m_n)_{1,2}\longrightarrow \CG$
as follows. For an element $T\in\CT(m_0,\dots,m_n)_{1,2}$ we denote by $\beta^T$ the following composition:
\[
\begin{split}
B^c(T)\times\M[k_1,\dots, k_n]&\xrightarrow{Comp \times \id}\CK(T)\times \M[k_1,\dots, k_n]\\
&\xrightarrow{U\times\id}\K(n)\times \M[k_1,\dots, k_n].
\end{split}
\]  
See sub-subsection \ref{SSSCK} for $\CT(m_0,\dots,m_n)_{1,2}$ and $B^c$. $Comp$ denotes the partial composition of $\CK$ indicated by $T$. We put $B_{\tF}(T)=(\beta^T)^{-1}\D^n$. The morphism $B_{\tF}(T)\to B_{\tF}(T')$ corresponding to $T\leq T'$ is given by the product of the corresponding morphism of $B^c$ and the identity. Then we easily see there is a natural identification:
\[
\colim_{\CT(m_0,\dots, m_n)_{1,2}}B_{\tF}\cong \delta\tF:=\{(u\,;\ve_1,\dots,\ve_n)\in \tF(m_l)_{l=0}^n| u\in \partial\CK(m_0,\dots,m_n)\}
\]
Using this identification, we define $z^j_i$ on $\delta\tF$. More precisely, for $T$ of codimension one, $z:B_{\tF}(T)\to M^{m_{\leq n}}$ by the composition through the top right corner of  the diagram of the condition \zcomp , and by exactly analogous way to the case of $\psi$, we can prove these maps fit together to define a natural transformation $B_{\tF}\Rightarrow M^{m_{\leq n}}$, which is omitted. Thus we get $z$ on $\delta\tF$. 
For a point $tu\in \CK(m_0,\dots, m_n)$ ($t\in  [0,1]$ and $u\in\CK(\partial T(m_0,\dots,m_n))$), we put 
\[
\begin{split}
z^j_i(tu\,;\ve_1,\dots,\ve_n )&=\pi_\phi(\hz^j_i(tu\,;\ve_1,\dots \ve_n ))\vs{1mm}\\
\hz^j_i(tu\,;\ve_1,\dots \ve_n )&=
\left\{
\begin{array}{ll}
(1-2t)(v_1,\dots, v_n)+2t\phi\circ z^j_i(u\,;\ve_1,\dots,\ve_n )
& (0\leq t\leq 1/2)\\
 \phi\circ z^j_i(u\,;\ve_1,\dots,\ve_n )
& (1/2\leq t\leq 1)
\end{array}\right.
\end{split}
\]
where $\phi=\phi_1\times\cdots\times \phi_n$. (The condition \zev\, is used to ensure that $\hz_i^j$ belongs to the domain of $\pi_\phi$, see below.) We will see this formula satisfies the conditions on $z$ below. \qed \\

\textbf{Construction of $\omega$} : Construction of $\omega$ is slightly more delicate. We actually construct a map
\[
\homega=(\homega_i^j):\tF_{k_1,\dots,k_n}(m_0,\dots,m_n)\times I\longrightarrow \Map(M,\R^{K})^{m_{\leq n}}\qquad (K=k_{\leq n})
\]
by induction and put 
\begin{equation}
\omega_i^j(u\,;\ve_1,\dots,\ve_n)=\pi_\phi\circ (\homega_i^j(u\,;\ve_1,\dots,\ve_n)) \label{Foromega}.
\end{equation}
This is because the value $|\omega(u\,;\ve_1,\dots,\ve_n)(y)-y|$, which we want to estimate would diverge if we try to construct $\omega$ by induction. We first list the conditions which $\homega$ satisfies. ( The condition \homegaev\, below ensures that the values of $\homega_i^j$ belong to a tubular neighborhood of $\phi(M)$ so we can compose $\pi_\phi$.) It is clear that the following conditions on $\homega$ implies the conditions \omegaS \ - \omegaconti\  on $\omega$ defined by the formula (\ref{Foromega}).\\
\phantom{a}\par
\textbf{Conditions on $\homega$.}    for any set of indexes and any element such that the involved notations make sense.
\begin{enumerate}
\item[\homegaS] For  $t\in\R$, if $(u\,;\dots,t\times \ve_i,\dots;s)\in\tF(m_0,\dots,m_n)\times I$, \\ 
then 
$\homega_s(u\,;\dots,t\times \ve_i,\dots)- \homega_s(u\,;\dots,\ve_i\dots)\in \R e_{k_{\leq i-1}+1}$. Here $e_{j}$ is the unit vector  whose unique non-zero components is the $j$-th component.
\item[\homegaSigma] For permutations $\sigma_1,\dots,\sigma_n$, $\homega_s(u\,;\sigma_1\ve_1,\dots,\sigma_n\ve_n)=(\sigma_1\times \cdots \times \sigma_n)\cdot\homega_s(u\,;\ve_1,\dots,\ve_n)$. Here, $\sigma_1\times \cdots \times \sigma_n$ is the component-wise permutation on $\R^K=\R^{k_1}\times\cdots\times \R^{k_n}$. 
\item[\homegacomp] If $n_1\geq 2$, the following diagram is commutative:
\[
\xymatrix{
\beta^{-1}\tF (\tilde m_l)_{l=0}^{n_1+n_2-1}\times  I
\ar[rr]^{\tau\circ(\alpha_i\times \gamma_i\times \Delta) \qquad} 
\ar[dd]^{\beta_i\times \id_I} &&
(\tF(m_l)_{l=0}^{n_1}\times  I) \times (\tF(m'_l)_{l=0}^{n_2}\times  I)
\ar[d]^{\homega\times (\tpsi'\circ\omega)}\\
&&\Map(M,\R^K)^{m_{\leq n_1}}\times\Map(M,\R^{K})^{m'_{\leq n_2}}\ar[d]^T\\
 \tF(\tilde m_l)_{l=0}^{n_1+n_2-1}\times I
\ar[rr]^{\homega\qquad \qquad} &&
\Map(M,\R^K)^{m_{\leq n_1}+m'_{\leq n_2}}}
\]
Here,
\begin{itemize}
\item $(\tilde m_l)_{l=0}^{n_1+n_2-1}$, $\tau$ and $T$  are the transpositions defined by the same formula in \zcomp \ above.

\item   For $x=(u_1;\ve_1',\dots,\ve_{n_1}';s_1)\in\tF(m_l)_{l=0}^{n_1}\times I$
and $y\in\tF(m'_l)_{l=0}^{n_2}\times  I$, we put
\[
\omega\times (\tpsi'\circ\omega)(x,y)=\omega(x)\times (\phi'\circ[\tpsi_i(x)]\circ\omega_0^1(y),\dots,\phi'\circ[\tpsi_i(x)]\circ\omega_{n_2}^{m'_{n_2}}(y)),
\]
 where $\phi'=\phi_1'\times\cdots\times \phi_{n_1}'$ with $\phi'_j$ being the $\phi$-part of $\ve'_j$, and we omit $U\times \id$ in the entry of $\tpsi_i$, and $\omega_i^j$ is given by the formula (\ref{Foromega}).
\item If  the subscripts of $\tF (\tilde m_l)_{l=0}^{n_1+n_2-1}$ in the diagram are $k_1,\dots,k_{n_1+n_2-1}$,  those of $\tF(m_l)_{l=0}^{n_1}$  (resp. $\tF(m'_l)_{l=0}^{n_2}$) are  $k_1,\dots, k_{i-1}, k_{(i)},k_{i+n_2},\dots, k_{n_1+n_2-1}$ (resp. $k_i,\dots, k_{i+n_2-1}$) by definition of $\alpha_i$ (resp. $\gamma_i$).  So all the $K$ in the diagram is understood to be the same $K=k_{\leq n_1+n_2-1}$.
\end{itemize}
If $n_1=1$, the following diagram is commutative:
\[
\xymatrix{
\beta^{-1}\tF (\tilde m_l)_{l=0}^{n_2}\times I
\ar[rr]^{\tau\circ(\alpha_i\times \gamma_i\times \Delta ) \qquad \quad  } 
\ar[dd]^{\beta_i\times \id_I} &&
(\tF(m_0,m_1)\times I) \times (\tF(m'_k)_{k=0}^{n_2}\times I)
\ar[d]^{\homega\times \homega}\\
&&\Map(M,\R^K)^{m_0+m_{1}}\times\Map(M,\R^K)^{m'_{\leq n_2}}\ar[d]^T\\
 \tF(\tilde m_k)_{k=0}^{n_1+n_2-1}\times I
\ar[rr]^{\homega\qquad \qquad} &&
\Map(M,\R^K)^{m_0+m_1+m'_{\leq n_2}}},
\]
where the maps are defined similarly to the case $n_1\geq 2$, for example,  $\homega\times \homega (x,y)=(\homega_i^j(x),\homega_{i'}^{j'}(y))$.
\item[\homegacos] The following diagrams are commutative:
\[
\xymatrix{\tF(\dots, m_i, \dots )\times I
\ar[r]^{ d_i^j\quad}
\ar[d]^\homega &
\tF(\dots, m_i+1,\dots)\times I
\ar[d]^\homega \\
\Map (M,\R^K)^{\times m_{\leq n}}
\ar[r]^{ \delta^j_i\quad} &
\Map (M,\R^K)^{\times m_{\leq n}+1}}
\quad
\xymatrix{\tF(\dots, m_i,\dots)\times I
\ar[r]^{s_i^j\quad}
\ar[d]^\homega &
\tF(\dots, m_i+1,\dots)\times I
\ar[d]^\homega \\
\Map (M,\R^K)^{\times m_{\leq n}}
\ar[r]^{ \sigma^j_i\quad} &
\Map (M,\R^K)^{\times m_{\leq n}+1}}
\]
Here, $\delta^j_i$ and $\sigma^j_i$ are the maps defined by the same formula in \omegacos.

\item[\homegaend] 
$\homega_{i,0}^j(u\,;\ve_1,\dots,\ve_n)(z_i^j(u\,;\ve_1,\dots,\ve_n))$ belongs to the fiber over $ \pi_{\phi}(v_1,\dots,v_n)$ of the projection $\pi_{\phi}:\nu_{\eps}(\phi)\to M$ for some $\eps <L_{\phi\circ e_0}$, and $\homega_{i,1}^j(u\,;\ve_1,\dots,\ve_n)=\phi(=\phi|_M)$. Here, $\phi=\phi_1\times \cdots \times \phi_n$.
\item[\homegaev] $|\homega(u\,;\ve_1,\dots,\ve_n)(y)-\phi(y)|\leq 10^n|\phi|\cdot d(v,\phi(M))$ for each $y\in M$, where $\phi=\phi_1\times \cdots \times \phi_n$.

\item[\homegaconti]  Let $u\in \CK(m_0,\dots, m_n)$, $(\lambda_1,h_1),\dots (\lambda_n, h_n)\in \TIM-\{*\}$, and $s\in [0,1]$. Let $\ve_l=(\phi_l,\eps_l,v_l)$ be the shaft of $(\lambda_l,h_l)$ for $1\leq l\leq n$ and put $\phi=\phi_1\times \cdots\times \phi_n$.  When $(u\,;(\lambda_l,h_l)_l\,; s)$ runs through the range such that $(u\,;(\ve_l)_l\,;s)$ belongs to $\tF(m_0,\dots,m_n)\times I$, the map $(u\,;(\lambda_l,h_l)_l\,;s)\mapsto \pi_{\phi}\circ (\homega_{i,s}^j(u\, ;(\ve_l)_l))\in \Map(M,M)$ is continuous. 

\end{enumerate}


\textbf{Construction of $\homega$.} The construction proceeds on by induction on the set $\{(n,m_0,\dots,m_n)\}$ with the lexicographical order. For $n=1$, we put $\homega^j_{0,s}(u\,;\ve_1)=\homega^j_{1,s}(u\,;\ve_1)= \phi_1$, where $\phi_1$ is the $\phi$-part of $\ve_1$.\\
Suppose we have constructed $\homega_i^j$ for $(l,m'_0,\dots,m'_l)<(n,m_0,\dots,m_n)$. We  define $\homega_i^j(u\,;\ve_1,\dots,\ve_n)$ for $(u\,;\ve_1,\dots,\ve_n)\in\delta\tF$  similarly to construction of $\psi$ or $z$, using the composition through the right top corner of the diagram in \homegacomp. Verification of well-definedness on $\delta\tF$ is completely analogous to verification of well-definedness of $\bpsi$ so we omit it. 
We put 
\[
\begin{split}
\homega&^j_{i,s}(tu)(y)=\\
&\left\{
\begin{array}{ll}
 (1-s)\{\, v+\phi (y-z^j_i(tu))\,\}+s\{\, (1-t)\phi(y)+t\,\homega_{i,s}^j(u)(y)\, \}
& (0\leq t\leq 1/2)\vs{1mm}\\
(1-s)\{\,(2-2t)(v+\phi (y-z^j_i(u)))+(2t-1)\,\homega_{i,s}^j(u)(y)\,\}
+s\{\,(1-t)\phi(y)+t\,\homega_{i,s}^j(u)(y)\, \}
& (1/2\leq t\leq 1)\\
\end{array}\right.
\end{split}
\]
for $u\in \delta\tF$ and $t\in[0,1]$. 
Here, $\phi=\phi_1\times\cdots\times\phi_n$, $v=(v_1,\dots,v_n)$ and $\homega(u)$ and $\homega(tu)$ denotes $\homega(u\,;\ve_1,\dots,\ve_n)$ and $\homega(tu\,;\ve_1,\dots,\ve_n)$, respectively. We will see this formula satisfies the conditions on $\homega$ below.　\qed \\

\textbf{Verification on construction of $z$ and $\homega$} : (On $z$) The condition \zcomp\  is satisfied by definition. Verification of the conditions \zS,  \zSigma\ is trivial routine work. \\
\indent  We shall verify the condition \zev\ and the claim that 
$\hz_i^j$ belongs to the domain of $\pi_\phi$. We shall show these conditions and  the claim by induction on the same poset as we used in construction of $z$. Let $u\in\partial\CK(m_0,\dots, m_n)$ be an element. By definition, there exist two elements $u_1$, $u_2$ which are not the  unit, such that  $u=u_1\circ_{i_0}u_2$ for some $i_0$. We first show $z_i^j(u)$ satisfies the inequality of the condition \zev.
By construction, $z_i^j(u)=z_{i_1}^{j_1}(u_1)$ or $z_i^j(u)=z_{i_2}^{j_2}(u_2)$  for some $(i_1,j_1)$ or $(i_2,j_2)$. In the former case, we observe
\[
|v-\phi (z_i^j(u))|=|v-\phi (z_i^j(u_1))|\leq 4n_1|\phi|\cdot d(v,\phi(M))\leq 4n|\phi|\cdot d(v,\phi(M))
\]
by inductive hypothesis. In the latter case, 
\[
|v-\phi (z_{i_2}^{j_2}(u_2))|\leq 
|v-\phi(\pi_\phi(v))|
+|\phi(\pi_\phi(v))-\phi(\pi_{\phi_{(i_0)}}(v_{(i_0)}))|
+|\phi(\pi_{\phi_{(i_0)}}(v_{(i_0)}))-\phi (z_{i_2}^{j_2}(u_2))|
\]
Here, $\phi_{(i_0)}=\phi_{i_0}\times\cdots\times \phi_{i_0+n_2-1}$, and 
$v_{(i_0)}=(v_{i_0},\dots, v_{i_0+n_2-1})$.
We have
\[
\begin{split}
|\phi(\pi_{\phi_{(i_0)}}(v_{(i_0)}))-\phi (z_{i_2}^{j_2}(u_2))|&
\leq \frac{|\phi |}{|\phi_{(i_0)}|}\cdot
(|\phi_{(i_0)}(\pi_{(i_0)}(v_{(i_0)}))-v_{(i_0)}|+|v_{(i_0)}-\phi_{(i_0)}(z_{i_2}^{j_2}(u_2))|) \\
&\leq \frac{|\phi |}{|\phi_{(i_0)}|}\cdot [d(v,\phi(M))+4 n_2|\phi_{(i_0)}|\cdot d(v,\phi(M))],\\
|\phi(\pi_\phi(v))-\phi(\pi_{\phi_{(i_0)}}(v_{(i_0)}))|&
\leq \frac{|\phi |}{|\phi_{(i_0)}|}\cdot 
(|\phi_{(i_0)}(\pi_{\phi_{(i_0)}}(v_{(i_0)}))-v_{(i_0)}|+|v_{(i_0)}-\phi_{(i_0)}(\pi_\phi(v)) |)\\
&\leq 2\frac{|\phi |}{|\phi_{(i_0)}|}\cdot d(v,\phi(M)).
\end{split}
\]
So we have 
\[
|v-\phi (z_{i_2}^{j_2}(u_2))|\leq \left( 1+3\frac{|\phi |}{|\phi_{(i_0)}|}+4n_2|\phi| \right) \cdot d(v,\phi (M)).
\]
As $|\phi_{(i_0)}|\geq 1$, if $n_2< n$, we see $|v-\phi (z_{i_2}^{j_2}(u_2))|\leq 4n|\phi|\cdot d(v,\phi (M))$. If $n_2=n$, as $v=v_{(i_0)}$ and $\phi=\phi_{(i_0)}$, clearly, $|v-\phi (z_{i_2}^{j_2}(u_2))|\leq 4n|\phi|\cdot d(v,\phi (M))$. Thus the inequality in the condition \zev\, is satisfied for $u\in \partial \CK(m_0,\dots,m_n)$.\\
Now we turn to prove that $\hz_i^j$ belongs to the domain of $\pi_\phi$. It is enough to prove  $d(\hz_i^j,\phi(M))<|\phi|L_{e_0}$.
By definition of $\hz^j_i$, 
we have 
$|v-\hz^j_i(tu) |\leq 2t|v-\phi(z^j_i(u))|$  for $t\in [0,1]$ and $u\in\partial \CK(m_0,\dots,m_n)$. So
\[
\begin{split}
d(\hz_i^j(tu),\phi(M))
&
\leq |\hz_i^j(tu)-\phi(\pi_\phi(v))|
\leq |\hz_i^j(tu)-v|+|v-\phi(\pi_\phi(v))|
\\
&
\leq  2t|v-\phi(z_i^j(u))|+d(v,\phi(M))
\leq  (8tn|\phi| +1)d(v,\phi(M))
\\
&
\leq (8tn|\phi|+1)\teps
\leq |\phi|L_{e_0}
\end{split}
\]
by the conditions \epsev\, and \zev\, for $u\in\partial\CK(m_0,\dots,m_n)$. We shall prove the condition \zev \  for general element $tu$. We have
\[
|\hz^j_i(tu)-\phi(z_i^j(tu))|\leq |\hz^j_i(tu)-\phi(z_i^j(u))|\leq (1-2t)|v-\phi(z^j_i(u))|.
\]
So
\[
\begin{split}
|v-\phi (z^j_i(tu))|&\leq |v-\hz^j_i(tu) |+|\hz^j_i(tu)-\phi(z_i^j(tu))|\\
&\leq |v-\phi(z^j_i(u))|\leq 4n|\phi|\cdot d(v,\phi (M)).
\end{split}
\]
\indent We shall prove the condition \zcos \ is satisfied. We consider coface maps $\{d_k^j\}$. We first consider the case $(k,j)\not =(0,0), (n,m_n)$. Let $T=T_1\circ_iT_2$ be a cofacial $n$-tree of codimension 1 with $T_1$, $T_2$ indecomposable. $d_k^j(T)$ is a composition of $d_{k'}^{j'}(T_1)$ and $T_2$ or $T_1$ and $d_{k''}^{j''}(T_2)$ with appropriate $(k',j')$ or $(k'',j'')$. Consider the case of $d_{k}^j(T)=d_{k'}^{j'}(T_1)\circ_i T_2$. Under the presentation: $\CK(m_0,\dots,m_n)\cong Cone(\colim B^c)$, $d_k^j$ on $B^c(T)$ is equal to the following composition.
\[
B^c(T_1\circ_iT_2)=\CK(T_1)\times\CK(T_2)\xrightarrow{d_{k'}^{j'}\times \id}
\CK(d_{k'}^{j'}(T_1))\times\CK(T_2)\subset \CK(T'_1)\times\CK(T_2),
\]
where $T'_1$ is an  indecomposable cofacial tree such that $d_{k'}^{j'}(T_1)\leq 
T'_1$.  Using this expression and inductive hypothesis, we see $z$ commutes 
with $d^j_k$ on $\delta\tF$. For general point $t\cdot u\in Cone(\colim B^c)$
, we have $d^j_k(t\cdot u)=td^j_k(u)$. By the construction, if 
$z_i^j(u)=z_i^{j+1}(u)$, then $z_i^j(tu)=z_i^{j+1}(tu)$ so the compatibility 
with $d_k^j$ holds on the whole $\tF$. The case of 
$d_k^j(T)=T_1\circ_id_{k''}^{j''}(T_2)$ is similarly verified. If 
$(k,j)=(0,0)$ or $(n,m_n+1)$, $d_k^j(T)=d\circ T$ where $d=\df$ or $\dl$. So 
the compatibility with $d_k^j$ follows from the condition \zcomp.   The condition \zconti\  is proved completely analogously to Lemma \ref{Lshaftconti}.\\

(On $\homega$) The condition \homegacomp\ is satisfied by definition. The verification of \homegacos\ is completely analogous to that of \zcos. \homegaend\ is easily verified using the formula of $z$ given in the construction. Verification of \homegaS\  and \homegaSigma\  is trivial.\\
\indent  We shall verify the condition \homegaev. We put
\[
k_n^{\omega}=10^n|\phi |,\qquad k_n^\psi=6n^2.
\]
Let $u=u_1\circ_{i_0}u_2\in \partial\CK(m_0,\dots,m_n)$ be an element and suppose $\ari(u_1)\geq 2$. For $0\leq t\leq 1/2$, by construction, we have
\[
|\homega_{i}^j(tu)(y)-\phi(y)|
\leq (1-s)|v-\phi(z_i^j(tu))|+ st|\homega_{i}^j(u)(y)-\phi(y)|.
\]
By the conditions \homegacomp, \tpsiev\ and  inductive hypothesis, we have
\[
\begin{split}
|\homega_{i}^j(u)(y)-\phi(y)|&= |\phi\circ \tpsi_{i}(u_1)\circ \omega_{i}^j(u_2)(y)-\phi(y)|
\\
&\leq |\phi|\cdot ( |\tpsi_{i}(u_1)(\omega_{i}^j(u_2)(y))-\omega_{i}^j(u_2)(y)|+|\omega_{i}^j(u_2)(y)-y|)\\
&
\leq |\phi|\cdot \left(k_{n_1}^{\psi}d(v,\phi(M)) 
+2k_{n_2}^{\omega}\frac{d(v_{(i_0)},\phi_{(i_0)}(M))}{|\phi_{(i_0)}|}\right)
\\
&
\quad \left(\because \ |\omega_{i}^j(u_2)(y)-y|\leq \frac{2}{|\phi_{(i_0)}|}|\homega_{i}^j(u_2)(y)-\phi_{(i_0)}(y)|\ \right)
\\
&\leq|\phi|\cdot \left( k_{n_1}^{\psi}+\frac{2k_{n_2}^{\omega}}{|\phi_{(i_0)}|}\right)d(v,\phi(M)).
\end{split}
\]
So by the condition \zev, 
\[
\begin{split}
|\homega_{i}^j(tu)(y)-\phi(y)|
&
\leq \left[(1-s)4n|\phi|+ st|\phi|\cdot \left(k_{n_1}^{\psi}+\frac{2k_{n_2}^{\omega}}{|\phi_{(i_0)}|}\right)\right]d(v,\phi(M))\\
&\leq \max\left\{4n|\phi|,\ |\phi|\cdot \left( k_{n_1}^{\psi}+2k_{n_2}^{\omega}\frac{1}{|\phi_{(i_0)}|}\right) \right\}d(v,\phi(M))
\end{split}
\]
So it is enough to prove
\[
|\phi|\cdot \left( k_{n_1}^{\psi}+2k_{n_2}^{\omega}\frac{1}{|\phi_{(i_0)}|}\right)\leq k^{\omega}_n\quad (n=n_1+n_2-1, n_1\geq 2).
\]
This inequality is equivalent to $10^n-2\cdot 10^{n_2}-6n_1^2\geq 0$. But this inequality is verified by easy calculation. \\
\indent The evaluation in the case $1/2\leq t\leq 1$ is much analogous: by construction, we have
\[
\begin{split}
|\homega_{i}^j(tu)(y)-\phi(y)|
&
\leq (1-s)(2-2t)|v-\phi(z_i^j(u)) |+((1-s)(2t-1)+st)|\homega^j_{i}(u)(y)-\phi(y)|
\\
&
\leq \max\left\{ 4n|\phi|, \  |\phi|\cdot \left( k_{n_1}^{\psi}+2k_{n_2}^{\omega}\frac{1}{|\phi_{(i_0)}|}\right) \right\}d(v,\phi(M))\\
&
\leq k_n^{\omega}d(v,\phi(M)).
\end{split}
\]
as above. The evaluation of the case $\ari(u_1)=1$ is much easier (so we omit it). The condition \homegaconti\ is proved completely analogously to Lemma \ref{Lshaftconti}. \qed
\subsubsection{Construction of $\hUpsilon$ and $\tUpsilon$ }\label{SSSactionBIM}
In this sub-subsection, we construct the actions $\hUpsilon$ and $\tUpsilon$, see sub-subsection \ref{SSSoutlineMCK}. This completes the proof of Theorem \ref{TMCKalgebra}. The only difference between the restriction of the action $\Omega$ to $s=1\in[0,1]$ and the action induced by the $\square$-object structure is the $\eps$-part. The former is a value of $\teps$ and the latter is $\min\{\eps_1,\dots, \eps_n\}$. $\tUpsilon$ gives a homotopy between these $\eps$-parts, and is stationary on the other parts. $\hUpsilon$ is defined as the restrction of $\tUpsilon$ to $s=1$, which is also factored through the restriction of $\Omega$ to $s=1$.   Put $J=[1,2]$. We first define a morphism of symmetric spectra
\[
\Upsilon_n :(\K(n)\times J)\hotimes \Gamma(M)^{\otimes n}\longrightarrow \Gamma(M)
\]
for each $n\geq 1$ (see sub-section \ref{SSatiyahduality} for the definition of $\Gamma(M)$). Let $u\in \K(n)$, $s\in J$, and $\langle \phi_i,\eps_i,s_i\rangle\in\Gamma(M)_{k_i}$ be elements ($i=1,\dots,n$).
We put  
\[
\begin{split}
\Upsilon_n(u,s\,;\langle \phi_1,\eps_1,\theta_1\rangle,\dots,\langle \phi_n,\eps_n,\theta_n\rangle)
&=\langle \phi_1\times\cdots\times\phi_n,\teps_s,\theta_1\wedge\cdots\wedge \theta_n\rangle,\\
\teps_s
=(2-s)\teps(u\,;(\phi_1,\eps_1)&,\dots,(\phi_n,\eps_n))+(s-1)\min\{\eps_1,\dots,\eps_n\}
\end{split}
\]
(see sub-subsection \ref{SSSconstructionLMT} for the definition of $\teps$). To define an action $\MK(\BIM^\bullet)\longrightarrow\BIM^\bullet$, we shall define a morphism
\[
\tUpsilon_{p_1,\dots,p_n}:\K(n)\hotimes \bigotimes_{i=1}^n\Inhom(F(M)^{\otimes p_i},\Gamma(M))^J\longrightarrow \Inhom(F(M)^{\otimes p_{\leq n}},\Gamma(M))^J
\]
for each $n\geq 1,p_1,\dots,p_n\geq 0$. By adjointness, this is equivalent to define a morphism
\[
(\K(n)\times J)\hotimes \bigotimes_{i=1}^n\Inhom(F(M)^{\otimes p_i},\Gamma(M))^J\otimes F(M)^{\otimes p_{\leq n}}\longrightarrow \Gamma(M). 
\]
We define this morphism as the following composition:
\[
\begin{split}
&(\K(n)\times J)\hotimes \bigotimes_{i=1}^n\Inhom(F(M)^{\otimes p_i},\Gamma(M))^J\otimes F(M)^{\otimes p_{\leq n}}
\\
\xrightarrow{\id\times \Delta_J\times \id}
&
(\K(n)\times J^{\times n+1})\hotimes \bigotimes_{i=1}^n\Inhom(F(M)^{\otimes p_i},\Gamma(M))^J\otimes F(M)^{\otimes p_{\leq n}}
\\
\xrightarrow{T}
&
(\K(n)\times J)\hotimes \bigotimes_{i=1}^n[J\hotimes\Inhom(F(M)^{\otimes p_i},\Gamma(M))^J]\otimes F(M)^{\otimes p_{\leq n}}
\\
\xrightarrow{\id \times ev_J^{\times n}}
&
(\K(n)\times J)\hotimes \bigotimes_{i=1}^n\Inhom(F(M)^{\otimes p_i},\Gamma(M))\otimes F(M)^{\otimes p_{\leq n}}
\\
\xrightarrow{T}
&
(\K(n)\times J)\hotimes \bigotimes[\Inhom(F(M)^{\otimes p_i},\Gamma(M))\otimes F(M)^{\otimes p_i}]
\\
\xrightarrow{\id\times ev_{F(M)}^{\times n}}
&
(\K(n)\times J)\hotimes \Gamma(M)^{\otimes n}
\xrightarrow{\Upsilon_n}
\Gamma(M)
\end{split}
\]
Here, $\Delta_J:J\to J^{\times n+1}$ is the diagonal, and $T$ is the appropriate transposition, and $ev_{-}$ is the evaluation. 
We also define a morphism 
\[
\hUpsilon_{p_1,\dots,p_n}:\K(n)\hotimes \bigotimes_{i=1}^n\Inhom(F(M)^{\otimes p_i},\Gamma(M))\longrightarrow \Inhom(F(M)^{\otimes p_{\leq n}},\Gamma(M))
\]
similarly to $\tUpsilon$ using the restriction of $\Upsilon_n$ to $(\K(n)\times \{1\})\hotimes \Gamma(M)^{\otimes n}$.
\begin{prop}\label{PtUpsilon}
The collection $\{\tUpsilon_{p_1,\dots,p_n}\}_{n,p_1,\dots,p_n}$ induces a well-defined action of $\MK$ on $\BIM^\bullet$, which we denote by $\tUpsilon$, and the collection $\{\hUpsilon_{p_1,\dots,p_n}\}_{n,p_1,\dots,p_n}$ induces a well-defined action of $\MK$ on $\athc^\bullet$, which we denote by $\hUpsilon$. These two actions $\tUpsilon$, $\hUpsilon$ and the action $\Omega$ constructed in sub-subsections \ref{SSSformulaTIM}, \ref{SSSconstructiontpsi} satisfy the compatibility conditions 1,2 stated in sub-subsection \ref{SSSoutlineMCK}.
\end{prop}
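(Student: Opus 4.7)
The plan is to reduce the verification to three separate checks: well-definedness of $\Upsilon_n$ as a morphism of symmetric spectra, the operad/monad composition axioms (which then transfer to $\tUpsilon$ and $\hUpsilon$ by adjunction), and compatibility with $\Omega$ via explicit evaluation at $s=1$.

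First I would check that each $\Upsilon_n$ is well-defined. Positivity of $\teps_s$ for $s\in J=[1,2]$ is automatic since both $\teps(u;(\phi_i,\eps_i))$ and $\min\{\eps_1,\dots,\eps_n\}$ lie in $(0,L_{e_0}/16)$, and so does their convex combination. Equivariance with respect to $\Sph$ and $\Sigma_k$ reduces to \epsS\ and \epsSigma\ together with the evident equivariance of the product $\theta_1\wedge\cdots\wedge\theta_n$ on sections. Continuity is clear from continuity of $\teps$.

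Next I would verify the operad composition for $\{\Upsilon_n\}$: for $u_1\in\K(n_1)$, $u_2\in\K(n_2)$ and $s\in J$,
\[
\Upsilon_{n_1+n_2-1}(u_1\circ_i u_2,s;\ldots)\;=\;\Upsilon_{n_1}\bigl(u_1,s;\ldots,\Upsilon_{n_2}(u_2,s;\ldots),\ldots\bigr).
\]
The $\phi$-parts compose associatively by definition, and the sections $\theta_1\wedge\cdots\wedge\theta_n$ depend only on the component data. For the $\eps$-part, the key identity is that both $\teps$ and $\min$ satisfy the composition rule: $\epscomp$ for the former and a triviality for the latter. Any convex combination of two operations satisfying the same composition identity satisfies it too, so $\teps_s=(2-s)\teps+(s-1)\min$ inherits it. Cosimplicial compatibility for $\tUpsilon$ and $\hUpsilon$ is inherited from the $\square$-object structure of Lemma \ref{LmonoidTHC}, because $\teps_s$ depends only on the tuple $(u;(\phi_i,\eps_i))$ and is untouched by the coface/codegeneracy operators, which only use the $F(M)$-bimodule structure on $\Gamma(M)$. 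Adjointness then upgrades these pointwise identities into the monad-action axioms $\MK\circ\MK\Rightarrow\MK$ for $\tUpsilon$ and $\hUpsilon$.

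For compatibility with $\Omega$, the statement for $p_0$ is immediate from $\tilde\lambda=\Psi\circ U_n(u;\lambda_1,\dots,\lambda_n)$ in sub-subsection \ref{SSSformulaTIM}, which is visibly the image under the $\MK$-action on $\CL^\bullet$ of $U(u;\lambda_1,\dots,\lambda_n)$. For $p_1$, I would specialize $s=1$ in formula (\ref{ForMCK}): by \tpsiend\ the maps $\tpsi_{i,1}=\id_M$, by \omegaend\ the maps $\omega_{i,1}^j=\id_M$, and $\triangledown$ replaces the $\eps$-part by $\teps=\teps_1$. What remains on the right-hand side is exactly the $\hUpsilon$-product of the elements $d^0\cdots d^0 d^{p+1}\cdots d^{p+m_n}h_i$ composed with pullbacks $H_{z_i^j,1}^*g_i^\ell$; this is precisely the formula for $\hUpsilon\circ U(\MCK(p_1))$, since the $\df^{m_i}\dl^{m_j}$-insertions in the definition of $U_n$ correspond to the $F(M)$-bimodule actions used in the coface maps on $\athc^\bullet$ evaluated at the point $\pi_\phi(v)$. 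Compatibility of $q_2$ and $q_1$ with $\hUpsilon$ and the standard $\square$-product is immediate from $\teps_1=\teps$ and $\teps_2=\min\{\eps_i\}$ respectively.

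The hard part will be the identification at $s=1$: one has to match the $H_{z_i^j}^*$ pullbacks in the $\Omega$-formula with the iterated coface insertions implicit in $U_n$. Once that bookkeeping is set up carefully using \omegaend\ and the definition of the coface maps on $\TIM^\bullet$ in Definition \ref{DTIM}(4) (which use $H_{\pi_\phi(v)}$), everything else is routine algebra of the two families $\teps$ and $\min$.
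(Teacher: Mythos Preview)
Your overall strategy—reducing to the conditions on $\teps$ and invoking \tpsiend, \omegaend\ for the endpoint compatibilities—is exactly the route the paper's one-line proof takes. However, your justification of the monad-composition axiom for $\tUpsilon$ contains a genuine error.

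You assert that ``any convex combination of two operations satisfying the same composition identity satisfies it too'' and apply this to $\teps_s=(2-s)\teps+(s-1)\min$. This is false for the substitution identity in \epscomp. For $\tUpsilon$ to be an $\MK$-action one needs
\[
\teps_s\bigl(u_1\circ_i u_2;(\phi_j,\eps_j)_j\bigr)=\teps_s\bigl(u_1;\ldots,(\phi',\teps_s(u_2;\ldots)),\ldots\bigr),
\]
where the inner $\teps_s$ is fed back as an $\eps$-argument of the outer one. Neither $\teps$ nor $\min$ is linear in its $\eps$-arguments (both are built from minima and rescalings), so this self-substitution does not commute with taking a convex combination. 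A direct check already breaks it: with $n_1=n_2=2$, $i=1$, $|\phi_1|=|\phi_2|=|\phi_3|=1$, $\eps_1=\eps_2=\eps$, $\eps_3=\eps/2$ and $s=3/2$, the left side is $0.250005\,\eps$ while the right side is $0.25025\,\eps$. Since distinct $\eps$-parts give distinct (non-basepoint) elements of $\Gamma(M)$, the operad identity for $\Upsilon_n$ genuinely fails at interior $s\in(1,2)$. The paper's proof only asserts the claim is ``clear'' and does not address this point either; it is invisible at the endpoints $s=1,2$ where $\teps$ and $\min$ each satisfy \epscomp\ on their own, so your endpoint verifications for conditions 1 and 2 are unaffected, but the associativity of the $\MK$-action on $\BIM^\bullet$ does not follow from the argument you give.
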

\begin{proof}
These are clear from the definition of $\Omega$ given in sub-subsections \ref{SSSformulaTIM}, \ref{SSSconstructiontpsi} and the conditions on $\teps$, and \tpsiend, \omegaend.
\end{proof}
If we replace $\teps_s$ with $\min\{\eps_1,\dots,\eps_n\}$ in the defintion of $\Upsilon_n$, the action of $\MK$ on $\BIM^\bullet$ becomes  the one naturally induced by the associative product on $\athc^\bullet$. \\
\indent By Proposition \ref{PtUpsilon}, we have completed the proof of Theorem \ref{TMCKalgebra}.
\subsection{Comparison of $A_\infty$-structures  on $\tot$ and  $\ttot$}\label{SScomparisontot}
In this subsection, we prove $\tot(X^\bullet)$ and $\ttot(X^\bullet)$ are weak equivalent as nu-$A_\infty$-ring spectra for a cs-spectrum $X^\bullet$ under some assumptions, see Corollary \ref{Ctot}. To do this we use the notion of colored operads. We mainly consider the case of non-symmetric operad as it is enough for the $A_\infty$ or little 1-cubes case, but to prove Theorem \ref{Tintro2} for general $n$, we need to consider the symmetric case, which is completely analogous to the non-symmetric case. In the last part of this subsection, we indicate the necessary changes for the symmetric case and prove Theorem \ref{Tintro2}. 
\begin{defi}\label{Dcoloredoperad}
\begin{itemize2}
\item[(1)] A \textit{(non-symmetric topological) colored operad} $\oper$ consists of 
\begin{enumerate}
\item a set $\OB(\oper) $ whose elements we call objects (or colors) of $\oper$, 
\item a family of spaces $\{\oper(c_1,\dots, c_n;d)\in\CG \mid n\geq 1, c_1,\dots, c_n, d\in\OB(\oper)\}$, 
\item a family of morphisms in $\CG$, 
\[
(-\circ -):\oper(c_1,\dots, c_n;d)\times\prod_{i=1}^n\oper(e^i_1,\dots, e^i_{k_i};c_i)\longrightarrow \oper(e^1_1,\dots, e^1_{k_1},\dots,e^n_1,\dots, e^n_{k_n};d),
\]
where $c_1,\dots,c_n,d,e^i_1,\dots e^i_{k_i}\in\OB(\oper)$, which  we call the composition of $\oper$ and 
\item an element $\id_c\in\oper(c;\,c)$ which we call the identity, for each $c\in \OB(\oper)$, 
\end{enumerate}
 which satisfy the associativity and unity laws which are exactly analogous to those of operad. For a colored operad $\oper$ and an object $c\in \OB(\oper)$, we define an operad $\oper_c$ by $\oper_c(n)=\oper(c^n;c)$ with the composition induced by the composition of $\oper$. Here $c^n$ denotes the $n$-tuple  $(c,c,\dots,c)$.
\item[(2)] Let $\oper$ be a colored operad. An $\oper$-\textit{algebra} $X$ is a collection of objects of $\SP$, $\{X_c\}_{c\in\OB(\oper)}$ equipped with a morphism 
$\oper(c_1,\dots,c_n;d)\,\hotimes \,X_{c_1}\otimes\cdots\otimes X_{c_n}\longrightarrow X_d$ for each integer $n\geq 1$ and each tuple  $(c_1,\dots,c_n,d)\in\OB(\oper)^{n+1}$ which satisfies compatibility conditions exactly analogous to those on algebras over an operad. For $c\in\OB(\oper)$, we always regard $X_c$ as an $\oper_c$-algebra by the obvious way.
\end{itemize2}
\end{defi}
\begin{thm}\label{Ttoper}
Let $\oper$ be a colored operad, and  $a, b$ two objects of $\oper$, and $\alpha\in\oper (a,b)$ an element. Let $X$ be an $\oper$-algebra. 
Then there exist 
\begin{itemize}
\item a topological operad $\toper$  with  two morphisms of operads $\zeta_a:\toper\rightarrow \oper_a$, $\zeta_b:\toper\to\oper_b$, and
\item an $\toper$-algebra $\tilde X$ with  two morphisms of symmetric spectra $\eta_a:\tilde X\to X_a$, $\eta_b:\tilde X\to X_b$
\end{itemize}
such that the following conditions are satisfied.
\begin{enumerate}
\item $\zeta_c$ is compatible with $\eta_c$ for $c=a, b$.
\item In  $\Ho(\CG)$ there exists an isomorphism $\toper(n)\cong \oper_a(n)\times^h_{\oper(a^n;\,b)}\oper_b(n)$ such that the following diagram commutes for $c=a,b$:
\[
\xymatrix{\toper(n)\ar[r]^{\cong\qquad\qquad}\ar[rd]^{\zeta_c}&\oper_a(n)\times^h_{\oper(a^n;\,b)}\oper_b(n)\ar[d]^{p_c}\\
&\oper_c(n).}
\]
Here $\oper_a(n)\times^h_{\oper(a^n;\,b)}\oper_b(n)$ denotes the homotopy fiber product of the diagram:\\
$\oper_a(n)\xrightarrow{\alpha\circ-}\oper(a^n;b)\xleftarrow{-\circ \alpha^{ n}}\oper_b(n)$,
and $p_c$ the natural projection.
\item $\eta_a$ is a level equivalence and 
the following diagram commutes in $\Ho(\SP)$:
\[
\xymatrix{\tilde X\ar[r]^{\eta_a}\ar[rd]^{\eta_b}&X_a\ar[d]^{\alpha_*}\\
&X_b.}
\] 
Here, $\alpha_*$ is the evaluation of the structure morphism $\oper(a;b)\, \hotimes\, X_a\longrightarrow X_b$ at $\alpha$.
\end{enumerate}

\end{thm}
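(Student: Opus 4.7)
The plan is to realize the homotopy fiber product $\oper_a(n)\times^h_{\oper(a^n;b)}\oper_b(n)$ by a strict pullback using a Moore path replacement, so that both $\toper$ and its algebra $\tilde X$ acquire strict (rather than merely up-to-homotopy) structures. Recall that for $Y\in\CG$, the Moore path space $PY=\{(r,\gamma)\mid r\ge 0,\ \gamma:[0,r]\to Y\}$ carries a strictly associative concatenation $(r,\gamma)\ast(r',\gamma')=(r+r',\gamma\ast\gamma')$, and the evaluation $(r,\gamma)\mapsto(\gamma(0),\gamma(r))$ to $Y\times Y$ is a fibration; this is the feature that will be leveraged throughout.

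\textbf{Construction of $\toper$.} I will set
\[
\toper(n):=\bigl\{(u,(r,\gamma),v):u\in\oper_a(n),\ v\in\oper_b(n),\ (r,\gamma)\in P\oper(a^n;b),\ \gamma(0)=\alpha\circ u,\ \gamma(r)=v\circ(\alpha,\dots,\alpha)\bigr\},
\]
with $\zeta_a,\zeta_b$ the projections to the first and third coordinates. For $\theta_0=(u_0,(r_0,\gamma_0),v_0)\in\toper(n)$ and $\theta_i=(u_i,(r_i,\gamma_i),v_i)\in\toper(k_i)$, the composite $\theta_0\circ(\theta_1,\dots,\theta_n)$ will have $u$-part $u_0\circ(u_1,\dots,u_n)$, $v$-part $v_0\circ(v_1,\dots,v_n)$, and total length $R=r_0+r_1+\dots+r_n$; its Moore path $\Gamma$ is the concatenation, in this order, of $\gamma_0\circ(u_1,\dots,u_n)$ followed, for $i=1,\dots,n$, by $v_0\circ(\alpha u_1,\dots,\alpha u_{i-1},\gamma_i,v_{i+1}\circ(\alpha,\dots,\alpha),\dots,v_n\circ(\alpha,\dots,\alpha))$. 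The endpoints match up correctly, $\zeta_a,\zeta_b$ are operad morphisms by construction, and condition~(2) of the theorem follows because the strict pullback so defined coincides with the standard Moore-path model for the homotopy fiber product.

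\textbf{Construction of $\tilde X$.} Levelwise, I set
\[
\tilde X_l:=\bigl\{(x,(s,p))\mid x\in(X_a)_l,\ (s,p)\in P((X_b)_l),\ p(0)=(\alpha_*x)_l\bigr\},
\]
with $\eta_a(x,(s,p))=x$ and $\eta_b(x,(s,p))=p(s)$; the symmetric spectrum structure maps on $\tilde X$ come from those of $X_a,X_b$ and the naturality of $\alpha_*$. The $\toper$-action sends $\theta\in\toper(n)$ and $\tilde x_i=(x_i,(s_i,p_i))$ to $(u(x_1,\dots,x_n),(S,\Pi))$ with $S=r+s_1+\dots+s_n$ and $\Pi$ the interleaved Moore concatenation parallel to $\Gamma$, with each $\gamma_i\circ(\dots)$ replaced by its evaluation on the $x_j$ and the Moore paths $p_i$ substituted for the ``internal'' $\gamma_i$. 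Compatibility with $\zeta_a,\zeta_b$ is immediate; $\eta_a$ is a level equivalence because its fiber over $x\in(X_a)_l$ is the Moore path space of $(X_b)_l$ based at $(\alpha_*x)_l$, which is contractible; and the square in condition~(3) commutes in $\Ho(\SP)$ via the explicit spectrum-level homotopy $H_t(x,(s,p))=p((1-t)s)$.

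\textbf{Main obstacle.} The delicate point is choosing the interleaving rule for $\Gamma$ so that the operadic composition on $\toper$ is \emph{strictly} associative: two different bracketings of a triple composition must yield \emph{identical} Moore paths, not merely reparametrizations of each other. Moore concatenation is strictly associative in isolation, but the interleaving mixes the outer $\gamma_0$ with the slot-wise $\gamma_i$'s, and one must verify that every pair of bracketings produces the same ordered sequence of intermediate operations and the same lengths. I expect this to reduce to a diagrammatic check using the strict operadic associativity of $\oper$ together with the observation that each intermediate term in the concatenation is a rigid substitution of $u_j$'s, $v_j$'s, and copies of $\alpha$ into fixed slots; the same combinatorial check then also yields associativity of the $\toper$-action on $\tilde X$.
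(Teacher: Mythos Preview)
Your construction has a genuine gap at exactly the point you flagged as the ``main obstacle'': strict associativity fails, and no choice of interleaving rule for a \emph{single} Moore path can repair it. The problem is not nested associativity (which Moore concatenation handles) but the operadic \emph{interchange law} for partial compositions in disjoint slots. Take $\theta_0\in\toper(2)$ and $\theta_1,\theta_2\in\toper(1)$. With your recipe (after fixing the endpoint-matching, which as written is backwards --- one needs $v_j\circ\alpha^{k_j}$ for $j<i$ and $\alpha\circ u_j$ for $j>i$), one computes
\[
(\theta_0\circ_1\theta_1)\circ_2\theta_2:\quad [\gamma_0(u_1,u_2)]_{r_0}\ast[v_0(\gamma_1,\alpha u_2)]_{r_1}\ast[v_0(v_1\alpha,\gamma_2)]_{r_2},
\]
\[
(\theta_0\circ_2\theta_2)\circ_1\theta_1:\quad [\gamma_0(u_1,u_2)]_{r_0}\ast[v_0(\alpha u_1,\gamma_2)]_{r_2}\ast[v_0(\gamma_1,v_2\alpha)]_{r_1}.
\]
These are distinct Moore paths in $\oper(a^2;b)$: the first runs $\gamma_1$ before $\gamma_2$, the second runs $\gamma_2$ before $\gamma_1$, and they have genuinely different values on the interval $[r_0,r_0+r_1+r_2]$. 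No diagrammatic check in $\oper$ will collapse this difference, because the two sides are homotopic but not equal.

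The paper's resolution is precisely to abandon the single length parameter. It replaces $r\in\R_{\ge 0}$ by a tuple $l=(l_1,\dots,l_n)\in\R_{\ge 0}^n$ (one length per input slot), with composition $l\circ_i l'=(l_1,\dots,l_{i-1},l_i+l'_1,\dots,l_i+l'_m,l_{i+1},\dots,l_n)$; this operad $\len$ on lengths is what makes the interchange law hold on the nose. Correspondingly, instead of a single path $\gamma$ one carries a compatible family $\{p_{\mathbf c}\}_{\mathbf c\in\{a,b\}^n}$ of paths (a natural transformation over the poset $\{a,b\}^n$), so that the passage from $a$ to $b$ can happen independently in each slot rather than sequentially. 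With these two changes the construction goes through; the algebra $\tilde X$ is then built in the same spirit. Your basic intuition (Moore paths to rigidify the homotopy pullback) is exactly right, but it needs this ``multi-length'' refinement to be compatible with operadic composition.
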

\begin{cor}\label{Ctot}
Let $X^\bullet$ be a cs-spectrum. Suppose $X^\bullet$ is given a structure of $\square$-object or $\MK$-algebra. If the morphism $\tot(X^\bullet)\to\ttot(X^\bullet)$ induced by a weak equivalence $f:\tilde \Delta^\bullet\to \Delta^\bullet$ is a stable equivalence, then $\tot(X^\bullet)$ and $\ttot(X^\bullet)$ are equivalent as nu-$A_\infty$-ring spectra.
\end{cor}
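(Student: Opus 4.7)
The plan is to apply Theorem \ref{Ttoper} to a two-coloured operad that simultaneously encodes the $A_\infty$-structures on $\tot(X^\bullet)$ and $\ttot(X^\bullet)$ together with the comparison map $f^*: \tot(X^\bullet) \to \ttot(X^\bullet)$. Define a non-symmetric topological colored operad $\oper$ with $\OB(\oper) = \{a,b\}$ by
\[
\oper(c_1, \dots, c_n;\, d) = \Map_{\CG^\Delta}\bigl(K_d,\; E(n) \hotimes K_{c_1} \, \square \, \cdots \, \square \, K_{c_n}\bigr),
\]
where $K_a = \Delta^\bullet$, $K_b = \tilde\Delta^\bullet$, and $E(n)$ is the one-point space in the $\square$-object case and $\K(n)$ in the $\MK$-algebra case; composition is defined by the same pasting rule used in the construction of $\B$ and $\tB$ in subsection \ref{SSms}. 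Then $\oper_a$ is $\B$ (respectively $\tB$) and $\oper_b$ is its analogue built from $\tilde\Delta^\bullet$, which in the $\MK$-case is precisely the operad $\tB'$ of Proposition \ref{PtB'}. The fixed weak equivalence $f: \tilde\Delta^\bullet \to \Delta^\bullet$ provides a distinguished element $\alpha \in \oper(a;b)$, and the pair $X = \{\tot(X^\bullet),\, \ttot(X^\bullet)\}$ is naturally an $\oper$-algebra whose structure morphism at $\alpha$ is the map $f^*$.

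Apply Theorem \ref{Ttoper} to $(\oper, a, b, \alpha, X)$. The main step is to verify that the resulting operad $\toper$ is an $A_\infty$-operad. By condition (2) of the theorem,
\[
\toper(n) \;\simeq\; \oper_a(n) \times^h_{\oper(a^n;\,b)} \oper_b(n),
\]
and both structure maps in this square are induced by $f$: pre-composition $\oper_a(n) \to \oper(a^n;b)$ by $f$ itself, and post-composition $\oper_b(n) \to \oper(a^n;b)$ by $f^{\, \square \, n}$. Pulling the $E(n)$-factor outside, each of the three spaces reduces to a mapping space of the form $\Map_{\CG^\Delta}(K,\, \Delta^\bullet)$ or $\Map_{\CG^\Delta}(K,\, \tilde\Delta^\bullet)$ with $K \in \{\Delta^\bullet, \tilde\Delta^\bullet\}$. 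Since $\tilde\Delta^\bullet$ is projectively cofibrant, $\Delta^\bullet$ is termwise fibrant, and $f$ is a weak equivalence between them, standard homotopy invariance of mapping spaces in $\CG^\Delta$ forces both structure maps to be weak equivalences. Hence the homotopy fibre product is equivalent to $\oper_a(n)$, which is contractible in the $\square$-case and weakly equivalent to $\K(n)$ in the $\MK$-case; either way $\toper(n)$ has the homotopy type of the $n$-th component of an $A_\infty$-operad.

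Finally, condition (3) of Theorem \ref{Ttoper} supplies a symmetric spectrum $\tilde X$ with morphisms $\eta_a: \tilde X \to \tot(X^\bullet)$ and $\eta_b: \tilde X \to \ttot(X^\bullet)$ compatible with $\zeta_a: \toper \to \oper_a$ and $\zeta_b: \toper \to \oper_b$ respectively, such that $\eta_a$ is a level equivalence and $\eta_b = \alpha_* \circ \eta_a = f^* \circ \eta_a$ in $\Ho(\SP)$. The hypothesis that $f^*$ is a stable equivalence then forces $\eta_b$ to be a stable equivalence, so the zig-zag
\[
\tot(X^\bullet) \xleftarrow{\eta_a} \tilde X \xrightarrow{\eta_b} \ttot(X^\bullet)
\]
exhibits the desired weak equivalence of nu-$A_\infty$-ring spectra. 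The hard part will be the homotopy invariance check for the mapping spaces appearing in the fibre square: although $f$ is a weak equivalence between a cofibrant and a fibrant object of $\CG^\Delta$, one must ensure that $\square$-powers (and, in the $\MK$-case, external tensor with $\K(n)$) preserve the property that pre- and post-composition with $f$ induce weak equivalences of mapping spaces, which requires care about the interaction of $\square$ with the projective/Reedy model structures.
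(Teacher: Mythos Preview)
Your approach is correct and essentially identical to the paper's: define the two-colored operad by $\oper(c_1,\dots,c_n;d)=\Map_{\CG^\Delta}(K_d,\,K_{c_1}\,\square\,\cdots\,\square\,K_{c_n})$ (with the extra $\K(n)$ factor in the $\MK$-case), take $\alpha=f$, give $\{\tot(X^\bullet),\ttot(X^\bullet)\}$ the evident $\oper$-algebra structure, and apply Theorem~\ref{Ttoper}. The ``hard part'' you flag is not a genuine obstacle: all three spaces in the cospan are already contractible --- $\oper_a(n)$ and $\oper_b(n)$ because $\B,\B'$ (resp.\ $\tB,\tB'$) are known $A_\infty$-operads, and $\oper(a^n;b)\cong\Map_{\CG^\Delta}(\tilde\Delta^\bullet,\Delta^\bullet)=\ttot(\Delta^\bullet)$ via Lemma~\ref{Lzetan}, which is contractible since $\tilde\Delta^\bullet$ is projectively cofibrant and $\Delta^\bullet$ is termwise contractible --- so the homotopy fibre product, hence $\toper(n)$, is contractible (times $\K(n)$ in the $\MK$-case) without any delicate analysis of how $\square$ interacts with model structures.
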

\begin{proof}[Proof of Corollary \ref{Ctot} assuming Theorem \ref{Ttoper}]
We only prove the case of a $\square$-object. The case of $\MK$-algebra is completely analogous.\par
We define a colored operad $\oper$ as follows:
\[
\begin{split}
a=\Delta^\bullet, 
&
\quad b=\tilde\Delta^\bullet, 
\\
\oper(c_1,\dots,c_n;d)
&
=\Map_{\CCG}(d,c_1\, \square \,\cdots\, \square \, c_n),
\end{split}
\]
where the composition is naturally defined using the composition of $(\CCG)^{op}$. Clearly $\oper_a=\B$ and $\oper_b=\B'$. We put $\alpha=f$. We define an $\oper$-algebra $Y$ by $Y_c=\Map_{\CCG}(c,X^\bullet)$ with structure morphisms induced by the product on $X^\bullet$. Clearly $Y_a=\tot(X^\bullet)$ and $Y_b=\ttot(X^\bullet)$. We apply Theorem \ref{Ttoper} to $(\oper,Y,\alpha)$ and obtain an $\toper$-algebra $\tilde Y$ with morphisms $\eta_c:\tilde Y\to Y_c$ ($c=a,b$). $\toper$ is an $A_\infty$-operad by the second condition  of the theorem, and  the first and third conditions and the assumption of the corollary  imply $\eta_a$ and $\eta_b$ are weak equivalences of nu-$A_\infty$-ring spectra.
\end{proof}
The rest of this subsection is devoted to the proof of Theorem \ref{Ttoper}. The main task is  the construction of the operad $\toper$.
If the pullback $\oper_a(n)\times_{\oper(a^n;b)}\oper_b(n)$ has the `correct' homotopy type, we may define the $n$-th space of $\toper$ as this pullback and the composition as the component-wise composition. Our task is to construct a model of the homotopy pullback $\oper_a(n)\times^h_{\oper(a^n;b)}\oper_b(n)$ which carries a unital and associative composition. $\toper$ is defined in the part (5) of the following definition. It is something like 'multiple version' of Moore's loop space.  The reader who quickly wants to get some intuition  may jump to Example \ref{Etoper}.
\begin{defi}\label{Dtoper} Let $\oper$ be a colored operad and $a$, $b$ two objects of $\oper$, and $\alpha$  an element of $\oper(a\,;\, b)$. Let $\R_{\geq 0}$ 
denote the space of  non-negative 
real numbers with the usual  topology.
\begin{itemize2}
\item[(1)] We define an operad $\len$ as follows. Put  
$\len(n)=(\R_{\geq 0})^{\times n}$. For $l=(l_1,\dots,l_n)\in\len(n)$ and $l'=(l'_1,\dots,l'_m)\in\len(m)$, the composition $l\circ_il'\in\len(n+m-1)$ is given by
$l\circ_i l'=(l_1,\dots, l_{i-1},l_i+l'_1,\dots, l_i+l'_m,l_{i+1},\dots, l_n).$
The unit is $0\in\R_{\geq 0}$.
\item[(2)] We give the set $C=\{a,b\}$ an order $b< a$ and give the $n$ times product $C^n$ the  product partial order. We regard $C^n$ as a category in the usual manner. We define a functor 
$\FO: C^n\longrightarrow \CG$ as follows. We put $\FO(\col)=\oper(\col; b)$ for an element (object) $\col\in C^n$,  and for two elements  $\col=(c_1,\dots,c_n)$ and $\col'=(c'_1,\dots,c'_n)$ with $\col\leq \col'$, we put 
\[
\FO(\col\leq \col')(f)=f\circ (\beta_1,\dots,\beta_n),
\] 
where $\beta_i=\alpha$ if $c_i<c'_i$, and $\beta_i=\id$ if $c_i=c'_i$. 
\item[(3)] Let $l\in\len(n)$ and $\col=(c_1,\dots,c_n)\in C^n$. We put
\[
l_0=\max\{\,l_i\mid i=1,\dots, n\},\quad
l(\col )=\left\{
\begin{array}{ll}
0
&
\text{ if }\col=(a,a,\dots,a)
\\
\max\{\,l_i\mid c_i=b\}
&
 \text{ otherwise }
\end{array}\right. .
\]
We define a functor
$\GL:C^n\longrightarrow \CG$
by $\GL(\col)=[\,l(\col),l_0\,]$ and $\GL(\col\leq\col')$ being the usual inclusion. (Note that $l(\col')\leq l(\col)$ if $\col\leq\col'$.)
\item[(4)] For an element $l\in\len(n)$ let $\bar\oper(l)$ denote the set of all natural transformations $\GL\to \FO$. So an element of $\bar\oper(l)$ is a collection $\{p_\col\}_\col$ of paths $p_\col:[l(\col),l_0]\to \oper (\col ;b)$ which satisfy certain compatibility condition.  We define a space $\boper(n)\in\TOP$ as follows. As a set, we put 
\[
\bar \oper(n)=\coprod_{l\in\len(n)}\bar\oper(l).
\]
The topology of $\boper(n)$ is analogous to the compact-open topology of a mapping space. The open subbasis is defined as follows. Let $U$ be an open subset of $\len(n)$, $K$  a compact subset of $\R_{\geq 0}$ and $\{V_\col\}_{\col\,\in C^n}$ a collection of open subsets $V_\col$ of $\oper(\col;b)$. 
We put 
\[
B(U,K,\{V_\col\})=\{(l,p_\col)\in\bar\oper (n)|\ l\in U, \ p_\col(K\cap [l(\col),l_0 ])\subset V_\col\ {}^\forall \col\in C^n\}
\]
The open sub-basis of $\bar\oper(n)$ is $\{B(U,K,\{V_\col\})\}_{U,K,\{V_\col\}}$, where $U$ (resp. $K$, $V_\col$) runs through all open sets (resp. compact sets, open sets). We define a map $\lambda_n:\bar\oper(n)\to \oper(a^n,b)$ by $\lambda_n(l,\{p_\col\}_{\col\,\in C^n})=p_{a^n}(0)$. Clearly $\lambda_n$ is continuous.
 \item[(5)] Recall the notion of k-ification, which is a functor takes  a topological space to a compactly generated spaces, see \cite{hovey}. We define an operad $\toper$ as follows. $\toper(n)$ is the k-ification of the fiber product (in $\TOP$) of the following diagram:
\[
\oper_a(n)\xrightarrow{\alpha\circ-}\oper(a^n,b)\xleftarrow{\lambda_n}\bar\oper(n).
\] 
 Let $(f',l',\{p_{\col'}\}_{\col'\in C^n})\in\toper(n)$, and $(f'',l'',\{q_{\col''}\}_{\col''\in C^m})\in\toper(m)$. The $i$-th composition 
\[
(f,l,\{r_{\col}\})=(f',l',\{p_{\col'}\})\circ_i(f'',l'',\{q_{\col''}\})
\] is defined as follows. We put $f=f'\circ_i f''$ and $l=l'\circ_i l''$. For an element $\col=(c_1,\dots, c_{n+m-1})\in C^{n+m-1}$, we define three elements $\ocol_a$, $\ocol_b\in C^n$, and $\ucol\in C^m$ by $\ocola=(c_1,\dots, c_{i-1},a, c_{i+m},\dots, c_{n+m-1})$, $\ocolb=(c_1,\dots, c_{i-1},b, c_{i+m},\dots, c_{n+m-1})$, and $\ucol=(c_i,\dots,c_{i+m-1})$. If $l(\col)\leq l'_i$, we put
\[
r_{\col}(t)=\left\{
\begin{array}{ll}
p_{\ocola}(t)\circ _i f'' & (l(\col )\leq t \leq l'_i)\\
p_{\ocolb}(t)\circ _iq_{a^m}(t-l'_i)& (l'_i< t\leq l_0).
\end{array}\right.
\]
If $l(\col)>l'_i$, we put
\[
r_{\col}(t)=p_{\ocolb}(t)\circ _iq_{\ucol}(t-l'_i).
\]
Here, we regard $p_{\ocola}$, $p_{\ocolb}$ etc. as continuous maps from $\R$ by extending the domain using  constant maps if necessary.
\item[(6)] We define a morphism of operads $\zeta_a:\toper\to\oper_a$ as the projection to the first component, and another morphism $\zeta_b:\toper\to \oper_b$ by
$(f,l,\{p_\col\})\longmapsto p_{b^n}(l_0)$.
\end{itemize2}
\end{defi}

\begin{rem}\label{Rintuitivedef}
A more intuitive (but less convenient) description of $\boper(l)$. Let $L_1<\dots <L_N$ be the different values of $l_1,\dots, l_n$. Let $\col_j$ be the minimum of $\col$ such that $l(\col)=L_j$ (so $\col_N=b^n$). Then there is a natural bijection:
\[
\boper (l)\cong \oper (a^n;b)^{[0,L_1]}\times_{\oper (a^n;b)} \oper(\col_1;b)^{[L_1,L_2]}\times_{\oper (\col_1;b)}\cdots 
\times_{\oper(\col_{N-1};b)} \oper (\col_N;b).
\]
\end{rem}

\begin{exa}[$\toper(1)$, $\toper(2)$]\label{Etoper}
(1) An element of $\toper(1)$ is a 4-tuple $(f,l,p,q)$ consisting of 
\[
f\in \oper(a\,;a),\quad
l\geq 0,\quad p\in\Map(\,[0,l\,]\,, \oper(a\,;b)), \quad q\in\oper(b\,;b)
\] which satisfies $\alpha\circ f=p(0)$ and $p(l)=q\circ \alpha$. For two elements $(f,l,p,q),\ (f',l',p',q')\in\toper (1)$, the composition 
\[
(\bar f,\bar l,\bar p, \bar q)=(f,l,p,q)\circ (f',l',p',q')
\] is defined as follows. We put $\bar f=f\circ f'$ and $\bar l=l+l'$, and define $\bar p$ as in Figure (\ref{Fcompositiontoper(1)}). $\bar p$ is actually continuous because $p(l)\circ f'=q\circ \alpha\circ f'=q\circ p'(0)$. Note that the definition of composition is similar to that of the product of Moore's loop space. This composition is clearly associative and the unit is $(\id_a,0,\alpha,\id_b)$.  
 \\
\indent (2) An element of $\toper(2)$ is (essentially) a 6-tuple $(f,l_1,l_2,p,q,r)$ consisting of   
\[
\begin{split}
f\in\oper(a^2;a),\quad l_1\geq 0,\quad l_2\geq 0,\quad 
&
p\in\Map
([0,L_1]\, ,\oper(a^2;b)),
\\
&
 q\in\Map([L_1,L_2 ]\, , \oper(c_1,c_2\, ;b)),\quad r\in\oper(b^2;b),
\end{split}
\] where $L_1$ is the smaller of $l_1$, $l_2$ and $L_2$ is the larger, and  $(c_1,c_2)=(b,a)$ if $l_1\leq l_2$, and $(c_1,c_2)=(a,b)$ otherwise (see Figure \ref{Fbaroper(2)}). This 6-tuple is imposed the following conditions: 
\[
\begin{split}
\alpha\circ f=p(0),\quad p(l_1)=q(l_1)\circ_1\alpha,\quad  q(l_2)=r\circ_2\alpha
& 
\qquad \text{ if }l_1\leq l_2
\\
\alpha\circ f=p(0),\quad p(l_2)=q(l_2)\circ_2\alpha,\quad   q(l_2)=r\circ_1\alpha
&
\qquad \text{otherwise}.
\end{split}
\]  
 For two elements $(f,l_1,l_2,p,q,r)\in\toper(2)$,  and$(f',l',p',q')\in\toper(1)$, the composition 

\[
(\bar f,\bar l_1,\bar l_2,\bar p,\bar q,\bar r)=(f,l_1,l_2,p,q,r)\circ_1(f';l',p',q')
\]
 is defined as follows. We put $\bar f=f\circ f'$, $\bar l_1=l_1+l'$, $\bar l_2=l_2$, and $\bar r=r\circ_1 q'$ and define $\bar p$ and $\bar q$ as in Figure (\ref{Fcompositiontoper(2)}). 
\end{exa}

Now we shall define the $\toper$-algebra $\tilde X$ in Theorem \ref{Ttoper}. 
\begin{defi}\label{DtildeX} Let $\oper$ be a colored operad, and $a, b$ two objects of $\oper$, and $\alpha\in\oper(a\,;b)$. Let $X$ be an $\oper$-algebra.
\begin{itemize2}
\item[(1)] A symmetric spectrum $\tilde X$ is defined as follows. We first define a space $\bar X'_k$ for each $k\geq 0$. An element of $\bar X'_k$ is a pair $(L, h)$ of non-negative number $L\in \R_{\geq 0}$ and a path $h:[0,L]\to X_{b,k}$. $\bar X'_k$ is topologized  analogously to  $\boper(n)$ in Definition \ref{Dtoper}. We define $\bar X_k$ as the k-ification of the quotient $\bar X'_k/\{(L,*_L)\mid L\geq 0\}$ where $*_L$ denotes the constant path at the basepoint, and regard the point represented by $\{(L,*_L)\mid L\geq 0\}$ as the basepoint of $\bar X_k$. The sequence $\bar X=\{\bar X_k\}_k$ is  regarded as a symmetric spectrum in the obvious manner. Then, we put
\[
\tilde X:=X_a\times_{\alpha,X_b, ev_0}\bar X,
\]
where $ev_0:\bar X\to X_b$ is the evaluation at $0\in [0,L]$.
We shall define an action of $\toper$ on $\tilde X$. Let $(f,l,\{p_{\col}\})\in\toper (n)$ and $(x_i,\langle L_i,h_i\rangle)\in \tilde X_{k_i}$   ($i=1,\dots, n$).  The value of the structure morphism $\toper(n)\,\hotimes\, \tilde X^{\otimes n}\to  \tilde X$, 
\[
(x,\langle L, h\rangle)=(f,l,\{p_{\col}\})((x_1,\langle L_1,h_1\rangle),\dots, (x_n,\langle L_n,h_n\rangle))
\] is defined as follows:
\[
\begin{split}
x&=f(x_1,\dots, x_n),\\
L&=\max \{ l_1+L_1,\dots, l_n+L_n\},\\
h(t)&=p_{\col}(t)(h'_1(t),\dots, h'_n(t)).
\end{split}
\]
In the last formula, we write $\col=\min\{\,\col'\mid l(\col')\leq t\,\}$ and 
\[
h'_i(t)=
\left\{
\begin{array}{ll}
x_i & t\leq l_i\\ 
h_i(t-l_i) & \text{otherwise,}
\end{array}\right.
\]
and  extend the domains of $p_\col$ and $h'_i$ using  constant maps if necessary.\par
\item[(2)] We define a morphism of symmetric spectra $\eta_a: \tilde X\to X_a$ as the projection to the first component, and another morphism  $\eta_b:\tilde\to X_b$ by $(x,\langle L,h\rangle)\longmapsto h(L)$. 
\end{itemize2}
\end{defi}
We shall show the definitions given above are well-defined.
\begin{lem}\label{Lk-ification}
For each $n\geq 0$, the space $\toper(n)$ is compactly generated.
\end{lem}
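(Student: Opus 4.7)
By construction $\toper(n)$ is obtained by k-ification of a topological space, so it is automatically a k-space; the content of the lemma is therefore that it is weak Hausdorff (in the sense of \cite[Def.~2.4.21]{hovey}, compactly generated means weak Hausdorff k-space). Since k-ification preserves weak Hausdorff-ness, it is enough to verify that property for the underlying $\TOP$-level fibre product
\[
P := \oper_a(n) \times_{\oper(a^n;b)} \boper(n).
\]
Now $P$ is a subspace of $\oper_a(n) \times \boper(n)$, and both subspaces and finite products (in $\TOP$) of weak Hausdorff spaces remain weak Hausdorff. Since $\oper_a(n) \in \CG$ is weak Hausdorff by hypothesis, the problem reduces to showing that $\boper(n)$ itself is weak Hausdorff.

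For this I would construct a continuous injection
\[
\iota : \boper(n) \longrightarrow \len(n) \times \prod_{\col \in C^n} \Map\bigl([0,\infty),\, \oper(\col;b)\bigr),
\]
given by $(l, \{p_\col\}) \mapsto (l, \{\tilde p_\col\})$, where $\tilde p_\col(t) = p_\col(\max(l(\col), \min(t, l_0)))$ extends $p_\col$ by constants outside $[l(\col), l_0]$. The codomain is weak Hausdorff, being a product of the Hausdorff space $\len(n)$ with compact-open mapping spaces whose targets lie in $\CG$. Injectivity is immediate, since $p_\col$ is recovered by restricting $\tilde p_\col$ to $[l(\col), l_0]$. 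Once $\iota$ is known to be continuous, weak Hausdorff-ness of $\boper(n)$ follows by pulling back the closed diagonal of the target.

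The main obstacle is the continuity check for $\iota$. A subbasic open of the target is specified by a compact $K' \subset [0,\infty)$ and open subsets $U \subset \len(n)$, $V_\col \subset \oper(\col;b)$; its preimage is not directly of the form $B(U, K, \{V_\col\})$, because the evaluation of $\tilde p_\col$ at points of $K'$ depends in a piecewise manner on the endpoints $l(\col)$ and $l_0$, which move with $l$. The verification therefore requires shrinking $U$ so that these endpoints vary inside a controlled compact range, covering $K' \cap [l(\col), l_0]$ and the points $\{l(\col), l_0\}$ by finitely many compact sets on which the path values are forced into $V_\col$, and then exhibiting the preimage as a union of sets of the form $B(U', K, \{V_\col\})$. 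This is a tedious but essentially routine point-set check, and once executed it completes the proof.
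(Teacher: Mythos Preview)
Your approach is essentially identical to the paper's: both reduce to showing that $\boper(n)$ (or its k-ification) is weak Hausdorff, and both do this by exhibiting a continuous injection into $\len(n)\times\prod_{\col}\Map(\R_{\geq 0},\oper(\col;b))$ given by extending each $p_\col$ by constants. The paper's proof is considerably terser---it simply asserts the existence of the continuous monomorphism without the continuity discussion you flag as the ``main obstacle''---so your version is, if anything, more scrupulous about the point-set details. One small remark: your phrase ``pulling back the closed diagonal'' is slightly off as a justification for weak Hausdorff (that is the Hausdorff criterion); the correct argument, which your setup supports, is that for any continuous injection $f:X\to Y$ with $Y$ weak Hausdorff and any compact Hausdorff $K\xrightarrow{g} X$, one has $g(K)=f^{-1}(f(g(K)))$ closed in $X$.
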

\begin{proof}
It is enough to see the k-ification of $\bar\oper(n)$ is weak Hausdorff. We shall define a continous monomorphism $\bar\oper(n)\to \len (n)\hat\times\hat\prod_{\col}\Map(\R_{\geq 0},\oper(\col,b))$ by 
$(l,\{p_\col \})\longmapsto (l,\{\tilde p_\col\})$.
Here, $\tilde p_\col$ is the extension of $p_\col$ by a constant map and $\hat\times$ and $\hat\prod$ denote the products in $\TOP$. 
Clearly, the k-ification of $\len (n)\hat\times\hat\prod_{\col}\Map(\R_{\geq 0},\oper(\col,b))$ is weak Hausdorff and so is the k-ification of $\bar\oper(n)$.
\end{proof}
\begin{lem}\label{Lwelldefinedtoper}
The topological operad $\toper$ given in Definition \ref{Dtoper}, (5) is well-defined. 
\end{lem}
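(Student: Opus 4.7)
The claim asserts that the data in Definition \ref{Dtoper}(5) — the spaces $\toper(n)$, the partial compositions $\circ_i$, and the implicit unit — form a topological operad. Since Lemma \ref{Lk-ification} already provides the compactly generated structure, three things remain: the output of each $\circ_i$ genuinely lies in $\toper(n+m-1)$, the composition is continuous, and the associativity and unit axioms hold.

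The key check for well-definedness concerns the piecewise definition of $r_\col$. When $l(\col)\leq l'_i$, I first observe that this assumption forces $l(\ocolb)=l'_i$ (every $b$-labelled index of $\ocolb$ other than $i$ contributes $l'_j\leq l'_i$), so $p_{\ocolb}(l'_i)$ sits at the left endpoint of its domain and is well-defined. The naturality of $\{p_{\col'}\}:\GL\to \FO$ along $\ocolb\leq\ocola$ in $C^n$ then yields
\[
p_{\ocola}(l'_i)=p_{\ocolb}(l'_i)\circ(\id,\ldots,\alpha,\ldots,\id),
\]
with $\alpha$ in the $i$-th slot, so that operad associativity in $\oper$ gives $p_{\ocola}(l'_i)\circ_i f'' = p_{\ocolb}(l'_i)\circ_i(\alpha\circ f'')$. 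The fiber condition on $(f'',l'',\{q_{\col''}\})$ identifies $\alpha\circ f''$ with $q_{a^m}(0)$, so the two pieces of $r_\col$ agree at $t=l'_i$ and $r_\col$ is continuous. Naturality of the resulting $\{r_\col\}$ as a natural transformation $\GL\to\FO$ is verified by a case analysis on whether $l(\col)$ and $l(\col')$ lie below or above $l'_i$; in each case it reduces to the naturality of $\{p_{\col'}\}$ and $\{q_{\col''}\}$ combined with operad associativity. Finally, the fiber condition $r_{a^{n+m-1}}(0)=\alpha\circ f$ is immediate: $l(a^{n+m-1})=0\leq l'_i$ selects the first piece, giving $r_{a^{n+m-1}}(0)=p_{a^n}(0)\circ_i f''=(\alpha\circ f')\circ_i f''=\alpha\circ f$.

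For continuity of $\circ_i$, the plan is to show that the preimage of each sub-basic open $B(U,K,\{V_\col\})$ is open, by breaking the compact set $K$ according to the threshold $l'_i$ and invoking continuity of composition in $\oper$ together with continuity of $\max$ and $+$ on $\R_{\geq 0}$; the k-ification introduces no obstruction, since continuity into a k-ified space can be verified on compact test domains. The unit of $\toper(1)$ is $(\id_a,0,\{p_a\equiv\alpha,\,p_b\equiv\id_b\})$ defined on the singleton $[0,0]$; the natural transformation axiom reduces to the trivial identity $\alpha\circ\id_a=\id_b\circ\alpha$, and the unit axiom for $\circ_i$ then follows by direct substitution into the composition formula. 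Associativity follows from the associativity of composition in $\oper$, from that of $+$ on $\R_{\geq 0}$, and from a case analysis on how multiple thresholds $l'_i$ and $l''_j$ subdivide the interval $[l(\col),l_0]$ under iterated composition; in each sub-case the piecewise descriptions of $r_\col$ in the two parenthesisations coincide on the nose.

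The main obstacle is the systematic case analysis underpinning the naturality of $\{r_\col\}$ and associativity: one must enumerate the orderings of $l(\col),\,l(\col')$ relative to $l'_i$ and, for associativity, track how several nested thresholds cut the interval $[l(\col),l_0]$ in the two parenthesisations of a triple composition. The bookkeeping is intricate but mechanical, and constitutes the real content of the lemma.
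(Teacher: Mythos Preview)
Your approach matches the paper's: verify the gluing at $t=l'_i$ via naturality (your observation that $l(\col)\le l'_i$ forces $l'(\ocolb)=l'_i$ is a nice clarification the paper leaves implicit), then check naturality of $\{r_\col\}$, associativity, the unit, and continuity.

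The difference is one of emphasis, and here you have inverted the paper's priorities. The paper dismisses the algebraic verifications (naturality of $\{r_\col\}$ and associativity of $\circ_i$) as ``trivial routine work'' and devotes almost the entire proof to continuity of the composition. That argument is more delicate than your plan suggests: the threshold $l'_i$ at which the piecewise formula for $r_\col$ switches is itself moving, so one cannot simply ``break $K$ according to $l'_i$.'' The paper instead stratifies by the distinct values $L_1<\cdots<L_N$ of $l_1,\dots,l_{n+m-1}$, chooses $\epsilon_j>0$ and finite covers $K^{j,k}$ of $K\cap[l'_i,l_0]$, finds open sets $W^{j,k}(\ocolb)$, $X^{j,k}(\ucol)$, $Y^j(\ocola)$, $Z^j$ witnessing continuity of the operad composition in $\oper$, and then shows that the resulting sub-basic neighborhoods $S_1\times S_2$ in $\toper'(n)\times\toper'(m)$ land in $R\times B(U,K,\{V_\col\})$ even after perturbing $l'_i$ by less than $\epsilon$. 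Your sketch captures the idea but not this $\epsilon$-enlargement step, which is where the work lies.

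One minor correction: your remark that ``continuity into a k-ified space can be verified on compact test domains'' is not quite the mechanism. The paper's point is that k-ification preserves finite products (so $\toper(n)\times_{\CG}\toper(m)$ is the k-ification of the $\TOP$-product), and it suffices to prove continuity of $\toper'(n)\hat\times\toper'(m)\to\toper'(n+m-1)$ in $\TOP$, since k-ification only refines the topology on the source.
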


\begin{proof}
We use the notations in Definiton \ref{Dtoper}, (5). By  naturality, 
\[
p_{\ocola}(l'_i)\circ _i f''=(p_{\ocolb}(l'_i)\circ_i \alpha)\circ_i f''=p_{\ocolb}\circ_iq_{a^m}(0),
\] which implies $r_{\col}:[l(\col ),l_0]\to\oper(\col ;b)$ is continuous. It is  a trivial routine work  to verify $\{r_\col\}$ forms a natural transformation and  the composition of $\toper$ is associative. The unit is $(\id_a,l_1=0,\alpha,\id_b)$.\par
 We shall show the composition is continuous. Let $\toper'(n)$ denote  the fiber product of the diagram 
\[
\oper_a(n)\xrightarrow{\alpha\circ-}\oper(a^n,b)\xleftarrow{\lambda_n}\bar\oper(n),
\]
in $\TOP$.  As the k-ification preserves  products (see \cite{hovey}), it is enough to show the map
\[
(-\circ_i-):\toper'(n)\times\toper'(n)\longrightarrow \toper'(n+m-1)
\]
defined by the same formula as the composition of $\toper$ is continuous.
 Take elements $(f',l',\{p_{\col'}\})\in\toper'(n)$ and $(f'',l'',\{q_{\col''}\})\in\toper'(m)$. Let $R\times B(U,K,\{V_\col\})$ be an open neighborhood of $(f,l,\{r_{\col}\}):=(f',l',\{p_{\col'}\})\circ_i(f'',l'',\{q_{\col''}\})$. Let $L_1<\dots <L_N$ be all the different values of $l_1,\dots,l_{n+m-1}$. We put
\[
A_j=\{\col\in C^{n+m-1}\mid l(\col) =L_j\}.
\]
Let $j_0$ be the number such that $L_{j_0}\leq l'_i< L_{j_0+1}$. Let $j$ be a number with $1\leq j\leq j_0$. By an elementary argument, we see there exist 
\begin{enumerate}
\item a number $\epsilon_j>0$,
\item an integer $M_j\geq 1$,
\item a compact subset $K^{j,k}\subset\R_{\geq 0}$ for each $k=1,\dots, M_j$,
\item an open subset $W^{j,k}(\ocolb)\subset\oper (\ocol_b;b)$ for each $k=1,\dots, M_j$ and $\col\in A_j$,
\item an open subset $X^{j,k}(\ucol)\subset\oper (\ucol\,; b)$ for each $k=1,\dots, M_j$ and $\col\in A_j$, and
\item an open subset $Y^j(\ocola)\subset\oper (\ocola; b)$  for each  $\col\in A_j$, and 
\item an open subset $Z^j\subset\oper_a(n)$
\end{enumerate}
which satisfy the following conditions:
\begin{enumerate}
\item $K^{j,1}\cup\cdots\cup K^{j ,M_j}=K\cap [l'_i\, ,l_0]$,
\item  $p_{\ocolb}(K^{j,k}_{\pm\eps_j})\subset W^{j,k}(\ocolb)$, \ $q_{\ucol}(K^{j,k}_{\pm\eps_j}-l'_i)\subset X^{j,k}(\ucol)$,
\item $p_{\ocola}(([L_j,l'_i]\cap K)_{\pm \eps_j})\subset Y^j(\ocola)$ and $f''\in Z^j$,
\item $(-\circ_i-)(W^{j,k}(\ocolb)\times X^{j,k}(\ucol))\subset V_\col$ and $(-\circ_i-)(Y^j(\ocola)\times Z^j)\subset V_\col$.
\end{enumerate}
Here, for a subset $S\subset \R_{\geq 0}$ and a number $\delta>0$, we use the following notations:
\[ 
S_{\pm \delta}=\{t\in \R_{\geq 0} \mid [t-\delta,t+\delta]\cap S\not=\emptyset \},\quad S-\delta=\{t\mid t+\delta\in S\}.
\]
\indent Similarly, for a number $j$ with  $j_0<j<N$, we can take 
\begin{enumerate}
\item a number $\epsilon_j>0$,
\item an integer $M_j\geq 1$,
\item a compact subset $K^{j,k}\subset\R_{\geq 0}$ for each $k=1,\dots, M_j$,
\item an open subset $W^{j,k}(\ocolb)\subset\oper (\ocol_b;b)$ for each $k=1,\dots, M_j$ and $\col\in A_j$,
\item an open subset $X^{j,k}(\ucol)\subset\oper (\ucol; b)$ for each $k=1,\dots, M_j$ and $\col\in A_j$,
\end{enumerate}
satisfying conditions similar to the above first and second conditions (but in the first condition, $l_i'$ is replaced by $L_j$).\par
We put $\eps_0=\min\{|L_j-L_{j'}| \mid j\not= j'\}$ and put $\eps=\frac{1}{2}\min\{\eps_0,\eps_1,\dots, \eps_N\}$. Let $U_1\times U_2\subset\len(n)\times\len(m)$ be an open neighborhood of $(l',l'')$ such that $(-\circ_i-)(U_1\times U_2)\subset U$, and for each $(\bar l',\bar l'')\in U_1\times U_2$, $\max\{|\bar l'_i-l'_i|, |\bar l''_i-l''_i|\}<\eps$. Let $R_1\times R_2\subset \oper_a(n)\times \oper_a(m)$ be an open neighborhood of $(f',f'')$ such that $(-\circ_i-)(R_1\times R_2)\subset R$. We put
\[
\begin{split}
D&^{j,k}=K^{j,k}_{\pm \eps_j},\quad E^j= ([L_j,l'_i]\cap K)_{\pm\eps_j},\\
S_1&=R_1\times\Big(\bigcap_{j=1}^{N}\bigcap_{k=1}^{M_j}B\big( U_1,D^{j,k},\{ W^{j,k}(\ocolb )|\col\in A_j\}\big)\cap \bigcap_{j\leq j_0}B\big (U_1,E^j, \{Y^j(\ocola)|\col\in A_j\}\big)\Big)\ \subset \toper'(n)\\
S_2&=\big( R_2\cap\bigcap_{j\leq j_0}Z_j\big) \times \bigcap_{j=1}^{N}\bigcap_{k=1}^{M_j}B\big( U_2,D^{j,k},\{ X^{j,k}(\ucol )|\col\in A_j\}\big)\ \subset \toper'(m)
\end{split}
\] 
Here precisely speaking, $\{ W^{j,k}(\ocolb )|\col\in A_j\}$ denotes the collection $\{ W^{j,k}_{\col'}\}_{\col'\in C^n}$ defined by \[
\bar W^{j,k}_{\col'}=\left\{
\begin{array}{ll}
W^{j,k}(\ocolb)
&
 \text{ if } \col'=\ocolb \text{ for some }\col\in A_j
\\
\oper(\col' ;b)
&
\text{ otherwise}
\end{array}\right. ,
\]
and $\{Y^j(\ocola)|\col\in A_j\}$ and $\{ X^{j,k}(\ucol )|\col\in A_j\}$ are similarly understood.\par
By definition, $(f',l',\{p_{\col'}\})\in S_1$ and $(f'', l'',\{q_{\col''}\})\in S_2$. We shall show $(-\circ_i -)(S_1\times S_2)\subset R\times B(U,K,\{V_\col\})$. Take elements $(\bar f',\bar l',\{\bar p'_{\col'}\})\in S_1$ and $(\bar f'',\bar l'',\{\bar q_{\col''}\})\in S_2$ and put $(\bar f,\bar l,\{\bar r_{\col}\})=(\bar f',\bar l',\{\bar p_{\col'}\})\circ_i (\bar f'',\bar l'',\{\bar q_{\col''}\})$. Take $\col\in C^{n+m-1}$ and let $l(\col)=L_j$. Then $\bar l(\col)\in(L_j-2\eps,L_j+2\eps)$. Take $t\in [\bar l(\col),\bar l_0]\cap K$.  If $t\leq \bar l'_i$, as $\bar l'_i<l'_i+\eps$, we have $t\in E^j$, which implies  $\bar r_{\col}(t)=\bar p_{\ocola}(t)\circ_i \bar f''\in V_\col$. On the other hands, if $t> \bar l'_i$, as $\bar l'_i\geq l'_i-\eps$, we have $t\in D^{j,k}$ and $t-l'_i-\eps<t-\bar l'_i<t-l'_i+\eps$, which imply $\bar p_{\ocolb}(t)\circ_i \bar q_{\ucol}(t-\bar l'_i)\in V_\col$. Thus, we have $(\bar f,\bar l,\{\bar r_{\col}\})\in R\times B(U,K,\{V_\col\})$.
\end{proof}

Proof of the following lemma is similar to that of Lemma \ref{Lwelldefinedtoper} and much easier so omitted.
\begin{lem}\label{LwelldefinedtildeX}
The $\toper$-algebra $\tilde X$ given in Definition \ref{DtildeX}, (1) is well-defined.\qed
\end{lem}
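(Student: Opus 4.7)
My strategy mirrors the proof of Lemma \ref{Lwelldefinedtoper}, with the extra task of verifying that a family of elements valued in different $X_c$'s glues continuously into a single path into $X_b$. I would first verify that the output $(x,\langle L,h\rangle)$ of the structure morphism genuinely lies in $\tilde X$. The pullback condition $h(0)=\alpha(x)$ follows from $p_{a^n}(0)=\alpha\circ f$ combined with $h'_i(0)=x_i$. The subtle point is continuity of $h$ at each breakpoint $t=l_i$: on the low side $\col$ has its $i$-th entry equal to $a$ and $h'_i(t)=x_i\in X_{a,k_i}$, while on the high side it has entry $b$ and $h'_i(t)=h_i(t-l_i)\in X_{b,k_i}$. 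Writing $\col^{(a)}$ and $\col^{(b)}$ for the two values of $\col$ that differ only at slot $i$, the naturality of $\{p_\col\}$ gives $p_{\col^{(a)}}(l_i)=p_{\col^{(b)}}(l_i)\circ_i\alpha$; combined with the defining identity $h_i(0)=\alpha(x_i)$ of the pullback $\tilde X$, this shows that both one-sided expressions for $h(l_i)$ produce the same element of $X_{b,k_{\leq n}}$. The same identity resolves the apparent type mismatch in the piecewise formula at $t=l_i$ itself.

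Next I would check the operadic identities. The unit $(\id_a,0,\alpha,\id_b)\in\toper(1)$ acts trivially by direct substitution. Associativity reduces to a bookkeeping check: the three expressions for $x$ agree by associativity in $\oper_a$; the three expressions for the length $L$ agree by the definition of $l'\circ_il''$ in $\len$; the breakpoints of the resulting path match those dictated by the composition in Definition \ref{Dtoper}(5), and on each subinterval the resulting element of $\oper(\col;b)$ is the same on both sides. Equivariance under $\Sigma_k$ and compatibility with the $\Sph$-action on each $\tilde X_k$ are verified pointwise in $t\in[0,L]$ from the corresponding structures on $X_{a,k}$ and $X_{b,k}$; the basepoint quotient defining $\bar X_k$ is respected because the basepoint of $X_{b,k}$ is preserved by every $p_\col(t)$ and by $\alpha$.

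The principal technical obstacle is continuity of the structure map at the topological-space level, before $k$-ification. Given a subbasic open neighborhood of the output $(x,\langle L,h\rangle)$ determined by compact subsets $K^{j,k}\subset[0,L]$ subdividing $[0,L]$ according to the breakpoints of $\col(t)$, together with opens $V_\col\subset\oper(\col;b)$ and opens around $h(K^{j,k})$ in $X_{b,k_{\leq n}}$, I would build the corresponding product neighborhood on the input side by perturbing each $l_i$ and each $L_i$ by less than $\frac{1}{2}\min\{|L_j-L_{j'}|\mid j\ne j'\}$ and choosing small open neighborhoods of each $p_\col$ and each $h_i$ on suitably thickened compact sets, exactly as in the argument for Lemma \ref{Lwelldefinedtoper}. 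Continuity of composition and of evaluation in $\oper$ and of the action on $X$ then ensure that the chosen product lands in the target neighborhood. Finally, that each $\tilde X_k$ is compactly generated follows by the argument of Lemma \ref{Lk-ification} applied to the natural embedding of $\bar X_k$ into $\R_{\geq 0}\,\hat\times\,\Map(\R_{\geq 0},X_{b,k})$, together with the fact that pullbacks of weak Hausdorff spaces remain weak Hausdorff.
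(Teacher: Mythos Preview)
Your proposal is correct and takes essentially the same approach as the paper, which simply states that the proof is similar to that of Lemma \ref{Lwelldefinedtoper} and much easier, and therefore omits it. Your sketch supplies exactly the details the paper leaves implicit: the pullback condition via $p_{a^n}(0)=\alpha\circ f$, the gluing of $h$ at the breakpoints via naturality of $\{p_\col\}$ together with $h_i(0)=\alpha(x_i)$, and the continuity and compact-generation arguments patterned on the earlier lemma.
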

To prove Theorem \ref{Ttoper}, we need some lemmas. We shall define two continuous maps
$F:\boper (n)\longrightarrow \oper_ b (n)$, $F':\oper _b(n)\longrightarrow \boper (n)$
by $F(l,\{p_\col\})=p_{b^n}$, and $F'(x)=(1^n,\{x_{\col}\})$, where $1^n$ denotes $(1,\dots,1)\in \len(n)$ and $x_{\col}$ is defined by $x_{\col}(t)=F_{\oper}(b^n\leq\col)(x)$.

\begin{lem}\label{Lhomotopytypetoper1}
$F$ and $F'$ are homotopy equivalences which are homotopy inverse to each other.
\end{lem}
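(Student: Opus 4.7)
The strategy is to verify $F\circ F'=\mathrm{id}_{\oper_b(n)}$ on the nose and to exhibit an explicit homotopy $F'\circ F\simeq \mathrm{id}_{\boper(n)}$ built in two stages. For the first equality, note that $F'(x)=(1^n,\{x_{\col}\})$ with $x_{\col}(t)=x\circ(\beta_1,\dots,\beta_n)$, so in particular the component $x_{b^n}$ is the constant path with value $x\circ(\id,\dots,\id)=x$; then $F(1^n,\{x_{\col}\})=x_{b^n}(1)=x$.

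For the homotopy, I will define $H_s(l,\{p_\col\})$ for $s\in[0,1]$ by concatenating two stages at $s=\tfrac12$. Stage A ($s\in[0,\tfrac12]$, set $r=2s$) keeps the length vector $l$ fixed and reparametrizes each path toward its right endpoint,
\[
p^r_\col(t):=p_\col\bigl((1-r)\,t+r\,l_0\bigr),\qquad t\in[l(\col),l_0].
\]
Since $(1-r)t+r\,l_0$ lies in $[l(\col),l_0]$ whenever $t$ does, the family $\{p^r_\col\}$ remains a natural transformation $\GL\Rightarrow\FO$ by applying the naturality of $\{p_\col\}$ at the reparametrized point. At $r=1$ we reach the ``constant-path'' state $(l,\{\bar p_\col\})$, where $\bar p_\col$ is the constant path at $p_\col(l_0)$ on $[l(\col),l_0]$. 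Stage B ($s\in[\tfrac12,1]$, set $r=2s-1$) starts from that state, keeps each path constant at $p_\col(l_0)$, and linearly interpolates the length vector,
\[
l^r:=(1-r)\,l+r\cdot 1^n,\qquad q^r_\col:=\text{constant at } p_\col(l_0)\ \text{on}\ [l^r(\col),l^r_0].
\]
Naturality here is automatic once one observes that the constants $\{p_\col(l_0)\}$ are already a compatible family: naturality of the original $\{p_\col\}$ applied to $b^n\le\col$, evaluated at the point $l_0\in\{l_0\}\subset[l(\col),l_0]$, gives $p_\col(l_0)=p_{b^n}(l_0)\circ(\beta_1,\dots,\beta_n)$. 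The two stages match at $s=\tfrac12$ on the common value $(l,\{\bar p_\col\})$, and at $s=1$ we land on $(1^n,\{p_{b^n}(l_0)\circ(\beta_1,\dots,\beta_n)\})=F'\bigl(F(l,\{p_\col\})\bigr)$, as required.

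What remains is the continuity of $H$ in the (non-standard) subbase topology of $\boper(n)$. This reduces to continuity of the affine operations $(r,l,t)\mapsto(1-r)t+r\,l_0$ and $(r,l)\mapsto(1-r)l+r\cdot 1^n$, together with a tube-lemma argument: given a basic open set $B(U,K,\{V_\col\})$ containing $H_{s_0}(l^0,\{p^0_\col\})$, one uses the compactness of $K\cap[l^0(\col),l^0_0]$ (respectively $K\cap[(l^r)^0(\col),(l^r)^0_0]$ in Stage B) and the continuity of $l\mapsto l_0,\,l(\col)$ to produce a neighborhood of $(s_0,l^0,\{p^0_\col\})$ whose image lies in $B(U,K,\{V_\col\})$. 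I expect this topological verification --- matching the compact-open-style topology on $\boper(n)$ against the bookkeeping of intervals with continuously moving endpoints --- to be the only real obstacle, but it is a routine chase through the definitions. Combining $F\circ F'=\mathrm{id}$ with the homotopy $F'\circ F\simeq\mathrm{id}$ gives the claim.
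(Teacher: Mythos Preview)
Your proposal is correct and follows the paper's overall strategy: verify $F\circ F'=\id$ on the nose and then build a two-stage homotopy $\id\simeq F'\circ F$ on $\boper(n)$. The specific implementation differs. The paper first deforms the length vector so that every component becomes $l_0$ while keeping the paths $p_\col$ unchanged (their domains shrink), and then in a second stage replaces each $p^s_\col$ by the constant $p_\col(l_0)$ while rescaling all lengths to $1$. You instead first reparametrize every path toward its right endpoint, $p^r_\col(t)=p_\col((1-r)t+r\,l_0)$, keeping $l$ fixed, and only then interpolate $l$ linearly to $1^n$ with all paths already constant. Both constructions are elementary and yield the same endpoints.

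Your ordering has a small technical advantage: the gluing at the midpoint is transparent for every $\col$, including $\col=a^n$. In the paper's version, at $s=1$ the domain of $p^s_{a^n}$ is still $[0,l_0]$, and the two stages prescribe $p_{a^n}(t)$ versus the constant $p_{a^n}(l_0)$ there, which do not literally agree; your reparametrization step handles this case uniformly. Your remark that continuity reduces to a tube-lemma argument against the subbasis $B(U,K,\{V_\col\})$ is also accurate and matches how continuity is treated elsewhere in the paper for the related composition maps.
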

\begin{proof}
Clearly, $F\circ F'$ is the identity, so it is enough to give a homotopy $H:\boper (n)\times [0,2]\longrightarrow \boper (n)$ such that $H_0=id$ and $H_2=F'\circ F$.  $(l^s,\{p^s_{\col}\})=H(l,\{p_\col\},s)$ is defined as follows. When $0\leq s\leq 1$, we put $l^s(i)=(1-s)l(i)+sl(0)$, and $p^s_{\col}(t)=p_\col(t)$. (Note that $l^s(i)\geq l(i)$ so this is possible.) When $1\leq s\leq 2$, we put $l^s(i)=(2-s)l(0)+s-1$ and $p^s_{\col}(t)=p_{\col}(l(0))$. 
\end{proof}
\begin{lem}\label{Lhomotopytypetoper2}
The map $\lambda_n:\boper (n)\longrightarrow \oper_a(n)$ is a Serre fibration.
\end{lem}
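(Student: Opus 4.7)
The plan is to lift homotopies explicitly by lengthening the paths $p_\col$ at their initial endpoint using the given homotopy. Suppose we are given a lifting problem consisting of a map $f: D^k \to \boper(n)$ with $f(x) = (l^x, \{p^x_\col\})$ together with a homotopy $H: D^k \times I \to \oper(a^n; b)$ such that $\lambda_n \circ f = H|_{s=0}$. I will construct $\tilde H: D^k \times I \to \boper(n)$ by uniformly translating every coordinate of $l$ upward by $s$ and prepending the reversed path $t \mapsto H(x, s-t)$ to the component of the natural transformation indexed by $a^n$ (which is the only component whose interval actually grows, since $l(a^n)=0$).

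Concretely, set $l^{x,s}_i = l^x_i + s$ for $i=1,\dots,n$. Then $l^{x,s}(\col) = l^x(\col) + s$ for $\col \neq a^n$, while $l^{x,s}(a^n)$ remains $0$, and the maximum becomes $l^{x,s}_0 = l^x_0 + s$. For $\col \neq a^n$ define $p^{x,s}_\col(t) = p^x_\col(t-s)$ on $[l^x(\col)+s,\, l^x_0 + s]$. For $\col = a^n$ put
\[
p^{x,s}_{a^n}(t) = \begin{cases} H(x,\, s-t) & 0 \leq t \leq s, \\ p^x_{a^n}(t-s) & s \leq t \leq l^x_0 + s. \end{cases}
\]
The two pieces agree at $t=s$ because $p^x_{a^n}(0) = \lambda_n(f(x)) = H(x,0)$. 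The naturality condition $\FO(a^n \leq \col)(p^{x,s}_{a^n}(t)) = p^{x,s}_\col(t)$ holds on the domain $[l^x(\col)+s,\, l^x_0+s]$ of $p^{x,s}_\col$ because there $t \geq s$, so only the translated (second) piece of $p^{x,s}_{a^n}$ is relevant, and the required identity reduces to the corresponding property of $\{p^x_\col\}$. By construction $\lambda_n \tilde H(x,s) = p^{x,s}_{a^n}(0) = H(x,s)$ and $\tilde H(x,0) = f(x)$.

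The substantive part of the argument, and the expected main obstacle, is continuity of $\tilde H$ into the non-standard topology of $\boper(n)$. Given a sub-basic open $B(U, K, \{V_\col\})$ containing $\tilde H(x_0, s_0)$, I will split $K$ into $K \cap [0, s_0]$ and $K \cap [s_0, l^{x_0}_0 + s_0]$, choose a neighborhood $U' \times J$ of $(l^{x_0}, s_0)$ in $\len(n) \times I$ small enough that for $(l^x, s) \in U' \times J$ each $l^{x,s}(\col)$ stays within a prescribed $\varepsilon$ of $l^{x_0, s_0}(\col)$, and then combine two ingredients: continuity of $H$ on $D^k \times I$ controls $p^{x,s}_{a^n}$ on the first piece $K \cap [0, s_0]$, while an open neighborhood of $f(x_0)$ of the form $B(U'', K', \{V_\col\})$ with $K'$ a suitable fattening of $K - s_0$ controls the translated pieces of every $p^{x,s}_\col$. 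Intersecting these produces the desired open neighborhood of $(x_0, s_0)$ mapped into $B(U, K, \{V_\col\})$. The book-keeping is of exactly the same flavor as (and strictly simpler than) the continuity verification for operadic composition carried out in the proof of Lemma \ref{Lwelldefinedtoper}, so the same technique applies verbatim.
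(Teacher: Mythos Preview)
Your proof is correct and follows essentially the same approach as the paper: both lengthen each coordinate of $l$ by the homotopy parameter and prepend the given homotopy to the $a^n$-component (the paper parametrizes the lift from $s=1$ rather than $s=0$, but this is immaterial). One small slip: since $b<a$ the element $a^n$ is the \emph{maximum} of $C^n$, so your naturality check should read $\FO(\col \leq a^n)(p^{x,s}_\col(t)) = p^{x,s}_{a^n}(t)$ rather than $\FO(a^n \leq \col)(\ldots)$; with that correction your verification goes through, and in fact you are somewhat more explicit than the paper about why naturality and continuity hold.
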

\begin{proof}
Consider a commutative diagram
\[
\xymatrix{D^{k-1}\times\{1\}\ar[r]^f\ar[d]^{i_1}& \boper (n)\ar[d]^{\lambda_n}\\
D^{k-1}\times[0,1]\ar[r]^g&\oper( a^n,b)}
\]
We define a lift $h:D^{k-1}\times [0,1]\to \boper (n)$, $D^{k-1}\times [0,1]\ni (u,s)\mapsto (l^{u,s},p^{u,s}_\col)$ by 
\[
\begin{split}
(l^{u,1},\{p^{u,1}_\col\} )&=f(u),\quad
l^{u,s}(i)=l^{u,1}(i)+1-s,\\
p^{u,s}_{\col}(t)&=\left\{
\begin{array}{ll}
g(u,t) &(0\leq t\leq 1-s)\\
p_\col(t+s-1) &(1-s\leq t\leq l^{u,s}(i)).
\end{array}\right.
\end{split}
\]
\end{proof}

\begin{lem}\label{Leta}
$\eta_a$ defined in Definition \ref{DtildeX},(2) is a level equivalence.
\end{lem}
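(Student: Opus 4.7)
The plan is to exhibit, at each level $k$, an explicit based homotopy equivalence between $\tilde X_k$ and $X_{a,k}$ with $\eta_{a,k}$ as the forward map. First, define a section $s: X_{a,k} \to \tilde X_k$ by
\[
s(x) = \bigl(x,\ \langle 0,\ c_{\alpha(x)} \rangle \bigr),
\]
where $c_{\alpha(x)}$ is the unique zero-length path at $\alpha(x) \in X_{b,k}$, and the basepoint is sent to the basepoint. The pullback condition is satisfied because $ev_0(c_{\alpha(x)}) = \alpha(x)$, and continuity of $s$ is immediate from the definition of the Moore-style topology on $\bar X_k$ (Definition \ref{DtildeX}). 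By construction $\eta_a \circ s = \id_{X_{a,k}}$.

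Next, I would define a homotopy $H: \tilde X_k \times [0,1] \to \tilde X_k$ by the path-shortening formula
\[
H\bigl((x, \langle L, h \rangle),\, t\bigr) = \bigl(x,\ \langle tL,\ h|_{[0,\, tL]} \rangle\bigr).
\]
At $t=1$ this is the identity, while at $t=0$ it yields $(x, \langle 0, c_{h(0)}\rangle) = (x, \langle 0, c_{\alpha(x)}\rangle) = s \circ \eta_a(x, \langle L, h\rangle)$ (using the pullback relation $h(0) = \alpha(x)$). The homotopy is pointed because if $(L, h) = (L, *_L)$, then the shortened pair is $(tL, *_{tL})$, still in the collapsed basepoint class. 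Continuity of $H$ reduces to continuity of $(L, t, h) \mapsto (tL, h|_{[0, tL]})$ in the open-set basis of Definition \ref{DtildeX}, which mirrors the verification given for $\bar\oper(n)$ in Lemma \ref{Lwelldefinedtoper}; after passing to the quotient and $k$-ifying, this yields a continuous based map in $\CG$.

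Combining these two steps shows $\eta_{a,k}$ is a (pointed) homotopy equivalence, and in particular a weak equivalence, for every $k$; hence $\eta_a$ is a level equivalence. The only mildly delicate point is the interaction of the Moore-type topology with the basepoint quotient defining $\bar X_k$, but this is routine: the shortening operation preserves constant-basepoint paths, so $H$ descends well to the quotient. No deeper structure (such as the $\toper$-action or fibration property of $ev_0$) is needed for this particular statement, though one could alternatively verify that $ev_0: \bar X \to X_b$ is a Hurewicz trivial fibration and deduce the result by pullback stability.
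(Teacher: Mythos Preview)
Your proof is correct and essentially identical to the paper's own argument: both construct the section $x\mapsto(x,\langle 0,\alpha(x)\rangle)$ and the path-shortening homotopy $((x,\langle L,h\rangle),t)\mapsto(x,\langle tL,h|_{[0,tL]}\rangle)$. The paper states this in two lines without your additional remarks on continuity and the basepoint quotient, but the content is the same.
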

\begin{proof}
In fact, for each $k\geq 0$, $\eta_{a,k}:\tilde X_k\to X_{a,k}$ has a homotopy inverse: $\eta'_k:X_{a,k}\to \tilde X_k$, $x\mapsto (0,\alpha(x))$. A homotopy between $\eta'_k\circ \eta_{a,k}$ and $id$ is given by $((x,\langle L,h\rangle),s)\mapsto (x, \langle sL,h|_{[0,sL]}\rangle)$.
\end{proof}

\begin{proof}[Proof of Theorem \ref{Ttoper}]
The topological operad $\toper$ defined in Definition \ref{Dtoper},(5) is well-defined by Lemmas \ref{Lk-ification} and \ref{Lwelldefinedtoper}. Verification of the first condition of the theorem (compatibility of $\zeta_c$ with $\eta_c$) is trivial. The second condition easily follows from Lemmas \ref{Lhomotopytypetoper1}, \ref{Lhomotopytypetoper2} and the  equation $\lambda_n\circ F'=\alpha_{b^n,a^n}$. The third condition also easily follows from Lemma \ref{Leta}. 
\end{proof}
\begin{rem}\label{Rmorphismoperad}
It is straightforward to construct certain homotopy invariant version of colored operad of morphisms  by extending  $\toper$. Here, the colored operad of morphisms of a colored operad $\oper$ is defined by saying that objects are 1-array morphisms of $\oper$ and $n$-array morphisms are pairs of $n$-array morphisms of $\oper$ compatible with 1-array morphisms at the  sources and targets.  In fact, $\toper$ will fit in it as  the endomorphism operad of $\alpha$. This construction should be equivalent to a special case of internal hom-objects of dendoroidal sets introduced by Moerdijk and Weiss \cite{mw07}. Our construction is  more intuitive and efficient for our purpose i.e., we can easily determine the homotopy type of $\toper(n)$ and  construct the accompanying algebra $\tilde X$.
\end{rem}
We shall prove Theorem \ref{Tintro2}. We first state it more precisely. In \cite{ms} two operads $\Doper_n$ and $\tDoper_n$ both of which are weak equivalent to the little $n$-cubes operads are introduced (see sections 9  and 15 of \cite{ms}). In our notation, $\Doper_1=\B$ and $\tDoper_1=\tB$. Let $X^\bullet$ be a cosimplicial space or symmetric spectrum. It is proved that a  $\Xi^n$-algebra structure on $X^\bullet$ (see Definition 4.5 and 8.4 of \cite{ms}) induces an action of $\Doper_n$ (resp. $\tDoper_n$) on $\tot(X^\bullet)$ (resp. $\ttot(X^\bullet)$) in a functorial way. 
\begin{thm}[precise statement of Theorem \ref{Tintro2}]\label{Ttot}
Let $X^\bullet$ be a $\Xi^n$-algebra. Suppose the  morphism $\tot(X^\bullet)\to \ttot(X^\bullet)$ induced from a weak equivalence $\tilde\Delta^\bullet\to\Delta^\bullet$ is a weak equivalence. Then, there exist a symmetric operad $\toper$, $\toper$-algebra $\tilde Y$, weak equivalences $\zeta_1:\toper\to \Doper_n$, $\zeta_2:\toper\to\tDoper_n$ of symmetric operads, and weak equivalences $\eta_1:\tilde Y\to \tot(X^\bullet)$,  $\eta_2:\tilde Y \to \ttot(X^\bullet)$ such that $\eta_i$ is compatible with $\zeta_i$ for $i=1,2$. Here a weak equivalence of spaces or symmetric spectra means any of a weak homotopy equivalence, level equivalence,  or stable equivalence.
\end{thm}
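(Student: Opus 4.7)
The plan is to reduce the statement to a symmetric-operad analogue of Theorem \ref{Ttoper}, applied to a colored operad whose two ``endomorphism operads'' are $\Doper_n$ and $\tDoper_n$. The whole argument runs parallel to the proof of Corollary \ref{Ctot} (which is the case $n=1$ and $\Xi^1=\square$), so the first task is to re-examine the construction of $\toper$ and $\tilde X$ in Definitions \ref{Dtoper} and \ref{DtildeX} and check that the only place non-symmetry was used was in the composition formula; replacing the partial composition of operads with its symmetric analogue (and inserting the evident $\Sigma$-actions on $\boper(n)$ and on the $n$-fold product $\tilde X^{\otimes n}$) yields a symmetric-operad version of Theorem \ref{Ttoper}. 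The homotopy-type computation (Lemmas \ref{Lhomotopytypetoper1}, \ref{Lhomotopytypetoper2}) and the level equivalence statement for $\eta_a$ (Lemma \ref{Leta}) carry over verbatim.

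Next I would introduce the symmetric colored operad $\oper$ in $\CG$ whose set of objects is $\{a,b\}$ with $a=\Delta^\bullet$, $b=\tilde\Delta^\bullet$, and whose morphism spaces are
\[
\oper(c_1,\dots,c_n;d)=\Map_{\CG^\Delta}\!\bigl(d,\ \Xi^n\otimes_{\Sigma_n}(c_1\otimes\cdots\otimes c_n)\bigr),
\]
where the right-hand side is the McClure--Smith construction used to produce $\Doper_n$ and $\tDoper_n$ in Sections~9 and 15 of \cite{ms} (for $n=1$ this recovers $\square$ or $\K(n)\hotimes(-)^{\square n}$). By design one has operad isomorphisms $\oper_a\cong \Doper_n$ and $\oper_b\cong \tDoper_n$. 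The chosen weak equivalence $f:\tilde\Delta^\bullet\to\Delta^\bullet$ provides a distinguished element $\alpha\in\oper(a;b)$. Set $Y_a=\tot(X^\bullet)=\Map_{\CG^\Delta}(\Delta^\bullet,X^\bullet)$ and $Y_b=\ttot(X^\bullet)=\Map_{\CG^\Delta}(\tilde\Delta^\bullet,X^\bullet)$; the $\Xi^n$-algebra structure on $X^\bullet$ converts the family $\{Y_a,Y_b\}$ into an $\oper$-algebra in the manifest way, and the map $\alpha_*:Y_a\to Y_b$ is exactly the natural comparison $f^*$, which is a weak equivalence by hypothesis.

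Now apply the symmetric version of Theorem \ref{Ttoper} to $(\oper, Y, \alpha)$. It produces a symmetric topological operad $\toper$ with maps $\zeta_a:\toper\to\oper_a\cong\Doper_n$ and $\zeta_b:\toper\to\oper_b\cong\tDoper_n$, and a $\toper$-algebra $\tilde Y$ with maps $\eta_a:\tilde Y\to \tot(X^\bullet)$ and $\eta_b:\tilde Y\to\ttot(X^\bullet)$ compatible with $\zeta_a,\zeta_b$. By the homotopy pullback identification in Theorem \ref{Ttoper}(2), each $\zeta_c$ fits into a homotopy pullback with leg $\alpha\circ-$ or $-\circ\alpha^n$; since $\alpha$ is a termwise weak equivalence between a projective cofibrant object and a Reedy fibrant one, both legs are weak equivalences, and hence so are $\zeta_a$ and $\zeta_b$. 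On the algebra side, $\eta_a$ is a level equivalence by (the symmetric analogue of) Lemma \ref{Leta}, and the hypothesis that $f^*:\tot\to\ttot$ is a weak equivalence, together with the factorization $\eta_b\simeq \alpha_*\circ\eta_a=f^*\circ\eta_a$, forces $\eta_b$ to be a weak equivalence as well. This gives the desired zig-zag of equivalences of $E_n$-algebras.

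The main obstacle I anticipate is verifying the identifications $\oper_a\cong\Doper_n$ and $\oper_b\cong\tDoper_n$ precisely matching McClure--Smith's definitions: for $n\geq 2$ one must spell out how $\Xi^n$ acts on iterated tensor products of $\Delta^\bullet$ and $\tilde\Delta^\bullet$ in a way that is compatible with $\Sigma_n$-symmetry and with the colored-operad composition, and confirm that no coherence beyond what $\Xi^n$ already supplies is needed. A secondary point of care is to ensure that $\boper(n)$ is still well-behaved (compactly generated, with continuous symmetric composition) after $\Sigma_n$-actions are added; this reduces to checking, as in Lemmas \ref{Lk-ification} and \ref{Lwelldefinedtoper}, that the subbasic opens used to topologize $\boper(n)$ are $\Sigma_n$-stable, which is automatic because the $\Sigma_n$-action only permutes color-tuples $\col\in C^n$.
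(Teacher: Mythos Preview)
Your approach is essentially identical to the paper's: extend Theorem \ref{Ttoper} to symmetric colored operads and apply it to the colored operad $\oper(c_1,\dots,c_k;d)=\Map_{\CG^\Delta}(d,\Xi_k^n(c_1,\dots,c_k))$, exactly as in the proof of Corollary \ref{Ctot} with $\square$ replaced by the McClure--Smith functor $\Xi^n_k$. Two small corrections: write $\Xi_k^n(c_1,\dots,c_k)$ rather than $\Xi^n\otimes_{\Sigma_n}(c_1\otimes\cdots\otimes c_n)$ (taking coinvariants would destroy the $\Sigma_k$-action you need on $\oper(c_1,\dots,c_k;d)$ for the symmetric structure), and note that the $\Sigma_k$-action on $\toper(k)$ must also permute the components of $\len(k)=\R_{\geq 0}^k$, not only the color-tuples $\col\in C^k$; the paper points this out explicitly.
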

\begin{proof}
A symmetric colored operad is a non-symmetric colored operad equipped with an action of the $k$-th symmetric group $\Sigma_k$ on each morphism space $\oper(c_1,\dots,c_k;d)$ for each $k\geq 1$. Of course, this action and composition satisfy the compatibility analogous to (uncolored) symmetric operads. If $\oper$ is a symmetric colored  operad, the space $\toper(k)$ has a natural action of $\Sigma_k$ induced from that of $\oper$ and permutations of components of $\len(k)=\R^k_{\geq 0}$. Theorem \ref{Ttoper} is still valid if we replace all morphisms of operads in the statement by morphisms of symmetric operads. Now the proof of Theorem \ref{Ttot} is completely analougous to Corollary \ref{Ctot}. The only necessary change is to replace $c_1\, \square \,\dots \, \square \, c_k$ by $\Xi_k^n(c_1,\dots,c_k)$ (see Definition 8.4 of \cite{ms}). 
\end{proof}

\begin{figure}[H]
\begin{center}
\begin{picture}(190,70)(-109,-24)
\put(-80,5){\line(1,0){160}}
\put(-80,10){\line(0,-1){10}}
\put(80,10){\line(0,-1){10}}
\put(0,10){\line(0,-1){10}}
\multiput(0,10)(0,-4){11}
    {\line(0,-1){2}}
\multiput(-80,10)(0,-4){11}
    {\line(0,-1){2}}
\multiput(80,10)(0,-4){11}
    {\line(0,-1){2}}
\multiput(-100,5)(4,0){5}
{\line(1,0){2}}
\put(-95,15){$t$}
\put(-98,-20){$\bar p(t)$}
\put(-70,-20){$p(t)\circ f$}
\put(10,-20){$q\circ p'(t-l)$}
\put(-80,15){$0$}
\put(0,15){$l$}
\put(70,15){$l+l'$} 

\end{picture}
\caption{Composition of two elements $(f,l,p,q)$, $(f',l',p',q')\in\toper(1)$}\label{Fcompositiontoper(1)}
\end{center}
\end{figure}
\begin{figure}[H]
\begin{center}
\begin{picture}(400,70)(-199,-34)
\put(-200,-34){
\begin{picture}(180,70)(-99,-24)
\put(-80,5){\line(1,0){160}}
\put(-80,10){\line(0,-1){10}}
\put(80,10){\line(0,-1){10}}
\put(0,10){\line(0,-1){10}}
\multiput(0,10)(0,-4){11}
    {\line(0,-1){2}}
\multiput(-80,10)(0,-4){11}
    {\line(0,-1){2}}
\multiput(80,10)(0,-4){11}
    {\line(0,-1){2}}
\put(-70,-20){$p(t)\in\oper(a^2;b)$}
\put(10,-20){$q(t)\in\oper(b,a;b)$}
\put(-80,15){$0$}
\put(0,15){$l_1$}
\put(80,15){$l_2$}
\put(-10,30){$(l_1\leq l_2)$}   
\end{picture}}
\put(0,-35){
\begin{picture}(180,70)(-99,-24)
\put(-80,5){\line(1,0){160}}
\put(-80,10){\line(0,-1){10}}
\put(80,10){\line(0,-1){10}}
\put(0,10){\line(0,-1){10}}
\multiput(0,10)(0,-4){11}
    {\line(0,-1){2}}
\multiput(-80,10)(0,-4){11}
    {\line(0,-1){2}}
\multiput(80,10)(0,-4){11}
    {\line(0,-1){2}}
\put(-70,-20){$p(t)\in\oper(a^2;b)$}
\put(10,-20){$q(t)\in\oper(a,b;b)$}
\put(-80,15){$0$}
\put(0,15){$l_2$}
\put(80,15){$l_1$}
\put(-10,30){$(l_1> l_2)$}   
\end{picture}}
\end{picture}
\caption{An element of $\toper(2)$ ($f\in\oper_a(2)$ and $r\in\oper_b(2)$ are omitted)}\label{Fbaroper(2)}
\end{center}
\end{figure}
\begin{figure}[H]
\begin{center}
\begin{picture}(300,150)(-149,-99)
\put(-35,25){(a)\quad  $l_1+l'\leq l_2$}
\put(-135,0){\line(1,0){270}}
\put(-135,5){\line(0,-1){10}}
\put(-45,5){\line(0,-1){10}}
\put(45,5){\line(0,-1){10}}
\put(135,5){\line(0,-1){10}}
\multiput(-135,5)(0,-4){20}
    {\line(0,-1){2}}
\multiput(-45,5)(0,-4){20}
    {\line(0,-1){2}}
\multiput(45,5)(0,-4){20}
    {\line(0,-1){2}}
\multiput(135,5)(0,-4){20}
    {\line(0,-1){2}}
\multiput(-160,0)(4,0){74}
    {\line(1,0){2}}
\multiput(-160,-40)(4,0){74}
    {\line(1,0){2}}
\multiput(-160,-75)(4,0){74}
    {\line(1,0){2}} 
\put(-155,10){$t$}
\put(-135,10){$0$}   
\put(-45,10){$l_1$}
\put(45,10){$l_1+l'$}
\put(135,10){$l_2$}
\put(-155,-25){$\bar p(t)$}
\put(-120,-25){$p(t)\circ_1f'$}
\put(-35,-25){$q(t)\circ_1p'(t-l_1)$}
\put(-155,-60){$\bar q(t)$}
\put(60,-60){$q(t)\circ_1q'$}
\end{picture}\\
\begin{picture}(300,150)(-149,-99)
\put(-35,25){(b)\quad $l_1\leq l_2< l'+ l_1$}
\put(-135,0){\line(1,0){270}}
\put(-135,5){\line(0,-1){10}}
\put(-45,5){\line(0,-1){10}}
\put(45,5){\line(0,-1){10}}
\put(135,5){\line(0,-1){10}}
\multiput(-135,5)(0,-4){20}
    {\line(0,-1){2}}
\multiput(-45,5)(0,-4){20}
    {\line(0,-1){2}}
\multiput(45,5)(0,-4){20}
    {\line(0,-1){2}}
\multiput(135,5)(0,-4){20}
    {\line(0,-1){2}}
\multiput(-160,0)(4,0){74}
    {\line(1,0){2}}
\multiput(-160,-40)(4,0){74}
    {\line(1,0){2}}
\multiput(-160,-75)(4,0){74}
    {\line(1,0){2}} 
    \put(-155,10){$t$}
\put(-135,10){$0$}   
\put(-45,10){$l_1$}
\put(45,10){$l_2$}
\put(135,10){$l_1+l'$}
\put(-155,-25){$\bar p(t)$}
\put(-120,-25){$p(t)\circ_1f'$}
\put(-35,-25){$q(t)\circ_1p'(t-l_1)$}
\put(-155,-60){$\bar q(t)$}
\put(60,-60){$r\circ_1p'(t-l_1)$}
\end{picture}\\
\begin{picture}(300,150)(-149,-99)
\put(-35,25){(c)\quad $l_2< l_1$}
\put(-135,0){\line(1,0){270}}
\put(-135,5){\line(0,-1){10}}
\put(-45,5){\line(0,-1){10}}
\put(45,5){\line(0,-1){10}}
\put(135,5){\line(0,-1){10}}
\multiput(-135,5)(0,-4){20}
    {\line(0,-1){2}}
\multiput(-45,5)(0,-4){20}
    {\line(0,-1){2}}
\multiput(45,5)(0,-4){20}
    {\line(0,-1){2}}
\multiput(135,5)(0,-4){20}
    {\line(0,-1){2}}
\multiput(-160,0)(4,0){74}
    {\line(1,0){2}}
\multiput(-160,-40)(4,0){74}
    {\line(1,0){2}}
\multiput(-160,-75)(4,0){74}
    {\line(1,0){2}} 
    \put(-155,10){$t$}
\put(-135,10){$0$}   
\put(-45,10){$l_2$}
\put(45,10){$l_1$}
\put(135,10){$l_1+l'$}
\put(-155,-25){$\bar p(t)$}
\put(-120,-25){$p(t)\circ_1f'$}
\put(-155,-60){$\bar q(t)$}
\put(-30,-60){$q(t)\circ_1f'$}
\put(60,-60){$r\circ_1p'(t-l_1)$}
\end{picture}
\caption{Composition of $(f;l_1,l_2;p,q,r)\in\toper(2)$ and 
$(f';l';p',q')\in\toper(1)$}\label{Fcompositiontoper(2)}
\end{center}
\end{figure}


\subsection{Proof of Theorem \ref{mainthm1} (2)}\label{SSproofTmain2}
The following lemma easily follows from compatibility of totalization and homotopy pushout in \cite{bousfield}
\begin{lem}\label{LtotCL}
The morphism $\tot(\CL^\bullet)\to \ttot(\CL^\bullet)$ induced by a weak equivalence $\tilde\Delta^\bullet\to \Delta^\bullet$ is a level equivalence.\qed
\end{lem}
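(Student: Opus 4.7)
My plan is to reduce the claim to a levelwise statement and then invoke Reedy fibrancy. Since $\tot$ and $\ttot$ in $\SP$ are built from subspaces of products of cotensors, for each level $k$ we have $\tot(\CL^\bullet)_k = \tot(\CL^\bullet_k)$ and $\ttot(\CL^\bullet)_k = \ttot(\CL^\bullet_k)$, where $\CL^\bullet_k$ is the cosimplicial pointed space with $\CL^p_k = M^{\times p}_+ \wedge \MT_k$. So it suffices to show that for every $k$, the map $f^* : \tot(\CL^\bullet_k) \to \ttot(\CL^\bullet_k)$ induced by $f : \tilde\Delta^\bullet \to \Delta^\bullet$ is a weak homotopy equivalence.

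Next, I would establish that $\CL^\bullet_k$ is Reedy fibrant. The codegeneracies of $\CL^\bullet$ merely drop a component of the $M^{\times p}$-factor, leaving the $\MT$-factor untouched. Because $M$ is a smooth closed manifold, the matching map for the cosimplicial unpointed space $[p]\mapsto M^{\times p}$ (with this codegeneracy structure) is a homeomorphism onto its image at every level: any compatible family of matching data assembles uniquely into an element of $M^{\times p}$, and the image is cut out by a closed compatibility condition. Since $\MT_k$ is well-pointed (it is a Thom space over $\TV_k\times M$), smashing with $\MT_k$ preserves the relevant (Hurewicz) fibration property, so the matching map $\CL^p_k\to M^p\CL^\bullet_k$ is a fibration of pointed spaces.

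With Reedy fibrancy in hand, I would invoke the standard invariance result (Hirschhorn, Theorem 18.5.3 / Corollary 18.4.4, Theorem 18.6.6): for a Reedy fibrant cosimplicial pointed space $X^\bullet$ and a weak equivalence $f:K^\bullet\to L^\bullet$ between Reedy cofibrant cosimplicial spaces, the induced map $\Map(L^\bullet,X^\bullet)\to \Map(K^\bullet,X^\bullet)$ is a weak equivalence. Applying this with $L^\bullet=\Delta^\bullet$, $K^\bullet=\tilde\Delta^\bullet$ (both Reedy cofibrant, the latter by choice in subsection \ref{SSNT}), and $X^\bullet=\CL^\bullet_k$ yields the desired equivalence at every level $k$.

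The most delicate step is verifying Reedy fibrancy of $\CL^\bullet_k$ in the pointed category, since the smash product $M^{\times p}_+\wedge \MT_k$ does not split as a Cartesian product and one must be careful about basepoints when forming matching objects. An alternative, which may be what the hint toward Bousfield's work suggests, is to write $\CL^p_k$ as a homotopy pushout of $M^{\times p}\times \MT_k \leftarrow M^{\times p} \to *$ at each cosimplicial level, show that this exhibits $\CL^\bullet$ as a levelwise homotopy pushout of cosimplicial diagrams whose $\tot$ and $\ttot$ are known to agree (the constant diagram $*$ and the product-type diagrams obtained by tensoring the cosimplicial model of $LM$ with $\MT_k$), and then invoke compatibility of $\tot$ and $\ttot$ with such homotopy pushouts of nice cosimplicial spaces. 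Either route should yield the result.
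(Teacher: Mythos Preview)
Your first route has a genuine gap: $\CL^\bullet_k$ is \emph{not} Reedy fibrant in pointed spaces. Already at cosimplicial degree $1$ the matching map is the single codegeneracy
\[
s^0:\ M_+\wedge \MT_k\longrightarrow \MT_k,\qquad (x,\langle\phi,\eps,v\rangle)\mapsto \langle\phi,\eps,v\rangle,
\]
whose fibre over any non-basepoint is $M$ while the fibre over the basepoint is a single point; this is not a Serre fibration. The step ``smashing with $\MT_k$ preserves the relevant fibration property'' is exactly where the argument breaks: smashing collapses everything over the basepoint and destroys the fibration structure there. So Reedy fibrancy of the unpointed $[p]\mapsto M^{\times p}$ (which is fine) does not transfer to the pointed $\CL^\bullet_k$.

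Your alternative is essentially the paper's one-line proof (compatibility of $\tot$ with homotopy pushout, after Bousfield), but the specific pushout you wrote does not assemble into a diagram of \emph{cosimplicial} spaces: in $[p]\mapsto M^{\times p}\times \MT_k$ the coface $d^0$ needs $\pi_\phi(v)$, which is undefined at the basepoint of $\MT_k$, so this is not a cosimplicial object, and the middle term $[p]\mapsto M^{\times p}$ has no compatible $d^0,d^{p+1}$ at all. The fix is to work before collapsing: set
\[
A^p=M^{\times p}\times\partial\M_k\ \hookrightarrow\ B^p=M^{\times p}\times\M_k,
\]
with the cosimplicial structure coming from $\pi_\phi(v)$ on all of $\M_k$. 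Both $A^\bullet$ and $B^\bullet$ are genuine cosimplicial unpointed spaces and are Reedy fibrant (the only nontrivial matching map is the projection $M\times Z\to Z$ at degree $1$), so $f^*$ is a weak equivalence on each. Then $\CL^\bullet_k=B^\bullet/A^\bullet$ is a levelwise homotopy cofibre, and Bousfield's compatibility of $\tot$ (and $\ttot$) with such cofibres gives the result.
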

Let $F:\CL^\bullet\to X^\bullet$ be a term-wise stable fibrant replacement. We must prove the induced morphism $\ttot(\CL^\bullet)\to \ttot(X^\bullet)$ is a stable equivalence.
\begin{defi}
\begin{itemize2}
\item[(1)] We say a symmetric spectrum $X$ is a \textit{strongly semi-stable object} (
\textit{sss-object} for short) if there exists a number $\alpha>1$ such that for sufficiently large $k$,
 the canonical map $\pi_i(X_k)\to \pi_{i+1}(X_{k+1})$ is an isomorphism for $0\leq i\leq \alpha k$.
\item[(2)]  $H\Z\in \SP$ denotes a fixed cofibrant model of  the Eilenberg-MacLane spectrum of $\Z$ and $-\otimes^LH\Z$ denotes the derived tensor product. Explicitly, let $q_X:QX\to X$ be the cofibrant replacement in $\SP$, and we set $(X\otimes^LH\Z)_k$ to be the free abelian topological group generated by the points of $X_k$ other than the basepoint. The action of $S^1$ and permutations are defined in the obvious way. 
\end{itemize2}
\end{defi}
We can take the morphism $q_X$ as a trivial fibration by the axioms of a model category, and trivial fibrations in $\SP$ are level equivalences. So If $X$ and $Y$ are level equivalent, i.e., connected by a zigzag of level equivalences, $X\otimes^LH\Z$ and $Y\otimes^LH\Z$ are also level equivalent, and we can consider $X\otimes^LH\Z$ up to level equivalences. It makes sense to say $X\otimes^LH\Z$ is an sss-object for an object $X\in \SP$. \par 
The first part of the following lemma trivially follows from the fact that stable fibrant objects are $\Omega$-spectra (in the sense of \cite{hss}) and the second part  is proved in section 5.6 of\cite{hss}.
\begin{lem}[\cite{hss}]\label{Lhss}
\begin{itemize2}
\item[(1)] Any stable fibrant object is an sss-object. 
\item[(2)] Any stable equivalence between sss-objects is a $\pi_*$-isomorphism.\qed
\end{itemize2}
\end{lem}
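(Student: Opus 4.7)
The plan is to invoke results from \cite{hss} with essentially no new work. For part (1), I would use that any stably fibrant symmetric spectrum $X$ is an $\Omega$-spectrum in the sense of \cite{hss}, meaning the adjoint structure maps $X_k \to \Omega X_{k+1}$ are weak equivalences of pointed spaces for every $k \geq 0$. Applying the long exact sequence (or just the $\pi_i \Omega = \pi_{i+1}$ identification) gives that the canonical map $\pi_i(X_k) \to \pi_{i+1}(X_{k+1})$ is an isomorphism for every $i \geq 0$ and every $k$. In particular the sss condition is satisfied for any $\alpha > 1$, with no restriction on $k$ needed.

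For part (2), the key observation is that the sss property is a strong form of semi-stability: for an sss-object $X$, the system $\{\pi_{i+k}(X_k)\}_k$ stabilizes at a finite stage depending on $i$, so the naive homotopy groups $\pi^s_i(X) := \operatorname{colim}_k \pi_{i+k}(X_k)$ coincide with $\pi_{i+k}(X_k)$ for large $k$. I would then quote the main result of section 5.6 of \cite{hss}, which asserts that a map between semi-stable symmetric spectra is a stable equivalence if and only if it is a $\pi_*$-isomorphism. Both sss-objects $X$ and $Y$ are semi-stable in this sense, so any stable equivalence $f \colon X \to Y$ must be a $\pi_*$-isomorphism.

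The hard part, if any, is simply bookkeeping: matching the definition of sss-object used here against the notion of semi-stable spectrum used in \cite{hss}, and locating the precise statement in section 5.6 that allows us to replace ``stable equivalence'' by ``$\pi_*$-isomorphism'' under this hypothesis. No new argument is needed; the lemma is recorded here only to isolate the two facts from \cite{hss} that will be applied in the next subsection to the term-wise fibrant replacement $F : \CL^\bullet \to X^\bullet$.
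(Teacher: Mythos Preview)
Your proposal is correct and matches the paper's own treatment exactly: the paper states that part (1) ``trivially follows from the fact that stable fibrant objects are $\Omega$-spectra (in the sense of \cite{hss})'' and that part (2) ``is proved in section 5.6 of \cite{hss}.'' Your added detail on the semi-stability bookkeeping is accurate and helpful, but there is no substantive difference in approach.
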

\begin{lem}\label{Liroiro}
\begin{itemize2}
\item[(1)] A finite homotopy limit of sss-objects in the level model structure  is also  a sss-object and is (weak equivalent to)  the homotopy limit    in the stable model structure. Here, a finite homotopy limit is a homotopy limit over a category whose sets of morphisms and objects are finite.
\item[(2)] If $X$ is a sss-object, $X\otimes^L H\Z$ is also a sss-object. 
\item[(3)] $(-\otimes^L H\Z)$ preserves finite homotopy limits in the stable model structure.
\item[(4)] For each $p\geq 0$, $\CL^p$ is a sss-object.
\end{itemize2}
\end{lem}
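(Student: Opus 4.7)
\smallskip

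\noindent\textbf{Proof plan.} The four assertions share a common structural point: the sss-condition asserts isomorphism of stabilisation maps in a range $0\le i\le \alpha k$, and all the operations considered interact well with such ``stable ranges.''  My plan is to prove (1), then deduce (2), (3) from it together with a Hurewicz-type comparison, and finally check (4) by direct connectivity estimates.

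For (1), the main point is to propagate the stable range through a finite homotopy limit computed termwise in the level model structure.  I would first reduce to iterated homotopy pullbacks and finite products using the standard decomposition of finite homotopy limits.  For products this is immediate, since $\pi_i(\prod_j X^j_k)\cong\prod_j\pi_i(X^j_k)$ and the stabilisation maps are products.  For a homotopy pullback $P_k=X_k\times^h_{Z_k}Y_k$, the Mayer--Vietoris long exact sequence
\[
\cdots\to\pi_{i+1}(Z_k)\to\pi_i(P_k)\to\pi_i(X_k)\oplus\pi_i(Y_k)\to\pi_i(Z_k)\to\cdots
\]
is compatible with the stabilisation maps $\iota_*$, so in the range where all three of $X_k,Y_k,Z_k$ stabilise isomorphically (which, by taking a common $\alpha$, happens for $0\le i\le\alpha k-1$ for large $k$), the five-lemma forces $\pi_i(P_k)\to\pi_{i+1}(P_{k+1})$ to be an isomorphism.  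Iterating over the finite diagram only loses a bounded number of degrees at each step, so a (possibly smaller) $\alpha>1$ works for the whole limit.  The same long exact sequence computes $\pi_*^{s}$ of the stable homotopy limit, giving the asserted agreement between the level and stable homotopy limits.  This is the step I expect to be the main obstacle, essentially because one must carefully verify that ``taking $\alpha$ smaller'' really covers all the finite compositions used when replacing a diagram by a tower of pullbacks.

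For (2), I would use the Hurewicz/Atiyah--Hirzebruch comparison applied levelwise.  If $X$ is sss with parameter $\alpha$, then for large $k$ the space $X_k$ has the same homotopy groups as $\Omega^{k'-k}X_{k'}$ in a range $0\le i\le \alpha k$, so up to a range it is a $k$-fold loop space of an arbitrary delooping.  Smashing with $H\Z$ termwise (or, equivalently, taking $(X\otimes^LH\Z)_k$) computes, through the Atiyah--Hirzebruch spectral sequence, ordinary integral homology in the stable range, and the stabilisation map on homotopy groups of $(X\otimes^LH\Z)_k$ corresponds to the suspension isomorphism on homology groups in the same range; hence $X\otimes^LH\Z$ is again sss (with essentially the same $\alpha$, perhaps slightly diminished).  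Part (3) then follows by combining (1) and (2): finite homotopy limits in the stable model structure can, by (1), be computed in the level model structure on sss-objects, $(-\otimes^LH\Z)$ commutes with termwise homotopy limits of nicely connected spaces (again by the long exact sequence argument and Hurewicz in the stable range), and the resulting limit is itself sss by (2), so equals the stable homotopy limit.

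For (4), recall from subsection \ref{SSatiyahduality} that $\CL^p=M^{\times p}_+\hotimes \MT$, so $\CL^p_k=\MT_k\wedge (M^{\times p})_+$.  Since $\MT_k$ is the Thom space of a disk bundle over $\widetilde V_k\times M$ with $\widetilde V_k$ being $(k/2-1)$-connected, $\MT_k$ is roughly $(k/2-1)$-connected above the Thom class, and the structure map $S^1\wedge\MT_k\to\MT_{k+1}$ is, stably, a $\pi_*$-isomorphism in a range of degrees that grows linearly in $k$; this gives the sss property for $\MT$ with $\alpha$ just below $1/2$.  Smashing termwise with the finite CW-complex $(M^{\times p})_+$ preserves this stable range by the standard cellular induction (each cell attachment only shifts degrees by a bounded amount), so $\CL^p$ is sss as claimed.
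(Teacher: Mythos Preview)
Your approach to (1) and (2) is reasonable and broadly correct, though more elaborate than the paper's one-line appeals to Lemma~\ref{Lhss} and Dold--Thom.  There are, however, two genuine problems.

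\textbf{Part (4): the connectivity estimate is too weak.}  The sss condition demands $\alpha>1$, but you conclude with ``$\alpha$ just below $1/2$''.  The error is that you accounted only for the connectivity of $\widetilde V_k$ (roughly $k/2$, or in fact $k-k_0-1$) and forgot to add the Thom degree shift.  The bundle $\M_k\to \widetilde V_k\times M$ has fiber dimension $k-d$, so $\MT_k$ is $(k-d-1)$-connected, Freudenthal gives the suspension isomorphism up to roughly $2(k-d)$, and the map $\Sigma\MT_k\to\MT_{k+1}$, being induced by the highly connected inclusion $\widetilde V_k\hookrightarrow\widetilde V_{k+1}$ together with a fiberwise suspension, is roughly $(k-d)+\text{conn}(\widetilde V_k)$-connected.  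Either estimate for $\text{conn}(\widetilde V_k)$ yields an isomorphism range $\sim \tfrac{3}{2}k$ or $\sim 2k$, hence $\alpha>1$.  Your method is right; the bookkeeping is off in a way that, as written, fails the definition.

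\textbf{Part (3): your argument proves less than the statement.}  You deduce (3) from (1) and (2), which forces you to assume the diagram consists of sss-objects.  The statement has no such hypothesis.  The paper's argument is both simpler and more general: in the stable model structure homotopy pullback squares coincide with homotopy pushout squares, and $(-\otimes^L H\Z)$, being a smash product, preserves homotopy pushouts; hence it preserves homotopy pullbacks, and then all finite homotopy limits.  For the application in Proposition~\ref{PinvarianceCL} your restricted version would suffice (the inputs there are stable fibrant, hence sss), but as a proof of (3) as stated it is incomplete.
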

\begin{proof}
(1) follows from  Lemma \ref{Lhss}. (2) and (4) are trivial. (3) follows from the fact that homotopy pullback squares and homotopy pushout squares coincide in the stable model structure of $\SP$. 
\end{proof}
\begin{prop}\label{PinvarianceCL}
The morphism $\ttot(\CL^\bullet)\to \ttot(X^\bullet)$ induced by $F$ is a stable equivalence. 
\end{prop}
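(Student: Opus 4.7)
The plan is to prove the stronger statement that $\ttot(F)$ is a $\pi_*$-isomorphism; since every $\pi_*$-isomorphism in $\SP$ is a stable equivalence, this will suffice. The strategy is to pass to finite approximations where the sss machinery of Lemmas \ref{Lhss} and \ref{Liroiro} applies, then pass back through a Milnor-type argument.

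First I would write $\ttot$ as a sequential homotopy limit of partial totalizations $\ttot^n$, obtained by restricting $\tilde{\Delta}^\bullet$ to $\Delta_{\leq n}$; each $\ttot^n$ is a finite homotopy limit (in the level model structure), and, after choosing $\tilde{\Delta}^\bullet$ so that the maps $\tilde{\Delta}^{\bullet}_{\leq n-1} \hookrightarrow \tilde{\Delta}^\bullet_{\leq n}$ are Reedy cofibrations, the tower $\{\ttot^n(Y^\bullet)\}_n$ is a tower of fibrations between termwise fibrant objects (applied to both $Y^\bullet = \CL^\bullet$ and $Y^\bullet = X^\bullet$).

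Next, handle the finite case. Each $\CL^p$ is an sss-object by Lemma \ref{Liroiro}(4), and each $X^p$ is stably fibrant, hence an $\Omega$-spectrum, hence sss by Lemma \ref{Lhss}(1). By Lemma \ref{Liroiro}(1) the finite homotopy limits $\ttot^n(\CL^\bullet)$ and $\ttot^n(X^\bullet)$ are themselves sss-objects, coinciding with the corresponding homotopy limits in the stable model structure. The termwise map $F^p:\CL^p \to X^p$ is a stable equivalence between sss-objects, hence a $\pi_*$-isomorphism by Lemma \ref{Lhss}(2). A Bousfield--Kan-type spectral sequence applied levelwise, combined with the uniform connectivity provided by the sss property (which, for each fixed homotopy degree $i$ and $k$ sufficiently large, turns the termwise $\pi_*$-isomorphism into an ordinary $\pi_i$-isomorphism at level $k$), then forces $\ttot^n(F)$ to be a $\pi_*$-isomorphism for every $n$.

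Finally, I would pass to the limit using the Milnor short exact sequence
\[
0 \to {\lim}^1_n \, \pi_{i+1}\ttot^n(Y^\bullet) \longrightarrow \pi_i\ttot(Y^\bullet) \longrightarrow \lim_n \, \pi_i\ttot^n(Y^\bullet) \to 0
\]
for each $Y^\bullet \in \{\CL^\bullet, X^\bullet\}$. Since $\ttot^n(F)$ is a $\pi_*$-iso for every $n$, it induces isomorphisms on both the $\lim$ and the $\lim^1$ columns, and the five-lemma then gives that $\ttot(F)$ is a $\pi_*$-iso, completing the proof.

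The main obstacle is verifying that a finite homotopy limit of a termwise $\pi_*$-iso between termwise sss-diagrams is again a $\pi_*$-iso. The delicate point is that a $\pi_*$-iso in $\SP$ is not a level equivalence, so one cannot simply compute the homotopy limit levelwise and compare; instead one has to exploit the uniform stable range guaranteed by the sss condition to translate, for each fixed $i$ and all sufficiently large $k$, the $\pi_*$-isomorphism into an isomorphism on ordinary $\pi_i$ of the spaces appearing in the finite homotopy limit, and then use the finiteness of the diagram to conclude.
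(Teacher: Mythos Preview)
Your finite-stage argument is essentially correct: by Lemma~\ref{Liroiro}(1) the partial totalizations $\ttot^n(\CL^\bullet)$ and $\ttot^n(X^\bullet)$ are sss and agree with the corresponding finite homotopy limits in the stable model structure, so the termwise stable equivalence $F$ induces a stable equivalence $\ttot^n(F)$ between sss objects, hence a $\pi_*$-isomorphism by Lemma~\ref{Lhss}(2).

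The gap is in the passage to the limit. The Milnor sequence you write for $\pi_*^S$ is not available when $Y^\bullet=\CL^\bullet$. The tower $\{\ttot^n(\CL^\bullet)\}_n$ is a tower of \emph{level} fibrations, and $\ttot(\CL^\bullet)$ is its \emph{levelwise} inverse limit; at each fixed level $k$ one has a Milnor sequence on the unstable groups $\pi_{i+k}((-)_k)$. To promote this to a sequence on $\pi_*^S$ you would need to identify $\pi_{i+k}(\ttot^n(\CL^\bullet)_k)$ with $\pi_i^S(\ttot^n(\CL^\bullet))$ for a single $k$ working for \emph{all} $n$. But the sss range for $\ttot^n(\CL^\bullet)$ degrades with $n$ (each extra cosimplicial layer contributes further loopings), so no such uniform $k$ exists without further input; equivalently, you are tacitly commuting the sequential colimit over $k$ defining $\pi_*^S$ past the inverse limit over $n$. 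For $X^\bullet$ the issue disappears only because every $\ttot^n(X^\bullet)$ is already an $\Omega$-spectrum.

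The paper supplies precisely the missing convergence ingredient and it is specific to $\CL^\bullet$: a Thom-isomorphism computation gives $N^s\tilde H_t^S(\CL^\bullet)=0$ for $t\le 2s-d$, which (after passing to the fibrant replacement and through $H\Z$ via Lemma~\ref{Liroiro}(2),(3) and Hurewicz) forces the tower maps $\ttot^s(X^\bullet)\to\ttot^{s-1}(X^\bullet)$ to be $(s-d)$-connected. Thus for each fixed $i$ the tower of $\pi_i^S$'s is eventually constant, the $\lim^1$ vanishes, and the comparison with the finite stages goes through. A purely formal Milnor/five-lemma argument cannot replace this geometric connectivity estimate; your outline would equally ``prove'' that $\ttot$ preserves arbitrary termwise stable equivalences between termwise sss diagrams, which is false in general.
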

\begin{proof}
Let $G:X^\bullet\otimes^{L} H\Z\to Y^\bullet$ be a termwise  stable fibrant replacement. \par
We first prove the composition
\[ 
\ttot(X^\bullet)\otimes^{L}H\Z\longrightarrow \ttot(X^\bullet\otimes^LH\Z)
\xrightarrow{G_*} \ttot(Y^\bullet)
\] is a stable equivalence.\par
By Thom isomorphism, we have $ \tilde H_t(\CL^s_k)\cong H_{t-k+d}(M^{\times n}\times (M\times \TV_k))$ ($d=dim M$). So $N^s\tilde H_t(\CL^\bullet_k)=0$ if $t-k+d\leq 2s$. So  $N^s\tilde H_t^S(\CL^\bullet)=0$ if $t\leq 2s-d$. Here $N^s$ is the part of degree $s$ of the usual normalization and $\tilde H_t^S(X)=\colim_k H_{t+k}(X_k) $, where maps in the sequence is give by the cross product with a fundamental class of $S^1$. So we have
\[
\begin{split}
H^S_t(N^s(X^\bullet))\cong \pi_t^S((N^sX^\bullet)\otimes^L H\Z)
&
\cong \pi_t^S(N^s(X^\bullet\otimes^L H\Z))
\\
&
\cong N^s\pi_t^S(X^\bullet\otimes^L H\Z)\cong N^s\tilde H^S_t(\CL^\bullet)=0 
\end{split}
\]
if $t\leq 2s-d$. (See \cite{gj} for $N^s$ of a cosimplicial space and we apply it in the levelwise manner.) Here, the first isomorphism is trivial, and the second one follows from Lemmas \ref{Lhss} and \ref{Liroiro}, (1),(2),(3) as $N^s$ is a finite homotopy limit in the level model structure, and the third is proved in \cite{gj}, and the forth follows from Lemmas \ref{Lhss} and \ref{Liroiro},(4). (For the definition of $N^s$ for cosimplicical spaces, see \cite{gj} and we apply it in a levelwise manner.)
This and the Hurewicz theorem imply $\pi_t^S(N^s(X^\bullet))=0$ for $t\leq 2s-d$. This and the homotopy fiber sequence
\[
\Omega^sN^sX^\bullet\to \ttot^s(X^\bullet)\to \ttot^{s-1}(X^\bullet)
\]
(see \cite{gj}) imply $\ttot^s(X^\bullet)\to \ttot^{s-1}(X^\bullet)$ is $s-d$-connected. Here $\ttot^l$ is the homotopy limit of the restriction to the full subcategory of $\Delta$ consisting of $[k]$ with $k\leq l$. 
Consider the following diagram:
\[
\xymatrix{\ttot(\CL^\bullet)\otimes^L H\Z\ar[r]^{F_*}\ar[rd]&\ttot(X^\bullet)\otimes^LH\Z\ar[r]\ar[d]^{G_*}
&
\ttot^s(X^\bullet)\otimes^LH\Z\ar[d]^{G_*}\\
&\ttot(Y^\bullet)
\ar[r]&\ttot^s(Y^\bullet).
}
\]
By the preceding argument and Hurewicz theorem,  the top right arrow   is  $s-d$-connected. Similarly, the right bottom arrow  is also $s-d$-connected. As $\ttot^s$ is a finite homotopy limit, The right vertical arrow is a stable equivalence by Lemma \ref{Liroiro} (3).  Thus, the middle vertical arrow is $s-d$-connected for all $s\geq 0$ hence a  stable equivalence. 
We easily see the slanting arrow is also a stable equivalence using the homology spectral sequence of cosimplicial space. Thus the top left horizontal arrow is also a stable equivalence. As both the objects $\ttot(\CL^\bullet)$ and $\ttot(X^\bullet)$ are connected, i.e., $\pi_k^S=0$ for all sufficiently small $k\in \Z$, and sss-objects by Lemma \ref{LtotCL}, we see the morphism $F_*:\ttot(\CL^\bullet)\to \ttot(X^\bullet)$ is a stable equivalence by the Hurewicz theorem.
\end{proof}
We shall complete the proof of Theorem \ref{mainthm1} (2). In fact, gathering results and construction we obtained by now,  we have the following zig-zag of weak equivalences of nu-$A_\infty$-ring spectra.
\[
\begin{split}
\LMT\stackrel{\text{(A)}}{\cong}\tot(\CL^\bullet)&
\stackrel{\text{(B)}}{\simeq}
\ttot(\CL^\bullet)\stackrel{\text{(C)}}{\simeq}\ttot(\IM^\bullet)\\
&\stackrel{\text{(D)}}{\simeq}
\ttot(\thc^\bullet(A',B))\stackrel{\text{(E)}}{\simeq}
\tot(\thc^\bullet(A',B))=
\thc(A',B)\stackrel{\text{(F)}}{\simeq}
\thc(Q,Q)
\end{split}
\]
The isomorphism (A) is given in subsection \ref{SScosimplicialLMT}, and the equivalences (B) and (E) follow from Corollary \ref{Ctot}, Lemma \ref{LtotCL} and the fact that $\thc^\bullet(A,B')$ is a Reedy fibrant object ,which implies the morphism $\tot(\thc^\bullet(A,B'))\to\ttot(\thc^\bullet(A,B'))$ is a (level) equivalence. \\
\indent By Lemma \ref{LweqIM}, the morphism $p_0:\IM^\bullet\to \CL^\bullet$ is a termwise level equivalence so induces a level equivalence  $(p_0)_*:\ttot(\IM^\bullet)\to \ttot(\CL^\bullet)$.  By the same lemma and the Atiyah duality (Theorem \ref{Tatiyah} ), the morphism $\bar q_2$ in Definition \ref{Dmorphismscosimplicial} is a termwise stable equivalence. This fact, Proposition \ref{PinvarianceCL}, and the fact that $\thc^\bullet(A',B)$ is termwise stable fibrant imply the induced morphism  
$(\bar q_2)_*:\ttot(\IM^\bullet) \to \ttot(\thc^\bullet(A',B))$ is a stable equivalence. By these facts and  Theorem \ref{TMCKalgebra},
 the  equivalences (C) and (D) hold. The equivalence (F) is proved in Proposition \ref{propinvarianceofthc}. Thus, we have proved $\LMT$ and $\thc(Q,Q)$ are equivalent as nu-$A_\infty$-ring spectra.\qed


\begin{thebibliography}{99}

\bibitem{adams} J. F. Adams, {\em Infinite loop spaces,}  Annals of Mathematics Studies 90 (1978).

\bibitem{atiyah} MF. Atiyah, {\em Thom complexes,} Proceedings of  London Mathematical Society, 11 (1961), 291-310.

\bibitem{bousfield} A. K. Bousfield,  {\em On the homology spectral sequence of a cosimplicial space,} American Journal of Mathematics 109 (1987), 361-394.

\bibitem{ekmm} A. D.  Elmendorf, I. Kriz, M. A. Mandell, J. P. May, {\em Rings, Modules, and Algebras
in Stable Homotopy Theory,} Mathematical Surveys and Monographs,  47 (1997) Amer.
Math. Soc. 

\bibitem{cs} M. Chas,  D. Sullivan. {\em String topology}, arXiv preprint math/9911159 (1999).

\bibitem{cm11} 
Denis-Charles Cisinski,  I. Moerdijk, {\em Dendroidal sets as models for homotopy operads,} Journal of Topology 4 (2011), no. 2, 257-299.
\bibitem{cooke} G. Cooke, {\em Replacing homotopy actions by topological actions}, Trans. Amer. Math.
Soc. 237 (1978), 391–406.
\bibitem{muro} F. Muro {\em Homotopy theory of nonsymmetric operads,} Algebraic \& Geometric Topology 11 (2011), 1541-1599.
\bibitem{cj} R. L. Cohen, J. D. S. Jones, {\em A homotopy theoretic realization of string topology,} Mathematische Annalen 324  (2002),  no.4, 773-798.

\bibitem{cohen} R. L. Cohen, {\em Multiplicative properties of Atiyah duality,} Homology, Homotopy \& Applications 6 (2004), no.1, 269-281.
\bibitem{dugger} D. Dugger, {\em A primer on homotopy colimits}, preprint, available at https://pages.uoregon.edu/ddugger/hocolim.pdf
\bibitem{ekmm} A. D. Elmendorf, I. Kriz, M. A. Mandell, and J. P. May, {\em Rings, modules, and algebras in stable homotopy theory ,}  Mathematical Surveys and Monographs 47 (1997).

\bibitem{ft2} 
Y. Felix, Jean-Claude Thomas, {\em Rational BV-algebra in string topology,} Bull. Soc. math. France 136  (2008), no. 1, 311-327.

\bibitem{ft} 
Y. Felix, Jean-Claude Thomas, {\em String topology on Gorenstein spaces,} Mathematische Annalen 345 (2009), no.2, 417-452.
\bibitem{FGV} 
Z. Fiedorowicz, Steven Gubkin, and R. M. Vogt. {\em Associahedra and weak monoidal structures on categories,}  arXiv:1005.3979 (2010).
\bibitem{fukaya} K. Fukaya, {\em Application of Floer homology of Langrangian submanifolds to symplectic topology,} Morse theoretic methods in nonlinear analysis and in symplectic topology, Springer Netherlands, (2006), 231-276.



\bibitem{gerstenhaber} M. Gerstenhaber, {\em The cohomology structure of an associative ring,} Annals of Mathematics 79 (1963), no.2, 267-288.

\bibitem{gj} 
P. G. Goerss, J. F. Jardine, {\em Simplicial Homotopy Theory,} Progress in Mathematics 174 (2009).

\bibitem{gh} 
M. Goresky, N. Hingston, {\em Loop products and closed geodesics,} Duke Mathematical Journal 150 (2009), no. 1, 117-209.
\bibitem{gh} 
M. Goresky, N. Hingston, {\em Loop products and closed geodesics,} Duke Mathematical Journal 150 (2009), no. 1, 117-209.


\bibitem{hirschhorn} P. S. Hirschhorn {\em Model categories and their localizations} Mathematical Survey \& Monographs, No. 99, American Mathematical Soc., (2009).


\bibitem{hovey} M. Hovey, {\em Model Categories,} Mathematical Surveys and Monographs 63  (1999), AMS, Providence, RI .

\bibitem{hss} M. Hovey, B. Shipley, and J. Smith, {\em Symmetric spectra,} Journal of the American Mathematical Society 13 (2000), no. 1, 149-208.
\bibitem{irie} K. Irie, {\em A chain level Batalin-Vilkovisky structure in string topology via de Rham chains,} Int. Math. Res. Notices, (2018) no. 15, 4602-4674. 
\bibitem{jones} J. D. S. Jones, {\em Cyclic homology and equivariant homology} Inventiones mathematicae 87 (1987), no.2,  403-423.

\bibitem{klein}
J. Klein, {\em Fiber products, Poincare duality and $A_\infty$-ring spectra,} Proceedings of the American Mathematical Society 134 (2006), no.6, 1825-1833.
\bibitem{kuribayashi} 
K. Kuribayashi,  {\em The Hochschild cohomology ring of the singular cochain algebra of a space,}  arXiv:1006.0884 (2010).
\bibitem{ls} 

\bibitem{kmn12} K. Kuribayashi, L. Menichi and T. Naito, {\em Derived string topology and the Eilenberg-Moore spectral sequence,} arXiv:1211.6833 (2012).
\bibitem{kmn13} K. Kuribayashi, L. Menichi and T Naito, {\em Behavior of the Eilenberg-Moore spectral sequence in derived string topology,}  Topology and its Applications 164 (2014), 24-44. 
\bibitem{KNWY} K. Kuribayashi, T. Naito, S. Wakatsuki, T. Yamaguchi, (2021). {\em A reduction of the string bracket to the loop product,}  preprint, arXiv:2109.10536 (2021).
\bibitem{km} I. Kriz,  J. P. May, {\em Operads, algebras, modules and motives,} Societe mathematique de France (1995).

\bibitem{ls} P. Lambrechts,  D. Stanley, {\em Poincare Duality and Commutative Differential Graded Algebras,} Annales Scientifiques de l'Ecole Normale Superieure 41 (2008), No. 4, 1029-1052.




\bibitem{maclane} 
S. Mac Lane, {\em Categories for the working mathematician,} Graduate Texts in Mathematics 5 (1998).
\bibitem{malm} E. J. Malm,  {\em String topology and the based loop space,} arXiv preprint arXiv:1103.6198 (2011).

\bibitem{mms} M. A. Mandell, J. P. May, S. Schwede, and B. Shipley, {\em Model categories of diagram spectra,} Proceedings of the London Mathematical Society 82 (2001), no. 2, 441-512.
\bibitem{menichi} 
L. Menichi,  {\em String topology for spheres,} Commentarii Mathematici Helvetici 84 (2009), no. 1, 135-157.

\bibitem{ms2} J. E. McClure, J. H. Smith. {\em Multivariable cochain operations and little $n$-cubes,} Journal of the American Mathematical Society 16 (2003), no.3, 681-704.

\bibitem{ms} J. E. McClure, J. H. Smith, {\em Cosimplical objects and little n-cubes I,} American Journal of Mathematics 126 (2004), no.5 1109-1153.
\bibitem{moriya} S. Moriya,  {\em Multiplicative formality of operads and Sinha’s spectral sequence for long knots,} Kyoto Journal of Mathematics 55.1 (2015) 17-27.
\bibitem{moriya1} S. Moriya, {\em Models for knot spaces and Atiyah duality}, preprint, available at arXiv. 
\bibitem{mw07} 
I. Moerdijk and I. Weiss,{\em Dendroidal sets}, Algebraic \& Geometric Topology 7 (2007),
1441-1470.

\bibitem{naito} T. Naito, {\em String operations on rational Gorenstein spaces,} arXiv:1301.1785 (2013).


\bibitem{pr} 
K. Poirier, N. Rounds, {\em Compactifying string topology,}  arXiv:1111.3635 (2011).
\bibitem{rs} B. Richter, B. Shipley,  {\em An algebraic model for commutative $H\mathbb{Z}$-algebras,} Algebraic \& Geometric Topology, 17(4), (2017) 2013-2038.
\bibitem{sinha2}
D. Sinha,  {\em Operads and knot spaces,} Journal of the American Mathematical Society 19 (2006), no.2, 461-486.
\bibitem{sinha} D. P. Sinha, {\em The topology of spaces of knots: cosimplicial models,} American Journal of Mathematics 131 (2009), no. 4,  945-980.
\bibitem{stasheff} 
J. D. Stasheff, {\em Homotopy associativity of H-spaces I,} Transactions of the American Mathematical Society (1963), 275-292.

\bibitem{tsopmene} P. A. Songhafouo Tsopm\'en\'e,  {\em Formality of Sinha’s cosimplicial model for long knots spaces and the Gerstenhaber algebra structure of homology,} Algebraic \& Geometric Topology 13 (2013), 2193-2205.
 \bibitem{ungheretti} M. Ungheretti, {\em Free loop space and the cyclic bar construction,}  Bulletin of the London Mathematical Society, 49(1), (2017) 95-101.


















\end{thebibliography}
\end{document}